\numberwithin{equation}{section}
\numberwithin{figure}{section}
\theoremstyle{plain}
\newtheorem{theorem}{Theorem}[section]
  \newtheorem{lemma}[theorem]{Lemma}
  \newtheorem{proposition}[theorem]{Proposition}
  \newtheorem*{claim*}{Claim}
  \newtheorem{corollary}[theorem]{Corollary}
  \newtheorem{conj}[theorem]{Conjecture}
  \theoremstyle{definition}
  \newtheorem{definition}[theorem]{Definition}
  \theoremstyle{remark}
  \newtheorem{notation}[theorem]{Notation}
	\newtheorem*{acknowledgment*}{Acknowledgment}  
  \theoremstyle{remark}
  \newtheorem{remark}[theorem]{Remark}
  \newtheorem{example}[theorem]{Example}
\newcommand{\Aut}{\operatorname{Aut}}
\newcommand{\Mat}{\operatorname{Mat}}
\newcommand{\dbracket}[1]{[\![#1]\!]}
\newcommand{\Uv}{\mathcal{U}_v}
\newcommand{\wt}{\operatorname{wt}}
\newcommand{\up}{\mathrm{up}}
\newcommand{\Gup}{G^{\up}}
\newcommand{\dprod}{\mathop{\overrightarrow{\prod}}\limits}
\newcommand{\Image}{\operatorname{Im}}
\newcommand{\res}{\operatorname{res}}
    \newcommand*{\qrr@gobblenexttocentry}[5]{}
    \newcommand*{\qrr@gobblenexttocentry}[4]{}
\newcommand*{\addsubsection}{%
    \addtocontents{toc}{\protect\qrr@gobblenexttocentry}%
    \subsection}
\begin{document}
\title[Quantum Grothendieck ring isomorphisms]{Quantum Grothendieck ring isomorphisms, cluster algebras and Kazhdan-Lusztig algorithm}

\author{David HERNANDEZ}

\address[David HERNANDEZ]{Universit\'e Paris Diderot, Sorbonne Universit\'e, CNRS, Institut de Math\'ematiques de Jussieu - Paris Rive Gauche, IMJ-PRG, F-75013, Paris, FRANCE}

\email{david.hernandez@imj-prg.fr}
\author{Hironori OYA}
\address[Hironori OYA]{Department of Mathematical Sciences, Shibaura Institute of Technology, 307 Fukasaku, Minuma-ku, Saitama-shi, Saitama, 337-8570, JAPAN}

\email{hoya@shibaura-it.ac.jp}

%\author{Hironori OYA}

%\address[Hironori OYA]{Universit\'e Paris Diderot, Sorbonne Universit\'e, CNRS, Institut de Math\'ematiques de Jussieu - Paris Rive Gauche, IMJ-PRG, F-75013, Paris, FRANCE}

%\email{hironori.oya@imj-prg.fr}

\date{}
\keywords{
	Quantum affine algebras, Quantum Grothendieck rings, Kazhdan-Lusztig algorithm, $T$-systems, Quantum cluster algebras, Dual canonical bases}
\begin{abstract}\label{abs}
We establish ring isomorphisms between quantum Grothendieck rings of certain remarkable monoidal categories $\mathcal{C}_{\mathscr{Q},\mathrm{B}_n}$ and $\mathcal{C}_{\mathcal{Q}, \mathrm{A}_{2n-1}}$ of finite-dimensional representations of quantum affine algebras of types $\mathrm{B}_n^{(1)}$ and $\mathrm{A}_{2n-1}^{(1)}$, respectively. Our proof relies in part on the corresponding quantum cluster algebra structures. Moreover, we prove that our isomorphisms specialize at $t = 1$ to the isomorphisms of (classical) Grothendieck 
rings obtained recently by Kashiwara, Kim and Oh by other methods. As a consequence, we prove a conjecture formulated by the first author in 2002 : the multiplicities of simple modules in standard modules in $\mathcal{C}_{\mathscr{Q},\mathrm{B}_n}$ are given by the specialization of certain analogues of Kazhdan-Lusztig polynomials and the coefficients of these polynomials are positive.
\end{abstract}
\maketitle
\tableofcontents 

\section{Introduction}\label{s:intro}

Cluster algebras can be realized in terms of various important monoidal categories, such as finite-dimensional representations of quantum affine algebras, perverse sheaves on quiver varieties, representations of quiver-Hecke algebras, equivariant perverse coherent sheaves on the affine Grassmannian... (see the very recent \cite{cw} and \cite{H:bou} for a review).

In this paper, inspired by this general framework of monoidal categorifications, we get new results on the representation theory of quantum affine algebras and on the positivity of certain corresponding Kazhdan-Lusztig polynomials.

Let $\mathcal{U}_q(\hat{\mathfrak{g}})$ be an untwisted quantum affine algebra of quantum parameter $q\in\mathbb{C}^{\times}$ which is not a root of unity. Consider $\mathcal{C}$ to be the monoidal category of finite-dimensional representations of the algebra $\mathcal{U}_q(\hat{\mathfrak{g}})$ (those representations are also regarded as representations of a quantum loop algebra $\mathcal{U}_q(\mathcal{L}\mathfrak{g})$). This category has been studied from several geometric, algebraic, combinatorial perspectives in connections to various fields, see \cite{mo, KKKO:mon, GT} for important recent developments and \cite{cwy} for a very recent point of view in the context of physics.

The category $\mathcal{C}$ is non-semisimple, and non-braided. It has a very intricate structure and many basic questions are still open. The simple objects in $\mathcal{C}$ have been
classified by Chari-Pressley in terms of Drinfeld polynomials \cite{CP:guide}, but the dimensions and characters of these simple objects are not known in general. 

The fundamental modules in $\mathcal{C}$ are distinguished simple modules whose classes generate its Grothendieck ring $K(\mathcal{C})$. (We do not distinguish the term ``module'' from ``representation'' in this paper.) For simply-laced types, Nakajima established in \cite{Nak:quiver} 
a remarkable Kazhdan-Lusztig algorithm to compute the multiplicity $P_{m,m'}$ of a simple module $L(m')$ in a 
standard module $M(m)$, that is, a tensor product of fundamental modules.
Here $m$, $m'$ belong to an ordered set of monomials $\mathcal{M}$ which parametrizes both simple and standard objects. 
This allows one to calculate the classes of simple modules $[L(m)]\in K(\mathcal{C})$ in terms of the classes of fundamental modules. As the dimensions and characters of the latter are known, this gives an answer to the problem of calculating the dimensions and characters of arbitrary simple objects : 
the multiplicities $P_{m,m'}$ are proved to be the evaluation at $t = 1$ of analogues $P_{m,m'}(t)$ of Kazhdan-Lusztig polynomials. These polynomials are constructed 
from the structure of the quantum Grothendieck ring $K_t(\mathcal{C})$, which is a $t$-deformation of $K(\mathcal{C})$ in a certain quantum torus \cite{Nak:quiver, VV:qGro}.
Besides it is proved that the coefficients of the polynomials $P_{m,m'}(t)$ are positive, as well as the structure constants of the quantum Grothendieck ring $K_t(\mathcal{C})$ on 
a canonical basis whose elements are called {\it $(q,t)$-characters of simple modules}. These results are based on the
geometric construction of standard modules in terms of quiver varieties known only for simply-laced types 
(note however that geometric character formulas for standard modules have been obtained in \cite{HL:JEMS2016} for all types).

The existence of an algorithm to compute the multiplicity of a simple module in a 
standard module is still not known in the case of non-simply-laced untwisted quantum affine algebras. 
A conjectural answer was proposed by the first author in \cite{H:qt} by 
giving another construction of the quantum Grothendieck ring $K_t(\mathcal{C})$ and
the corresponding polynomials $P_{m,m'}(t)$ which can be extended to general types. 
This leads to a general precise conjectural formula for the multiplicity of
simple modules in standard modules 
\begin{equation}\label{fconj}
[M(m)] = [L(m)] + \sum_{m'< m}P_{m,m'}(1)[L(m')],
\end{equation}
where $<$ is the partial ordering on monomials as above. 
Besides it is not clear from \cite{H:qt} that the polynomials $P_{m,m'}(t)$ have positive
coefficients (although it has been checked in numerous cases by computer).
As far as the authors know, these two independent conjectures, \eqref{fconj} and positivity, 
are completely open at the moment. 
In the present paper, we establish these conjectures for certain remarkable monoidal categories in the case of type $\mathrm{B}_n^{(1)}$.

For a simply-laced type $\mathrm{X}_n^{(1)}$, the category $\mathcal{C}$ has interesting monoidal subcategories $\mathcal{C}_{\mathcal{Q}, \mathrm{X}_n}$ introduced in \cite{HL:qGro} depending on a quiver $\mathcal{Q}$ whose underlying graph is the Dynkin diagram of type $\mathrm{X}_n$. This monoidal subcategory $\mathcal{C}_{\mathcal{Q}, \mathrm{X}_n}$ is sometimes called a Hernandez-Leclerc subcategory. It is proved in \cite{HL:qGro} that the quantum Grothendieck ring $K_t(\mathcal{C}_{\mathcal{Q}, \mathrm{X}_n})$ is isomorphic to the quantized coordinate algebra $\mathcal{A}_{t^{-1}}[N_-^{\mathrm{X}_n}]$ of type $\mathrm{X}_n$ (it was recently established \cite{Fuj} that certain completions of these categories have a structure of affine highest weight category). For type $\mathrm{B}_n^{(1)}$, analogues $\mathcal{C}_{\mathscr{Q},\mathrm{B}_n}$ of the categories $\mathcal{C}_{\mathcal{Q}, \mathrm{X}_n}$ were defined in \cite{OhS:foldedAR-I}.  In this paper, we establish an analogue of the isomorphism in \cite{HL:qGro} for $\mathcal{C}_{\mathscr{Q},\mathrm{B}_n}$ :
\begin{equation}\label{HLisom_B}
K_t(\mathcal{C}_{\mathscr{Q},\mathrm{B}_n})\simeq \mathcal{A}_{t^{-1}}[N_-^{\mathrm{A}_{2n-1}}] 
\end{equation}
(Theorem \ref{t:mainisom}, Corollary \ref{c:mainisom}). One notable point about this isomorphism is that the quantized coordinate algebra side is of type $\mathrm{A}_{2n-1}$, and not of type $\mathrm{B}_{n}$.  

In the proof of \eqref{HLisom_B}, we use the quantum cluster structure of the quantized coordinate algebras as follows. We first show the isomorphism between ambient quantum tori of $K_t(\mathcal{C}_{\mathscr{Q},\mathrm{B}_n})$ and $\mathcal{A}_{t^{-1}}[N_-^{\mathrm{A}_{2n-1}}]$. The former arises from the definition of (truncated) $(q, t)$-characters and the latter comes from the quantum cluster algebra structure of $\mathcal{A}_{t^{-1}}[N_-^{\mathrm{A}_{2n-1}}]$. Then we prove the correspondence between distinguished systems of relations : the quantum $T$-system of type $\mathrm{B}$ that we establish in $K_t(\mathcal{C}_{\mathscr{Q},\mathrm{B}_n})$ (Theorem \ref{t:T-sysaff}) and some specific mutation sequence in the quantum cluster algebra structure. This strategy is partially in line with the one in \cite{HL:qGro} aside from some technical
differences. For example, we do not know \emph{a priori} some positivity properties of (truncated) $(q, t)$-characters for type $\mathrm{B}_n^{(1)}$. 

As an immediate corollary of \eqref{HLisom_B}, we can say that $K_t(\mathcal{C}_{\mathscr{Q},\mathrm{B}_n})$ has a quantum cluster algebra structure coming from that of $\mathcal{A}_{t^{-1}}[N_-^{\mathrm{A}_{2n-1}}]$. Then the initial quiver for this quantum cluster algebra structure is nothing but a subquiver of the quiver introduced by the first author and Leclerc  \cite{HL:JEMS2016} which is defined for a subcategory of the category $\mathcal{C}$ of type $\mathrm{B}_n^{(1)}$. See Remarks \ref{r:HLquiver} and \ref{r:qclustr}. 

We also show that the isomorphism \eqref{HLisom_B} maps a basis $\{L_t(m)\}_m$ of $K_t(\mathcal{C}_{\mathscr{Q},\mathrm{B}_n})$ parametrized by simple modules $L(m)$ in $\mathcal{C}_{\mathscr{Q},\mathrm{B}_n}$ (called the $(q,t)$-characters of the simple modules $L(m)$ as above) to the dual canonical basis of  the quantized coordinate algebra in the sense of Lusztig and Kashiwara.
This implies the positivity property conjectured in \cite{H:qt} for the category $\mathcal{C}_{\mathscr{Q},\mathrm{B}_n}$ :
\begin{itemize}
\setlength{\itemsep}{4pt}
\item[(P1)] The positivity of the coefficients of the polynomials $P_{m,m'}(t)$.
\end{itemize}
\noindent As a by-product we also establish two additional positivity properties :
\begin{itemize}
\setlength{\itemsep}{4pt}
\item[(P2)] The positivity of structure constants of the quantum Grothendieck ring $K_t(\mathcal{C}_{\mathscr{Q},\mathrm{B}_n})$.

\item[(P3)] The positivity of the coefficients of the \emph{truncated} $(q,t)$-characters of $L(m)$.
\end{itemize}
As mentioned before, the following isomorphism was proved in \cite{HL:qGro} : 
 \begin{equation}\label{HLisom}
 K_t(\mathcal{C}_{\mathcal{Q},\mathrm{A}_{2n - 1}})\simeq \mathcal{A}_{t^{-1}}[N_-^{\mathrm{A}_{2n-1}}].
 \end{equation}
Combining \eqref{HLisom} with our isomorphism \eqref{HLisom_B}, we obtain an isomorphism between quantum Grothendieck rings
 \begin{equation}\label{isomt}
 K_t(\mathcal{C}_{\mathscr{Q},\mathrm{B}_n}) \simeq K_t(\mathcal{C}_{\mathcal{Q},\mathrm{A}_{2n - 1}}).
\end{equation}
Here we should note that $\mathcal{C}_{\mathscr{Q},\mathrm{B}_n}$ is determined by the Dynkin quiver $\mathcal{Q}'$ \emph{of type $\mathrm{A}_{2n-2}$}, which is irrelevant to the \emph{$\mathrm{A}_{2n-1}$-quiver} $\mathcal{Q}$, and some additional data $\flat\in \{>, <\}$. See section \ref{s:subcat} for details (the category $\mathcal{C}_{\mathscr{Q},\mathrm{B}_n}$ above is denoted by $\mathcal{C}_{(\mathcal{Q}')^{\flat}}$ there). Since the isomorphism \eqref{HLisom} also restricts to a bijection between the basis consisting of the $(q, t)$-characters of simple modules and the dual canonical basis, the isomorphism \eqref{isomt} induces a bijection between the $(q, t)$-characters of simple modules in $\mathcal{C}_{\mathscr{Q},\mathrm{B}_n}$ and $\mathcal{C}_{\mathcal{Q},\mathrm{A}_{2n - 1}}$. See also Remark \ref{r:stdPBW}. 

We note that similarities between finite-dimensional modules of type $\mathrm{A}$ and $\mathrm{B}$ quantum affine algebras had been observed in \cite{H:minim}. The Kirillov-Reshetikhin modules are certain simple objects in $\mathcal{C}$ generalizing fundamental modules. It is proved they are \emph{thin} in types $\mathrm{A}$ and $\mathrm{B}$, that is, they have multiplicity-free Frenkel-Reshetikhin $q$-characters \cite{FR:q-chara}, and that these $q$-characters have analogous explicit formulas in both types (certain $(q, t)$-characters of Kirillov-Reshetikhin modules are used in the	construction of $K_t(\mathcal{C}_{\mathscr{Q},\mathrm{B}_n})$).

To prove that the elements $\{L_t(m)\}_m$ specialize to the classes of simple modules as $t\rightarrow 1$, and to establish the formula (\ref{fconj}) in $\mathcal{C}_{\mathscr{Q},\mathrm{B}_n}$, we combine our results with
those of Kashiwara, Kim and Oh \cite{KO,KKO:AB}. These authors used different tools as in the present paper, 
namely generalized quantum affine Schur-Weyl duality \cite{KKK:1}, to construct an isomorphism $K(\mathcal{C}_{\mathscr{Q},\mathrm{B}_n}) \simeq K(\mathcal{C}_{\mathcal{Q},\mathrm{A}_{2n - 1}})$
 between {\it classical } Grothendieck rings. 
Moreover they proved their isomorphism preserves the basis of simple modules. We show that our isomorphism \eqref{isomt} of quantum Grothendieck rings specializes at $t = 1$ to their isomorphism (note that, as far as the authors know, the isomorphism \eqref{isomt} can not be deduced directly from the results of the papers \cite{KO,KKO:AB}).
This coincidence implies for the category $\mathcal{C}_{\mathscr{Q},\mathrm{B}_n}$ that the $(q,t)$-character $L_t(m)$  specializes to the $q$-character of $L(m)$ as conjectured by the first author in \cite{H:qt}. In particular the conjectural formula \eqref{fconj} holds for this category \footnote{Another approach is also developed by the first author in a paper in preparation.}.

We obtain also the explicit correspondence of $(q, t)$-characters of simple modules under \eqref{isomt} for special quivers. This is computed from the formulas for the change of the Lusztig parametrizations of the dual canonical basis associated with a change of reduced words. We observe that (\ref{isomt}) does not preserve any of the classes of modules one might naturally 
expect (e.g. fundamental or Kirillov-Reshetikhin modules). See Examples \ref{ex:ABcorresp2} and \ref{ex:ABcorresp3}. 
When the authors started to work on this correspondence, the paper \cite{KKO:AB} of Kashiwara, Kim and Oh appeared, and they also compute such correspondence in a different way. Our correspondence from type $\mathrm{A}_{2n-1}^{(1)}$ to type $\mathrm{B}_{n}^{(1)}$ coincides with the correspondence in \cite[section 3]{KKO:AB} up to shift of spectral parameters when we restrict it to $\mathcal{C}_{\mathcal{Q},\mathrm{A}_{2n-1}}$. We note that we also describe the correspondence from type $\mathrm{B}_{n}^{(1)}$ to type $\mathrm{A}_{2n-1}^{(1)}$ in this paper, which seems not to be written in \cite{KO} and \cite{KKO:AB}.

Some parts of the proof in this paper, in particular for the quantum $T$-systems, rely on the thinness of Kirillov-Reshetikhin modules discussed above. Although this property is not satisfied in general, it is satisfied by fundamental modules in types $\mathrm{C}$ and $\mathrm{G}_2$ \cite{H:minim}, which gives hope to get analogous results in these situations. We plan to come back to this in another paper.

We expect that we could also study the whole category $\mathcal{C}$ of type $\mathrm{B}_n^{(1)}$ by ``gluing'' (infinitely many) copies of $\mathcal{C}_{\mathscr{Q},\mathrm{B}_n}$ together (cf. \cite[subsection 1.5, sections 7, 8]{HL:qGro}). For example, we hope that we could describe a structure of $K_t(\mathcal{C})$ by revealing how the copies of $K_t(\mathcal{C}_{\mathscr{Q},\mathrm{B}_n})$ are ``patched'' in $K_t(\mathcal{C})$. In simply-laced cases, a ``patching'' of $K_t(\mathcal{C}_{\mathcal{Q}, \mathrm{X}_n})$ was calculated in a uniform manner in \cite{HL:qGro} by means of derived Hall algebras, whose analogues for non-simply-laced cases are not known now. We also plan to discuss the whole category $\mathcal{C}$ in a future paper. 

Moreover, the meaning of the quantum Grothendieck ring $K_t(\mathcal{C})$ in terms of the structure of the category $\mathcal{C}$ has not been clarified in non-simply-laced cases yet. Hence it is not known at the moment whether the isomorphism (\ref{isomt}) 
arises as an equivalence of categories, but it is certainly a question we would like to address.

The paper is organized as follows. In section \ref{s:prel}, we briefly recall the definitions of quantum loop algebras and their finite-dimensional representation theory.  
In section \ref{s:subcat}, we explain the definition of the remarkable monoidal categories considered in this paper following \cite{HL:qGro} and \cite{OhS:foldedAR-I}. 
Here we also show the important \emph{convexity} of twisted Auslander-Reiten quivers (Lemma \ref{l:tw-dia}). In section \ref{s:qtori}, we recall the definition of quantum tori which will be ambient spaces of $(q, t)$-characters. The important data are the inverses of quantum Cartan matrices. Their explicit form for type $\mathrm{B}$ is provided in Example \ref{e:invcarB_2} and \ref{e:invcarB-pic}, and its calculation is explained in Appendix \ref{a:inv}. In section \ref{s:QCA}, we explain another kind of quantum tori, that is, the quantum tori arising from the quantum cluster algebra structures of quantized coordinate algebras. In section \ref{s:qtoriisom}, we prove the isomorphism between these two kinds of quantum tori as explained just after \eqref{HLisom_B}. In section \ref{s:qGro}, we recall the definition of the quantum Grothendieck rings and the $(q, t)$-characters of standard modules and simple modules. In section \ref{s:dualcan}, we briefly recall the basics of quantized coordinate algebras including (normalized) unipotent quantum minors, quantum determinantal identities, (normalized) dual Poincar\'e-Birkhoff-Witt type bases and dual canonical bases. In section \ref{s:T-sys}, we prove the quantum $T$-system for type $\mathrm{B}_n^{(1)}$ relying on a feature of type $\mathrm{B}_n^{(1)}$ : the thinness of Kirillov-Reshetikhin modules. In section \ref{s:mainisom}, we prove the isomorphism \eqref{HLisom_B} together with the correspondence between $(q, t)$-characters of simple modules and dual canonical basis elements. In section \ref{s:cor}, we give the proof of the positivities (P1)--(P3) and explain the coincidence between our isomorphism and the isomorphism established by Kashiwara and Oh \cite{KO}, which implies \eqref{fconj} in $\mathcal{C}_{\mathscr{Q},\mathrm{B}_n}$. In section \ref{s:AB}, we provide the explicit correspondence of simple modules under \eqref{isomt} for some explicit quivers. 

\addsubsection*{Acknowledgments} 
The authors would like to thank Bernard Leclerc for helpful comments. They are also thankful to the anonymous referee whose suggestions improve the present paper. The authors were supported by the European Research Council under the European Union's Framework Programme H2020 with ERC Grant Agreement number 647353 Qaffine.

\subsection{}The following are general notation and convention in this paper.
\begin{itemize}
\item[(1)] For some proposition $\wp$, we set 
\[
\delta(\wp):=\begin{cases}
1&\text{if}\ \wp\ \text{holds}, \\
0&\text{otherwise}. 
\end{cases}
\]
\item[(2)] For a totally ordered set $J=\{\cdots <j_{-1}<j_0<j_1<j_2<\cdots\}$, write 
\begin{align*}
\dprod_{j\in J}A_{j}&:=\cdots A_{j_{-1}}A_{j_0}A_{j_1}A_{j_2}\cdots, &
\overrightarrow{\bigotimes_{j\in J}}A_{j}&:=\cdots A_{j_{-1}}\otimes A_{j_0}\otimes A_{j_1}\otimes A_{j_2}\cdots. 
\end{align*}
\item[(3)] For $k\in \mathbb{Z}_{>0}$, a subset $\sigma\subset \mathbb{Z}$ is called \emph{a $k$-segment} if $\sigma$ is of the form $\{\ell+sk\mid s=0, \dots, t\}$ for some $\ell\in \mathbb{Z}$ and $t\in\mathbb{Z}_{\geq 0}$. \end{itemize}

\section{Quantum loop algebras and their representation theory}\label{s:prel}
In this section, we give reminders on quantum loop algebras, their 
finite-dimensional modules and the corresponding $q$-character theory.
\subsection{Quantum loop algebras}\label{ss:QLA}
Let $\mathfrak{g}$ be a complex simple Lie algebra of type $\mathrm{X}_N$ ($\mathrm{X}_N=\mathrm{A}_N, \mathrm{B}_N, \mathrm{C}_N,\dots,\mathrm{G}_2 $), and $\mathfrak{h}$ be a Cartan subalgebra of $\mathfrak{g}$. Set $I:=\{1,\dots, N\}$ and let $\{ \alpha_i \mid i \in I\}$  (resp.~$\{ h_i\mid i \in I\}$) be the set of simple roots (resp.~coroots), $\Delta_+$ the set of positive roots.  Let $C=(c_{ij})_{i, j\in I}=(\langle h_i, \alpha_j\rangle)_{i, j\in I}$ be the Cartan matrix of $\mathfrak{g}$. We follow the labelling in \cite[\S 4.8]{Kac:Liealgbook}. Write $Q:=\sum_{i\in I}\mathbb{Z}\alpha_i\subset\mathfrak{h}^{\ast}$, called the root lattice of $\mathfrak{g}$, and set $P:=\{\lambda\in \mathfrak{h}^{\ast}\mid \langle h_i, \lambda\rangle\in \mathbb{Z}\ \text{for all}\ i\in I\}$, called the weight lattice of $\mathfrak{g}$. For $i\in I$, define $\varpi_i\in P$ as the element such that $\langle h_j, \varpi_i\rangle=\delta_{ij}$ for $j\in I$, which is called an fundamental weight. Set $P_+:=\sum_{i\in I}\mathbb{Z}_{\geq 0}\varpi_i$, $Q_+:=\sum_{i\in I}\mathbb{Z}_{\geq 0}\alpha_i$, and $\rho:=\sum_{i\in I}\varpi_i\in P_+$. We define a partial order $\leq$ on $P$ by $\beta\leq \beta'$ if and only if $\beta'-\beta\in Q_+$. 

Let $W$ be the Weyl group of $\mathfrak{g}$, that is, the group generated by $\{s_{i}\}_{i\in I}$ with the defining relations $s_i^2=e$ for $i\in I$ and $(s_is_j)^{m_{ij}}=e$ for $i, j\in I$, $i\neq j$, where $e$ is the unit of $W$ and $m_{ij}:=2$ (resp.~$3, 4, 6$) if $c_{ij}c_{ji}=0$ (resp.~$1, 2, 3$). We have the group homomorphisms $W\to\Aut\mathfrak{h}$ and $W\to\Aut\mathfrak{h}^{\ast}$ given by
\begin{align*}
s_{i}\left(h\right)&=h-\langle h, \alpha_{i}\rangle h_{i}&s_{i}\left(\mu\right)&=\mu-\langle h_{i},\mu\rangle\alpha_{i}
\end{align*}
for $h\in\mathfrak{h}$ and $\mu\in\mathfrak{h}^{\ast}$. For an element $w$ of $W$, $\ell(w)$ denotes the length of $w$, that is, the smallest integer $\ell$ such that there exist $i_{1},\dots,i_{\ell}\in I$ with $w=s_{i_{1}}\cdots s_{i_{\ell}}$. It is well-known that there exists a unique element $w_0$ such that $\ell(w)\leq \ell(w_0)$ for all $w\in W$, which is called the longest element of $W$. For $w\in W$, set 
\[
I(w):=\{\bm{i}=(i_{1},\dots,i_{\ell(w)})\in I^{\ell(w)}\mid w=s_{i_{1}}\cdots s_{i_{\ell(w)}}\}.
\]
An element of $I(w)$ is called a reduced word of $w$. 

Let $(-, -)$ the symmetric $\mathbb{C}$-bilinear form on $\mathfrak{h}^{\ast}$ such that  
\begin{enumerate}
\item[(a)] $\displaystyle\langle h_{i},\alpha_{j}\rangle=2\frac{(\alpha_i,\alpha_j)}{(\alpha_{i},\alpha_{i})}$ for $i, j\in I$, here left-hand side is the canonical paring of $h_i$ and $\alpha_j$, 
\item[(b)] $\min\{(\alpha_{i},\alpha_{i}) \mid i\in I\}=2$.
\end{enumerate}
Note that this bilinear form $(-,-)$ is $W$-invariant. Set 
\[
r_i:=\frac{(\alpha_i, \alpha_i)}{2} 
\]
for $i\in I$, and $D:=(\delta_{ij}r_i)_{i, j\in I}$. Let $B=(b_{ij})_{i, j\in I}:=DC$. Then we have 
\[
b_{ij}=(\alpha_i, \alpha_j)
\]
for $i, j\in I$, in particular, $B$ is symmetric. 
\begin{notation}\label{n:qbinom}
Let $x$ be a non-zero complex number which is not a root of unity or an indeterminate. Set
\begin{align*}
&x_i:= x^{r_i}, \displaystyle [n]_{x}:= \frac{x^n-x^{-n}}{x-x^{-1}}\ \text{for\ }n\in \mathbb{Z},\\
&\left[ \begin{array}{c} n\\k \end{array} \right]_{x}:=\begin{cases}\displaystyle \frac{[n]_{x}[n-1]_{x}\cdots[n-k+1]_{x}}{[k]_{x}[k-1]_{x}\cdots [1]_{x}}&\text{if\ }n\in \mathbb{Z}, k\in \mathbb{Z}_{>0},\\ 1 &\text{if\ }n\in \mathbb{Z}, k=0,\end{cases}
\end{align*}
Note that $[n], \left[ \begin{array}{c} n\\k \end{array} \right]\in \mathbb{Z}[x^{\pm 1}]$ if $x$ is an indeterminate. 
\end{notation}

Let $q$ be a non-zero complex number which is not a root of unity. 
\begin{definition}[{\cite{Dri:real,B:braid}}]\label{d:QEA-Drinfeld}
\emph{The quantum loop algebra $\mathcal{U}_q(\mathcal{L}\mathfrak{g})(=\mathcal{U}_q(\mathrm{X}_N^{(1)}))$ of type $\mathrm{X}_N^{(1)}$} is the $\mathbb{C}$-algebra defined by the generators
\[
k_i^{\pm 1} (i\in I), x_{i, r}^{\pm} ((i, r)\in I\times \mathbb{Z}), h_{i, r} ((i, r)\in I\times (\mathbb{Z}\setminus \{0\})),
\]
and the relations below :
\begin{itemize}
\item[(I)] $k_ik_i^{-1}=1=k_i^{-1}k_i$, $k_ik_j=k_jk_i$ for $i, j\in I$,
\item[(II)] $k_ix_{j, r}^{\pm}=q_i^{\pm c_{ij}}x_{j, r}^{\pm}k_i$ for $i, j\in I, r\in\mathbb{Z}$,
\item[(III)] $[k_i, h_{j, r}]=0$ for $i, j\in I, r\in \mathbb{Z}\setminus \{0\}$,
\item[(IV)] $\displaystyle [x_{i, r}^+, x_{j, s}^-]=\delta_{ij}\frac{\phi_{i, r+s}^+-\phi_{i, r+s}^-}{q_i-q_i^{-1}}$ for $(i, r), (j, s)\in I\times \mathbb{Z}$,
\item[(V)] $\displaystyle [h_{i, r}, x_{j, s}^{\pm}]=\pm\frac{[rc_{ij}]_{q_i}}{r}x_{j, r+s}^{\pm}$ for $(i, r)\in I\times (\mathbb{Z}\setminus \{0\}), (j, s)\in I\times \mathbb{Z}$,
\item[(VI)] $\displaystyle [h_{i, r}, h_{j, s}]=0$ for $(i, r), (j, s)\in I\times (\mathbb{Z}\setminus \{0\})$,
\item[(VII)] $x_{i, r+1}^{\pm}x_{j, s}^{\pm}-q^{\pm (\alpha_i, \alpha_j)}x_{i, r}^{\pm}x_{j, s+1}^{\pm}=q^{\pm (\alpha_i, \alpha_j)}x_{j, s}^{\pm}x_{i, r+1}^{\pm}-x_{j, s+1}^{\pm}x_{i, r}^{\pm}$ for $(i, r), (j, s)\in I\times \mathbb{Z}$,
\item[(VIII)] $\displaystyle \sum_{\sigma\in \mathfrak{S}_{1-c_{ij}}}\sum_{k=0}^{1-c_{ij}}(-1)^k\left[ \begin{array}{c} 1-c_{ij}\\k \end{array} \right]_{q_i}x_{i, r_{\sigma(1)}}^{\pm}\cdots x_{i, r_{\sigma(k)}}^{\pm}x_{j, s}^{\pm}x_{i, r_{\sigma(k+1)}}^{\pm}\cdots x_{i, r_{\sigma(1-c_{ij})}}^{\pm}=0$ for $i,j\in I$ with $i\neq j$, and $r_1,\dots, r_{1-c_{ij}}, s\in \mathbb{Z}$,
\end{itemize}
where 
\[
\phi_i^{\pm}(z):=\sum_{r=0}^{\infty}\phi_{i, \pm r}^{\pm}z^{\pm r}=k_i^{\pm 1}\exp\left(\pm (q_i-q_i^{-1})\sum_{r>0}h_{i, \pm r}z^{\pm r}\right).
\]
\end{definition}
The quantum loop algebra is a quotient of the corresponding quantum affine algebra and has a Hopf algebra structure which is defined via another realization of $\mathcal{U}_q(\mathcal{L}\mathfrak{g})$ (called the Drinfeld-Jimbo presentation). See, for example, \cite{CH} for more details. 

\subsection{Finite-dimensional representations}

A finite-dimensional $\mathcal{U}_q(\mathcal{L}\mathfrak{g})$-module is said to be of \emph{type 1} if the eigenvalues of $\{k_i\mid i\in I\}$ are of the form $q^m$, $m\in \mathbb{Z}$.
Let $\mathcal{C}$ be the category of type 1 finite-dimensional $\mathcal{U}_q(\mathcal{L}\mathfrak{g})$-modules. 

Since the elements $\{h_{i, r}, k_i\mid (i, r)\in I\times (\mathbb{Z}\setminus \{0\})\}$ mutually commute, we have a decomposition
\[
V=\bigoplus_{\bm{\gamma}=(\gamma_{i, \pm r}^{\pm})_{i\in I,  r\geq 0}}V_{\bm{\gamma}}
\]
such that
\[ 
V_{\bm{\gamma}}=\{v\in V \mid \text{for all}\ i, r,\ \text{there exists}\ p_{i, \pm r}>0\ \text{such that}\  (\phi_{i, \pm r}^{\pm}-\gamma_{i, \pm r}^{\pm})^{p_{i, \pm r}}.v=0\}.
\]
The series $\bm{\gamma}$ is called \emph{$l$-weights} and $V_{\bm{\gamma}}$ is called \emph{an $l$-weight space} if $V_{\bm{\gamma}}\neq 0$.

Let $L$ be a simple $\mathcal{U}_q(\mathcal{L}\mathfrak{g})$-module in $\mathcal{C}$. Then there uniquely exists an $l$-weight space $L_{\bm{\gamma}_0}$ such that $x_{i, r}^+.L_{\bm{\gamma}_0}=0$ for all $i\in I$ and $r\in \mathbb{Z}$ \cite[subsection 12.2]{CP:guide}. The isomorphism class of a simple $\mathcal{U}_q(\mathcal{L}\mathfrak{g})$-module in $\mathcal{C}$ is determined by such $\bm{\gamma}_0$ (called \emph{the $l$-highest weight}). Moreover, we have the following :
\begin{theorem}[{\cite[Theorem 3.3]{CP:QAAandRep}, \cite[Theorem 12.2.6]{CP:guide}}]\label{t:CP-class}
If $\bm{\gamma}=(\gamma_{i, \pm r}^{\pm})_{i\in I,  r\geq 0}$ is an $l$-highest weight of a simple $\mathcal{U}_q(\mathcal{L}\mathfrak{g})$-module in $\mathcal{C}$, then there uniquely exists $P_i(z)\in \mathbb{C}[z]$ with $P_i(0)=1$ for $i\in I$ such that 
\begin{align}
\sum_{r=0}^{\infty}\gamma_{i, \pm r}^{\pm}z^{\pm r}=q_i^{\deg (P_i)}\frac{P_i(zq_i^{-1})}{P_i(zq_i)}\label{eq:Dripoly}
\end{align}
as elements of $\mathbb{C}\dbracket{z}$ and $\mathbb{C}\dbracket{z^{-1}}$, respectively. Conversely, for any $(P_i(z))_{i\in I}\in \mathbb{C}[z]^I$ with $P_i(0)=1$ for $i\in I$, we have a simple $\mathcal{U}_q(\mathcal{L}\mathfrak{g})$-module in $\mathcal{C}$ with the $l$-highest $\bm{\gamma}$ given by \eqref{eq:Dripoly}. 
\end{theorem}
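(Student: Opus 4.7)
The plan is to reduce the statement to the rank one case (the quantum loop algebra of $\mathfrak{sl}_2$) and then bootstrap both directions from there, using fundamental modules with spectral parameter for the converse.

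For the uniqueness/direct part, fix a non-zero highest weight vector $v_0\in L_{\bm{\gamma}_0}$ and an index $i\in I$. Inspection of the Drinfeld relations (I)--(VIII) restricted to the index $i$ shows that the subalgebra $\mathcal{U}_i$ of $\mathcal{U}_q(\mathcal{L}\mathfrak{g})$ generated by $\{x_{i,r}^{\pm}, h_{i,s}, k_i^{\pm 1}\}$ is a homomorphic image of the rank one quantum loop algebra $\mathcal{U}_{q_i}(\mathcal{L}\mathfrak{sl}_2)$: since $c_{ii}=2$, the Serre relation (VIII) becomes vacuous, while the remaining relations reduce to the Drinfeld relations for $\mathcal{U}_{q_i}(\mathcal{L}\mathfrak{sl}_2)$. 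The $\mathcal{U}_i$-submodule $\mathcal{U}_i\,v_0\subset L$ is therefore a finite-dimensional $l$-highest weight module with highest weight series $(\gamma_{i,\pm r}^{\pm})_{r\geq 0}$, and it suffices to prove the theorem for $\mathfrak{g}=\mathfrak{sl}_2$. Uniqueness of $P$ is then automatic, since a rational function is recovered from its formal power series expansion at $z=0$.

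The $\mathfrak{sl}_2$-case is the heart of the argument and is where I expect the main obstacle. The strategy is to exploit relation (IV), which expresses $[x_{r}^+, x_{-s}^-]\,v_0$ in terms of the $\phi^{\pm}_{r-s}$, together with finite-dimensionality of $L$. Acting by monomials $x_{s_1}^-\cdots x_{s_k}^-\,v_0$ produces only finitely many linearly independent vectors, so applying a further $x_r^+$ and re-expressing via (IV) yields recurrence relations among the $\gamma^{\pm}_{\pm r}$. Packaging these recurrences into generating functions shows that $\phi^+(z)$ and $\phi^-(z)$ are, respectively, the expansions at $z=0$ and at $z=\infty$ of one and the same rational function in $\mathbb{C}(z)$, and a finer analysis of its zeros and poles forces that rational function to take the form $q^{\deg P}\, P(zq^{-1})/P(zq)$ for a unique $P(z)\in\mathbb{C}[z]$ with $P(0)=1$. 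The delicate point is that both one-sided expansions must genuinely come from a common rational function, which really uses finite-dimensionality and is not a purely formal statement.

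For the converse, given a tuple $(P_i)_{i\in I}$, I would factor $P_i(z)=\prod_k(1-a_{i,k}z)$ and construct the simple module in $\mathcal{C}$ as follows. Taking for granted the existence of fundamental modules $V(\varpi_i)_a$ with spectral parameter $a\in\mathbb{C}^{\times}$ (obtained via evaluation and the Drinfeld-Jimbo presentation, cf.~\cite{CP:guide}), whose $l$-highest weight corresponds to $P_i(z)=1-az$ in the $i$-th slot and $P_j=1$ for $j\neq i$, I would form the tensor product $W=\bigotimes_{i,k} V(\varpi_i)_{a_{i,k}}$ in any chosen order. A direct computation using the coproduct on the Drinfeld-Jimbo generators shows that the tensor product of highest weight vectors in $W$ is an $l$-weight vector with the prescribed eigenvalues \eqref{eq:Dripoly}, so that the unique simple quotient of the cyclic $\mathcal{U}_q(\mathcal{L}\mathfrak{g})$-submodule it generates yields the desired simple object, completing the proof.
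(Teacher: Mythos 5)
The paper does not prove Theorem \ref{t:CP-class} at all: it is quoted as a known classification result of Chari--Pressley (the references given in the statement), so there is no internal proof to compare yours with. What you have written is an outline of the standard Chari--Pressley argument itself: the reduction to the subalgebras $\mathcal{U}_i$ generated by $x_{i,r}^{\pm}, h_{i,s}, k_i^{\pm1}$ (which are quotients of $\mathcal{U}_{q_i}(\mathcal{L}\mathfrak{sl}_2)$, and $\mathcal{U}_i v_0$ is a finite-dimensional highest $\ell$-weight module, so the rank-one case suffices), rationality of $\phi_i^{\pm}(z)$ from finite-dimensionality, and the converse via fundamental modules together with the simple quotient of the submodule generated by the tensor product of highest $\ell$-weight vectors (the mechanism recorded in Proposition \ref{p:tensor}). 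As a plan this is the right one.

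Two steps, however, are not actually supplied. First, your converse rests on fundamental modules $V(\varpi_i)_a$ ``obtained via evaluation and the Drinfeld--Jimbo presentation'': evaluation homomorphisms $\mathcal{U}_q(\mathcal{L}\mathfrak{g})\to\mathcal{U}_q(\mathfrak{g})$ exist only in type $\mathrm{A}$, not for general $\mathrm{X}_N$ (in particular not for type $\mathrm{B}_n$, the case this paper needs), and the existence of a simple module in $\mathcal{C}$ with Drinfeld polynomials $P_i=1-az$, $P_j=1$ for $j\neq i$ is itself a nontrivial part of the theorem you are proving; taken for granted, your argument for the converse is circular outside type $\mathrm{A}$. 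Second, the core of the direct part --- that the two one-sided expansions $\phi_i^{\pm}(z)$ are expansions at $0$ and $\infty$ of a single rational function, and that its zero/pole configuration forces the exact form $q_i^{\deg P_i}P_i(zq_i^{-1})/P_i(zq_i)$ with $P_i(0)=1$ --- is asserted (``a finer analysis \dots forces'') rather than carried out; this is precisely where the substance of the Chari--Pressley proof lies, via the recurrences obtained from relation (IV) applied to the vectors $x_{i,r}^-v_0$. (Also, the multiplicativity of the $\ell$-highest weight on tensor products is not a purely ``direct'' coproduct computation: the Drinfeld--Jimbo coproduct of the loop generators is only controlled modulo lower-order terms, which one must check annihilate the tensor product of highest $\ell$-weight vectors.) So the proposal is a faithful outline of the cited proof, but with these gaps it does not yet stand on its own.
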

This $I$-tuple of polynomials $(P_i(z))_{i\in I}$ corresponding to a simple module $L$ is called \emph{the Drinfeld polynomials of $L$}. 

\subsection{$q$-characters}\label{ss:q-ch}

The $q$-character, introduced by  Frenkel and Reshetikhin \cite{FR:q-chara}, is an important tool for the study of finite-dimensional modules over quantum loop algebras. In this subsection, we briefly recall their definition and properties. See, for example, \cite{CH} for the survey of this topic.

 Frenkel and Reshetikhin \cite[Proposition 1]{FR:q-chara} proved that any $l$-weight $\bm{\gamma}$ of a $\mathcal{U}_q(\mathcal{L}\mathfrak{g})$-module in $\mathcal{C}$ is of the form 
\[
\sum_{r=0}^{\infty}\gamma_{i, \pm r}^{\pm}z^{\pm r}=q_i^{\deg (Q_i)-\deg (R_i)}\frac{Q_i(zq_i^{-1})R_i(zq_i)}{Q_i(zq_i)R_i(zq_i^{-1})}
\]
as elements of $\mathbb{C}\dbracket{z}$ and $\mathbb{C}\dbracket{z^{-1}}$, respectively, for some $Q_i(z), R_i(z)\in \mathbb{C}[z]$ with $Q_i(0)=R_i(0)=1$. For this $\bm{\gamma}$, define a monomial $m_{\bm{\gamma}}$ in the Laurent polynomial ring $\mathbb{Z}[Y_{i, a}^{\pm 1}\mid i\in I, a\in\mathbb{C}^{\times}]$ over $\mathbb{Z}$ with infinitely many variables by 
\[
m_{\bm{\gamma}}:=\prod_{i\in I, a\in \mathbb{C}^{\times}}Y_{i, a}^{q_{i, a}-r_{i, a}},
\]
where 
\begin{align*}
Q_i&=\prod_{a\in \mathbb{C}^{\times}}(1-za)^{q_{i, a}}
&
R_i&=\prod_{a\in \mathbb{C}^{\times}}(1-za)^{r_{i, a}}. 
\end{align*}

Let $K(\mathcal{C})$ be the Grothendieck ring of $\mathcal{C}$. 
\begin{theorem}[{\cite[Theorem 3]{FR:q-chara}}]\label{t:q-chara}
The assignment
\[
[V]\mapsto \sum_{\bm{\gamma}:l\text{-weights}}\dim (V_{\bm{\gamma}})m_{\bm{\gamma}}. 
\]
defines an injective algebra homomorphism $\chi_q\colon K(\mathcal{C})\to \mathbb{Z}[Y_{i, a}^{\pm 1}\mid i\in I, a\in\mathbb{C}^{\times}]$. 
\end{theorem}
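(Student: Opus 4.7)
The plan is to verify three distinct properties: well-definedness on the Grothendieck ring, multiplicativity, and injectivity.

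First I would check that the assignment descends to a well-defined group homomorphism $K(\mathcal{C}) \to \mathbb{Z}[Y_{i,a}^{\pm 1}\mid i\in I, a\in \mathbb{C}^{\times}]$. This reduces to additivity of the $l$-weight multiplicities on short exact sequences $0 \to V' \to V \to V'' \to 0$ in $\mathcal{C}$. The key observation is that each $\phi_{i, \pm r}^{\pm}$ acts on all three modules compatibly with the morphisms of the sequence, so the generalized eigenspace decomposition for the commuting family $\{\phi_{i,\pm r}^{\pm}\}$ is stable under subquotients, giving $\dim V_{\bm{\gamma}} = \dim V'_{\bm{\gamma}} + \dim V''_{\bm{\gamma}}$ for every series $\bm{\gamma}$. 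Since the definition of $m_{\bm{\gamma}}$ depends only on $\bm{\gamma}$, linearity of $\chi_q$ on $K(\mathcal{C})$ follows.

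Next I would prove multiplicativity $\chi_q(V \otimes W) = \chi_q(V) \chi_q(W)$. This is the main technical obstacle, since the Hopf algebra structure is transparent only in the Drinfeld–Jimbo presentation, while the generators $\phi_{i,\pm r}^{\pm}$ live in the Drinfeld new realization. The crucial input is a triangularity statement for the coproduct of the form
\[
\Delta(\phi_{i,\pm r}^{\pm}) \;\equiv\; \sum_{s+t = r}\phi_{i,\pm s}^{\pm}\otimes \phi_{i, \pm t}^{\pm} \pmod{\mathcal{N}_+ \otimes \mathcal{N}_-},
\]
where $\mathcal{N}_\pm$ are the subalgebras generated by appropriate $x_{j,s}^{\pm}$'s. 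Choosing orderings so that $\mathcal{N}_+$ acts nilpotently upward and $\mathcal{N}_-$ downward on $l$-weight filtrations, one concludes that if $v \in V_{\bm{\gamma}}$ and $w \in W_{\bm{\gamma}'}$, then $v \otimes w$ belongs to the generalized eigenspace of $V \otimes W$ with $l$-weight $\bm{\gamma}\cdot \bm{\gamma}'$ (componentwise multiplication of the series). Writing the corresponding Frenkel–Reshetikhin pair $(Q_i, R_i)$ for each side, the product of series corresponds to multiplication of the polynomials, hence addition of the exponents $q_{i,a} - r_{i,a}$, which translates exactly into $m_{\bm{\gamma}\cdot \bm{\gamma}'} = m_{\bm{\gamma}} \, m_{\bm{\gamma}'}$. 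Summing over $l$-weights in $V\otimes W$ then yields the desired product formula.

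Finally, for injectivity I would invoke Theorem~\ref{t:CP-class}: the classes $\{[L]\}$ of simple objects form a $\mathbb{Z}$-basis of $K(\mathcal{C})$, and each simple $L$ corresponds to an $I$-tuple of Drinfeld polynomials, equivalently to a dominant monomial $m_L \in \mathbb{Z}[Y_{i,a}\mid i\in I, a\in\mathbb{C}^{\times}]$ (the $l$-highest weight monomial). Distinct simples produce distinct dominant monomials. It remains to observe that $\chi_q(L)$ contains $m_L$ with multiplicity exactly one (the $l$-highest weight space is one-dimensional by \cite[subsection 12.2]{CP:guide}) and that every other monomial appearing in $\chi_q(L)$ is strictly smaller than $m_L$ in the natural partial ordering on monomials — this triangularity is immediate from the highest-weight theory, since lowering operators $x_{i,s}^-$ decrease the weight. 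A triangular change-of-basis argument then shows that $\{\chi_q(L)\}_L$ is $\mathbb{Z}$-linearly independent, completing the proof that $\chi_q$ is injective. The main obstacle is the multiplicativity step; once the coproduct triangularity is in hand, the rest reduces to bookkeeping with the Chari–Pressley parametrization.
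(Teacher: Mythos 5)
You should first note a mismatch with the premise of the exercise: the paper offers no proof of this statement at all — it is quoted directly from Frenkel--Reshetikhin \cite{FR:q-chara} (Theorem 3 there) and used as a black box — so your proposal can only be compared with the standard argument in the literature, which it in fact follows. Your three-step plan is the accepted one. The well-definedness step is fine: the $\phi_{i,\pm r}^{\pm}$ commute and are morphism-compatible, so generalized eigenspace dimensions are additive on short exact sequences. The injectivity step is also essentially correct: classes of simples form a $\mathbb{Z}$-basis by Theorem \ref{t:CP-class}, the $l$-highest weight space is one-dimensional, and every other monomial of $\chi_q(L(m))$ has strictly smaller weight (the refined statement is recorded in this paper as Theorem \ref{t:highest}); a finite triangular change-of-basis argument with respect to the weight (or Nakajima) ordering then gives linear independence of the $\chi_q(L(m))$.

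The one place where your write-up is not quite right as stated is the multiplicativity step. You correctly identify the crux, namely the coproduct approximation $\Delta(\phi_i^{\pm}(z))\equiv \phi_i^{\pm}(z)\otimes\phi_i^{\pm}(z)$ modulo terms involving Drinfeld root vectors; but this is itself a nontrivial theorem (due to Damiani, and the key lemma in Frenkel--Reshetikhin's and Frenkel--Mukhin's treatments), which your sketch invokes rather than proves, so the heart of the matter is still outsourced. Moreover, the conclusion you draw from it is too strong: a pure tensor $v\otimes w$ of $l$-weight vectors does \emph{not} in general lie in a single generalized eigenspace of $V\otimes W$. What the triangularity actually yields is a filtration (or block-triangular matrix form) of $V\otimes W$ for the commuting family $\{\phi_{i,\pm r}^{\pm}\}$ whose diagonal blocks carry the $l$-weights $\bm{\gamma}\cdot\bm{\gamma}'$, so that the \emph{multiset} of $l$-weights of $V\otimes W$, counted with multiplicity, is the product multiset; since $m_{\bm{\gamma}\cdot\bm{\gamma}'}=m_{\bm{\gamma}}m_{\bm{\gamma}'}$, this is exactly what gives $\chi_q(V\otimes W)=\chi_q(V)\chi_q(W)$. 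With the statement rephrased at the level of filtrations, and with the coproduct lemma properly cited, your outline is the standard proof.
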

The map $\chi_q$ is called \emph{the $q$-character homomorphism}. A monomial which is a product of only positive powers of $Y_{i, a}$'s is said to be \emph{dominant}. Write the simple $\mathcal{U}_q(\mathcal{L}\mathfrak{g})$-module in $\mathcal{C}$ with $l$-highest weight $\bm{\gamma}$ as $L(m_{\bm{\gamma}})$. Then, by Theorem \ref{t:CP-class}, $m_{\bm{\gamma}}$ is dominant, and the dominant monomials classify the simple $\mathcal{U}_q(\mathcal{L}\mathfrak{g})$-module in $\mathcal{C}$ up to isomorphism. 
\begin{proposition}[{\cite[Corollary 3.5]{CP:QAAandRep}}]\label{p:tensor}
Let $m$ and $m'$ be dominant monomials. Then $L(mm')$ is isomorphic to a quotient of the submodule of $L(m)\otimes L(m')$ generated by
the tensor product of vectors in their $l$-highest weight spaces.
\end{proposition}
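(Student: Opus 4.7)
The proof is a standard highest weight vector argument in the tensor product, together with the classification theorem already stated (Theorem \ref{t:CP-class}). The plan is to identify $v \otimes v'$ as an $l$-highest weight vector of $L(m) \otimes L(m')$ whose $l$-highest weight corresponds exactly to the monomial $mm'$, and then deduce the result from the uniqueness part of the Chari--Pressley classification.

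First I would fix $l$-highest weight vectors $v \in L(m)_{\bm{\gamma}}$ and $v' \in L(m')_{\bm{\gamma}'}$, where the dominant monomials associated with $\bm{\gamma}$ and $\bm{\gamma}'$ are $m$ and $m'$ respectively, with corresponding Drinfeld polynomials $(P_i(z))_{i\in I}$ and $(P_i'(z))_{i\in I}$ as in Theorem \ref{t:CP-class}. The tensor product $L(m) \otimes L(m')$ is formed using the Hopf algebra structure on $\mathcal{U}_q(\mathcal{L}\mathfrak{g})$ inherited from the Drinfeld--Jimbo presentation. The crucial analytic input is the shape of the coproduct on the Drinfeld generators: modulo terms that annihilate $v \otimes v'$ (being of the form $X \otimes Y$ with either $X$ acting by $0$ on $v$ or $Y$ acting by $0$ on $v'$, coming from the triangular decomposition and the fact that $v,v'$ are $l$-highest weight vectors), one has
\begin{align*}
\Delta(x_{i,r}^{+}) &\equiv x_{i,r}^{+}\otimes 1 + (\text{Cartan-like term})\otimes x_{i,r}^{+},\\
\Delta(\phi_i^{\pm}(z)) &\equiv \phi_i^{\pm}(z)\otimes \phi_i^{\pm}(z).
\end{align*}
Combining these with the action on $v$ and $v'$ yields $x_{i,r}^{+}(v \otimes v') = 0$ for all $(i,r)\in I\times \mathbb{Z}$, and
\[
\phi_i^{\pm}(z)(v\otimes v') = q_i^{\deg(P_iP_i')}\frac{(P_iP_i')(zq_i^{-1})}{(P_iP_i')(zq_i)}\,(v\otimes v')
\]
expanded as elements of $\mathbb{C}\dbracket{z}$ and $\mathbb{C}\dbracket{z^{-1}}$ respectively. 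In other words, $v \otimes v'$ is an $l$-highest weight vector whose $l$-highest weight corresponds to the Drinfeld polynomials $(P_i(z)P_i'(z))_{i\in I}$, hence to the dominant monomial $mm'$.

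Next I would set $W := \mathcal{U}_q(\mathcal{L}\mathfrak{g})\cdot (v\otimes v')$, the submodule generated by $v \otimes v'$. Since $v \otimes v'$ is annihilated by all $x_{i,r}^{+}$ and is a simultaneous generalized eigenvector for $\{\phi_{i,\pm r}^{\pm}\}$, the standard triangular argument shows that $W$ possesses a unique maximal proper submodule $W'$ (namely, the sum of all submodules not containing $v \otimes v'$; any such submodule lies in a weight subspace strictly lower than that of $v\otimes v'$, so their sum is proper). The quotient $W/W'$ is therefore a simple $l$-highest weight module with $l$-highest weight corresponding to $mm'$, and by the uniqueness part of Theorem \ref{t:CP-class} it is isomorphic to $L(mm')$.

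The main technical obstacle is the coproduct computation in Step~1. The Hopf structure is naturally expressed in the Drinfeld--Jimbo presentation, whereas the generators $x_{i,r}^{\pm},\phi_{i,\pm r}^{\pm}$ belong to the Drinfeld (loop) presentation; passing between the two and extracting enough information to control $\Delta(x_{i,r}^{+})$ and $\Delta(\phi_i^{\pm}(z))$ modulo the annihilating ideal of $v\otimes v'$ requires the coproduct formulas of Beck and Damiani (and is carried out in detail in \cite{CP:QAAandRep}). Once one grants these formulas, the remainder of the argument is formal and amounts to the uniqueness of simple $l$-highest weight modules.
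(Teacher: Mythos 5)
The paper does not prove this proposition at all; it is quoted from Chari--Pressley \cite[Corollary 3.5]{CP:QAAandRep}, so there is no internal proof to compare against. Your argument is essentially the standard proof from that reference and is correct: the only genuinely nontrivial input is the control of $\Delta(x_{i,r}^{+})$ and $\Delta(\phi_i^{\pm}(z))$ modulo terms annihilating $v\otimes v'$ (Beck--Damiani), which you isolate and attribute appropriately, and granting it, your identification of $v\otimes v'$ as an $l$-highest weight vector with Drinfeld polynomials $(P_iP_i')_{i\in I}$ (in fact a genuine, not merely generalized, eigenvector of the $\phi_{i,\pm r}^{\pm}$, which is what makes $W_{\lambda}=\mathbb{C}\,(v\otimes v')$ for the ordinary weight $\lambda$ and hence the unique-maximal-proper-submodule argument go through) together with the uniqueness part of Theorem \ref{t:CP-class} yields exactly the statement.
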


For $i\in I$ and $a\in \mathbb{C}^{\times}$, set 
\[
A_{i, a}:=Y_{i, aq_i^{-1}}Y_{i, aq_i}\left(\prod_{j\colon c_{ji}=-1}Y_{j, a}^{-1}\right)\left(\prod_{j\colon c_{ji}=-2}Y_{j, aq^{-1}}^{-1}Y_{j, aq}^{-1}\right)\left(\prod_{j\colon c_{ji}=-3}Y_{j, aq^{-2}}^{-1}Y_{j, a}^{-1}Y_{j, aq^2}^{-1}\right). 
\]

Define a partial ordering on the set of monomials in $\mathbb{Z}[Y_{i, a}^{\pm 1}\mid i\in I, a\in\mathbb{C}^{\times}]$ as follows : 
\[
m\leq m'\ \text{if and only if}\ m(m')^{-1}\ \text{is a product of elements of}\ \{A_{i, a}^{-1}\mid i\in I, a\in \mathbb{C}^{\times}\}.
\]
This is called \emph{the Nakajima ordering}, which is considered as a refinement of the usual Chevalley ordering $\leq$ on $P$.
 
\begin{theorem}[{\cite[Theorem 4.1]{FM}}]\label{t:highest}
Let $m$ be a dominant monomial. Then all monomials appearing in $\chi_q(L(m))-m$ are strictly less than $m$ with respect to the Nakajima ordering. 
\end{theorem}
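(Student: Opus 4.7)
The plan is to exploit the cyclicity of $L(m)$ as a highest $l$-weight simple module. Let $v_m$ be a nonzero vector in the $l$-weight space with monomial $m$, which is one-dimensional by Theorem \ref{t:CP-class}. Since $x_{i,r}^+ v_m = 0$ for all $i, r$ and $L(m)$ is simple, the triangular decomposition of $\mathcal{U}_q(\mathcal{L}\mathfrak{g})$ in the Drinfeld presentation implies that $L(m)$ is spanned by vectors of the form $x_{j_1,s_1}^- x_{j_2,s_2}^- \cdots x_{j_k,s_k}^- v_m$. Hence every generalized $l$-weight of $L(m)$ is obtained from the highest one by successively applying lowering operators, and it suffices to track how a single application of $x_{j,s}^-$ modifies an $l$-weight monomial.

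The core claim to establish is the following. If $v \in L(m)$ is a generalized $l$-weight vector with $l$-weight monomial $m_v$, then for any $j \in I$ and $s \in \mathbb{Z}$ the vector $x_{j,s}^- v$ lies in a sum of generalized $l$-weight spaces whose monomials are of the form $m_v \cdot A_{j,a}^{-1}$ for various $a \in \mathbb{C}^\times$. I would derive this from relations (IV)--(VI) of Definition \ref{d:QEA-Drinfeld}, which yield an exchange identity of the shape
\[
\phi_i^{\pm}(z)\, x_j^-(w) \;=\; \frac{q^{-(\alpha_i,\alpha_j)} z - w}{z - q^{-(\alpha_i,\alpha_j)} w}\, x_j^-(w)\, \phi_i^{\pm}(z)
\]
(interpreted in the sense of formal series expansions at $z=\infty$ and $z=0$ respectively, with $x_j^-(w) := \sum_{s} x_{j,s}^- w^{-s}$). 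Since $L(m)$ is finite-dimensional, only finitely many spectral values $a \in \mathbb{C}^\times$ actually contribute to $x_j^-(w)v$, so one may Fourier-decompose $x_j^-(w) v = \sum_a v_a\, \delta(a/w)$ with each $v_a$ living in a generalized $l$-weight space. Reading off the common eigenvalue of $\phi_i^{\pm}(z)$ on $v_a$ from the exchange formula, one obtains exactly the rational factor corresponding to $A_{j,a}^{-1}$ (with the case-by-case $Y_{i,b}^{\pm 1}$-factors determined by the value of $c_{ij}$, consistently with the definition of $A_{j,a}$).

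Iterating this claim along the spanning set $x_{j_1,s_1}^- \cdots x_{j_k,s_k}^- v_m$, every monomial appearing in $\chi_q(L(m))$ is of the form $m \cdot \prod_k A_{i_k, a_k}^{-1}$ with non-negative multiplicities; a monomial distinct from $m$ involves at least one genuine $A^{-1}$ factor and is therefore strictly less than $m$ in the Nakajima ordering. The main technical obstacle will be to make the spectral decomposition rigorous while allowing for possibly non-semisimple generalized $l$-weight spaces (Jordan blocks for the $\phi_i^{\pm}$-action), and to match the resulting rational factors with the precise shape of $A_{j,a}^{-1}$ in each $c_{ij}$-case. An alternative approach, closer to the original \cite{FM}, is to bypass this analysis by invoking the screening operators of Frenkel--Reshetikhin and Frenkel--Mukhin, whose joint kernel in $\mathbb{Z}[Y_{i,a}^{\pm 1}]$ contains the image of $\chi_q$; a combinatorial argument in that setting forces the maximal monomial of $\chi_q(L(m))$ to be a dominant highest weight, after which the theorem follows.
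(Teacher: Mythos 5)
First, a point of reference: the paper does not prove Theorem \ref{t:highest} at all; it is imported from \cite[Theorem 4.1]{FM}, so there is no internal proof to compare against. Measured against the original argument, your route is genuinely different: Frenkel--Mukhin work on the level of $q$-characters, using that $\chi_q(V)$ lies in the intersection of the kernels $\mathcal{K}_i$ of the screening operators (equivalently, that for each $i$ the monomials organize into $\mathfrak{sl}_2$-strings headed by $i$-dominant monomials, via restriction to the copy of $\mathcal{U}_{q_i}(\mathcal{L}\mathfrak{sl}_2)$), and then combine this string structure with the highest-weight property; you instead argue directly on the module, reducing to the statement that $x_{j,s}^-$ maps a generalized $l$-weight space with monomial $m_v$ into the sum of generalized $l$-weight spaces with monomials $m_v A_{j,a}^{-1}$. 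Your reduction itself is sound: $L(m)=\mathcal{U}^-v_m$ by the triangular decomposition of the Drinfeld presentation, $v_m$ is a genuine common eigenvector spanning a one-dimensional space, and once the shift claim is granted, every $l$-weight of $L(m)$ has monomial in $m\cdot\{\text{products of }A_{i,a}^{-1}\}$, which is exactly strict inequality in the Nakajima order. The shift claim is moreover a true and well-documented lemma in the theory of $l$-weights.

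The gap is that this shift claim is precisely where the whole proof lives, and your justification of it does not yet stand. The step ``Fourier-decompose $x_j^-(w)v=\sum_a v_a\,\delta(a/w)$ with each $v_a$ in a generalized $l$-weight space'' cannot simply be asserted: on a general finite-dimensional module the $\phi_{i,\pm r}$ act with nontrivial Jordan blocks, several components of the current can share a spectral parameter, and the coefficients of $x_j^-(w)v$ need not split into finitely many plain delta-supported $l$-weight vectors (derivative-of-delta terms and non-eigenvector components occur); also, ``reading off the eigenvalue of $\phi_i^\pm(z)$'' from the exchange relation presupposes the very spectral decomposition you are trying to produce. Two standard repairs exist, and you would need to carry one out: (i) project $x_{j,s}^-v$ onto the generalized simultaneous eigenspaces of $\{\phi_{i,\pm r}\}$ in the target weight space and compare the rational functions attached to the $l$-weights through the $\phi$--$x^-$ exchange relation, doing the zero/pole bookkeeping uniformly in the Jordan structure; or (ii) avoid the current analysis altogether by restricting to $\mathcal{U}_{q_j}(\mathcal{L}\mathfrak{sl}_2)$ and quoting the known structure of its finite-dimensional modules and $q$-characters, which collapses your argument back onto the Frenkel--Mukhin route (your closing alternative). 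As it stands, the pivotal lemma is acknowledged rather than proven, so the proposal is a correct plan with its central step missing rather than a complete proof.
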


Let $\mathcal{C}_{\bullet}$ be the full subcategory of $\mathcal{C}$ consisting of the objects all of whose composition factors are isomorphic to simple modules whose Drinfeld polynomials have their roots in $\{q^{r}\mid r\in\mathbb{Z}\}$. In fact, the description of the simple modules in $\mathcal{C}$ essentially reduces to the description of the simple modules in $\mathcal{C}_{\bullet}$. See \cite[subsection 3.7]{HL:cluaff} for more details.
Note that $\mathcal{C}_{\bullet}$ is stable by taking subquotient, tensor product and extension.

Let $K(\mathcal{C}_{\bullet})$ be the Grothendieck ring of $\mathcal{C}_{\bullet}$. Then this is a subalgebra of $K(\mathcal{C})$ and the $q$-character homomorphism restricts to the $\mathbb{Z}$-algebra homomorphism 
\[
K(\mathcal{C}_{\bullet})\to \mathbb{Z}[Y_{i, q^r}^{\pm 1}\mid i\in I, r\in\mathbb{Z}],
\]
which will be also denoted by $\chi_q$ \cite[Lemma 6.1]{FM}. 

In this paper, we always work in the subcategory $\mathcal{C}_{\bullet}$. From now on, we write $Y_{i, r}:=Y_{i, q^r}$, $A_{i, r}:=A_{i, q^r}$ and $\mathcal{Y}:=\mathbb{Z}[Y_{i, r}^{\pm 1}\mid i\in I, r\in\mathbb{Z}]$. 
\begin{proposition}[{\cite[Corollary 2]{FR:q-chara}}]
The $\mathbb{Z}$-algebra $K(\mathcal{C}_{\bullet})$ is isomorphic to the polynomial ring $\mathbb{Z}[X_{i, r}\mid i\in I, r\in \mathbb{Z}]$ via $[L(Y_{i, r})]\mapsto X_{i, r}$. 
\end{proposition}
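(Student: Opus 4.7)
The plan is to show that the algebra homomorphism $\Phi\colon\mathbb{Z}[X_{i,r}\mid i\in I,\,r\in\mathbb{Z}]\to K(\mathcal{C}_{\bullet})$ defined by $\Phi(X_{i,r})=[L(Y_{i,r})]$ is a bijection. By Theorem \ref{t:CP-class}, the classes $\{[L(m)]\mid m\ \text{dominant}\}$, where $m$ ranges over dominant monomials in $\mathcal{Y}=\mathbb{Z}[Y_{i,r}^{\pm 1}]$, form a $\mathbb{Z}$-basis of $K(\mathcal{C}_{\bullet})$ via Jordan--H\"older. The dominant monomials are precisely the monomials with nonnegative exponents in the $Y_{i,r}$, i.e.\ the monomials in the polynomial ring $\mathbb{Z}[Y_{i,r}\mid i\in I,\,r\in\mathbb{Z}]$. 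So the content of the proposition is to relate these two bases through the multiplicative structure supplied by tensor products.

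For surjectivity, I will argue by induction on the Nakajima ordering $\leq$ that for every dominant monomial $m=\prod_{i,r}Y_{i,r}^{u_{i,r}}$, the class $[L(m)]$ lies in the image of $\Phi$. Form the tensor product $T(m):=\bigotimes_{i,r}L(Y_{i,r})^{\otimes u_{i,r}}$ (in any chosen order), and consider its class $[T(m)]=\prod_{i,r}[L(Y_{i,r})]^{u_{i,r}}\in\Phi(\mathbb{Z}[X_{i,r}])$. Since $\chi_{q}$ is a ring homomorphism (Theorem \ref{t:q-chara}) and each factor $\chi_{q}(L(Y_{i,r}))$ has unique highest monomial $Y_{i,r}$ with coefficient one (Theorem \ref{t:highest}), the $q$-character $\chi_{q}([T(m)])$ has unique highest monomial $m$ with coefficient one. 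In particular $L(m)$ appears with multiplicity exactly one in $T(m)$, while every other composition factor $L(m')$ satisfies $m'<m$ strictly (using Proposition \ref{p:tensor} to see that $L(m)$ does occur). The induction (on the finite set of dominant monomials $\leq m$) then gives $[L(m)]=[T(m)]-\sum_{m'<m}c_{m,m'}[L(m')]\in\mathrm{Im}\,\Phi$.

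For injectivity, combine $\Phi$ with the injective $\chi_{q}$: the composite $\chi_{q}\circ\Phi$ sends a monomial $\prod X_{i,r}^{u_{i,r}}$ to an element of $\mathcal{Y}$ whose unique maximal monomial (again by Theorem \ref{t:highest} together with multiplicativity) is $\prod Y_{i,r}^{u_{i,r}}$, with coefficient one. Distinct monomials in the $Y_{i,r}$ are linearly independent in $\mathcal{Y}$; hence the highest-term map identifies the leading monomials of $\chi_{q}\circ\Phi(f)$ for $0\neq f\in\mathbb{Z}[X_{i,r}]$ with the leading monomial of $f$, forcing $\chi_{q}\circ\Phi(f)\neq 0$. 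Thus $\Phi$ is injective, completing the proof.

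The main technical point is the multiplicity-one assertion: that $L(m)$ occurs exactly once in $T(m)$. Once Proposition \ref{p:tensor} guarantees at least one occurrence, and Theorem \ref{t:highest} together with multiplicativity of $\chi_{q}$ shows that the coefficient of $m$ in $\chi_{q}([T(m)])$ is one, the multiplicity is pinned down and the upper-triangular induction runs without obstruction.
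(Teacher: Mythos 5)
Your proof is correct and follows essentially the same route as the source of this statement: the paper does not prove the proposition but quotes it from Frenkel--Reshetikhin, and their argument is exactly the one you give, namely the upper-triangularity (with respect to the Nakajima order, using Theorem \ref{t:highest} and multiplicativity of $\chi_q$) of the classes of tensor products of fundamental modules in terms of the simple classes, with leading coefficient one. Two cosmetic remarks only: since the order is merely partial, in the injectivity step you should pick a maximal element among the finitely many leading monomials of the terms of $f$ and note that its coefficient survives in $\chi_q\circ\Phi(f)$, and injectivity of $\chi_q$ is not actually needed there---it suffices that $\chi_q$ is a ring homomorphism.
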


For a monomial $m$ in $\mathcal{Y}$, write 
\begin{align}
m=\prod_{i\in I, r\in \mathbb{Z}}Y_{i, r}^{u_{i, r}(m)}\label{eq:power}
\end{align}
with $u_{i, l}(m)\in \mathbb{Z}$. 

\begin{definition}
For $i\in I$, $r\in \mathbb{Z}$ and $k\in \mathbb{Z}_{\geq 0}$, set 
\[
W_{k, r}^{(i)}:=L(m_{k, r}^{(i)}),\ m_{k, r}^{(i)}:=\prod_{s=1}^{k}Y_{i, r+2r_i(s-1)}. 
\]
These simple modules are called \emph{Kirillov-Reshetikhin modules}. Note that $W_{0,r}^{(i)}$ is the trivial $\mathcal{U}_q(\mathcal{L}\mathfrak{g})$-module. The modules of the form $W_{1,r}^{(i)}$ are called \emph{fundamental modules}.

\end{definition}
\begin{theorem}[{\cite[Theorem 3.2]{Nak:KR}, \cite[Theorem 4.1, Lemma 4.4]{H:KR-T}}]\label{t:minuscule}
Let $i\in I$, $r\in \mathbb{Z}$ and $k\in \mathbb{Z}_{\geq 0}$. Suppose that $m$ is a monomial appearing in $\chi_q(W_{k, r}^{(i)})$. Then $m=m_{k, r}^{(i)}$ or $m\leq m_{k, r}^{(i)}A_{i, r+r_i(2k-1)}^{-1}$. 
\end{theorem}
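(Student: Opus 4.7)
The plan is to combine two main ingredients: the \emph{specialness} of Kirillov-Reshetikhin modules---the fact that $m_{k,r}^{(i)}$ is the unique dominant monomial appearing in $\chi_q(W_{k,r}^{(i)})$---and the Frenkel-Mukhin algorithm, which for a special simple module reconstructs $\chi_q$ inductively from the highest $l$-weight monomial by analyzing, at each dominant monomial already computed, the $\widehat{\mathfrak{sl}}_{2,j}$-string structure for every $j \in I$. Once these two inputs are in place, the theorem reduces to a very local analysis at the top monomial $m_{k,r}^{(i)}$.

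Establishing specialness is the main obstacle. For simply-laced types it follows from Nakajima's geometric construction via quiver varieties. For general untwisted types, as carried out in \cite{H:KR-T}, one proceeds by induction on $k$ using the Kirillov-Reshetikhin $T$-system together with a careful counting of dominant monomials in the relevant tensor products; uniqueness of the dominant monomial in $\chi_q(W_{k,r}^{(i)})$ is extracted from this inductive structure.

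Granting specialness, the goal is to control the first descent from $m_{k,r}^{(i)}$. For each $j \neq i$, the monomial $m_{k,r}^{(i)}$ contains no factor $Y_{j,s}$, so the $\widehat{\mathfrak{sl}}_{2,j}$-string through $m_{k,r}^{(i)}$ has length one and no descent by any $A_{j,s}^{-1}$ occurs. For $j=i$, the $Y_{i,*}$-content $Y_{i,r}Y_{i,r+2r_i}\cdots Y_{i,r+2r_i(k-1)}$ of $m_{k,r}^{(i)}$ determines---through the explicit $q$-character of the corresponding $\mathcal{U}_{q_i}(\widehat{\mathfrak{sl}}_2)$-evaluation module---a $(k+1)$-element string of descents $\bigl\{m_{k,r}^{(i)},\; m_{k,r}^{(i)}A_{i,r+r_i(2k-1)}^{-1},\; m_{k,r}^{(i)}A_{i,r+r_i(2k-1)}^{-1}A_{i,r+r_i(2k-3)}^{-1},\,\dots\bigr\}$, whose top descent is by $A_{i,r+r_i(2k-1)}^{-1}$. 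Hence the only monomial of the form $m_{k,r}^{(i)}A_{j,s}^{-1}$ appearing in $\chi_q(W_{k,r}^{(i)})$ is $m_{k,r}^{(i)}A_{i,r+r_i(2k-1)}^{-1}$.

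The conclusion follows by induction on depth. Every non-highest monomial $m$ in $\chi_q(W_{k,r}^{(i)})$ is produced by the Frenkel-Mukhin algorithm either as one of the $i$-string descents above, or at a later stage from a $j$-dominant monomial already bounded above by $m_{k,r}^{(i)}A_{i,r+r_i(2k-1)}^{-1}$ in the Nakajima ordering; a $j$-string from such a monomial preserves this upper bound, being obtained by further multiplication by elements of $\{A_{j,s}^{-1}\}$. In either case $m \leq m_{k,r}^{(i)}A_{i,r+r_i(2k-1)}^{-1}$, which is the claim. The technical heart of the argument therefore lies entirely in specialness; the rest is a direct consequence of the $\widehat{\mathfrak{sl}}_2$ theory plus the general properties of the Frenkel-Mukhin algorithm.
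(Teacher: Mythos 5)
Your proposal is correct and follows essentially the same route as the proof in the cited source \cite{H:KR-T} (the paper itself only quotes this theorem): specialness of the Kirillov--Reshetikhin module obtained by the $T$-system induction, and then control of the first descent from $m^{(i)}_{k,r}$ via the $\widehat{\mathfrak{sl}}_2$-string in direction $i$, the strings in directions $j\neq i$ being trivial. The one ingredient you should cite explicitly is that the Frenkel--Mukhin algorithm does compute $\chi_q$ correctly for special modules (equivalently, one can bypass the algorithm by decomposing $\chi_q(W^{(i)}_{k,r})$ into $j$-strings and running a maximality argument on non-highest monomials, which is how the cited proof proceeds); with that standard fact in hand, your depth induction is sound.
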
 
In particular, dominant monomials do not occur in $\chi_q(W_{k, r}^{(i)})-m_{k, r}^{(i)}$. This property is called \emph{the affine-minuscule property}. 

\section{The monoidal subcategories associated with quivers}\label{s:subcat}

In \cite{HL:qGro}, the first author and Leclerc introduced specific monoidal subcategories $\mathcal{C}_{\mathcal{Q}}$ of $\mathcal{C}_{\bullet}$ in the case that the quantum loop algebra is of type $\mathrm{X}_N^{(1)}$ with $\mathrm{X}=\mathrm{A}, \mathrm{D}$ or $\mathrm{E}$. These subcategories are constructed from a Dynkin quiver $\mathcal{Q}$ of type $\mathrm{X}_N$. 
In particular, the essential datum is \emph{the Auslander-Reiten quiver $\Gamma_{\mathcal{Q}}$} associated with $\mathcal{Q}$. It is known that the subcategory $\mathcal{C}_{\mathcal{Q}}$ has interesting properties. For example, its complexified Grothendieck ring is isomorphic to the coordinate algebra $\mathbb{C}[N_-]$ of the unipotent group $N_-$ of type $\mathrm{X}_N$, and the simple modules in $\mathcal{C}_{\mathcal{Q}}$ \emph{categorify} the dual canonical basis of $\mathbb{C}[N_-]$ (see Theorem \ref{t:isomtypeA}). Moreover it is a ``heart'' of the whole category (cf.~\cite[subsection 1.5]{HL:qGro}). See subsection \ref{ss:HLsubcat} for the definition of $\mathcal{C}_{\mathcal{Q}}$, and see subsection \ref{ss:HLisom} and \cite{HL:qGro,Fuj} for detailed properties. 

In this paper, we deal also with an analogue of these subcategories in the case of type $\mathrm{B}_{n}^{(1)}$, which is introduced by Oh and Suh \cite{OhS:foldedAR-I}. To define these subcategories, we need combinatorially generalized Auslander-Reiten quivers, called \emph{the twisted Auslander-Reiten quivers} \cite{OhS:cAR,OhS:foldedAR-I}. 
We emphasize that the combinatorial generalization of  Auslander-Reiten quivers \emph{of type $\mathrm{A}_{2n-1}$} are used in order to define the subcategories of  $\mathcal{C}_{\bullet}$ \emph{of type $\mathrm{B}_{n}^{(1)}$}.

\subsection{The adapted reduced words and Auslander-Reiten quivers}\label{ss:adapted}
In this subsection, we assume that $\mathfrak{g}$ is of type $\mathrm{X}_N$ with $\mathrm{X}=\mathrm{A}, \mathrm{D}, \mathrm{E}$. Let $\mathcal{Q}=(\mathcal{Q}^0=I, \mathcal{Q}^1)$ be a Dynkin quiver of type $\mathrm{X}_N$, where $\mathcal{Q}^0$ is the set of vertices (identified with the index set $I$ of simple roots) and $\mathcal{Q}^1$ is the set of arrows. If there is an arrow from $i$ to $j$ in $\mathcal{Q}$, then we write $i\to j$. A vertex $i\in I$ is called \emph{a sink} (resp.\emph{a source}) if there is no arrow exiting out of (resp. entering into) $i$. 

Let $i$ be a sink or a source of $\mathcal{Q}$. Then define $s_i(\mathcal{Q})$ as the quiver obtained from $\mathcal{Q}$ by reversing the orientation of each arrows connected with $i$ in $\mathcal{Q}$.  A reduced word $\bm{i}=(i_1,\dots, i_{\ell})$ of an element $w$ of $W$ is said to be \emph{adapted to $\mathcal{Q}$} if
\begin{center}
$i_k$ is a sink of $s_{i_{k-1}}\cdots s_{i_2}s_{i_1}(\mathcal{Q})$ 
\end{center}
for all $k=1,\dots, \ell$. There is a unique element $\phi_{\mathcal{Q}}\in W$ such that
\begin{align}
\phi_{\mathcal{Q}}=s_{i_1}s_{i_2}\cdots s_{i_{N}}\label{eq:cox}
\end{align}
where $\{i_1,i_2,\dots, i_{N}\}=I$, and $(i_1,i_2,\dots, i_{N})$ is adapted to  $\mathcal{Q}$. The element $\phi_{\mathcal{Q}}$ is called \emph{the Coxeter element associated with $\mathcal{Q}$}.

Now we shall recall the combinatorial construction \cite{Hap,HL:qGro} of the Auslander-Reiten quiver $\Gamma_{\mathcal{Q}}$ of the category of finite-dimensional modules over the path algebra $\mathbb{C}\mathcal{Q}$. By definition, the vertex set of $\Gamma_{\mathcal{Q}}$ is the set of isomorphism classes of indecomposable $\mathbb{C}\mathcal{Q}$-modules. Gabriel's theorem states that this set is identified with the set $\Delta_+$ of positive roots by taking their dimension vectors. Hence, we consider the vertex set of $\Gamma_{\mathcal{Q}}$ as $\Delta_+$ in this paper. We do not review (in fact, do not need) here the details of the representation theory of $\mathbb{C}\mathcal{Q}$. See, for instance, \cite{ASS} for more details.

Fix a map $\xi\colon I\to\mathbb{Z}$ such that 
\begin{center}
$\xi(j)=\xi(i)+1$ whenever there exists an arrow $i\to j$ in $\mathcal{Q}$. 
\end{center}
Such a map is called \emph{a height function associated with $\mathcal{Q}$}. Set 
\[
\widehat{I}:=\{(i, r)\in I\times \mathbb{Z}\mid r-\xi(i)\in 2\mathbb{Z}\}. 
\]
Let $\widehat{\mathcal{Q}}$ be the quiver whose vertex set is $\widehat{I}$ and whose arrow set $\widehat{\mathcal{Q}}^1$ is given by
\[
\widehat{\mathcal{Q}}^1=\{(j, r-1)\to (i, r)\mid \text{there is an arrow between }i\ \text{and}\ j\ \text{in}\ \mathcal{Q}\}.
\]

Recall that $\phi_{\mathcal{Q}}$ is the Coxeter element associated with $\mathcal{Q}$. Take $(i_1,\dots, i_N)\in I(\phi_{\mathcal{Q}})$. 
For $i\in I$, set 
\[
\gamma_i=s_{i_1}\cdots s_{i_{k-1}}(\alpha_{i_k})\ \text{with}\ i_k=i.
\]
Note that $\gamma_i$ does not depend on the choice of $(i_1,\dots, i_N)\in I(\phi_{\mathcal{Q}})$. Consider the map $\Omega_{\xi}\colon \Delta_+\to \widehat{I}$ such that 
\begin{align*}
\gamma_i\mapsto (i, \xi(i)),&&&\phi_{\mathcal{Q}}(\beta)\mapsto (i, p-2)\ \text{if}\ \Omega_{\xi}(\beta)=(i, p)\ \text{and}\ \phi_{\mathcal{Q}}(\beta), \beta\in \Delta_+.
\end{align*}
\begin{proposition}[{see, for example,\cite[subsection 2.2]{HL:qGro}}]\label{p:ARlabel}
The full subquiver of $\widehat{\mathcal{Q}}$ whose vertex set is $\Omega_{\xi}(\Delta_+)$ is isomorphic to the Auslander-Reiten quiver $\Gamma_{\mathcal{Q}}$ via $\Omega_{\xi}$. 
\end{proposition}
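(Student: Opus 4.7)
The plan is to verify the statement in two stages: first that $\Omega_\xi$ is a well-defined injection into $\widehat{I}$, and second that it transports the arrow structure of $\Gamma_{\mathcal{Q}}$ to that of $\widehat{\mathcal{Q}}$ restricted to the image.

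For the first stage, I would start with well-definedness. Since $\phi_{\mathcal{Q}}$ is a Coxeter element of $W$, a standard fact is that its action on $\Delta_+$ partitions the positive roots into finite orbits, and each orbit has a unique ``initial'' representative of the form $\gamma_i$ with $i \in I$ (where ``initial'' means the element $\beta$ such that $\phi_{\mathcal{Q}}^{-1}(\beta) \notin \Delta_+$). Thus every $\beta \in \Delta_+$ is uniquely of the form $\phi_{\mathcal{Q}}^m(\gamma_i)$ with $i \in I$ and $m \geq 0$, so the recursive definition $\Omega_\xi(\gamma_i)=(i,\xi(i))$ and $\Omega_\xi(\phi_{\mathcal{Q}}(\beta))=(i,p-2)$ assigns a value to each $\beta$ consistently. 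The parity condition $r - \xi(i) \in 2\mathbb{Z}$ is preserved because at each application of $\phi_{\mathcal{Q}}$ the second coordinate shifts by $-2$, so the image lies in $\widehat{I}$. Injectivity follows from the orbit partition: different orbits are labelled by distinct ``initial'' indices $i$ appearing in the first coordinate, and within an orbit the second coordinate strictly decreases.

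For the second stage (the arrow correspondence) I would invoke Gabriel's theorem and the Bernstein-Gelfand-Ponomarev reflection functor picture. Under the bijection between $\Delta_+$ and indecomposable $\mathbb{C}\mathcal{Q}$-modules, the roots $\gamma_i$ correspond exactly to the indecomposable projectives $P_i$, and the Auslander-Reiten translation $\tau$ satisfies $\tau([M_\beta]) = [M_{\phi_{\mathcal{Q}}(\beta)}]$ whenever $\phi_{\mathcal{Q}}(\beta) \in \Delta_+$; this is the classical interpretation of $\phi_{\mathcal{Q}}$ as the Coxeter transformation on the Grothendieck group, combined with Gabriel's identification. Starting from the arrows $P_j \to P_i$ of $\mathbb{C}\mathcal{Q}$ (which correspond to arrows $j \to i$ in $\mathcal{Q}$ and by the height convention to arrows $(j,\xi(j)) \to (i,\xi(i))$ in $\widehat{\mathcal{Q}}$) and propagating via meshes, the AR quiver is entirely determined by the ADE translation-quiver structure on $\mathbb{Z}\mathcal{Q}$, which is precisely the subquiver of $\widehat{\mathcal{Q}}$ supported on $\Omega_\xi(\Delta_+)$.

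The main obstacle is the arrow-matching in the second stage: making precise that the knitting procedure for the AR quiver of a hereditary algebra of Dynkin type $\mathrm{X}_N$ yields exactly the arrows of $\widehat{\mathcal{Q}}$ restricted to the image of $\Omega_\xi$. The cleanest route is to argue by induction on the $\tau$-orbit step $m$: for $m=0$ (the projectives $P_i$), the irreducible morphisms between them read off directly from $\mathcal{Q}$, matching the arrows between vertices $(i,\xi(i))$ in $\widehat{\mathcal{Q}}$; for the inductive step, one uses that almost split sequences $0 \to \tau M \to E \to M \to 0$ force the summands of $E$ to sit at the neighbouring vertices of $\widehat{\mathcal{Q}}$ exactly as the mesh relations predict. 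Since the computation is entirely combinatorial and well-documented, I would simply cite \cite{HL:qGro} and Happel's work \cite{Hap}, which is the route taken in the paper.
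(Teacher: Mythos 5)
Your proposal is correct and follows essentially the same route as the paper, which does not actually prove this statement but quotes it as the standard combinatorial description of $\Gamma_{\mathcal{Q}}$ from \cite[subsection 2.2]{HL:qGro} and \cite{Hap}: the Coxeter-orbit combinatorics give well-definedness and injectivity of $\Omega_{\xi}$, and the identification of $\phi_{\mathcal{Q}}$ with the Auslander-Reiten translation together with the mesh (knitting) construction gives the arrow correspondence. One small caution: with the paper's sink-adapted convention and height function (cf.\ Remark \ref{r:HLconv}) your two normalizations are incompatible as stated---if $\tau[M_{\beta}]=[M_{\phi_{\mathcal{Q}}(\beta)}]$ then the $\gamma_i$ label the injectives rather than the projectives (equivalently, keep $\gamma_i\leftrightarrow P_i$ and use $\tau^{-1}$)---so one consistent convention must be fixed before the knitting step, though this does not affect the substance of the argument.
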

According to this proposition, the vertices of the Auslander-Reiten quiver $\Gamma_{\mathcal{Q}}$ are also labelled by the subset $\Omega_{\xi}(\Delta_+)$ of $\widehat{I}$. We write $\widehat{I}_{\xi}:=\Omega_{\xi}(\Delta_+)$. 
\begin{example}\label{e:AR}
Consider the case of type $\mathrm{A}_{4}$. 

\noindent(i) Let $\mathcal{Q}$ be the following quiver :  

\hfill
\begin{xy} 0;<1pt,0pt>:<0pt,-1pt>::
(0,0) *+{1} ="0",
(60,0) *+{2} ="1",
(120,0) *+{3} ="2",
(180,0) *+{4} ="3",
"1", {\ar"0"},
"2", {\ar"1"},
"3", {\ar"2"},
\end{xy}
\hfill
\hfill

Then $\phi_{\mathcal{Q}}=s_1s_2s_3s_4$. Take an associated height function $\xi$ such that $\xi(1)=0$. Then the  Auslander-Reiten quiver $\Gamma_{\mathcal{Q}}$ and its labelling $\widehat{I}_{\xi}$ are described as follows : 

\hfill
\scalebox{0.7}[0.7]{
\begin{xy} 0;<1pt,0pt>:<0pt,-1pt>::
(-40,-20) *+{\text{residue}}, 
(-40,0) *+{4}, 
(-40,30) *+{3}, 
(-40,60) *+{2}, 
(-40,90) *+{1}, 
(0,110) *+{0}, 
(60,110) *+{-1}, 
(120,110) *+{-2}, 
(180,110) *+{-3}, 
(240,110) *+{-4}, 
(300,110) *+{-5}, 
(360,110) *+{-6}, 
(180,0) *+{\alpha_1+\alpha_2+\alpha_3+\alpha_4} ="0",
(120,30) *+{\alpha_1+\alpha_2+\alpha_3} ="1",
(240,30) *+{\alpha_2+\alpha_3+\alpha_4} ="2",
(60,60) *+{\alpha_1+\alpha_2} ="3",
(180,60) *+{\alpha_2+\alpha_3} ="4",
(300,60) *+{\alpha_3+\alpha_4} ="5",
(0,90) *+{\alpha_1} ="6",
(120,90) *+{\alpha_2} ="7",
(240,90) *+{\alpha_3} ="8",
(360,90) *+{\alpha_4} ="9",
{\ar@{.} (-30,0); "0"},
{\ar@{.} "0"; (375,0)},
{\ar@{.} (-30,30); "1"},
{\ar@{.} "1"; "2"},
{\ar@{.} "2"; (375,30)},
{\ar@{.} (-30,60); "3"},
{\ar@{.} "3"; "4"},
{\ar@{.} "4"; "5"},
{\ar@{.} "5"; (375,60)},
{\ar@{.} (-30,90); "6"},
{\ar@{.} "6"; "7"},
{\ar@{.} "7"; "8"},
{\ar@{.} "8"; "9"},
{\ar@{.} "9"; (375,90)},
{\ar@{.} (0,100); "6"},
{\ar@{.} "6"; (0,-10)},
{\ar@{.} (60,100); "3"},
{\ar@{.} "3"; (60,-10)},
{\ar@{.} (120,100); "7"},
{\ar@{.} "7"; "1"},
{\ar@{.} "1"; (120,-10)},
{\ar@{.} (180,100); "4"},
{\ar@{.} "4"; "0"},
{\ar@{.} "0"; (180,-10)},
{\ar@{.} (240,100); "8"},
{\ar@{.} "8"; "2"},
{\ar@{.} "2"; (240,-10)},
{\ar@{.} (300,100); "5"},
{\ar@{.} "5"; (300,-10)},
{\ar@{.} (360,100); "9"},
{\ar@{.} "9"; (360,-10)},
"0", {\ar"1"},
"2", {\ar"0"},
"1", {\ar"3"},
"4", {\ar"1"},
"2", {\ar"4"},
"5", {\ar"2"},
"3", {\ar"6"},
"7", {\ar"3"},
"4", {\ar"7"},
"8", {\ar"4"},
"5", {\ar"8"},
"9", {\ar"5"},
\end{xy}
}
\hfill
\hfill

\noindent(ii) Let $\mathcal{Q}$ be the following quiver :  

\hfill
\begin{xy} 0;<1pt,0pt>:<0pt,-1pt>::
(0,0) *+{1} ="0",
(60,0) *+{2} ="1",
(120,0) *+{3} ="2",
(180,0) *+{4} ="3",
"0", {\ar"1"},
"2", {\ar"1"},
"3", {\ar"2"},
\end{xy}
\hfill
\hfill

Then $\phi_{\mathcal{Q}}=s_2s_1s_3s_4$. Take an associated height function $\xi$ such that $\xi(1)=0$. Then the Auslander-Reiten quiver $\Gamma_{\mathcal{Q}}$ and its labelling $\widehat{I}_{\xi}$ are described as follows : 

\hfill
\scalebox{0.7}[0.7]{
\begin{xy} 0;<1pt,0pt>:<0pt,-1pt>::
(-40,-20) *+{\text{residue}}, 
(-40,0) *+{4}, 
(-40,30) *+{3}, 
(-40,60) *+{2}, 
(-40,90) *+{1}, 
(0,110) *+{1}, 
(60,110) *+{0}, 
(120,110) *+{-1}, 
(180,110) *+{-2}, 
(240,110) *+{-3}, 
(300,110) *+{-4}, 
(120,0) *+{\alpha_2+\alpha_3+\alpha_4} ="0",
(240,0) *+{\alpha_1} ="6",
(60,30) *+{\alpha_2+\alpha_3} ="1",
(180,30) *+{\alpha_1+\alpha_2+\alpha_3+\alpha_4} ="2",
(0,60) *+{\alpha_2} ="3",
(120,60) *+{\alpha_1+\alpha_2+\alpha_3} ="4",
(240,60) *+{\alpha_3+\alpha_4} ="5",
(60,90) *+{\alpha_1+\alpha_2} ="7",
(180,90) *+{\alpha_3} ="8",
(300,90) *+{\alpha_4} ="9", 
{\ar@{.} (-30,0); "0"},
{\ar@{.} "0"; "6"},
{\ar@{.} "6"; (315,0)},
{\ar@{.} (-30,30); "1"},
{\ar@{.} "1"; "2"},
{\ar@{.} "2"; (315,30)},
{\ar@{.} (-30,60); "3"},
{\ar@{.} "3"; "4"},
{\ar@{.} "4"; "5"},
{\ar@{.} "5"; (315,60)},
{\ar@{.} (-30,90); "7"},
{\ar@{.} "7"; "8"},
{\ar@{.} "8"; "9"},
{\ar@{.} "9"; (315,90)},
{\ar@{.} (0,100); "3"},
{\ar@{.} "3"; (0,-10)},
{\ar@{.} (60,100); "7"},
{\ar@{.} "7"; "1"},
{\ar@{.} "1"; (60,-10)},
{\ar@{.} (120,100); "4"},
{\ar@{.} "4"; "0"},
{\ar@{.} "0"; (120,-10)},
{\ar@{.} (180,100); "8"},
{\ar@{.} "8"; "2"},
{\ar@{.} "2"; (180,-10)},
{\ar@{.} (240,100); "5"},
{\ar@{.} "5"; "6"},
{\ar@{.} "6"; (240,-10)},
{\ar@{.} (300,100); "9"},
{\ar@{.} "9"; (300,-10)},
"0", {\ar"1"},
"2", {\ar"0"},
"1", {\ar"3"},
"4", {\ar"1"},
"2", {\ar"4"},
"5", {\ar"2"},
"6", {\ar"2"},
"7", {\ar"3"},
"4", {\ar"7"},
"8", {\ar"4"},
"5", {\ar"8"},
"9", {\ar"5"},
\end{xy}
}
\hfill
\hfill

\end{example}

In the following, we frequently use the following \emph{convexity of $\Gamma_{\mathcal{Q}}$}. 
\begin{lemma}[{Convexity of $\Gamma_{\mathcal{Q}}$. cf.~\cite[Proof of Lemma 5.8]{HL:qGro}}]\label{l:dia}
For $i\in I$, the set $\{r\in \mathbb{Z}\mid (i, r)\in \widehat{I}_{\xi}\}$ is a 2-segment. Moreover, if $(i, r-1)$ and $(i, r+1)$ belong to $\widehat{I}_{\xi}$, then 
\[
\{(j, r)\mid j\in I, (\alpha_i, \alpha_j)<0\}\subset \widehat{I}_{\xi}.
\]
\end{lemma}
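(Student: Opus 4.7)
The plan is to exploit the bijection $\Omega_\xi : \Delta_+ \to \widehat{I}_\xi$ together with the mesh structure of the Auslander-Reiten quiver. The crucial intertwining property one needs is that $\Omega_\xi$ conjugates the Coxeter action $\phi_{\mathcal{Q}}$ (when the image remains in $\Delta_+$) into the vertical shift $(i, r) \mapsto (i, r-2)$, and that every positive root mapping to a point at residue $i$ arises as $\phi_{\mathcal{Q}}^k(\gamma_i)$ for some $k \geq 0$, a fact that is built into the recursive definition of $\Omega_\xi$.

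For part (i), I fix $i \in I$ and consider the forward $\phi_{\mathcal{Q}}$-orbit of $\gamma_i$ inside $\Delta_+$. Standard Coxeter theory guarantees that this orbit is an initial segment: there exists a largest $m_i \geq 0$ with $\phi_{\mathcal{Q}}^{k}(\gamma_i) \in \Delta_+$ for $0 \leq k \leq m_i$ and $\phi_{\mathcal{Q}}^{m_i+1}(\gamma_i) \notin \Delta_+$. Applying $\Omega_\xi$ then yields
\[
\{r \mid (i, r) \in \widehat{I}_\xi\} = \{\xi(i) - 2k \mid 0 \leq k \leq m_i\},
\]
which is visibly a 2-segment.

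For part (ii), suppose both $(i, r-1)$ and $(i, r+1)$ lie in $\widehat{I}_\xi$. Letting $M$ denote the indecomposable $\mathbb{C}\mathcal{Q}$-module corresponding (via Gabriel's bijection) to $\beta_+ := \Omega_\xi^{-1}(i, r+1)$, I would identify $\tau M$ with the module corresponding to $\phi_{\mathcal{Q}}(\beta_+) = \Omega_\xi^{-1}(i, r-1)$; in particular $\tau M \neq 0$, so $M$ is non-projective. The almost split sequence ending at $M$,
\[
0 \to \tau M \to \bigoplus_\ell N_\ell \to M \to 0,
\]
identifies the $N_\ell$ as the sources of the arrows of $\Gamma_{\mathcal{Q}}$ into $M$. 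By Proposition \ref{p:ARlabel}, $\Gamma_{\mathcal{Q}}$ is the full subquiver of $\widehat{\mathcal{Q}}$ on $\widehat{I}_\xi$, and by definition of $\widehat{\mathcal{Q}}^1$ the only candidates for arrows into $(i, r+1)$ are those from $(j, r)$ with $(\alpha_i, \alpha_j) < 0$. The completeness of each mesh in the AR quiver of a hereditary Dynkin algebra (each such neighbor $j$ contributes exactly one summand) then forces $(j, r) \in \widehat{I}_\xi$ for every such $j$.

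The hard part will be justifying this mesh completeness: that \emph{every} neighbor $j$ of $i$ in the Dynkin diagram, and not merely some subset, contributes a summand to the middle term of the almost split sequence. This is standard for hereditary Dynkin algebras, where $\Gamma_{\mathcal{Q}}$ is known to embed as a translation subquiver of $\mathbb{Z}\mathcal{Q}$ with the usual mesh relations, but it really captures the essential content of the convexity assertion. A more combinatorial alternative would be to iterate braid moves on a reduced expression for $\phi_{\mathcal{Q}}$ adapted to $\mathcal{Q}$ in order to exhibit the required intermediate positive roots $\Omega_\xi^{-1}(j, r)$ explicitly.
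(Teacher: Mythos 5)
The paper gives no proof of this lemma at all (it is quoted from the proof of Lemma 5.8 in \cite{HL:qGro}), so your argument has to stand on its own. Your outline is the standard Auslander--Reiten-theoretic route, and part (i) is fine: granting Proposition \ref{p:ARlabel} (so that $\Omega_{\xi}$ is defined on all of $\Delta_+$ and bijective onto $\widehat{I}_{\xi}$), the recursion defining $\Omega_{\xi}$ preserves the residue and lowers $r$ by $2$ at each step, so the residue-$i$ vertices are exactly the images of $\gamma_i,\phi_{\mathcal{Q}}(\gamma_i),\dots,\phi_{\mathcal{Q}}^{m_i}(\gamma_i)$, giving the $2$-segment; note also that, by part (i), the root labelled $(i,r-1)$ is automatically $\phi_{\mathcal{Q}}(\beta_+)$, which you use implicitly in part (ii).

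For part (ii), you have correctly located the decisive point but not closed it: as written, the proof rests on (a) the identification of $\tau M$ with the indecomposable of dimension vector $\phi_{\mathcal{Q}}(\beta_+)$ (equivalently, that for indecomposable $M$, $M$ is projective if and only if the Coxeter transform of $\dim M$ fails to be a positive root, and otherwise $\dim \tau M$ equals that transform, with conventions matching the adaptedness used here), and (b) mesh completeness, i.e.\ that the middle term of the almost split sequence ending at a non-projective $M$ at $(i,r+1)$ has exactly one indecomposable summand at $(j,r)$ for \emph{every} neighbour $j$ of $i$. Point (a) is Gabriel's classical theorem; point (b) is precisely what you call the hard part, and asserting it as ``standard'' is where the entire content of the convexity sits, as you admit. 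It is indeed standard, but the proof is only complete once you invoke a precise statement, e.g.\ Happel's description of the AR quiver of $D^b(\mathrm{mod}\,\mathbb{C}\mathcal{Q})$ as $\mathbb{Z}\mathcal{Q}$ \cite{Hap}: the AR triangle ending at $M$ has middle term an extension of $M$ by $\tau M$, hence again a module since both ends are modules and the algebra is hereditary, and in $\mathbb{Z}\mathcal{Q}$ the mesh at $M$ has exactly one middle vertex per neighbour $j$ (no multiple edges in simply-laced type); each such summand is therefore an indecomposable module sitting at $(j,r)$ in $\Gamma_{\mathcal{Q}}$, which is the assertion. Your alternative suggestion via braid moves is not developed enough to count as a proof. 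With (a) and a precise form of (b) supplied or cited, your argument is complete and is essentially the one the paper's reference \cite{HL:qGro} relies on.
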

\begin{remark}\label{r:dia}
There are arrows from $(i, r-1)$ to the vertices $\{(j, r)\mid j\in I, (\alpha_i, \alpha_j)<0\}$ and arrows from them to $(i, r+1)$. Moreover, those arrows are all the arrows exiting out of $(i, r-1)$ and entering into $(i, r+1)$, respectively. 
\end{remark}

\subsection{The monoidal category $\mathcal{C}_{\mathcal{Q}}$}\label{ss:HLsubcat}
Let $\mathcal{Q}$ be the Dynkin quiver of type $\mathrm{X}_N$ with $\mathrm{X}=\mathrm{A}, \mathrm{D}, \mathrm{E}$, and $\xi$ an associated height function. As mentioned in the beginning of section \ref{s:subcat}, we provide the definition of the monoidal subcategory $\mathcal{C}_{\mathcal{Q}}$ of $\mathcal{C}_{\bullet}$ of type $\mathrm{X}_{N}^{(1)}$. In fact, we do not use $\mathcal{C}_{\mathcal{Q}}$ until section \ref{s:AB}, 
but the definition of the nice monoidal subcategories in the case of type $\mathrm{B}_n^{(1)}$ is a generalization of this construction (subsection \ref{ss:subcat}). See subsection \ref{ss:HLisom} and \cite{HL:qGro} for properties of  $\mathcal{C}_{\mathcal{Q}}$.

\begin{definition}[{\cite[subsection 5.11]{HL:qGro}}]\label{d:HLsubcat}
Recall the labelling $\widehat{I}_{\xi}$ of the positive roots $\Delta_+$ defined just after Proposition \ref{p:ARlabel}. Set $\mathbb{Y}^{\xi}:=\{Y_{i, r}\mid (i, r)\in \widehat{I}_{\xi}\}\subset\mathcal{Y}$, and $\mathbb{B}^{\xi}$ be the set of dominant monomials in the variables in $\mathbb{Y}^{\xi}$. 

Define $\mathcal{C}_{\mathcal{Q}}^{\xi}$ as the full subcategory of the category of $\mathcal{C}_{\bullet}$ of type $\mathrm{X}_{N}^{(1)}$ whose objects have all their composition factors isomorphic to $L(m)$ for some $m\in \mathbb{B}^{\xi}$. 
\end{definition}
\begin{remark}\label{r:shift}
In the following, we write $\mathcal{C}_{\mathcal{Q}}^{\xi}$ simply as
$\mathcal{C}_{\mathcal{Q}}$. Note that the height function is uniquely determined by the quiver $\mathcal{Q}$ \emph{up to shift}, that is, if $\xi'$ is another height function corresponding to $\mathcal{Q}$, then $\xi-\xi'$ is a constant function. Hence the categories $\mathcal{C}_{\mathcal{Q}}^{\xi}$ and $\mathcal{C}_{\mathcal{Q}}^{\xi'}$ are isomorphic via appropriate shift of the spectral parameter. 
\end{remark}
\begin{remark}\label{r:HLconv}
The monoidal subcategory $\mathcal{C}_{\mathcal{Q}}$ corresponds to the monoidal subcategory $\mathcal{C}_{\mathcal{Q}^{\mathrm{op}}}$ in \cite{HL:qGro}, here $\mathcal{Q}^{\mathrm{op}}$ is the Dynkin quiver obtained from $\mathcal{Q}$ by reversing the orientation of all arrows. This difference arises from conventional difference of the definitions of adaptedness and height functions.
\end{remark}
By using convexity of $\Gamma_{\mathcal{Q}}$, we can prove the following : 
\begin{lemma}[{\cite[Lemma 5.8]{HL:qGro}}]
The subcategory $\mathcal{C}_{\mathcal{Q}}$ is closed under tensor product. 
\end{lemma}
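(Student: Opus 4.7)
The plan is to reduce the closure under tensor product to a support property of $q$-characters, and then prove the latter by propagation along the Auslander--Reiten quiver using the convexity stated in Lemma~\ref{l:dia}.

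Let $L(m_1), L(m_2) \in \mathcal{C}_{\mathcal{Q}}$, so that $m_1, m_2 \in \mathbb{B}^\xi$. Since tensor product is exact, it is enough to treat the case of two simples. By Proposition~\ref{p:tensor} combined with the injectivity of $\chi_q$ (Theorem~\ref{t:q-chara}), every composition factor of $L(m_1) \otimes L(m_2)$ is of the form $L(m')$ for some dominant monomial $m'$ appearing in $\chi_q(L(m_1))\chi_q(L(m_2))$; so it suffices to show that any such $m'$ lies in $\mathbb{B}^\xi$. I would reach this through the following auxiliary \emph{support property}: for every $m \in \mathbb{B}^\xi$, every Laurent monomial of $\chi_q(L(m))$ lies in the subring $\mathcal{Y}^\xi := \mathbb{Z}[Y_{j,r}^{\pm 1} \mid (j,r) \in \widehat{I}_\xi]$. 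Granting this, $\chi_q(L(m_1))\chi_q(L(m_2)) \in \mathcal{Y}^\xi$, and any dominant Laurent monomial in $\mathcal{Y}^\xi$ is a positive product of $Y_{j,r}$ with $(j,r) \in \widehat{I}_\xi$, hence lies in $\mathbb{B}^\xi$.

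To prove the support property, I would first reduce to fundamentals: writing $m = \prod_{(i,s) \in \widehat{I}_\xi} Y_{i,s}^{u_{i,s}(m)}$ and iterating Proposition~\ref{p:tensor}, $L(m)$ is a subquotient of $\bigotimes L(Y_{i,s})^{\otimes u_{i,s}(m)}$, so $\chi_q(L(m))$ is a subsum of the product $\prod \chi_q(L(Y_{i,s}))^{u_{i,s}(m)}$. It therefore suffices to prove the support property for each fundamental $L(Y_{i,s})$ with $(i,s) \in \widehat{I}_\xi$. By Theorem~\ref{t:highest}, each monomial in $\chi_q(L(Y_{i,s}))$ has the form $Y_{i,s} \prod A_{j,r}^{-a_{j,r}}$ with $a_{j,r} \in \mathbb{Z}_{\geq 0}$, and in simply-laced type $A_{j,r} = Y_{j,r-1} Y_{j,r+1} \prod_{k \sim j} Y_{k,r}^{-1}$, whose support is $\{(j,r-1),(j,r+1)\} \cup \{(k,r) \mid k \sim j\}$. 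The convexity of Lemma~\ref{l:dia} states precisely that if $(j, r-1),(j,r+1) \in \widehat{I}_\xi$ then $(k,r) \in \widehat{I}_\xi$ for all $k \sim j$, so the whole support of such an $A_{j,r}$ stays inside $\widehat{I}_\xi$. I would then induct on $\sum a_{j,r}$, using the affine-minuscule property (Theorem~\ref{t:minuscule}) as the base case, to conclude that only these ``admissible'' $A_{j,r}$ can actually occur.

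The main obstacle is this last propagation step: rigorously excluding $A_{j,r}$ whose support escapes $\widehat{I}_\xi$, a statement finer than the mere Nakajima bound of Theorem~\ref{t:highest}. One clean route is to produce $\chi_q(L(Y_{i,s}))$ recursively via the (known) $T$-system relations, starting from boundary fundamentals where the support property is visible from Theorem~\ref{t:minuscule}, and to check inductively that each $T$-system step remains within $\mathcal{Y}^\xi$ thanks to the convexity of $\widehat{I}_\xi$. An alternative is a direct AR-theoretic description of the monomials of $\chi_q(L(Y_{i,s}))$ via paths in $\Gamma_\mathcal{Q}$, which by construction never leave $\widehat{I}_\xi$.
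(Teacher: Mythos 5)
The reduction to your auxiliary ``support property'' is where the argument breaks down: that property is false. Already for the quiver of Example~\ref{e:AR} (i), the fundamental module $L(Y_{4,-3})$ lies in $\mathcal{C}_{\mathcal{Q}}$ (see Example~\ref{e:HLsubcat}), but $\chi_q(L(Y_{4,-3}))$ contains the monomial $Y_{4,-3}A_{4,-2}^{-1}=Y_{4,-1}^{-1}Y_{3,-2}$, and $(4,-1)\notin\widehat{I}_{\xi}$; in the $\mathfrak{sl}_2$ case the full $q$-character $Y_{1,\xi(1)}+Y_{1,\xi(1)+2}^{-1}$ already leaves $\widehat{I}_{\xi}$ at the second monomial. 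In general the $q$-characters of objects of $\mathcal{C}_{\mathcal{Q}}$ do involve variables $Y_{j,r}^{\pm1}$ with $(j,r)\notin\widehat{I}_{\xi}$ --- this is exactly why truncated $(q,t)$-characters are introduced (Proposition~\ref{p:trunc} and its classical counterpart in \cite{HL:qGro}): if your support property held, truncation would be the identity and injectivity would be vacuous. So the ``propagation step'' you flag as the main obstacle is not a technical gap to be filled but an impossibility: monomials of $\chi_q(L(Y_{i,s}))$ genuinely contain factors $A_{j,r}^{-1}$ whose support escapes $\widehat{I}_{\xi}$, and neither a $T$-system recursion nor an AR-path description can change that.

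The lemma does not require control of all monomials, only of the \emph{dominant} monomials of $\chi_q(L(m_1))\chi_q(L(m_2))$, and your first reduction (every composition factor of $L(m_1)\otimes L(m_2)$ has $l$-highest weight a dominant monomial occurring in this product) is correct and is precisely the starting point of the paper's proof of the type $\mathrm{B}$ analogue, Lemma~\ref{l:subcat}, which mirrors the proof of the present lemma in \cite{HL:qGro}. From there one argues directly on such a dominant monomial $m''$: by Theorem~\ref{t:highest}, $m''=m_1m_2M$ with $M$ a product of variables $A_{j,r}^{-1}$; by the right- and left-negativity of the $A_{j,r}^{-1}$ (cf.~\cite{FM}), their extremal variables occur with negative exponents and, since $m''$ is dominant, can only be cancelled by variables of $m_1m_2$, all of which are indexed by $\widehat{I}_{\xi}$; combined with the $2$-segment part of Lemma~\ref{l:dia} this forces $(j,r-1),(j,r+1)\in\widehat{I}_{\xi}$ for every $A_{j,r}^{-1}$ occurring in $M$, and then the convexity part of Lemma~\ref{l:dia} places the whole support of each such $A_{j,r}^{-1}$, hence of $m''$, inside $\widehat{I}_{\xi}$. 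Being dominant, $m''\in\mathbb{B}^{\xi}$, which is what you needed; no statement about non-dominant monomials of $\chi_q(L(m))$ enters at any point.
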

\begin{example}\label{e:HLsubcat}
Let $\mathcal{Q}$ and $\xi$ be the ones in Example \ref{e:AR} (i). Then $\mathcal{C}_{\mathcal{Q}}$ is the full subcategory of the category of $\mathcal{C}_{\bullet}$ of type $\mathrm{A}_{4}^{(1)}$ whose objects have all their composition factors isomorphic to 
\[
L(Y_{1,0}^{u_{1, 0}}Y_{1,-2}^{u_{1, -2}}Y_{1,-4}^{u_{1, -4}}Y_{1,-6}^{u_{1, -6}}Y_{2,-1}^{u_{2, -1}}Y_{2,-3}^{u_{2, -3}}Y_{2, -5}^{u_{2, -5}}Y_{3,-2}^{u_{3, -2}}Y_{3,-4}^{u_{3, -4}}Y_{4, -3}^{u_{4,-3}}) 
\]
for some $(u_{i, r})_{(i, r)\in \widehat{I}_{\xi}}\in \mathbb{Z}_{\geq 0}^{\widehat{I}_{\xi}}$. In fact, $\mathcal{C}_{\mathcal{Q}}$ is the smallest abelian full subcategory of $\mathcal{C}_{\bullet}$ such that 
\begin{itemize}
\item[(1)] it is stable by taking subquotient, tensor product and extension, 
\item[(2)] it contains the simple modules $L(Y_{i, r})$ with $(i, r)\in \widehat{I}_{\xi}$ and the trivial module. 
\end{itemize}
See the proof of Lemma \ref{l:subcat} below.
\end{example}
\subsection{Combinatorial Auslander-Reiten quivers}\label{ss:cAR}
The Auslander-Reiten quiver $\Gamma_{\mathcal{Q}}$, by definition,  describes the category of finite-dimensional modules over the path algebra $\mathbb{C}\mathcal{Q}$. Interestingly, it is known that $\Gamma_{\mathcal{Q}}$ also visualizes a certain partial ordering on the set $\Delta_+$ of positive roots determined from $\mathcal{Q}$ (see Proposition  \ref{p:adapted} below). This point of view leads to a combinatorial generalization of Auslander-Reiten quivers \cite{OhS:cAR}, which is an important tool in this paper. 

We consider the general setting in subsection \ref{ss:QLA} unless otherwise specified.  
\begin{definition}
Two sequences $\bm{i}$ and $\bm{i}'$ of elements of $I$ are said to be \emph{commutation equivalent} if $\bm{i}$ is obtained from $\bm{i}$ by a sequence of transformations of adjacent components from $(i, j)$ with $(\alpha_i, \alpha_j)=0$ into $(j, i)$. This is an equivalence relation, and the equivalence class of $\bm{i}$ is called \emph{the commutation class of $\bm{i}$}, denoted by $[\bm{i}]$. 
\end{definition}
Let $w_0$ be the longest element of the Weyl group $W$. For $\bm{i}=(i_1, i_2,\dots, i_{\ell})\in I(w_0)$, set 
\begin{align}
\beta_k^{\bm{i}}:=s_{i_1}\cdots s_{i_{k-1}}(\alpha_{i_k})\label{eq:label}
\end{align}
for $k=1,\dots, \ell$. Then it is well-known that $\beta_k^{\bm{i}}\neq \beta_{k'}^{\bm{i}}$ if $k\neq k'$ and the set $\{\beta_k^{\bm{i}}\mid k=1,\dots, \ell\}$ coincides with the set $\Delta_+$ of positive roots of $\mathfrak{g}$ \cite{Bour:Lie}. For $k=1,\dots, \ell$, $i_k$ is called \emph{the residue} of $\beta_k^{\bm{i}}$. Write 
\begin{align*}
\res^{[\bm{i}]}(\beta_{k}^{\bm{i}}):=i_k. 
\end{align*}
Note that the residues of the positive roots depend only on the commutation class of $\bm{i}$.

We consider the total ordering $<_{\bm{i}}$ on $\Delta_+$ such that 
\begin{center}
$\beta_k^{\bm{i}}<_{\bm{i}}\beta_{k'}^{\bm{i}}$ if and only if $k<k'$. 
\end{center}

There is also the partial ordering $\prec_{[\bm{i}]}$ on $\Delta_+$ associated with the commutation class $[\bm{i}]$ of $\bm{i}$ such that, for $\beta, \beta'\in \Delta_+$, 
\begin{center}
$\beta \prec_{[\bm{i}]} \beta'$ if and only if $\beta <_{\bm{i}'} \beta'$ for all $\bm{i}'\in [\bm{i}]$. 
\end{center}

\begin{proposition}[{\cite[section 4]{Lus:can1}, \cite[section 6]{Rin:PBW}, \cite[section 2]{Bed}}]\label{p:adapted}
Suppose that $\mathfrak{g}$ is of type $\mathrm{X}_N$ with $\mathrm{X}=\mathrm{A}, \mathrm{D}, \mathrm{E}$. Let $\mathcal{Q}$ be a Dynkin quiver of type $\mathrm{X}_N$.  
Then the following hold :

\item[(1)] A reduced word $\bm{i}\in I(w_0)$ is adapted to at most one Dynkin quiver $\mathcal{Q}$. 
\item[(2)] For each Dynkin quiver $\mathcal{Q}$, there exists a reduced word $\bm{i}_{\mathcal{Q}}$ of $w_0$ adapted to $\mathcal{Q}$. Moreover, all reduced words in its commutation class $[\bm{i}_{\mathcal{Q}}]$ are adapted to $\mathcal{Q}$, and all reduced words adapted to $\mathcal{Q}$ belong to $[\bm{i}_{\mathcal{Q}}]$. Hence this commutation class is denoted by $[\mathcal{Q}]$. 

\item[(3)] There exists $\bm{i}_c=(i_1,i_2\dots, i_{\ell})\in [\mathcal{Q}]$ such that $(i_1,i_2,\dots, i_{N})\in I(\phi_{\mathcal{Q}})$ (recall \eqref{eq:cox}).  

\item[(4)] For $\beta, \beta'\in \Delta_+$, we have $\beta\prec_{[\mathcal{Q}]}\beta'$ if and only if there is a path from $\beta'$ to $\beta$ in $\Gamma_{\mathcal{Q}}$. 

\item[(5)] For $\beta\in \Delta_+$ with $\Omega_{\xi}(\beta)=(i, r)$ (recall Proposition \ref{p:ARlabel}), we have $\res^{[\mathcal{Q}]}(\beta)=i$. 
\end{proposition}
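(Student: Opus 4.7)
The plan is to handle parts (1)--(5) in order, each being a classical statement in the theory of adapted reduced expressions. For (1), I would recover $\mathcal{Q}$ directly from $\bm{i}$: the condition that $i_1$ be a sink of $\mathcal{Q}$ forces every edge of the Dynkin diagram at $i_1$ to be oriented toward $i_1$ in $\mathcal{Q}$, and more generally, the first time a vertex $i\in I$ occurs in $\bm{i}$ it must be a sink of the current quiver $s_{i_{k-1}}\cdots s_{i_1}(\mathcal{Q})$, which fixes the orientations of all edges at $i$ there; pulling back through the already-determined reflections pins down their orientations in $\mathcal{Q}$ itself. Since every vertex of $I$ appears in any reduced word of $w_0$, and every edge of the Dynkin diagram has both endpoints in $I$, every edge orientation of $\mathcal{Q}$ is recovered, yielding uniqueness.

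For (2), existence of an adapted reduced word follows by iteratively picking a sink: any Dynkin quiver has a sink (being acyclic), reflecting at a sink preserves the underlying graph, and the procedure terminates after $\ell(w_0)$ steps. A swap of two commuting adjacent letters $(i_k,i_{k+1})$ with $(\alpha_{i_k},\alpha_{i_{k+1}})=0$ does not affect whether these vertices are sinks of the relevant intermediate quivers, since they are non-adjacent in the Dynkin diagram, so adaptedness is stable under commutation moves, yielding the class $[\mathcal{Q}]$. To see that \emph{every} adapted reduced word lies in $[\mathcal{Q}]$, I would argue by induction on length: two sinks of $\mathcal{Q}$ that could both serve as initial letter must be mutually non-adjacent, so the two adapted words starting with them are connected by a commutation move at the head, and one reduces to the $\ell(w_0)-1$ case inside $s_{i_1}(\mathcal{Q})$.

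For (3), I would build $\bm{i}_c$ by choosing as its first $N$ reflections a linear extension $(i_1,\ldots,i_N)$ of the partial order on $I$ induced by $\mathcal{Q}$ (sinks first): at each step $k\le N$ the vertex $i_k$ is a sink of $s_{i_{k-1}}\cdots s_{i_1}(\mathcal{Q})$, and by construction $s_{i_1}\cdots s_{i_N}$ coincides with $\phi_{\mathcal{Q}}$ in \eqref{eq:cox}. The remaining $\ell(w_0)-N$ letters are completed by the same greedy procedure, using that $s_{i_N}\cdots s_{i_1}(\mathcal{Q})=\mathcal{Q}$. For (4), I would appeal to Auslander-Reiten theory: by Gabriel's theorem the indecomposable $\mathbb{C}\mathcal{Q}$-module $M_\beta$ with $\dim M_\beta=\beta$ exists for each $\beta\in\Delta_+$, a directed path from $\beta'$ to $\beta$ in $\Gamma_{\mathcal{Q}}$ yields a nonzero composition of irreducible morphisms $M_{\beta'}\to M_\beta$, and the resulting nonvanishing of $\Hom$ forces the PBW-monomial inequality $\beta\prec_{[\mathcal{Q}]}\beta'$ in the sense of \cite{Lus:can1,Rin:PBW}. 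Conversely, when no such path exists, the indecomposables are separated in $\Gamma_{\mathcal{Q}}$, and one may produce $\bm{i}'\in[\mathcal{Q}]$ in which $\beta$ precedes $\beta'$ by reordering independent chains, showing $\beta\not\prec_{[\mathcal{Q}]}\beta'$.

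For (5), the formula is immediate from the definitions: $\Omega_{\xi}(\gamma_i)=(i,\xi(i))$ together with $\gamma_i=s_{i_1}\cdots s_{i_{k-1}}(\alpha_{i_k})$ and $i_k=i$ give $\res^{[\mathcal{Q}]}(\gamma_i)=i$, and the action $\beta\mapsto\phi_{\mathcal{Q}}(\beta)$ corresponds to shifting the index inside the periodic reduced word $\bm{i}_c$ produced in (3) by $N$, which preserves the residue, matching the first coordinate in $\Omega_{\xi}(\phi_{\mathcal{Q}}(\beta))=(i,p-2)$. The principal technical hurdle is part (4): reconciling the purely combinatorial order $\prec_{[\mathcal{Q}]}$ with the geometric order encoded by $\Gamma_{\mathcal{Q}}$ requires a careful invocation of PBW-basis and Hall-algebra arguments, and this is where I would rely most heavily on the cited references rather than attempt a self-contained derivation.
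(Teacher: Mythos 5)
The paper does not actually prove this proposition: it is recalled as a classical result with citations to Lusztig, Ringel and B\'edard (with the combinatorial reformulations taken from Oh--Suh), so there is no in-text argument to match yours against. Judged on its own terms, your sketch has the right shape for (1), (2)-uniqueness-of-class, (3) and (5), but two steps are genuinely under- or mis-justified. First, existence in (2) (and the completion in (3)) is not just ``iteratively pick a sink and stop after $\ell(w_0)$ steps'': adapted sequences need not be reduced, and the whole content is to show that a suitable sink-sequence of length $\ell(w_0)$ is a reduced word \emph{for $w_0$}. The classical route is to repeat a Coxeter-adapted ordering $(i_1,\dots,i_N)$ periodically and verify that $\beta_k=s_{i_1}\cdots s_{i_{k-1}}(\alpha_{i_k})$ remains a positive root for all $k\le\ell(w_0)$ (equivalently, that left multiplication by the next prescribed sink shortens the remaining element); this is exactly what B\'edard/Lusztig prove and it cannot be waved away. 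Relatedly, in your induction for ``all adapted reduced words lie in one commutation class'', the phrase ``connected by a commutation move at the head'' hides the needed lemma that in an adapted reduced word every letter preceding the first occurrence of a prescribed sink of $\mathcal{Q}$ is non-adjacent to that sink, so that it can be commuted to the front while preserving adaptedness; without this the two words need not visibly start with comparable heads.

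Second, in (4) the step ``a directed path from $\beta'$ to $\beta$ in $\Gamma_{\mathcal{Q}}$ yields a nonzero composition of irreducible morphisms'' is false in general: compositions of irreducible maps along AR-quiver paths frequently vanish (mesh relations). The argument is repairable without it: a single arrow is itself a nonzero irreducible map, so $\Hom(M_{\gamma},M_{\delta})\neq 0$ for each arrow $\gamma\to\delta$, and the Ringel/Lusztig compatibility of adapted orders with $\Hom$ gives $\delta\prec_{[\mathcal{Q}]}\gamma$ arrow by arrow; since $\prec_{[\mathcal{Q}]}$ is a partial order (an intersection of total orders), transitivity then handles arbitrary paths. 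For the converse you need the precise statement that every linear extension of the path order of $\Gamma_{\mathcal{Q}}$ is realized by an adapted reduced word (the compatible-reading theorem, cf.\ Theorem \ref{t:comread}), rather than an ad hoc ``reordering of independent chains''. Since you explicitly defer to the references for (4), this is acceptable as a citation, but as a proof sketch the nonvanishing-of-compositions claim should be removed or replaced as above.
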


Proposition \ref{p:adapted} (4) states that the Auslander-Reiten quiver $\Gamma_{\mathcal{Q}}$ is regarded as a visualization of the partial ordering $\prec_{[\mathcal{Q}]}$. A visualization of an arbitrary $\prec_{[\bm{i}]}$ (of an arbitrary type) is given by \emph{a combinatorial Auslander-Reiten quiver}, introduced by Oh and Suh \cite{OhS:cAR}. The following is their explicit constructive definition :  

\begin{definition}[{\cite[Algorithm 2.1]{OhS:cAR}}]
Let $\bm{i}\in I(w_0)$. The combinatorial Auslander-Reiten quiver $\Upsilon_{[\bm{i}]}$ associated with $[\bm{i}]$ is constructed as follows : 
\begin{itemize}
\item[(i)] the set of vertices consists of $\ell (w_0)$ elements labelled by $\Delta_+$. 
\item[(ii)] there is an arrow from $\beta_k^{\bm{i}}$ to $\beta_{k'}^{\bm{i}}$ (see \eqref{eq:label}) if and only if 
\begin{align*}
\mathrm{(a)}\ k>k'&&&\mathrm{(b)}\ (\alpha_{i_k}, \alpha_{i_{k'}})<0&&&\text{and}&&&\mathrm{(c)}\ \{t\mid k'<t<k, i_t=i_{k'}\ \text{or}\ i_k\}=\emptyset. 
\end{align*}
\end{itemize}
In (ii), we use the reduced word $\bm{i}$ but it is easy to show that, even if we choose another reduced word $\bm{i}'$ commutation equivalent to $\bm{i}$, we obtain the same quiver. Hence the quiver $\Upsilon_{[\bm{i}]}$ depends only on the commutation class $[\bm{i}]$ of $\bm{i}$. 
\end{definition}
\begin{remark}
Oh and Suh considered the combinatorial Auslander-Reiten quivers associated with (commutation classes of) reduced words of any element $w\in W$ in the same manner. Moreover, they considered colors of each arrow. However, we do not need them in this paper.   
\end{remark}
The combinatorial Auslander-Reiten quivers defined above satisfy the desired property  :
\begin{theorem}[{\cite[Theorem 2.22, Theorem 2.29]{OhS:cAR}}]\label{t:commequiv}
Two reduced words $\bm{i}, \bm{i}'\in I(w_0)$ are commutation equivalent if and only if $\Upsilon_{[\bm{i}]}=\Upsilon_{[\bm{i}']}$. Moreover, for $\beta, \beta'\in \Delta_+$, we have $\beta\prec_{[\bm{i}]}\beta'$ if and only if there is a path from $\beta'$ to $\beta$ in $\Upsilon_{[\bm{i}]}$. 
\end{theorem}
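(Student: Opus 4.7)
The plan is to derive both parts of the theorem from a single structural lemma: the reduced words in the commutation class $[\bm{i}]$ are in bijection with the linear extensions of the partial order on $\Delta_+$ defined by directed paths in $\Upsilon_{[\bm{i}]}$. Once this is established, the ``moreover'' statement on $\prec_{[\bm{i}]}$ is immediate, since $\beta \prec_{[\bm{i}]} \beta'$ means $\beta$ precedes $\beta'$ in every linear extension, which is equivalent to $\beta$ lying strictly below $\beta'$ in the poset, i.e.\ to the existence of a directed path from $\beta'$ to $\beta$. Likewise, two quivers $\Upsilon_{[\bm{i}]}$ and $\Upsilon_{[\bm{i}']}$ are equal if and only if their path-orders coincide (the vertex set being $\Delta_+$ in both cases), and then the sets of linear extensions agree; the converse direction in Part 1 is the well-definedness already noted after the definition of $\Upsilon_{[\bm{i}]}$.

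First I would verify the easy half: if $\Upsilon_{[\bm{i}]}$ contains an arrow $\beta_k^{\bm{i}} \to \beta_{k'}^{\bm{i}}$ with $k' < k$, then in every $\bm{j}\in[\bm{i}]$ the root $\beta_{k'}^{\bm{i}}$ still precedes $\beta_k^{\bm{i}}$. It suffices to track how positions of roots evolve under a single commutation move swapping two adjacent positions $k_0,k_0+1$ with commuting residues. A direct computation from \eqref{eq:label} shows that such a move simply exchanges the two roots labelling those positions, so the relative order of $\beta_k^{\bm{i}}$ and $\beta_{k'}^{\bm{i}}$ can change only if $\{k,k'\}=\{k_0,k_0+1\}$; but condition (b) in the definition of $\Upsilon_{[\bm{i}]}$ forbids their residues from commuting, ruling this out. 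By transitivity, the presence of a directed path from $\beta'$ to $\beta$ forces $\beta \prec_{[\bm{i}]} \beta'$.

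The harder direction requires identifying which permutations of the positions in $\bm{i}$ yield words in $[\bm{i}]$. The key observation is that for two adjacent positions $k,k+1$ of any $\bm{j}\in[\bm{i}]$, condition (c) of the arrow definition is vacuous, so there is an arrow between $\beta_k^{\bm{j}}$ and $\beta_{k+1}^{\bm{j}}$ if and only if the residues $j_k,j_{k+1}$ are non-orthogonal (and hence non-commuting, since $j_k\neq j_{k+1}$ in a reduced word). Thus $\bm{j}$ admits a commutation move at positions $(k,k+1)$ precisely when the two corresponding vertices are incomparable in the path-order poset. Combined with the standard combinatorial fact that any two linear extensions of a finite poset are connected by adjacent transpositions of incomparable pairs, this shows that the set of reduced words in $[\bm{i}]$ is exactly the set of linear extensions of the path-order, each decorated with the fixed residues $\res^{[\bm{i}]}$. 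From this the remaining implications follow at once: if $\beta,\beta'$ are incomparable in the path-order then linear extensions realise both orders, yielding $\bm{j},\bm{j}'\in[\bm{i}]$ with opposite relative orderings and contradicting $\beta\prec_{[\bm{i}]}\beta'$; and if $\Upsilon_{[\bm{i}]}=\Upsilon_{[\bm{i}']}$ then $\bm{i}$ and $\bm{i}'$ are linear extensions of the same poset, hence related by a chain of commutation moves.

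The main obstacle is the precise verification of the equivalence between ``adjacent-swap-connectedness of linear extensions'' and ``commutation equivalence of reduced words.'' The forward direction is immediate from the arrow-characterisation above, but the reverse requires checking that every linear extension of the path-order really corresponds to an honest reduced expression of $w_0$ with the prescribed residues. I would handle this by induction on $\ell(w_0)$: pick any minimal element $\gamma$ of the path-order, argue that $\gamma$ can be brought to the first position of some $\bm{j}\in[\bm{i}]$ by a sequence of commutation moves (since $\gamma$ has no predecessors in $\Upsilon_{[\bm{i}]}$, no non-commuting residue earlier than $\gamma$ in $\bm{i}$ can obstruct the pushes, by condition (c) and the arrow characterisation), then invoke the inductive hypothesis on the truncated reduced word of $s_{\res(\gamma)}w_0$. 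This bootstrap constitutes the technical heart of the argument.
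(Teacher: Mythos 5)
The paper does not actually prove this statement: it is imported verbatim from Oh--Suh \cite[Theorems 2.22, 2.29]{OhS:cAR}, so there is no in-paper argument to measure you against. Judged on its own, your proof strategy is sound, and it essentially re-derives the Oh--Suh results, with your ``key structural lemma'' (words in $[\bm{i}]$ $\leftrightarrow$ linear extensions of the path order of $\Upsilon_{[\bm{i}]}$) being equivalent to the compatible-reading theorem that the paper quotes separately as Theorem \ref{t:comread}. Your easy half is correct: a commutation move exchanges the two roots at the swapped positions and fixes all others, so a pair joined by an arrow (non-orthogonal residues, condition (b)) can never be the swapped pair and its relative order is invariant across the class; transitivity then gives ``path $\Rightarrow \prec_{[\bm{i}]}$''. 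Your local equivalence for adjacent positions (incomparable in the path order $\Leftrightarrow$ orthogonal residues $\Leftrightarrow$ commutation move available) is also correct, and note that it already finishes the hard direction without your final induction: starting from $<_{\bm{i}}$ and following a chain of adjacent transpositions of incomparable pairs, each swap is a legal commutation move on an honest reduced word and exchanges exactly the two roots involved, so reducedness is automatic and every linear extension is realized by some $\bm{j}\in[\bm{i}]$; the worry that a reading of a linear extension might fail to be reduced never arises on this route. (If you do keep the induction on $\ell(w_0)$, you must set it up for reduced words of arbitrary $w$, and observe that deleting the first letter restricts $\Upsilon$ to the remaining positions because conditions (a)--(c) involve only positions and residues; your push-to-front step is fine, via the maximality argument producing an arrow out of a supposedly minimal vertex.)

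One step you should make explicit, because it is genuinely used: to conclude Part 1 from $\Upsilon_{[\bm{i}]}=\Upsilon_{[\bm{i}']}$ you produce $\bm{j}\in[\bm{i}]$ whose root order equals $<_{\bm{i}'}$, but you then need $\bm{j}=\bm{i}'$, and a priori you do not know that $\res^{[\bm{i}']}$ agrees with $\res^{[\bm{i}]}$ (the quivers are compared as quivers on $\Delta_+$, without residue labels). The fix is the elementary observation that a reduced word is determined by its sequence of roots: $\beta_1=\alpha_{i_1}$ fixes $i_1$, then $\alpha_{i_2}=s_{i_1}(\beta_2)$ fixes $i_2$, and so on; with this inserted, your ``decorated with the fixed residues'' phrase becomes a theorem rather than an assumption, and the argument closes.
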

A sequence $(j_1, j_2,\dots, j_l)$ is called \emph{a compatible reading} of $\Upsilon_{[\bm{i}]}$ if 
\begin{align}
(j_1, j_2,\dots, j_l)=(\res^{[\bm{i}]}(\gamma_{1}), \res^{[\bm{i}]}(\gamma_{2}),\dots, \res^{[\bm{i}]}(\gamma_{\ell}))\label{eq:read}
\end{align}
for a total ordering of its vertex set $\Delta_+=\{\gamma_1, \gamma_2,\dots, \gamma_{\ell}\}$ such that $k'<k$ whenever there is an arrow from $\gamma_k$ to $\gamma_{k'}$.

\begin{theorem}[{\cite[Theorem 2.23]{OhS:cAR}}]\label{t:comread}
Every reduced expression of $\bm{i}'$ in $[\bm{i}]$ can be obtained by a compatible reading of $\Upsilon_{[\bm{i}]}$. 
\end{theorem}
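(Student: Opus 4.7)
The plan is to exhibit $\bm{i}'$ explicitly as the compatible reading associated with the total ordering $\gamma_k := \beta_k^{\bm{i}'}$ ($k=1,\dots,\ell$) of the vertex set $\Delta_+$ of $\Upsilon_{[\bm{i}]}$. With this choice the theorem reduces to two verifications: (i) that $\res^{[\bm{i}]}(\gamma_k) = i'_k$, so that the ordering recovers the sequence $\bm{i}'$ via \eqref{eq:read}, and (ii) that whenever $\Upsilon_{[\bm{i}]}$ contains an arrow $\gamma_a \to \gamma_b$, one has $b<a$, so that the ordering is admissible in the sense of the definition of compatible reading.

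For (i), I would invoke the fact recorded just after \eqref{eq:label} that the residue function on $\Delta_+$ depends only on the commutation class. Since $[\bm{i}] = [\bm{i}']$ this immediately yields
\[
\res^{[\bm{i}]}(\gamma_k) \;=\; \res^{[\bm{i}']}(\beta_k^{\bm{i}'}) \;=\; i'_k,
\]
so the sequence of residues read off from $\gamma_1,\ldots,\gamma_\ell$ is literally $\bm{i}'$. For (ii), an arrow of $\Upsilon_{[\bm{i}]}$ is in particular a path of length one, so by Theorem \ref{t:commequiv} the arrow $\gamma_a \to \gamma_b$ forces $\gamma_b \prec_{[\bm{i}]} \gamma_a$. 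Applying the definition of $\prec_{[\bm{i}]}$ to the representative $\bm{i}' \in [\bm{i}]$ gives $\gamma_b <_{\bm{i}'} \gamma_a$, which after unravelling the labeling $\gamma_k = \beta_k^{\bm{i}'}$ is precisely $b<a$.

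No serious obstacle arises, because Theorem \ref{t:commequiv} already encodes the compatibility between the partial order $\prec_{[\bm{i}]}$ and the directed paths in $\Upsilon_{[\bm{i}]}$, which is the nontrivial content; the theorem to be proved is then a clean unpacking of that identification applied to a single representative of the commutation class. The only point one has to be careful about is bookkeeping: one must not confuse the intrinsic labeling of vertices of $\Upsilon_{[\bm{i}]}$ by roots with the different indexings $k \mapsto \beta_k^{\bm{i}}$ and $k \mapsto \beta_k^{\bm{i}'}$ induced by the two commutation-equivalent reduced words, and I would state this identification explicitly at the start of the argument.
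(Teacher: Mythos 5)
Your argument is sound: taking the total ordering $\gamma_k:=\beta_k^{\bm{i}'}$, the residue sequence is $\bm{i}'$ because residues depend only on the commutation class (the remark after \eqref{eq:label}), and the admissibility of the ordering follows from Theorem \ref{t:commequiv}, since an arrow $\gamma_a\to\gamma_b$ is a path, hence $\gamma_b\prec_{[\bm{i}]}\gamma_a$, hence $\gamma_b<_{\bm{i}'}\gamma_a$, i.e.\ $b<a$, which is exactly the condition in the definition of a compatible reading via \eqref{eq:read}. Be aware, however, that the paper itself gives no proof of Theorem \ref{t:comread}: it is imported verbatim from Oh--Suh (\cite[Theorem 2.23]{OhS:cAR}), as is Theorem \ref{t:commequiv} (\cite[Theorems 2.22, 2.29]{OhS:cAR}), so there is no in-paper argument to compare against. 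What you have produced is a clean reduction of the compatible-reading statement to the path/partial-order statement, which is perfectly acceptable inside the logical framework of this paper where both are taken as known; but as a proof of the original result it is only as self-contained as Theorem \ref{t:commequiv} is, and in the source the order statement (their Theorem 2.29) appears \emph{after} the reading statement (their Theorem 2.23), so one should check that the source does not establish the former using the latter before presenting your reduction as an independent proof rather than as a derivation of one cited fact from another.
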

The following theorem guarantees that the combinatorial Auslander-Reiten quiver is a combinatorial generalization of the usual Auslander-Reiten quiver. 
\begin{theorem}[{\cite[Theorem 2.28]{OhS:cAR}}]\label{t:AR}
Let $\mathcal{Q}$ be a Dynkin quiver of type $\mathrm{X}_N$ with $\mathrm{X}=\mathrm{A}, \mathrm{D}, \mathrm{E}$. Then the combinatorial Auslander-Reiten quiver $\Upsilon_{[\mathcal{Q}]}$ is isomorphic to the Auslander-Reiten quiver $\Gamma_{\mathcal{Q}}$, here the correspondence between their vertex sets is given by Gabriel's theorem.  
\end{theorem}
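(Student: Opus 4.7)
The plan is to verify that, under the natural identification of both vertex sets with $\Delta_+$ (via Gabriel's theorem for $\Gamma_{\mathcal{Q}}$ and by construction for $\Upsilon_{[\mathcal{Q}]}$), the arrow sets of $\Upsilon_{[\mathcal{Q}]}$ and $\Gamma_{\mathcal{Q}}$ agree. Note that Proposition~\ref{p:adapted}(4) and Theorem~\ref{t:commequiv} already show that the two quivers have the same transitive closure (both encode the partial order $\prec_{[\mathcal{Q}]}$), but this alone does not imply equality of the arrow sets, which is what must be established. The residues match automatically by Proposition~\ref{p:adapted}(5).

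I would first fix an adapted reduced word $\bm{i}=(i_1,\dots,i_\ell)\in[\mathcal{Q}]$ and write $\Omega_\xi(\beta_k^{\bm{i}})=(i_k,r_k)$. The next sub-step is to prove the closed formula
\[
r_k=\xi(i_k)-2(m_k-1),
\]
where $m_k:=\#\{j\leq k\mid i_j=i_k\}$. This follows from the defining rules of $\Omega_\xi$—namely $\gamma_i\mapsto(i,\xi(i))$ and $\phi_{\mathcal{Q}}(\beta)\mapsto(i,p-2)$ when $\Omega_\xi(\beta)=(i,p)$—combined with Proposition~\ref{p:adapted}(3), which lets one initialise the recursion on a Coxeter word $(i_1,\dots,i_N)$ sitting inside $\bm{i}$ and then apply $\phi_{\mathcal{Q}}$ on each subsequent occurrence of a given residue. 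Plugging this formula together with Remark~\ref{r:dia} into the respective arrow definitions, one finds that an arrow of $\Gamma_{\mathcal{Q}}$ read through $\Omega_\xi$ is a pair $\beta_k\to\beta_{k'}$ with $k>k'$, $(\alpha_{i_k},\alpha_{i_{k'}})<0$ and $r_{k'}-r_k=1$; while an arrow $\beta_k\to\beta_{k'}$ of $\Upsilon_{[\mathcal{Q}]}$ demands $k>k'$, $(\alpha_{i_k},\alpha_{i_{k'}})<0$ and the absence of any intermediate $i_t\in\{i_k,i_{k'}\}$. Thus the theorem reduces to
\[
r_{k'}-r_k=1 \iff \text{no } t\in(k',k)\ \text{satisfies}\ i_t\in\{i_k,i_{k'}\},
\]
under the standing assumptions $k>k'$ and $(\alpha_{i_k},\alpha_{i_{k'}})<0$.

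To prove this equivalence I would use the convexity of $\Gamma_{\mathcal{Q}}$ from Lemma~\ref{l:dia}: whenever $(i,r-1)$ and $(i,r+1)$ both belong to $\widehat{I}_\xi$, every Dynkin neighbor $j$ of $i$ gives a vertex $(j,r)$ sandwiched between them, and these account for all arrows into $(i,r+1)$ and out of $(i,r-1)$ by Remark~\ref{r:dia}. Through the closed formula for $r_k$, this translates to the combinatorial claim that between any two consecutive occurrences of a residue $i$ in $\bm{i}$, each Dynkin neighbor $j$ of $i$ appears exactly once. I expect this interleaving property to be the main technical obstacle; I would prove it by induction on $k$, using that $i_k$ must be a sink of $s_{i_{k-1}}\cdots s_{i_1}(\mathcal{Q})$ and tracing how the orientation of the Dynkin diagram evolves under successive reflections. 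Granted the interleaving, the equivalence reduces to comparing relative $r$-coordinates directly, and the desired quiver isomorphism follows.
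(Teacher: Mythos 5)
First, a point of comparison: the paper does not prove Theorem \ref{t:AR} at all — it is imported verbatim from \cite[Theorem 2.28]{OhS:cAR} — so there is no in-paper argument to measure your proof against; I can only assess your plan on its own terms. As a plan it is sound. The closed formula $r_k=\xi(i_k)-2(m_k-1)$ is correct and is invariant under commutation moves (a commuting swap changes neither the root attached to a letter nor the number of earlier letters of the same residue), the translation of the two arrow conditions is right (for the $\Gamma_{\mathcal{Q}}$ side the extra requirement $k>k'$ is automatic, since an arrow gives a path and hence $\beta_{k'}\prec_{[\mathcal{Q}]}\beta_k$ by Proposition \ref{p:adapted}(4) — say this explicitly), and the interleaving property is a true fact about adapted words; your sink-tracking induction does prove it, because the $i$–$j$ arrow is flipped only by reflections at $i$ or at $j$, and $j$ can be reflected only while it is a sink, so between two consecutive reflections at $i$ each neighbour $j$ is reflected exactly once.

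Two steps need more than you wrote. (1) Interleaving between consecutive occurrences alone does not close the counting. To get the equivalence $r_{k'}-r_k=1\iff$ ``no $t\in(k',k)$ with $i_t\in\{i_k,i_{k'}\}$'' you also need the boundary statement: before the first occurrence of $i$, a neighbour $j$ occurs exactly once if $i\to j$ in $\mathcal{Q}$ (i.e.\ $\xi(j)=\xi(i)+1$) and not at all if $j\to i$. With it, the number of $j$'s preceding the $m$-th occurrence of $i$ is $m$ or $m-1$ according to the sign of $\xi(j)-\xi(i)$, and then $r_{k'}-r_k=(\xi(j)-\xi(i))-2(m_{k'}-m_k)=1$ matches ``consecutive in the $\{i,j\}$-subword'' in both cases; the same sink argument gives the boundary fact, but it must be stated and used. (2) The closed formula is not a formal consequence of the definition of $\Omega_\xi$: one must prove that the $m$-th occurrence of residue $i$ in an adapted word carries the root $\phi_{\mathcal{Q}}^{m-1}(\gamma_i)$. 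Your peeling argument needs care, because after deleting an initial Coxeter segment the remaining word is adapted but is a reduced word for $\phi_{\mathcal{Q}}^{-1}w_0$, so the induction has to be set up for adapted words of arbitrary Weyl group elements. Alternatively you can get the formula from tools already quoted: granting Proposition \ref{p:ARlabel}, the residue-$i$ columns are $\xi(i),\xi(i)-2,\dots$, and by Lemma \ref{l:dia}, Remark \ref{r:dia} and Proposition \ref{p:adapted}(4) two consecutive residue-$i$ vertices are joined by a length-two path through any common neighbour, hence are $\prec_{[\mathcal{Q}]}$-comparable; therefore same-residue roots appear in every word of $[\mathcal{Q}]$ in strictly decreasing column order, which is exactly your formula. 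With these repairs the argument goes through.
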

In the following, we identify $\Upsilon_{[\mathcal{Q}]}$ with $\Gamma_{\mathcal{Q}}$ by Theorem \ref{t:AR}. 

\subsection{Twisted adapted classes of type $\mathrm{A}_{2n-1}$}\label{ss:tadapted}
To define an analogue of $\mathcal{C}_{\mathcal{Q}}$ in the case of type $\mathrm{B}_n^{(1)}$, we need \emph{the twisted Auslander-Reiten quivers}  \cite{OhS:foldedAR-I}, which are specific combinatorial Auslander-Reiten quivers for type $\mathrm{A}_{2n-1}$. 

Let $\mathcal{Q}$ be a Dynkin quiver of type $\mathrm{A}_{2n-2}$, and $\xi\colon I=\{1, 2,\dots, 2n-2\}\to\mathbb{Z}$ be an associated height function. We identify the vertex set of $\Upsilon_{[\mathcal{Q}]}$ with $\widehat{I}_{\xi}$ (see the definition after Proposition \ref{p:ARlabel}). We construct the quiver $\Upsilon_{[\mathcal{Q}^{>}]}$ (resp.~$\Upsilon_{[\mathcal{Q}^{<}]}$), called \emph{a twisted Auslander-Reiten quiver}, from $\Upsilon_{[\mathcal{Q}]}$ as follows :
\begin{itemize}
\item[(T1)] Any vertex $(\imath, r)$ in $\widehat{I}_{\xi}$ with $\imath\geq n$ is renamed by $(\imath+1, r)$. 

\item[(T2)] Put a vertex $(n, r-\frac{1}{2})$ ($r\in \mathbb{Z}$) if there is an arrow $(n-1, r-1)\to (n+1, r)$ or $(n+1, r-1)\to (n-1, r)$. 

\item[(T3)] Each arrow of the form $(n-1, r-1)\to (n+1, r)$ is replaced by two arrows $(n-1, r-1)\to (n, r-\frac{1}{2})\to (n+1, r)$, and each arrow of the form $(n+1, r-1)\to (n-1, r)$ is replaced by two arrows $(n+1, r-1)\to (n, r-\frac{1}{2})\to (n-1, r)$. 

\item[(T4)] Let $r_{\mathrm{M}}:=\max\{r\in \mathbb{Z}\mid (n-1, r), (n, r)\in \widehat{I}_{\xi}\}$ (resp.~$r_{\mathrm{m}}:=\min\{r\in \mathbb{Z}\mid (n-1, r), (n, r)\in \widehat{I}_{\xi}\}$), and 
\begin{align*}
n_{\mathrm{M}}:=\begin{cases}
n-1&\text{if}\ (n-1, r_{\mathrm{M}})\in \widehat{I}_{\xi},\\
n+1&\text{if}\ (n, r_{\mathrm{M}})\in \widehat{I}_{\xi}. 
\end{cases}
&&&
\left(
n_{\mathrm{m}}:=\begin{cases}
n-1&\text{if}\ (n-1, r_{\mathrm{m}})\in \widehat{I}_{\xi},\\
n+1&\text{if}\ (n, r_{\mathrm{m}})\in \widehat{I}_{\xi}. 
\end{cases}
\right)
\end{align*}
Then we put a vertex $(n, r_{\mathrm{M}}+\frac{1}{2})$ (resp.~$(n, r_{\mathrm{m}}-\frac{1}{2})$) (on the quiver obtained in (T3)) and add an arrow $(n_{\mathrm{M}}, r_{\mathrm{M}})\to (n, r_{\mathrm{M}}+\frac{1}{2})$ (resp.~$(n, r_{\mathrm{m}}-\frac{1}{2})\to (n_{\mathrm{m}}, r_{\mathrm{m}})$). 
\end{itemize}

Let $\flat\in \{>, <\}$. For a vertex $(i, \frac{r}{2})$ of $\Upsilon_{[\mathcal{Q}^{\flat}]}$, set $\res^{[\mathcal{Q}^{\flat}]}((i, \frac{r}{2})):=i$. Then we can define a compatible reading of $\Upsilon_{[\mathcal{Q}^{\flat}]}$ in the same way as \eqref{eq:read}. The following theorem justifies these notations. 

\begin{theorem}[{\cite[Theorem 4.17, Remark 4.20, Algorithm 5.1]{OhS:foldedAR-I}}]\label{t:tw-const}
Let $\flat\in \{>, <\}$. Then a compatible reading of $\Upsilon_{[\mathcal{Q}^{\flat}]}$ gives a reduced word of the longest element of the Weyl group of type $\mathrm{A}_{2n-1}$. Moreover, the quiver $\Upsilon_{[\mathcal{Q}^{\flat}]}$ is isomorphic to its combinatorial Auslander-Reiten quiver in a residue preserving way. 
\end{theorem}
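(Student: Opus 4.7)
The plan is to exhibit an explicit compatible reading $\tilde{\bm{i}}$ of $\Upsilon_{[\mathcal{Q}^{\flat}]}$, verify that it is a reduced expression of $w_{0}^{\mathrm{A}_{2n-1}}$, and then identify $\Upsilon_{[\mathcal{Q}^{\flat}]}$ with the combinatorial Auslander-Reiten quiver $\Upsilon_{[\tilde{\bm{i}}]}$ in a residue-preserving way by comparing arrow sets from the two constructions.

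First, fix a height function $\xi$ associated with $\mathcal{Q}$. By Theorem~\ref{t:comread}, a compatible reading of $\Upsilon_{[\mathcal{Q}]}$ is a reduced word $\bm{i}=(i_{1},\ldots,i_{\ell(w_{0}^{\mathrm{A}_{2n-2}})})$ of $w_{0}^{\mathrm{A}_{2n-2}}$. Read the vertices of $\Upsilon_{[\mathcal{Q}^{\flat}]}$ in the linear order inherited from a compatible reading of $\Upsilon_{[\mathcal{Q}]}$, inserting the new vertices in the position dictated by their half-integer second coordinate. The resulting word $\tilde{\bm{i}}$ is obtained from $\bm{i}$ by (i) relabelling every letter $k\geq n$ to $k+1$ (implementing (T1)) and (ii) inserting a single letter $n$ at the position of each inserted vertex from (T2) and (T4). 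A careful count using the convexity Lemma~\ref{l:dia} shows that the total number of inserted letters equals $\ell(w_{0}^{\mathrm{A}_{2n-1}})-\ell(w_{0}^{\mathrm{A}_{2n-2}})$; the superscript $\flat$ selects which side of the quiver receives the (T4) boundary vertex in a consistent fashion.

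Next I would verify that $\tilde{\bm{i}}$ is reduced and represents $w_{0}^{\mathrm{A}_{2n-1}}$. Using \eqref{eq:label} I would enumerate the roots $\beta_{k}^{\tilde{\bm{i}}}=s_{\tilde{i}_{1}}\cdots s_{\tilde{i}_{k-1}}(\alpha_{\tilde{i}_{k}})$ and show they are pairwise distinct and exhaust the positive roots of $\mathrm{A}_{2n-1}$. Roots without an $\alpha_{n}$-component are in bijection (via the relabelling in (T1)) with the positive roots of $\mathrm{A}_{2n-2}$ and are visibly enumerated exactly once, by the fact that $\bm{i}$ is reduced. Each inserted $s_{n}$ produces exactly one new positive root of the form $\alpha_{a}+\alpha_{a+1}+\cdots+\alpha_{n}+\cdots+\alpha_{b}$, and a bookkeeping argument based on which original residue-$(n-1)$ and post-(T1) residue-$(n+1)$ letters precede the insertion shows that every root of this form is hit exactly once, giving the complete enumeration.

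Finally I would identify $\Upsilon_{[\mathcal{Q}^{\flat}]}$ with $\Upsilon_{[\tilde{\bm{i}}]}$. Arrows in the latter are determined by conditions (a), (b), (c) of the definition preceding Theorem~\ref{t:commequiv}, applied to pairs of letters in $\tilde{\bm{i}}$. Arrows between residues in $\{1,\ldots,n-1,n+1,\ldots,2n-1\}$ are inherited directly from $\Upsilon_{[\mathcal{Q}]}$ (via Theorem~\ref{t:AR} applied to $\mathcal{Q}$), so match those in $\Upsilon_{[\mathcal{Q}^{\flat}]}$ produced by (T1). Arrows incident to an inserted residue-$n$ vertex must be checked one by one: the existence of an arrow in $\Upsilon_{[\mathcal{Q}^{\flat}]}$ between $(n,r-\tfrac{1}{2})$ and its neighbors $(n-1,r),(n-1,r-1),(n+1,r),(n+1,r-1)$ prescribed by (T3)–(T4) must be matched against conditions (a)–(c) in $\Upsilon_{[\tilde{\bm{i}}]}$. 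The main obstacle is condition (c) (the ``no intermediate repeat'' clause): one must show that between an inserted $s_{n}$ and the neighboring $s_{n\pm 1}$ in $\tilde{\bm{i}}$ there is no other letter of residue $n$ or the relevant residue $n\pm 1$. This interlacing is precisely the combinatorial content of the insertion rule (T2) together with the convexity of $\Gamma_{\mathcal{Q}}$, which ensures that residue-$(n\pm 1)$ letters straddling an inserted $s_{n}$ appear in predictable relative positions. For arbitrary orientations of $\mathcal{Q}$ one may reduce to a single reference orientation by source/sink reflections, using compatibility of (T1)–(T4) with such mutations at both the AR-quiver level and the reduced-word level.
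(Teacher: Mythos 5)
First, a point of reference: the paper does not prove this statement at all — it is imported verbatim from Oh--Suh (the citation in the theorem header), so there is no internal proof to compare against; what can be assessed is whether your from-scratch argument works, and its central step does not. The bookkeeping in your second paragraph is numerically impossible. The word $\tilde{\bm{i}}$ has $(2n-1)(n-1)$ letters inherited from $\bm{i}$ and $2n-1$ inserted letters $n$, while $\mathrm{A}_{2n-1}$ has $n(n-1)$ positive roots whose support avoids $n$ and $n^{2}$ positive roots whose support contains $n$; for $n\geq 2$ neither pair of numbers matches, so the inherited letters cannot enumerate exactly the roots without $\alpha_{n}$-component, nor can each inserted $s_{n}$ account for exactly the roots of the form $\alpha_{a}+\cdots+\alpha_{n}+\cdots+\alpha_{b}$. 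The underlying reason is that the inserted $s_{n}$'s in the prefix do not commute with $s_{n\pm1}$, so the roots $\beta^{\tilde{\bm{i}}}_{k}$ at the old positions are not the relabelled $\mathrm{A}_{2n-2}$ roots. A concrete check: for $n=2$, $\mathcal{Q}\colon 2\to 1$, $\flat=>$, the reading is $(2,1,2,3,2,1)$ and the root sequence is $\alpha_{2},\ \alpha_{1}+\alpha_{2},\ \alpha_{1},\ \alpha_{1}+\alpha_{2}+\alpha_{3},\ \alpha_{2}+\alpha_{3},\ \alpha_{3}$; the inherited positions $2,4,6$ yield two roots containing $\alpha_{2}$, and the inserted position $3$ yields $\alpha_{1}$, which does not contain $\alpha_{2}$. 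So the verification that $\tilde{\bm{i}}$ is a reduced word for $w_{0}^{\mathrm{A}_{2n-1}}$ is not established by your argument; it needs a genuinely different mechanism (Oh--Suh get it via twisted Coxeter elements and their theory of twisted adapted classes, not by splitting the root enumeration along ``support contains $n$ or not'').

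Two further points are asserted rather than proved and would also need real work. The arrow comparison at the residue-$n$ vertices hinges entirely on condition (c) of the combinatorial AR quiver (no intermediate letter of the two relevant residues), and you only say this ``must be matched''; this is exactly where the content lies, and it interacts with the corrected root bookkeeping above. Finally, the proposed reduction to a single reference orientation ``by source/sink reflections, using compatibility of (T1)--(T4) with such mutations'' is not available off the shelf: the twisted classes $[\mathcal{Q}^{\flat}]$ are non-adapted for $\mathrm{A}_{2n-1}$ (Remark \ref{r:adapted}), so there is no standard reflection-functor statement to invoke, and such a compatibility would itself require a proof comparable in difficulty to the theorem.
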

\begin{definition}\label{d:tadapted}
Let $\flat\in \{>, <\}$. The commutation class corresponding to the twisted Auslander-Reiten quiver $\Upsilon_{[\mathcal{Q}^{\flat}]}$ is called \emph{a twisted adapted class of type $\mathrm{A}_{2n-1}$}, and denoted by $[\mathcal{Q}^{\flat}]$. 
\end{definition}
\begin{remark}\label{r:tadapted}
Our definition of twisted adapted classes of type $\mathrm{A}_{2n-1}$ is equivalent to the one in \cite[Definition 4.2]{OhS:foldedAR-I} by the results of \cite[section 4]{OhS:foldedAR-I}. 
\end{remark}

\begin{remark}\label{r:adapted}
Every twisted adapted class of type $\mathrm{A}_{2n-1}$ is non-adapted \cite[section 4]{OhS:foldedAR-I}.
\end{remark}

Let $\flat\in \{>, <\}$.  In the following, we regard the vertex set of $\Upsilon_{[\mathcal{Q}^{\flat}]}$ as $\Delta_+$, as before. Its another labelling set obtained by (T1)--(T4) will be written as $\widehat{I}_{\xi}^{\mathrm{tw}, \flat}$. Moreover, this relabelling is written as $\Omega_{\xi}^{\mathrm{tw}, \flat}\colon \Delta_+ \to \widehat{I}_{\xi}^{\mathrm{tw}, \flat}$. In addition to $\Omega_{\xi}^{\mathrm{tw}, \flat}$, define $\overline{\Omega}_{\xi}^{\mathrm{tw}, \flat}\colon \Delta_+ \to \{1,2,\dots, n\}\times \mathbb{Z}$ as follows : for $\beta\in \Delta_+$ with $\Omega_{\xi}^{\mathrm{tw}, \flat}(\beta)=(\imath, \frac{r}{2})$, 
\begin{align*}
\overline{\Omega}_{\xi}^{\mathrm{tw}, \flat}(\beta)=\begin{cases}
(\imath, r)&\text{if}\ \imath\leq n,\\
(2n-\imath, r)&\text{if}\ \imath>n.
\end{cases}
\end{align*}
Then $\overline{\Omega}_{\xi}^{\mathrm{tw}, \flat}$ is also injective, and set 
\begin{align*}
\overline{I}_{\xi}^{\mathrm{tw}, \flat}:=\Image (\overline{\Omega}_{\xi}^{\mathrm{tw}, \flat})
\end{align*}

\begin{example}\label{e:tw-AR}
Consider the case of type $\mathrm{A}_{5}$ ($n=3$). 

\noindent(i) Let $\mathcal{Q}$ and $\xi$ be the ones in Example \ref{e:AR} (i). The twisted Auslander-Reiten quiver $\Upsilon_{[\mathcal{Q}^{>}]}$ and its labellings $\widehat{I}_{\xi}^{\mathrm{tw}, >}$, $\overline{I}_{\xi}^{\mathrm{tw}, >}$ are described as follows : 

\noindent
\scalebox{0.55}[0.7]{
\begin{xy} 0;<1pt,0pt>:<0pt,-1pt>::
(-40,-20) *+{\widehat{I}_{\xi}^{\mathrm{tw}, >}}, 
(-40,0) *+{5}, 
(-40,30) *+{4}, 
(-40,45) *+{3}, 
(-40,60) *+{2}, 
(-40,90) *+{1}, 
(0,110) *+{0}, 
(30,110) *+{-\frac{1}{2}}, 
(60,110) *+{-1}, 
(90,110) *+{-\frac{3}{2}}, 
(120,110) *+{-2}, 
(150,110) *+{-\frac{5}{2}}, 
(180,110) *+{-3}, 
(210,110) *+{-\frac{7}{2}}, 
(240,110) *+{-4}, 
(270,110) *+{-\frac{9}{2}}, 
(300,110) *+{-5}, 
(330,110) *+{-\frac{11}{2}}, 
(360,110) *+{-6}, 
(180,0) *+{\bigstar} ="0",
(120,30) *+{\bigstar} ="1",
(240,30) *+{\bigstar} ="2",
(60,60) *+{\bigstar} ="3",
(180,60) *+{\bigstar} ="4",
(300,60) *+{\bigstar} ="5",
(0,90) *+{\bigstar} ="6",
(120,90) *+{\bigstar} ="7",
(240,90) *+{\bigstar} ="8",
(360,90) *+{\bigstar} ="9",
(30,45) *+{\bigstar} ="10",
(90,45) *+{\bigstar} ="11",
(150,45) *+{\bigstar} ="12",
(210,45) *+{\bigstar} ="13",
(270,45) *+{\bigstar} ="14",
{\ar@{.} (-30,0); (375,0)},
{\ar@{.} (-30,30); (375,30)},
{\ar@{.} (-30,45); (375,45)},
{\ar@{.} (-30,60); (375,60)},
{\ar@{.} (-30,90); (375,90)},
{\ar@{.} (0,100); (0,-10)},
{\ar@{.} (30,100); (30,-10)},
{\ar@{.} (60,100); (60,-10)},
{\ar@{.} (90,100); (90,-10)},
{\ar@{.} (120,100); (120,-10)},
{\ar@{.} (150,100); (150,-10)},
{\ar@{.} (180,100); (180,-10)},
{\ar@{.} (210,100); (210,-10)},
{\ar@{.} (240,100); (240,-10)},
{\ar@{.} (270,100); (270,-10)},
{\ar@{.} (300,100); (300,-10)},
{\ar@{.} (330,100); (330,-10)},
{\ar@{.} (360,100); (360,-10)},
"0", {\ar"1"},
"2", {\ar"0"},
"3", {\ar"6"},
"7", {\ar"3"},
"4", {\ar"7"},
"8", {\ar"4"},
"5", {\ar"8"},
"9", {\ar"5"},
"3", {\ar"10"},
"1", {\ar"11"},
"11", {\ar"3"},
"12", {\ar"1"},
"4", {\ar"12"},
"2", {\ar"13"},
"13", {\ar"4"},
"14", {\ar"2"},
"5", {\ar"14"},
\end{xy}
}
\scalebox{0.55}[0.7]{
\begin{xy} 0;<1pt,0pt>:<0pt,-1pt>::
(-40,-20) *+{\overline{I}_{\xi}^{\mathrm{tw}, >}}, 
(-40,0) *+{1}, 
(-40,30) *+{2}, 
(-40,45) *+{3}, 
(-40,60) *+{2}, 
(-40,90) *+{1}, 
(0,110) *+{0}, 
(30,110) *+{-1}, 
(60,110) *+{-2}, 
(90,110) *+{-3}, 
(120,110) *+{-4}, 
(150,110) *+{-5}, 
(180,110) *+{-6}, 
(210,110) *+{-7}, 
(240,110) *+{-8}, 
(270,110) *+{-9}, 
(300,110) *+{-10}, 
(330,110) *+{-11}, 
(360,110) *+{-12}, 
(180,0) *+{\bigstar} ="0",
(120,30) *+{\bigstar} ="1",
(240,30) *+{\bigstar} ="2",
(60,60) *+{\bigstar} ="3",
(180,60) *+{\bigstar} ="4",
(300,60) *+{\bigstar} ="5",
(0,90) *+{\bigstar} ="6",
(120,90) *+{\bigstar} ="7",
(240,90) *+{\bigstar} ="8",
(360,90) *+{\bigstar} ="9",
(30,45) *+{\bigstar} ="10",
(90,45) *+{\bigstar} ="11",
(150,45) *+{\bigstar} ="12",
(210,45) *+{\bigstar} ="13",
(270,45) *+{\bigstar} ="14",
{\ar@{.} (-30,0); (375,0)},
{\ar@{.} (-30,30); (375,30)},
{\ar@{.} (-30,45); (375,45)},
{\ar@{.} (-30,60); (375,60)},
{\ar@{.} (-30,90); (375,90)},
{\ar@{.} (0,100); (0,-10)},
{\ar@{.} (30,100); (30,-10)},
{\ar@{.} (60,100); (60,-10)},
{\ar@{.} (90,100); (90,-10)},
{\ar@{.} (120,100); (120,-10)},
{\ar@{.} (150,100); (150,-10)},
{\ar@{.} (180,100); (180,-10)},
{\ar@{.} (210,100); (210,-10)},
{\ar@{.} (240,100); (240,-10)},
{\ar@{.} (270,100); (270,-10)},
{\ar@{.} (300,100); (300,-10)},
{\ar@{.} (330,100); (330,-10)},
{\ar@{.} (360,100); (360,-10)},
"0", {\ar"1"},
"2", {\ar"0"},
"3", {\ar"6"},
"7", {\ar"3"},
"4", {\ar"7"},
"8", {\ar"4"},
"5", {\ar"8"},
"9", {\ar"5"},
"3", {\ar"10"},
"1", {\ar"11"},
"11", {\ar"3"},
"12", {\ar"1"},
"4", {\ar"12"},
"2", {\ar"13"},
"13", {\ar"4"},
"14", {\ar"2"},
"5", {\ar"14"},
\end{xy}
}

For example, $(3, 1, 2, 3, 4, 1, 5, 3, 2, 3, 4, 1, 3, 2, 1), (1, 3, 2, 3, 4, 1, 3, 5, 2, 3, 4, 1, 3, 2, 1)\in [\mathcal{Q}^>]$. 

The twisted Auslander-Reiten quiver $\Upsilon_{[\mathcal{Q}^{<}]}$ and its labellings $\widehat{I}_{\xi}^{\mathrm{tw}, <}$, $\overline{I}_{\xi}^{\mathrm{tw}, <}$ are described as follows : 

\noindent
\scalebox{0.55}[0.7]{
\begin{xy} 0;<1pt,0pt>:<0pt,-1pt>::
(-40,-20) *+{\widehat{I}_{\xi}^{\mathrm{tw}, <}}, 
(-40,0) *+{5}, 
(-40,30) *+{4}, 
(-40,45) *+{3}, 
(-40,60) *+{2}, 
(-40,90) *+{1}, 
(0,110) *+{0}, 
(30,110) *+{-\frac{1}{2}}, 
(60,110) *+{-1}, 
(90,110) *+{-\frac{3}{2}}, 
(120,110) *+{-2}, 
(150,110) *+{-\frac{5}{2}}, 
(180,110) *+{-3}, 
(210,110) *+{-\frac{7}{2}}, 
(240,110) *+{-4}, 
(270,110) *+{-\frac{9}{2}}, 
(300,110) *+{-5}, 
(330,110) *+{-\frac{11}{2}}, 
(360,110) *+{-6}, 
(180,0) *+{\bigstar} ="0",
(120,30) *+{\bigstar} ="1",
(240,30) *+{\bigstar} ="2",
(60,60) *+{\bigstar} ="3",
(180,60) *+{\bigstar} ="4",
(300,60) *+{\bigstar} ="5",
(0,90) *+{\bigstar} ="6",
(120,90) *+{\bigstar} ="7",
(240,90) *+{\bigstar} ="8",
(360,90) *+{\bigstar} ="9",
(90,45) *+{\bigstar} ="11",
(150,45) *+{\bigstar} ="12",
(210,45) *+{\bigstar} ="13",
(270,45) *+{\bigstar} ="14",
(330,45) *+{\bigstar} ="10",
{\ar@{.} (-30,0); (375,0)},
{\ar@{.} (-30,30); (375,30)},
{\ar@{.} (-30,45); (375,45)},
{\ar@{.} (-30,60); (375,60)},
{\ar@{.} (-30,90); (375,90)},
{\ar@{.} (0,100); (0,-10)},
{\ar@{.} (30,100); (30,-10)},
{\ar@{.} (60,100); (60,-10)},
{\ar@{.} (90,100); (90,-10)},
{\ar@{.} (120,100); (120,-10)},
{\ar@{.} (150,100); (150,-10)},
{\ar@{.} (180,100); (180,-10)},
{\ar@{.} (210,100); (210,-10)},
{\ar@{.} (240,100); (240,-10)},
{\ar@{.} (270,100); (270,-10)},
{\ar@{.} (300,100); (300,-10)},
{\ar@{.} (330,100); (330,-10)},
{\ar@{.} (360,100); (360,-10)},
"0", {\ar"1"},
"2", {\ar"0"},
"3", {\ar"6"},
"7", {\ar"3"},
"4", {\ar"7"},
"8", {\ar"4"},
"5", {\ar"8"},
"9", {\ar"5"},
"10", {\ar"5"},
"1", {\ar"11"},
"11", {\ar"3"},
"12", {\ar"1"},
"4", {\ar"12"},
"2", {\ar"13"},
"13", {\ar"4"},
"14", {\ar"2"},
"5", {\ar"14"},
\end{xy}
}
\scalebox{0.55}[0.7]{
\begin{xy} 0;<1pt,0pt>:<0pt,-1pt>::
(-40,-20) *+{\overline{I}_{\xi}^{\mathrm{tw}, <}}, 
(-40,0) *+{1}, 
(-40,30) *+{2}, 
(-40,45) *+{3}, 
(-40,60) *+{2}, 
(-40,90) *+{1}, 
(0,110) *+{0}, 
(30,110) *+{-1}, 
(60,110) *+{-2}, 
(90,110) *+{-3}, 
(120,110) *+{-4}, 
(150,110) *+{-5}, 
(180,110) *+{-6}, 
(210,110) *+{-7}, 
(240,110) *+{-8}, 
(270,110) *+{-9}, 
(300,110) *+{-10}, 
(330,110) *+{-11}, 
(360,110) *+{-12}, 
(180,0) *+{\bigstar} ="0",
(120,30) *+{\bigstar} ="1",
(240,30) *+{\bigstar} ="2",
(60,60) *+{\bigstar} ="3",
(180,60) *+{\bigstar} ="4",
(300,60) *+{\bigstar} ="5",
(0,90) *+{\bigstar} ="6",
(120,90) *+{\bigstar} ="7",
(240,90) *+{\bigstar} ="8",
(360,90) *+{\bigstar} ="9",
(90,45) *+{\bigstar} ="11",
(150,45) *+{\bigstar} ="12",
(210,45) *+{\bigstar} ="13",
(270,45) *+{\bigstar} ="14",
(330,45) *+{\bigstar} ="10",
{\ar@{.} (-30,0); (375,0)},
{\ar@{.} (-30,30); (375,30)},
{\ar@{.} (-30,45); (375,45)},
{\ar@{.} (-30,60); (375,60)},
{\ar@{.} (-30,90); (375,90)},
{\ar@{.} (0,100); (0,-10)},
{\ar@{.} (30,100); (30,-10)},
{\ar@{.} (60,100); (60,-10)},
{\ar@{.} (90,100); (90,-10)},
{\ar@{.} (120,100); (120,-10)},
{\ar@{.} (150,100); (150,-10)},
{\ar@{.} (180,100); (180,-10)},
{\ar@{.} (210,100); (210,-10)},
{\ar@{.} (240,100); (240,-10)},
{\ar@{.} (270,100); (270,-10)},
{\ar@{.} (300,100); (300,-10)},
{\ar@{.} (330,100); (330,-10)},
{\ar@{.} (360,100); (360,-10)},
"0", {\ar"1"},
"2", {\ar"0"},
"3", {\ar"6"},
"7", {\ar"3"},
"4", {\ar"7"},
"8", {\ar"4"},
"5", {\ar"8"},
"9", {\ar"5"},
"10", {\ar"5"},
"1", {\ar"11"},
"11", {\ar"3"},
"12", {\ar"1"},
"4", {\ar"12"},
"2", {\ar"13"},
"13", {\ar"4"},
"14", {\ar"2"},
"5", {\ar"14"},
\end{xy}
}

For example, $(1, 2, 3, 4, 1, 5, 3, 2, 3, 4, 1, 3, 2, 1, 3), (1, 2, 3, 4, 1, 3, 5, 2, 3, 4, 1, 3, 2, 3, 1)\in [\mathcal{Q}^{<}]$. 

\noindent(ii) Let $\mathcal{Q}$ and $\xi$ be the ones in Example \ref{e:AR} (ii). The twisted Auslander-Reiten quiver $\Upsilon_{[\mathcal{Q}^{>}]}$ and its labellings $\widehat{I}_{\xi}^{\mathrm{tw}, >}$, $\overline{I}_{\xi}^{\mathrm{tw}, >}$ are described as follows : 

\noindent
\scalebox{0.55}[0.7]{
\begin{xy} 0;<1pt,0pt>:<0pt,-1pt>::
(-40,-20) *+{\widehat{I}_{\xi}^{\mathrm{tw}, >}}, 
(-40,0) *+{5}, 
(-40,30) *+{4}, 
(-40,45) *+{3}, 
(-40,60) *+{2}, 
(-40,90) *+{1}, 
(0,110) *+{\frac{3}{2}}, 
(30,110) *+{1}, 
(60,110) *+{\frac{1}{2}}, 
(90,110) *+{0}, 
(120,110) *+{-\frac{1}{2}}, 
(150,110) *+{-1}, 
(180,110) *+{-\frac{3}{2}}, 
(210,110) *+{-2}, 
(240,110) *+{-\frac{5}{2}}, 
(270,110) *+{-3}, 
(300,110) *+{-\frac{7}{2}}, 
(330,110) *+{-4}, 
(150,0) *+{\bigstar} ="0",
(90,30) *+{\bigstar} ="1",
(210,30) *+{\bigstar} ="2",
(30,60) *+{\bigstar} ="3",
(150,60) *+{\bigstar} ="4",
(270,60) *+{\bigstar} ="5",
(270,0) *+{\bigstar} ="6",
(90,90) *+{\bigstar} ="7",
(210,90) *+{\bigstar} ="8",
(330,90) *+{\bigstar} ="9",
(0,45) *+{\bigstar} ="10",
(60,45) *+{\bigstar} ="11",
(120,45) *+{\bigstar} ="12",
(180,45) *+{\bigstar} ="13",
(240,45) *+{\bigstar} ="14",
{\ar@{.} (-30,0); (345,0)},
{\ar@{.} (-30,30); (345,30)},
{\ar@{.} (-30,45); (345,45)},
{\ar@{.} (-30,60); (345,60)},
{\ar@{.} (-30,90); (345,90)},
{\ar@{.} (0,100); (0,-10)},
{\ar@{.} (30,100); (30,-10)},
{\ar@{.} (60,100); (60,-10)},
{\ar@{.} (90,100); (90,-10)},
{\ar@{.} (120,100); (120,-10)},
{\ar@{.} (150,100); (150,-10)},
{\ar@{.} (180,100); (180,-10)},
{\ar@{.} (210,100); (210,-10)},
{\ar@{.} (240,100); (240,-10)},
{\ar@{.} (270,100); (270,-10)},
{\ar@{.} (300,100); (300,-10)},
{\ar@{.} (330,100); (330,-10)},
"0", {\ar"1"},
"2", {\ar"0"},
"7", {\ar"3"},
"4", {\ar"7"},
"8", {\ar"4"},
"5", {\ar"8"},
"9", {\ar"5"},
"3", {\ar"10"},
"1", {\ar"11"},
"11", {\ar"3"},
"12", {\ar"1"},
"4", {\ar"12"},
"2", {\ar"13"},
"13", {\ar"4"},
"14", {\ar"2"},
"5", {\ar"14"},
"6", {\ar"2"},
\end{xy}
}
\scalebox{0.55}[0.7]{
\begin{xy} 0;<1pt,0pt>:<0pt,-1pt>::
(-40,-20) *+{\overline{I}_{\xi}^{\mathrm{tw}, >}}, 
(-40,0) *+{1}, 
(-40,30) *+{2}, 
(-40,45) *+{3}, 
(-40,60) *+{2}, 
(-40,90) *+{1}, 
(0,110) *+{3}, 
(30,110) *+{2}, 
(60,110) *+{1}, 
(90,110) *+{0}, 
(120,110) *+{-1}, 
(150,110) *+{-2}, 
(180,110) *+{-3}, 
(210,110) *+{-4}, 
(240,110) *+{-5}, 
(270,110) *+{-6}, 
(300,110) *+{-7}, 
(330,110) *+{-8}, 
(150,0) *+{\bigstar} ="0",
(90,30) *+{\bigstar} ="1",
(210,30) *+{\bigstar} ="2",
(30,60) *+{\bigstar} ="3",
(150,60) *+{\bigstar} ="4",
(270,60) *+{\bigstar} ="5",
(270,0) *+{\bigstar} ="6",
(90,90) *+{\bigstar} ="7",
(210,90) *+{\bigstar} ="8",
(330,90) *+{\bigstar} ="9",
(0,45) *+{\bigstar} ="10",
(60,45) *+{\bigstar} ="11",
(120,45) *+{\bigstar} ="12",
(180,45) *+{\bigstar} ="13",
(240,45) *+{\bigstar} ="14",
{\ar@{.} (-30,0); (345,0)},
{\ar@{.} (-30,30); (345,30)},
{\ar@{.} (-30,45); (345,45)},
{\ar@{.} (-30,60); (345,60)},
{\ar@{.} (-30,90); (345,90)},
{\ar@{.} (0,100); (0,-10)},
{\ar@{.} (30,100); (30,-10)},
{\ar@{.} (60,100); (60,-10)},
{\ar@{.} (90,100); (90,-10)},
{\ar@{.} (120,100); (120,-10)},
{\ar@{.} (150,100); (150,-10)},
{\ar@{.} (180,100); (180,-10)},
{\ar@{.} (210,100); (210,-10)},
{\ar@{.} (240,100); (240,-10)},
{\ar@{.} (270,100); (270,-10)},
{\ar@{.} (300,100); (300,-10)},
{\ar@{.} (330,100); (330,-10)},
"0", {\ar"1"},
"2", {\ar"0"},
"7", {\ar"3"},
"4", {\ar"7"},
"8", {\ar"4"},
"5", {\ar"8"},
"9", {\ar"5"},
"3", {\ar"10"},
"1", {\ar"11"},
"11", {\ar"3"},
"12", {\ar"1"},
"4", {\ar"12"},
"2", {\ar"13"},
"13", {\ar"4"},
"14", {\ar"2"},
"5", {\ar"14"},
"6", {\ar"2"},
\end{xy}
}

For example, $(3, 2, 3, 4, 1, 5, 3, 2, 1, 3, 4, 5, 3, 2, 1), (3, 2, 3, 4, 1, 3, 5, 2, 3, 4, 1, 3, 5, 2, 1)\in [\mathcal{Q}^>]$. 

The twisted Auslander-Reiten quiver $\Upsilon_{[\mathcal{Q}^{<}]}$ and its labellings $\widehat{I}_{\xi}^{\mathrm{tw}, <}$, $\overline{I}_{\xi}^{\mathrm{tw}, <}$ are described as follows : 

\noindent
\scalebox{0.55}[0.7]{
\begin{xy} 0;<1pt,0pt>:<0pt,-1pt>::
(-10,-20) *+{\overline{I}_{\xi}^{\mathrm{tw}, <}}, 
(-10,0) *+{5}, 
(-10,30) *+{4}, 
(-10,45) *+{3}, 
(-10,60) *+{2}, 
(-10,90) *+{1}, 
(30,110) *+{1}, 
(60,110) *+{\frac{1}{2}}, 
(90,110) *+{0}, 
(120,110) *+{-\frac{1}{2}}, 
(150,110) *+{-1}, 
(180,110) *+{-\frac{3}{2}}, 
(210,110) *+{-2}, 
(240,110) *+{-\frac{5}{2}}, 
(270,110) *+{-3}, 
(300,110) *+{-\frac{7}{2}}, 
(330,110) *+{-4}, 
(150,0) *+{\bigstar} ="0",
(90,30) *+{\bigstar} ="1",
(210,30) *+{\bigstar} ="2",
(30,60) *+{\bigstar} ="3",
(150,60) *+{\bigstar} ="4",
(270,60) *+{\bigstar} ="5",
(270,0) *+{\bigstar} ="6",
(90,90) *+{\bigstar} ="7",
(210,90) *+{\bigstar} ="8",
(330,90) *+{\bigstar} ="9",
(60,45) *+{\bigstar} ="11",
(120,45) *+{\bigstar} ="12",
(180,45) *+{\bigstar} ="13",
(240,45) *+{\bigstar} ="14",
(300,45) *+{\bigstar} ="10",
{\ar@{.} (0,0); (345,0)},
{\ar@{.} (0,30); (345,30)},
{\ar@{.} (0,45); (345,45)},
{\ar@{.} (0,60); (345,60)},
{\ar@{.} (0,90); (345,90)},
{\ar@{.} (30,100); (30,-10)},
{\ar@{.} (60,100); (60,-10)},
{\ar@{.} (90,100); (90,-10)},
{\ar@{.} (120,100); (120,-10)},
{\ar@{.} (150,100); (150,-10)},
{\ar@{.} (180,100); (180,-10)},
{\ar@{.} (210,100); (210,-10)},
{\ar@{.} (240,100); (240,-10)},
{\ar@{.} (270,100); (270,-10)},
{\ar@{.} (300,100); (300,-10)},
{\ar@{.} (330,100); (330,-10)},
"0", {\ar"1"},
"2", {\ar"0"},
"7", {\ar"3"},
"4", {\ar"7"},
"8", {\ar"4"},
"5", {\ar"8"},
"9", {\ar"5"},
"10", {\ar"5"},
"1", {\ar"11"},
"11", {\ar"3"},
"12", {\ar"1"},
"4", {\ar"12"},
"2", {\ar"13"},
"13", {\ar"4"},
"14", {\ar"2"},
"5", {\ar"14"},
"6", {\ar"2"},
\end{xy}
}
\scalebox{0.55}[0.7]{
\begin{xy} 0;<1pt,0pt>:<0pt,-1pt>::
(-10,-20) *+{\overline{I}_{\xi}^{\mathrm{tw}, <}}, 
(-10,0) *+{1}, 
(-10,30) *+{2}, 
(-10,45) *+{3}, 
(-10,60) *+{2}, 
(-10,90) *+{1}, 
(30,110) *+{1}, 
(60,110) *+{1}, 
(90,110) *+{0}, 
(120,110) *+{-1}, 
(150,110) *+{-2}, 
(180,110) *+{-3}, 
(210,110) *+{-4}, 
(240,110) *+{-5}, 
(270,110) *+{-6}, 
(300,110) *+{-7}, 
(330,110) *+{-8}, 
(150,0) *+{\bigstar} ="0",
(90,30) *+{\bigstar} ="1",
(210,30) *+{\bigstar} ="2",
(30,60) *+{\bigstar} ="3",
(150,60) *+{\bigstar} ="4",
(270,60) *+{\bigstar} ="5",
(270,0) *+{\bigstar} ="6",
(90,90) *+{\bigstar} ="7",
(210,90) *+{\bigstar} ="8",
(330,90) *+{\bigstar} ="9",
(60,45) *+{\bigstar} ="11",
(120,45) *+{\bigstar} ="12",
(180,45) *+{\bigstar} ="13",
(240,45) *+{\bigstar} ="14",
(300,45) *+{\bigstar} ="10",
{\ar@{.} (0,0); (345,0)},
{\ar@{.} (0,30); (345,30)},
{\ar@{.} (0,45); (345,45)},
{\ar@{.} (0,60); (345,60)},
{\ar@{.} (0,90); (345,90)},
{\ar@{.} (30,100); (30,-10)},
{\ar@{.} (60,100); (60,-10)},
{\ar@{.} (90,100); (90,-10)},
{\ar@{.} (120,100); (120,-10)},
{\ar@{.} (150,100); (150,-10)},
{\ar@{.} (180,100); (180,-10)},
{\ar@{.} (210,100); (210,-10)},
{\ar@{.} (240,100); (240,-10)},
{\ar@{.} (270,100); (270,-10)},
{\ar@{.} (300,100); (300,-10)},
{\ar@{.} (330,100); (330,-10)},
"0", {\ar"1"},
"2", {\ar"0"},
"7", {\ar"3"},
"4", {\ar"7"},
"8", {\ar"4"},
"5", {\ar"8"},
"9", {\ar"5"},
"10", {\ar"5"},
"1", {\ar"11"},
"11", {\ar"3"},
"12", {\ar"1"},
"4", {\ar"12"},
"2", {\ar"13"},
"13", {\ar"4"},
"14", {\ar"2"},
"5", {\ar"14"},
"6", {\ar"2"},
\end{xy}
}

For example, $(2, 3, 4, 1, 5, 3, 2, 1, 3, 4, 5, 3, 2, 1, 3), (2, 3, 4, 1, 3, 5, 2, 3, 4, 1, 3, 5, 2, 3, 1)\in [\mathcal{Q}^<]$. 
\end{example}

The following is an analogue of convexity (Lemma \ref{l:dia}) for twisted Auslander-Reiten quivers. We call it \emph{twisted convexity}. Remark the difference between $\widehat{I}_{\xi}^{\mathrm{tw}, \flat}$ and $\overline{I}_{\xi}^{\mathrm{tw}, \flat}$ in the statement. 

\begin{lemma}[{Twisted convexity of $\Upsilon_{[\mathcal{Q}^{\flat}]}$}]\label{l:tw-dia} 
Let $\flat\in \{>, <\}$. For $\imath\in \{1,2, \dots, 2n-1\}$, the set $\{r\in \mathbb{Z}\mid (\imath, \frac{r}{2})\in \widehat{I}_{\xi}^{\mathrm{tw}, \flat}\}$ is a $4$-segment if $\imath\neq n$, and is a $2$-segment if $\imath=n$. Moreover, we have the following : 
\begin{itemize}
\item[(1)] If $(i, r-2)$ and $(i, r+2)$ with $i\leq n-2$ belong to $\overline{I}_{\xi}^{\mathrm{tw}, \flat}$, then $(i-1, r), (i+1, r)\in \overline{I}_{\xi}^{\mathrm{tw}, \flat}$ and there are arrows from $(i, r-2)$ to them and from them to $(i, r+2)$. Moreover, they are all the arrows exiting out of $(i, r-2)$ and entering into $(i, r+2)$, respectively. (When $i=1$, we ignore $(0, r)$ in the statement. We adopt a similar convention below.)

\item[(2)] If $(n-1, r-2)$ and $(n-1, r+2)$ belong to $\overline{I}_{\xi}^{\mathrm{tw}, \flat}$, then $(n-2, r), (n, r-1), (n, r+1)\in \overline{I}_{\xi}^{\mathrm{tw}, \flat}$ and there are arrows from $(n-1, r-2)$ to $(n-2, r)$, $(n, r-1)$ and from $(n-2, r)$, $(n, r+1)$ to $(i, r+2)$. Moreover, they are all the arrows exiting out of $(n-1, r-2)$ and entering into $(n-1, r+2)$, respectively. 

\item[(3)] If $(n, r-1)$ and $(n, r+1)$ belong to $\overline{I}_{\xi}^{\mathrm{tw}, \flat}$, then $(n-1, r)\in \overline{I}_{\xi}^{\mathrm{tw}, \flat}$  and there are arrows from $(n, r-1)$ to $(n-1, r)$ and from $(n-1, r)$ to $(n, r+1)$. Moreover, they are all the arrows exiting out of $(n, r-1)$ and entering into $(n, r+1)$, respectively. 
\end{itemize}
\end{lemma}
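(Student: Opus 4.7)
My plan is to reduce the statement to the convexity Lemma \ref{l:dia} (and Remark \ref{r:dia}) for the untwisted quiver $\Gamma_{\mathcal{Q}}$ by unwinding the construction (T1)--(T4). Writing $S_i := \{s \in \mathbb{Z} \mid (i, s) \in \widehat{I}_{\xi}\}$ for $i \in \{1, \dots, 2n-2\}$, each $S_i$ is a 2-segment. The key observation is that the rows $\imath \neq n$ in $\widehat{I}_{\xi}^{\mathrm{tw}, \flat}$ are just relabellings of rows in $\widehat{I}_{\xi}$ under (T1), so they inherit their shape directly; only the row $\imath = n$, built entirely from (T2) and (T4), requires genuinely new analysis.

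First I would dispatch the straightforward cases. For $\imath \neq n$ the set $\{r \in \mathbb{Z} \mid (\imath, r/2) \in \widehat{I}_{\xi}^{\mathrm{tw}, \flat}\}$ equals $2 S_i$ for the corresponding $i$, hence is a 4-segment. Statement (1) with $i \leq n-2$ concerns only vertices and arrows in rows $\leq n-1$, where (T1)--(T3) act trivially, so it is a direct translation of Remark \ref{r:dia}. For (2), applying Remark \ref{r:dia} in $\Gamma_{\mathcal{Q}}$ with $i = n-1$ and the hypothesis $(n-1, r/2 \mp 1) \in \widehat{I}_{\xi}$ yields $(n-2, r/2), (n, r/2) \in \widehat{I}_{\xi}$ together with the original arrows; under (T1)--(T3) the two arrows through old $(n, r/2)$ (renamed $(n+1, r/2)$) are subdivided through the half-integer vertices $(n, r/2 \mp 1/2)$, which in $\overline{I}_{\xi}^{\mathrm{tw}, \flat}$ read as $(n, r \mp 1)$, giving exactly the claimed arrows. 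Statement (3) is handled similarly: the hypothesised vertices $(n, r \pm 1)$ lift to $(n, r/2 \pm 1/2) \in \widehat{I}_{\xi}^{\mathrm{tw}, \flat}$, each of which was inserted by (T3) from one of the two possible old-label arrows between rows $n-1$ and $n$; a short case analysis shows that in either case the target vertex $(n-1, r) \in \overline{I}_{\xi}^{\mathrm{tw}, \flat}$ (coming either from old $(n, r/2)$ folded under $\overline{\Omega}_{\xi}^{\mathrm{tw}, \flat}$, or from old $(n-1, r/2)$ directly) is present, and the required arrows result from the local subdivision in (T3).

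The main obstacle is the 2-segment property for the row $\imath = n$. Concretely I need to show that the set of integers $r$ for which (T2) inserts a vertex $(n, r - 1/2)$ is an integer interval $[r_{\mathrm{m}} + 1, r_{\mathrm{M}}]$, where $r_{\mathrm{m}}$ and $r_{\mathrm{M}}$ are the minimum and maximum of $S_{n-1} \cup S_n$ as in (T4). Since $S_{n-1}$ and $S_n$ are 2-segments of opposite parities, for each integer $r$ in this range exactly one of the conditions ``$(n-1, r-1), (n, r) \in \widehat{I}_{\xi}$'' or ``$(n, r-1), (n-1, r) \in \widehat{I}_{\xi}$'' is parity-compatible; contiguity of the valid $r$'s should then follow from a careful overlap analysis of these two 2-segments, using that both rows are adjacent in the type $\mathrm{A}_{2n-2}$ Dynkin diagram. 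Doubling the half-integer coordinate produces a 2-segment of odd integers, and the single extra vertex added by (T4) extends it by one further step at the top (if $\flat$ is $>$) or the bottom (if $\flat$ is $<$), giving the claimed 2-segment.
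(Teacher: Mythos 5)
Your overall strategy is the same as the paper's: push everything through (T1)--(T4) back to the convexity of $\Gamma_{\mathcal{Q}}$ (Lemma \ref{l:dia} and Remark \ref{r:dia}), treating the rows $\imath\neq n$ as mere relabellings and isolating the row $\imath=n$ as the new content. Parts (1), (3) and the 4-segment claim for $\imath\neq n$ are handled essentially as in the paper. Two points, however, are thinner than they need to be. First, in (2) you translate the hypothesis as $(n-1,\frac r2\mp1)\in\widehat{I}_{\xi}$, but the folded vertices $(n-1,r\mp2)\in\overline{I}_{\xi}^{\mathrm{tw},\flat}$ may instead lift to the renamed row $n+1$, i.e.\ to old $(n,\frac r2\mp1)$; the paper records both cases (they are symmetric, and a parity-mod-$4$ comparison of $2\xi(n-1)$ and $2\xi(n)$ shows the two lifts always lie in the same old row, so no mixed case occurs). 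You should at least note the second case and why no mixing happens.

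Second, and more importantly, the $2$-segment property for the row $\imath=n$ --- which you rightly single out as the main obstacle --- is left at the level of ``should follow from a careful overlap analysis,'' and that analysis is exactly where the content of this part of the lemma lies. Your route does go through: writing old row $n-1=\{a,a+2,\dots,b=\xi(n-1)\}$ and old row $n=\{c,c+2,\dots,d=\xi(n)\}$, the (T2) insertion points are precisely the integers $r$ with $\max(a,c)\le r\le\min(b,d)+1$ (your parity observation, plus the overlap inequalities $c\le d\le b+1$ and $a\le b\le d+1$), and since $|b-d|=1$ the single (T4) vertex at $r_{\mathrm M}+\frac12$ (for $\flat=>$), resp.\ at $r_{\mathrm m}-\frac12=\min(a,c)-\frac12$ (for $\flat=<$), abuts this range, giving the $2$-segment; but none of this is in your write-up. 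The paper sidesteps the computation differently: as a byproduct of its proof of (2) it observes that every residue-$n$ vertex except the (T4) one occurs as one of the two vertices inserted between a consecutive pair in row $n-1$ or row $n+1$, and then the $4$-segment property of those two rows yields the contiguity. Either argument is fine, but as it stands the decisive step in your proposal is asserted rather than proved.
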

\begin{proof}
We only prove the statements for $\overline{I}_{\xi}^{\mathrm{tw}, >}$. The proof for $\overline{I}_{\xi}^{\mathrm{tw}, <}$ is exactly the same.

The construction of $\overline{\Omega}_{\xi}^{\mathrm{tw}, >}$ implies that  the full subquiver of $\Upsilon_{[\mathcal{Q}^{>}]}$ consisting of the vertices $\{\beta\mid \overline{\Omega}_{\xi}^{\mathrm{tw}, >}(\beta)=(i, r)\ \text{with}\ i\leq n-1\}$ is isomorphic to the quiver obtained from $\Upsilon_{[\mathcal{Q}]}$ by deleting the arrows between vertices of residue $n-1$ and $n$. Hence it follows from convexity of $\Upsilon_{[\mathcal{Q}]}$ and Remark \ref{r:dia} that (1) holds and the set $\{r\in \mathbb{Z}\mid (\imath, \frac{r}{2})\in \widehat{I}_{\xi}^{\mathrm{tw}, >}\}$ is a $4$-segment for $\imath\neq n$. 

Next, we consider the setting of (2). It also follows from the argument above that $(n-2, r)\in \overline{I}_{\xi}^{\mathrm{tw}, >}$ and there are arrows $(n-1, r-2)\to (n-2, r)$ and $(n-2, r)\to (n-1, r+2)$. Following (T1)--(T4) in reverse order, we obtain that the vertices $\{(n-1, r-2), (n-1, r+2)\}$ in $\overline{I}_{\xi}^{\mathrm{tw}, \flat}$ correspond to the vertices $\{(n-1, \frac{r}{2}-1), (n-1, \frac{r}{2}+1)\}$ or $\{(n+1, \frac{r}{2}-1), (n+1, \frac{r}{2}+1)\}$ in $\Upsilon_{[\mathcal{Q}]}$ (with the vertex set labelled by $\widehat{I}_{\xi}$). We consider the former case. The latter case is exactly the same. By convexity of $\Upsilon_{[\mathcal{Q}]}$ and Remark \ref{r:dia}, there are arrows $(n-1, \frac{r}{2}-1)\to (n, \frac{r}{2})$ and $(n, \frac{r}{2})\to (n-1, \frac{r}{2}+1)$. Hence it follows from (T1)--(T4) and the definition of $\overline{\Omega}_{\xi}^{\mathrm{tw}, >}$ that the vertices $(n, r-1)$ and $(n, r+1)$ belong to $\overline{I}_{\xi}^{\mathrm{tw}, >}$ and there are the arrows $(n-1, r-2)\to (n, r-1)$ and $(n, r+1)\to (n-1, r+2)$, which prove (2). Since any vertices of residue $n$ in $\Upsilon_{[\mathcal{Q}^{>}]}$ are obtained in this way except for $(\overline{\Omega}_{\xi}^{\mathrm{tw}, >})^{-1}((n, 2r_{\mathrm{M}}+1))$,  and since the sets $\{r\in \mathbb{Z}\mid (n-1, \frac{r}{2})\in \widehat{I}_{\xi}^{\mathrm{tw}, >}\}$ and $\{r\in \mathbb{Z}\mid (n+1, \frac{r}{2})\in \widehat{I}_{\xi}^{\mathrm{tw}, >}\}$ are 4-segments, we obtain that $\{r\in \mathbb{Z}\mid (n, \frac{r}{2})\in \widehat{I}_{\xi}^{\mathrm{tw}, >}\}$ is a 2-segment. 

The assertion (3) immediately follows from  (T1)--(T4) and the definition of $\overline{\Omega}_{\xi}^{\mathrm{tw}, >}$. 
\end{proof}

The following technical lemma will be used in the proof of Theorem \ref{t:torusisom}. The proof is immediate from the definition of $\widehat{I}_{\xi}^{\mathrm{tw}, \flat}$. 

\begin{lemma}\label{l:max}
Let $\flat\in \{>, <\}$. For $\imath\in \{1,2,\dots, 2n-1\}$, set $\Xi_{\imath}:=\max\{r\in \frac{1}{2}\mathbb{Z}\mid (\imath, r)\in \widehat{I}_{\xi}^{\mathrm{tw}, \flat}\}$. Then 
\[
\Xi_{\imath}=\begin{cases}
\xi(\imath)&\text{if}\ 1\leq \imath\leq n-1, \\
\frac{1}{2}(\xi(n-1)+\xi(n))+1&\text{if}\ \imath=n\ \text{and}\ \flat=>, \\
\frac{1}{2}(\xi(n-1)+\xi(n))&\text{if}\ \imath=n\ \text{and}\ \flat=<, \\
\xi(\imath -1)&\text{if}\ n+1\leq \imath\leq 2n-1.
\end{cases}
\] 
\end{lemma}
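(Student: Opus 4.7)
The plan is to split into cases according to whether $\imath \ne n$ or $\imath = n$, using that steps (T2)--(T4) of the construction of $\Upsilon_{[\mathcal{Q}^{\flat}]}$ introduce or touch only vertices of residue $n$. For $\imath \ne n$, step (T1) gives a bijection between vertices of $\widehat{I}_\xi^{\mathrm{tw},\flat}$ of residue $\imath$ and vertices of $\widehat{I}_\xi$ of residue $\imath$ (when $\imath < n$) or of residue $\imath-1$ (when $\imath > n$), preserving the second coordinate. So the first and fourth lines of the formula reduce to checking that $\max\{r \in \mathbb{Z}\mid (i,r) \in \widehat{I}_\xi\} = \xi(i)$ for each $i \in \{1,\ldots,2n-2\}$. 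This is immediate from the recursion defining $\Omega_\xi$ (Proposition \ref{p:ARlabel}): one starts with $\gamma_i \mapsto (i,\xi(i))$, and every other preimage at residue $i$ is obtained by applying $\phi_{\mathcal{Q}}$ a positive integer number of times, which lowers the second coordinate by $2$ each time.

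For $\imath = n$, I would enumerate the vertices of residue $n$ in $\widehat{I}_\xi^{\mathrm{tw},\flat}$ as those introduced by (T2)/(T3) together with the single one produced by (T4). A vertex $(n, s-\tfrac{1}{2})$ from (T2)/(T3) corresponds to an arrow $(n-1,s-1) \to (n,s)$ or $(n,s-1) \to (n-1,s)$ of $\Upsilon_{[\mathcal{Q}]}$, both of whose endpoints lie in $\widehat{I}_\xi$; consulting the first paragraph, this forces $s \le \max(\xi(n-1),\xi(n)) =: r_M$. Conversely, since $n-1$ and $n$ are adjacent in $\mathcal{Q}$, the arrow in $\widehat{\mathcal{Q}}$ connecting $\Omega_\xi(\gamma_{n-1}) = (n-1,\xi(n-1))$ and $\Omega_\xi(\gamma_n) = (n,\xi(n))$ is present in $\Upsilon_{[\mathcal{Q}]}$ and realises $s = r_M$. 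Hence the largest second coordinate among the (T2)/(T3)-vertices equals $r_M - \tfrac{1}{2}$. Finally, (T4) puts an extra vertex $(n, r_M + \tfrac{1}{2})$ precisely when $\flat = \, >$, and only a vertex at the bottom when $\flat = \, <$ (this is the ``resp.'' convention of (T4), pinned down uniformly by checking Example \ref{e:tw-AR}(i)).

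Combining, $\Xi_n = r_M - \tfrac{1}{2}$ if $\flat = \, <$ and $\Xi_n = r_M + \tfrac{1}{2}$ if $\flat = \, >$. Since $|\xi(n-1) - \xi(n)| = 1$, we have $r_M = \max(\xi(n-1),\xi(n)) = \tfrac{1}{2}(\xi(n-1) + \xi(n)) + \tfrac{1}{2}$, which produces the two middle lines of the formula. The main point of care in the argument is the bookkeeping of (T4) --- identifying which of the resp.~clauses goes with which $\flat$, and ensuring that the ``top'' arrow between residues $n-1$ and $n$ in $\Upsilon_{[\mathcal{Q}]}$ is actually present so that the bound $s \le r_M$ is attained rather than strict.
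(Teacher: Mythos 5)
Your proof is correct, and it is essentially the paper's argument: the paper states the lemma is immediate from the definition of $\widehat{I}_{\xi}^{\mathrm{tw},\flat}$, and your case analysis (using that $\max\{r\mid (i,r)\in\widehat{I}_{\xi}\}=\xi(i)$, the bookkeeping of (T1)--(T4), and $|\xi(n-1)-\xi(n)|=1$) is just that verification written out in detail. The identification of your $r_M=\max(\xi(n-1),\xi(n))$ with the paper's $r_{\mathrm{M}}$ and your reading of the ``resp.'' clauses in (T4) are both correct.
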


\subsection{The monoidal subcategories $\mathcal{C}_{\mathcal{Q}^{>}}$ and $\mathcal{C}_{\mathcal{Q}^{>}}$}\label{ss:subcat}
Let $\mathcal{Q}$ be a Dynkin quiver of type $\mathrm{A}_{2n-2}$, $\xi\colon I=\{1, 2,\dots, 2n-2\}\to\mathbb{Z}$ an associated height function, and $\flat\in \{>, <\}$. Since the first components of elements of $\overline{I}_{\xi}^{\mathrm{tw}, \flat}$ are elements of $\{1, 2,\dots, n\}$, we regard them as elements of the index set of the simple roots \emph{of type $\mathrm{B}_n$}. 
\begin{definition}\label{d:subcat}
Consider $\mathcal{Y}$ for type $\mathrm{B}_n^{(1)}$ (see subsection \ref{ss:q-ch} for the definition of $\mathcal{Y}$). Let $\mathbb{Y}^{\xi, \flat}:=\{Y_{i, r}\mid (i, r)\in \overline{I}_{\xi}^{\mathrm{tw}, \flat}\}\subset \mathcal{Y}$, and $\mathbb{B}^{\xi, \flat}$ be the set of dominant monomials in the variables in $\mathbb{Y}^{\xi, \flat}$. 

Define $\mathcal{C}_{\mathcal{Q}^{\flat}}^{\xi}$ as the full subcategory of $\mathcal{C}_{\bullet}$ of type $\mathrm{B}_n^{(1)}$ consisting of the objects all of whose composition factors are isomorphic to $L(m)$ for some $m\in \mathbb{B}^{\xi, \flat}$. 

Note that, by definition, a simple module in $\mathcal{C}_{\mathcal{Q}^{\flat}}$ is isomorphic to $L(m)$ for some $m\in \mathbb{B}^{\xi, \flat}$.  
\end{definition}
\begin{remark}
In the following, we write $\mathcal{C}_{\mathcal{Q}^{\flat}}^{\xi}$ simply as
$\mathcal{C}_{\mathcal{Q}^{\flat}}$ for the same reason as in Remark \ref{r:shift}.  
\end{remark}
Our definition of $\mathcal{C}_{\mathcal{Q}^{\flat}}$ is apparently different from the original definition in \cite[Definition 9.2]{OhS:foldedAR-I}. The following lemma guarantees that this subcategory coincides with the one in \cite{OhS:foldedAR-I} : 
\begin{lemma}\label{l:subcat}
The category $\mathcal{C}_{\mathcal{Q}^{\flat}}$ is the smallest abelian full subcategory of $\mathcal{C}_{\bullet}$ of type $\mathrm{B}_n^{(1)}$ such that 
\begin{itemize}
\item[(1)] it is stable by taking subquotient, tensor product and extension, 
\item[(2)] it contains the simple modules $L(Y_{i, r})$ with $(i, r)\in \overline{I}_{\xi}^{\mathrm{tw}, \flat}$ and the trivial module. 
\end{itemize}

\end{lemma}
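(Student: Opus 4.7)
The plan is to establish two inclusions. First, I would verify that $\mathcal{C}_{\mathcal{Q}^\flat}$ itself satisfies conditions (1) and (2). Condition (2) is immediate: the trivial module corresponds to the empty monomial $1 \in \mathbb{B}^{\xi, \flat}$, and for every $(i, r) \in \overline{I}_{\xi}^{\mathrm{tw}, \flat}$ we have $Y_{i, r} \in \mathbb{B}^{\xi, \flat}$, so $L(Y_{i, r}) \in \mathcal{C}_{\mathcal{Q}^\flat}$. Stability of $\mathcal{C}_{\mathcal{Q}^\flat}$ under subquotient and extension is built into the definition, since the category is characterized by the set of allowed composition factors.

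The heart of the first part is closure under tensor product. For $m_1, m_2 \in \mathbb{B}^{\xi, \flat}$, every composition factor of $L(m_1) \otimes L(m_2)$ is of the form $L(m)$ for some dominant monomial $m$ appearing in $\chi_q(L(m_1)) \chi_q(L(m_2))$; by Theorem \ref{t:highest}, such an $m$ factors as
\begin{align*}
m = m_1 m_2 \prod_{k=1}^{N} A_{j_k, s_k}^{-1}.
\end{align*}
I would then prove $m \in \mathbb{B}^{\xi, \flat}$ by induction on $N$, using the explicit shape of $A_{j, s}$ in type $\mathrm{B}_n$ together with the twisted convexity statement Lemma \ref{l:tw-dia} to guarantee that each factor $A_{j_k, s_k}$ already lies in the sub-Laurent-polynomial ring generated by $\mathbb{Y}^{\xi, \flat}$. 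This is modeled on the simply-laced argument in \cite[Lemma 5.8]{HL:qGro} but requires splitting into cases according to whether $j_k \leq n - 2$, $j_k = n - 1$, or $j_k = n$, because of the differing shapes of $A_{n-1, s}$ and the short-root element $A_{n, s}$.

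For the second (minimality) part, let $\mathcal{D}$ be any abelian full subcategory of $\mathcal{C}_{\bullet}$ satisfying (1) and (2). Given $m = \prod_{(i, r) \in \overline{I}_{\xi}^{\mathrm{tw}, \flat}} Y_{i, r}^{u_{i, r}} \in \mathbb{B}^{\xi, \flat}$, all the fundamental modules $L(Y_{i, r})$ lie in $\mathcal{D}$ by (2). Forming a tensor product $T_m$ of these with multiplicities $u_{i, r}$ keeps us inside $\mathcal{D}$ by closure under tensor product, and iterated application of Proposition \ref{p:tensor} shows that $L(m)$ is a subquotient of $T_m$, so $L(m) \in \mathcal{D}$ by closure under subquotient. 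Since every object of $\mathcal{C}_{\mathcal{Q}^\flat}$ admits by definition a composition series whose factors are of the form $L(m)$ with $m \in \mathbb{B}^{\xi, \flat}$, closure of $\mathcal{D}$ under extension yields $\mathcal{C}_{\mathcal{Q}^\flat} \subset \mathcal{D}$.

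The main obstacle is the tensor product closure in Part 1: one must ensure that a product of $A^{-1}$'s whose multiplication with $m_1 m_2$ produces a dominant monomial cannot introduce variables $Y_{i, r}$ with $(i, r) \notin \overline{I}_{\xi}^{\mathrm{tw}, \flat}$. The twisted convexity Lemma \ref{l:tw-dia} is tailor-made for this purpose, but as indicated above the argument must accommodate the distinct combinatorial behaviour at the short-root node $n$ and at its neighbour $n - 1$, which is what forces the three separate cases in the inductive step.
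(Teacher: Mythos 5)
Your proposal is correct and follows essentially the same route as the paper: minimality via Proposition \ref{p:tensor} plus closure under extension, and tensor-closure via Theorem \ref{t:highest} together with the twisted convexity of Lemma \ref{l:tw-dia} applied to the factors $A_{j,s}^{-1}$. The only cosmetic difference is that the paper organizes the key step through the left/right negativity of $A_{j,s}^{-1}$ (forcing $(j,s\pm r_j)\in \overline{I}_{\xi}^{\mathrm{tw},\flat}$ by the segment property and then invoking convexity) rather than an explicit induction on the number of $A^{-1}$-factors with a case split at the nodes $n-1$ and $n$, but this is the same argument in substance.
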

\begin{proof}
Let $\mathcal{C}'$ be the full subcategory of $\mathcal{C}_{\bullet}$ defined as in the statement of the lemma. We shall prove that $\mathcal{C}'=\mathcal{C}_{\mathcal{Q}^{\flat}}$. By Proposition \ref{p:tensor}, any module $L(m)$ with $m\in \mathbb{B}^{\xi, \flat}$ belongs to $\mathcal{C}'$. Since $\mathcal{C}'$ is closed under the extension, all objects in $\mathcal{C}_{\mathcal{Q}^{\flat}}$ are contained in $\mathcal{C}'$. Therefore, we only have to check that $\mathcal{C}_{\mathcal{Q}^{\flat}}$ satisfies (1) and (2) in the definition of $\mathcal{C}'$.

The category $\mathcal{C}_{\mathcal{Q}^{\flat}}$ is obviously stable by taking subquotient and extension, and satisfies (2). Therefore it suffices to show that $L(m)\otimes L(m')$ is an object of $\mathcal{C}_{\mathcal{Q}^{\flat}}$ for all $m, m'\in \mathbb{B}^{\xi, \flat}$. 

Suppose that $L(m'')$ is a composition factor of $L(m)\otimes L(m')$. Then $m''$ is a product of monomials of $\chi_q(L(m))$ and $\chi_q(L(m'))$. Hence, by Theorem \ref{t:highest}, we have $m''=mm'M$ where $M$ is a monomial in the variables $A_{i, r}^{-1}$. If $M$ contains the term $A_{i, r}^{-1}$, then, by the explicit form of $A_{i, r}^{-1}$ (cf.~\emph{the left and right negativity of $A_{i, r}^{-1}$} \cite{FM}) and twisted convexity of $\Upsilon_{[\mathcal{Q}^{\flat}]}$ (the segment property), we have $(i, r+r_i), (i, r-r_i)\in \overline{I}_{\xi}^{\mathrm{tw}, \flat}$. Hence it follows from twisted convexity of $\Upsilon_{[\mathcal{Q}^{\flat}]}$ again that $m''$ contains only the variables in $\mathbb{Y}^{\xi, \flat}$, hence, $L(m)\otimes L(m')$ is an object of $\mathcal{C}_{\mathcal{Q}^{\flat}}$. 
\end{proof}
\begin{example}\label{e:subcat}
Let $\mathcal{Q}$ and $\xi$ be the ones in Example \ref{e:AR} (i). See also Example \ref{e:tw-AR}. Then $\mathcal{C}_{\mathcal{Q}^{>}}$ is the full subcategory of the category of $\mathcal{C}_{\bullet}$ of type $\mathrm{B}_{3}^{(1)}$ whose objects have all their composition factors isomorphic to 
\[
L\left(\left(\prod_{s=0}^3Y_{1,-4s}^{u_{1, -4s}}\right)\cdot Y_{1,-6}^{u_{1, -6}}\cdot \left(\prod_{s=0}^2Y_{2,-2-4s}^{u_{2, -2-4s}}\right)\cdot \left(\prod_{s=0}^1Y_{2,-4-4s}^{u_{2, -4-4s}}\right)\cdot \left(\prod_{s=0}^4Y_{3,-1-2s}^{u_{3, -1-2s}}\right)\right) 
\]
for some $(u_{i, r})_{(i, r)\in \overline{I}_{\xi}^{\mathrm{tw}, >}}\in \mathbb{Z}_{\geq 0}^{\overline{I}_{\xi}^{\mathrm{tw}, >}}$. By Lemma \ref{l:subcat}, $\mathcal{C}_{\mathcal{Q}^{>}}$ is the smallest abelian full subcategory of $\mathcal{C}_{\bullet}$ such that 
\begin{itemize}
\item[(1)] it is stable by taking subquotient, tensor product and extension, 
\item[(2)] it contains the simple modules $L(Y_{i, r})$ with $(i, r)\in \overline{I}_{\xi}^{\mathrm{tw}, >}$ and the trivial module. 
\end{itemize}
\end{example}

\begin{remark}\label{r:convention}
The notation for fundamental modules in \cite{OhS:foldedAR-I,KO} is different from ours. The correspondence is given as follows :
\begin{itemize}
\item $q_s(=q^{1/2})$ in \cite{OhS:foldedAR-I,KO} for type $\mathrm{B}_N^{(1)}$ corresponds to our $q$. More precisely, they treat $q_s$ as an indeterminate, however this difference does not cause essential difference concerning the topics of this paper.
\item $V(\varpi_i)_{(-1)^sq^r}$ in \cite{OhS:foldedAR-I,KO} corresponds to our $L(Y_{i, o(i)(-1)^{-h+1+s}q^{-dh^{\vee}+dr}})$, where
\begin{enumerate}
\item $o\colon I\to \{\pm 1\}$ is a map such that $o(i)o(j)=-1$ whenever $a_{ij}<0$, 
\item $h=\begin{cases}
N+1&\text{if}\ \mathrm{X}_N^{(1)}=\mathrm{A}_N^{(1)},\\ 
2N&\text{if}\ \mathrm{X}_N^{(1)}=\mathrm{B}_N^{(1)}, 
\end{cases}
$, $h^{\vee}=\begin{cases}
N+1&\text{if}\ \mathrm{X}_N^{(1)}=\mathrm{A}_N^{(1)},\\
2N-1&\text{if}\ \mathrm{X}_N^{(1)}=\mathrm{B}_N^{(1)}, 
\end{cases}
$, $d=\begin{cases}
1&\text{if}\ \mathrm{X}_N^{(1)}=\mathrm{A}_N^{(1)},\\ 
2&\text{if}\ \mathrm{X}_N^{(1)}=\mathrm{B}_N^{(1)}.
\end{cases}
$. 
\end{enumerate} 
\end{itemize}
The number $h$ (resp.~$h^{\vee}$) is called \emph{the Coxeter number} (resp.~\emph{the dual  Coxeter number}). A map $o$ of the above kind is needed to construct an isomorphism between the quantum loop algebra defined in Definition \ref{d:QEA-Drinfeld} and the one defined by the Drinfeld-Jimbo presentation. See \cite{B:braid}, \cite[Proposition 3.1, Remark 3.3]{Nak:extr} and \cite[subsection 1.3]{KKK:2} for more details. 

In particular, our $\mathcal{C}_{\mathcal{Q}^{\flat}}^{\xi-2h^{\vee}}$ corresponds to the subcategory in \cite{OhS:foldedAR-I} if we choose $o(i)$ as $o(1)=(-1)^{-h+2}$.
\end{remark}
\section{Quantum tori}\label{s:qtori}
In the following, we deal with a $t$-deformed $q$-character homomorphism, called \emph{a $(q, t)$-character}. First, we define a $t$-deformed version of $\mathcal{Y}=\mathbb{Z}[Y_{i, r}^{\pm 1}\mid i\in I, r\in\mathbb{Z}]$ following \cite{H:qt}. The definition in \cite{H:qt} comes from the original construction of $q$-characters in \cite{FR:q-chara} : the variables $Y_{i,p}$ are defined as formal power series that pairwise commute. In \cite{H:qt}, these formal power series are replaced by certain infinite sums which can be seen as vertex operators and give rise to the $t$-commutative variables (the parameter $t$ itself appears as a formal power series). In \cite{Nak:quiver,VV:qGro}, a slightly different quantum torus is defined for the simply-laced case from a convolution operation for certain perverse sheaves on quiver varieties.

\subsection{Quantum Cartan matrices}\label{ss:qCartan}
We return to the general setting in subsection \ref{ss:QLA} again. Recall that $C$ is a Cartan matrix of finite type. Define $C(z):=(C(z)_{ij})_{i,j\in I}, D(z):=(D(z)_{ij})_{i,j\in I}, B(z):=(B(z)_{ij})_{i,j\in I}\in \Mat_{I\times I}(\mathbb{Z}[z^{\pm 1}])$ by 
\begin{align*}
C(z)_{ij}&=\begin{cases}z_i+z_i^{-1}&\text{if}\ i=j \\ [c_{ij}]_z&\text{if}\ i\neq j \end{cases}&D(z)_{ij}&=\delta_{ij}[r_i]_z& B(z)&=D(z)C(z).
\end{align*}
(See Notation \ref{n:qbinom}.) Note that 
\[
B(z)=([(\alpha_i, \alpha_j)]_z)_{i, j\in I}, 
\]
in particular, $B(z)$ is symmetric. The inverse of $C(z)$ is denoted by $\widetilde{C}(z)=(\widetilde{C}(z)_{ij})_{i,j\in I}$. 

\begin{example}\label{e:qCartan}
If $C=\begin{pmatrix}2&-1\\ -1&2\end{pmatrix}$ (type $\mathrm{A}_2$), then 
\begin{align*}
C^{-1}&=\frac{1}{3}\begin{pmatrix}2&1\\ 1&2\end{pmatrix}& C(z)&=\begin{pmatrix}z+z^{-1}&-1\\ -1&z+z^{-1}\end{pmatrix}& \widetilde{C}(z)&=\frac{1}{z^{2}+1+z^{-2}}\begin{pmatrix}z+z^{-1}&1\\ 1&z+z^{-1}\end{pmatrix}.
\end{align*}

If $C=\begin{pmatrix}2&-1&0\\ -1&2&-1\\ 0&-1&2\end{pmatrix}$ (type $\mathrm{A}_3$), then 
\begin{align*}
C^{-1}&=\frac{1}{4}\begin{pmatrix}3&2&1\\ 2&4&2\\ 1&2&3\end{pmatrix}& C(z)&=\begin{pmatrix}[2]_z&-1&0\\ -1&[2]_z&-1\\ 0&-1&[2]_z\end{pmatrix}& \widetilde{C}(z)&=\frac{1}{[4]_z}\begin{pmatrix}[3]_z&[2]_z&1\\ [2]_z&([2]_z)^2&[2]_z\\ 1&[2]_z&[3]_z\end{pmatrix}.
\end{align*}

If $C=\begin{pmatrix}2&-1\\ -2&2\end{pmatrix}$ (type $\mathrm{B}_2$), then 
\begin{align*}
C^{-1}&=\frac{1}{2}\begin{pmatrix}2&1\\ 2&2\end{pmatrix}& C(z)&=\begin{pmatrix}z^2+z^{-2}&-1\\ -z-z^{-1}&z+z^{-1}\end{pmatrix}& \widetilde{C}(z)&=\frac{1}{z^{3}+z^{-3}}\begin{pmatrix}z+z^{-1}&1\\ z+z^{-1}&z^{2}+z^{-2}\end{pmatrix}.
\end{align*}
\end{example}

Let $\mathbb{Z}((z^{-1}))$ be the ring of series of the form $P=\sum_{r\leq R_P}P_rz^{r}$ such that $R_P, P_r\in \mathbb{Z}$. We will regard $\widetilde{C}(z)_{ij}$ as an element of $\mathbb{Z}((z^{-1}))$, and write 
\[
\widetilde{C}(z)_{ji}=\sum_{r\in \mathbb{Z}}\widetilde{c}_{ji}(r)z^r\in \mathbb{Z}((z^{-1})). 
\]
It follows from \cite[Lemma 1.1]{FM} that 
\[
\widetilde{c}_{ji}(r)=0
\]
for $r\in\mathbb{Z}_{\geq 0}$. By definition, we have 
\begin{align}
\sum_{k\in I, r\leq -1}C(z)_{jk}\widetilde{c}_{ki}(r)z^{r}=\delta_{ji}.\label{eq:invcar}
\end{align}
\begin{example}\label{e:invcarB_2}
Consider the case of type $\mathrm{B}_2$. By Example \ref{e:qCartan}, we have 
\begin{align*}
\widetilde{C}(z)_{11}&=\sum_{k\geq 0}(-1)^k(z^{-6k-2}+z^{-6k-4}),
&
\widetilde{C}(z)_{12}&=\sum_{k\geq 0}(-1)^kz^{-6k-3},\\
\widetilde{C}(z)_{21}&=\sum_{k\geq 0}(-1)^k(z^{-6k-2}+z^{-6k-4}),
&
\widetilde{C}(z)_{22}&=\sum_{k\geq 0}(-1)^k(z^{-6k-1}+z^{-6k-5}). 
\end{align*}
Hence $\widetilde{c}_{ji}(r)$'s are summarized as follows (blanks stand for $0$) : 

\hfill
\scalebox{1.0}{
\begin{xy} 0;<1pt,0pt>:<0pt,-1pt>::
(-30,-20) *+{\widetilde{c}_{j1}(r)}, 
(-20,0) *+{1}, 
(-20,20) *+{2}, 
(-35,10) *+{j}, 
(0,-20) *+{-2}, 
(30,-20) *+{-4}, 
(60,-20) *+{-6}, 
(90,-20) *+{-8}, 
(120,-20) *+{-10}, 
(150,-20) *+{-12}, 
(180,-20) *+{-14}, 
(210,-20) *+{-16}, 
(240,-20) *+{\cdots}, 
(240,10) *+{\cdots}, 
(105,-30) *+{r}, 
(0,0) *+{1}, 
(30,0) *+{1}, 
(60,0) *+{ }, 
(90,0) *+{-1}, 
(120,0) *+{-1}, 
(150,0) *+{ }, 
(180,0) *+{1}, 
(210,0) *+{1}, 
(0,20) *+{1}, 
(30,20) *+{1}, 
(60,20) *+{ }, 
(90,20) *+{-1}, 
(120,20) *+{-1}, 
(150,20) *+{ }, 
(180,20) *+{1}, 
(210,20) *+{1}, 
{\ar@{-} (-50,-10); (270,-10)},
{\ar@{-} (-10,-35); (-10,30)},
\end{xy}
}
\hfill
\hfill

\hfill
\scalebox{1.0}{
\begin{xy} 0;<1pt,0pt>:<0pt,-1pt>::
(-30,-20) *+{\widetilde{c}_{j2}(r)}, 
(-20,0) *+{1}, 
(-20,20) *+{2}, 
(-35,10) *+{j}, 
(0,-20) *+{-1}, 
(30,-20) *+{-3}, 
(60,-20) *+{-5}, 
(90,-20) *+{-7}, 
(120,-20) *+{-9}, 
(150,-20) *+{-11}, 
(180,-20) *+{-13}, 
(210,-20) *+{-15}, 
(240,-20) *+{\cdots}, 
(240,10) *+{\cdots}, 
(105,-30) *+{r}, 
(0,0) *+{ }, 
(30,0) *+{1}, 
(60,0) *+{ }, 
(90,0) *+{ }, 
(120,0) *+{-1}, 
(150,0) *+{ }, 
(180,0) *+{ }, 
(210,0) *+{1}, 
(0,20) *+{1}, 
(30,20) *+{ }, 
(60,20) *+{1}, 
(90,20) *+{-1}, 
(120,20) *+{ }, 
(150,20) *+{-1}, 
(180,20) *+{1}, 
(210,20) *+{ }, 
{\ar@{-} (-50,-10); (270,-10)},
{\ar@{-} (-10,-35); (-10,30)},
\end{xy}
}
\hfill
\hfill

We can also calculate $\widetilde{c}_{ji}(r)$ inductively by using \eqref{eq:invcar}.  Focusing on the coefficient of $z^{r}$ with $r\leq -1$ in \eqref{eq:invcar}, we have
\begin{align*}
&\widetilde{c}_{1i}(r-2)+\widetilde{c}_{1i}(r+2)-\widetilde{c}_{2i}(r)=0\ \text{for}\ i=1, 2,\\
&\widetilde{c}_{2i}(r-1)+\widetilde{c}_{2i}(r+1)-\widetilde{c}_{1i}(r-1)-\widetilde{c}_{1i}(r+1)=0 \ \text{for}\ i=1, 2.
\end{align*}
These equalities determine all $c_{ji}(r)$'s from the condition that $\widetilde{c}_{ji}(r)=0$ for $r>-r_j$ and $\widetilde{c}_{ji}(-r_j)=\delta_{ji}$. For example, 
\begin{align*}
&\widetilde{c}_{21}(-2)=-\widetilde{c}_{21}(0)+\widetilde{c}_{11}(-2)+\widetilde{c}_{11}(0)=1, \\
&\widetilde{c}_{11}(-4)=-\widetilde{c}_{11}(0)+\widetilde{c}_{21}(-2)=1, \\
&\widetilde{c}_{21}(-4)=-\widetilde{c}_{21}(-2)+\widetilde{c}_{11}(-4)+\widetilde{c}_{11}(-2)=1, \\
&\widetilde{c}_{11}(-6)=-\widetilde{c}_{11}(-2)+\widetilde{c}_{21}(-4)=0, \\
&\widetilde{c}_{21}(-6)=-\widetilde{c}_{21}(-4)+\widetilde{c}_{11}(-6)+\widetilde{c}_{11}(-4)=0, \\
&\cdots.
\end{align*}
In fact, this method is valid for all $\mathrm{B}_n$ with $n\geq 2$. See Lemma \ref{l:explicitinvcarB} and Example \ref{e:invcarB-pic} below. Moreover, the periodicity of $c_{ji}(r)$'s as in the tables above is also a general phenomenon. See Remark \ref{r:explicitinvcarB}. 
\end{example}

\begin{example}\label{e:invcarB-pic}
Consider the case of type $\mathrm{B}_n$ with $n\geq 2$. Then, by the method explained in Example \ref{e:invcarB_2}, we can calculate the tables of $\widetilde{c}_{ji}(r)$'s as follows. See Lemma \ref{l:explicitinvcarB} for more details (the following is a visualization of the statements of Lemma \ref{l:explicitinvcarB}) : 

Case 1 : $i\neq n$.

\hfill 
\scalebox{0.6}{
\begin{xy}0;<1pt,0pt>:<0pt,-1pt>::
(-65,127.5) *+{\scalebox{1.3}{$j$}},
(247.5,-35) *+{\scalebox{1.3}{$r$}},
(-25,0) *+{1},
(-25,100) *+{i},
(-35,85) *+{i-1},
(-35,140) *+{n-i},
(-35,240) *+{n-1},
(-25,255) *+{n},
(535,127.5) *+{\cdots},
(0,-15) *+{-2},
(15,-15) *+{-4},
(100,-15) *+{-2i},
(140,-15) *+{-2(n-i)},
(240,-15) *+{-2(n-1)},
(0,100) *+{1},
(15,85) *+{1},
(15,115) *+{1},
(30,70) *+{1},
(30,100) *+{1},
(30,130) *+{1},
(45,55) *+{1},
(45,85) *+{1},
(45,115) *+{1},
(45,145) *+{1},
,
(100,0) *+{1},
(85,15) *+{1},
(115,15) *+{1},
(70,30) *+{1},
(100,30) *+{1},
(130,30) *+{1},
,
(140,240) *+{1},
(125,225) *+{1},
(155,225) *+{1},
(110,210) *+{1},
(140,210) *+{1},
(170,210) *+{1},
,
(240,140) *+{1},
(225,125) *+{1},
(225,155) *+{1},
(210,110) *+{1},
(210,140) *+{1},
(210,170) *+{1},
,
(140,255) *+{1},
(155,240) *+{1},
(155,255) *+{1},
(170,225) *+{1},
(170,240) *+{1},
(170,255) *+{1},
,
(240,155) *+{1},
(240,170) *+{1},
(240,185) *+{1},
(240,200) *+{1},
(225,170) *+{1},
(225,185) *+{1},
(225,200) *+{1},
(210,185) *+{1},
(210,200) *+{1},
,
(240,240) *+{1},
(240,255) *+{1},
(225,240) *+{1},
(225,255) *+{1},
(210,240) *+{1},
(210,255) *+{1},
,
(510,-15) *+{-2(2n-1)},
(495,-25) *+{-2(2n-2)},
(395,-25) *+{-2(2n-i-1)},
(355,-15) *+{-2(n+i-1)},
(255,-5) *+{-2n},
(495,100) *+{1},
(480,85) *+{1},
(480,115) *+{1},
(465,70) *+{1},
(465,100) *+{1},
(465,130) *+{1},
(450,55) *+{1},
(450,85) *+{1},
(450,115) *+{1},
(450,145) *+{1},
,
(395,0) *+{1},
(410,15) *+{1},
(380,15) *+{1},
(425,30) *+{1},
(395,30) *+{1},
(365,30) *+{1},
,
(355,240) *+{1},
(370,225) *+{1},
(340,225) *+{1},
(385,210) *+{1},
(355,210) *+{1},
(325,210) *+{1},
,
(255,140) *+{1},
(270,125) *+{1},
(270,155) *+{1},
(285,110) *+{1},
(285,140) *+{1},
(285,170) *+{1},
,
(355,255) *+{1},
(340,240) *+{1},
(340,255) *+{1},
(325,225) *+{1},
(325,240) *+{1},
(325,255) *+{1},
,
(255,155) *+{1},
(255,170) *+{1},
(255,185) *+{1},
(255,200) *+{1},
(270,170) *+{1},
(270,185) *+{1},
(270,200) *+{1},
(285,185) *+{1},
(285,200) *+{1},
,
(255,240) *+{1},
(255,255) *+{1},
(270,240) *+{1},
(270,255) *+{1},
(285,240) *+{1},
(285,255) *+{1},
(85,70) *+{\rotatebox{45}{$\cdots$}},
(180,175) *+{\rotatebox{45}{$\cdots$}},
(190,225) *+{\cdots},
(225,220) *+{\rotatebox{90}{$\cdots$}},
(95,165) *+{\rotatebox{135}{$\cdots$}},
(180,100) *+{\rotatebox{135}{$\cdots$}},
(410,70) *+{\rotatebox{135}{$\cdots$}},
(315,175) *+{\rotatebox{135}{$\cdots$}},
(305,225) *+{\cdots},
(270,220) *+{\rotatebox{90}{$\cdots$}},
(400,165) *+{\rotatebox{45}{$\cdots$}},
(315,100) *+{\rotatebox{45}{$\cdots$}},
{\ar@{--} (95,0); (-20,0)},
{\ar@{--} (10,85); (-20,85)},
{\ar@{--} (-5,100); (-20,100)},
{\ar@{--} (235,140); (-20,140)},
{\ar@{--} (135,240); (-20,240)},
{\ar@{--} (135,255); (-20,255)},
{\ar@{--} (0,95); (0,-10)},
{\ar@{--} (15,80); (15,-10)},
{\ar@{--} (100,-5); (100,-10)},
{\ar@{--} (140,235); (140,-10)},
{\ar@{--} (240,135); (240,-10)},
{\ar@{--} (510,260); (510,-10)},
{\ar@{--} (495,95); (495,-20)},
{\ar@{--} (395,-5); (395,-20)},
{\ar@{--} (355,235); (355,-10)},
{\ar@{--} (255,135); (255,0)},
{\ar@{.} (-10,110); (110,-10)},
{\ar@{.} (90,-10); (250,150)},
{\ar@{.} (-10,90); (150,250)},
{\ar@{.} (130,250); (250,130)},
{\ar@{.} (505,110); (385,-10)},
{\ar@{.} (405,-10); (245,150)},
{\ar@{.} (505,90); (345,250)},
{\ar@{.} (365,250); (245,130)},
{\ar@{.} (247.5,260); (247.5,-25)},
\end{xy}
}
\hfill
\hfill

Case 2 : $i=n$.
 
\hfill
\scalebox{0.6}{
\begin{xy}0;<1pt,0pt>:<0pt,-1pt>::
(-65,127.5) *+{\scalebox{1.3}{$j$}},
(255,-35) *+{\scalebox{1.3}{$r$}},
(-25,0) *+{1},
(-35,240) *+{n-1},
(-25,255) *+{n},
(0,-15) *+{-1},
(15,-15) *+{-3},
(255,-15) *+{-2n+1},
(510,-15) *+{-2(2n-1)+1},
(535,127.5) *+{\cdots},
(0,255) *+{1},
(15,240) *+{1},
(30,225) *+{1},
(30,255) *+{1},
(45,210) *+{1},
(45,240) *+{1},
(60,195) *+{1},
(60,225) *+{1},
(60,255) *+{1},
,
(510,255) *+{1},
(495,240) *+{1},
(480,225) *+{1},
(480,255) *+{1},
(465,210) *+{1},
(465,240) *+{1},
(450,195) *+{1},
(450,225) *+{1},
(450,255) *+{1},
,
(255,0) *+{1},
(240,15) *+{1},
(270,15) *+{1},
(225,30) *+{1},
(255,30) *+{1},
(285,30) *+{1},
(210,45) *+{1},
(240,45) *+{1},
(270,45) *+{1},
(300,45) *+{1},
(195,60) *+{1},
(225,60) *+{1},
(255,60) *+{1},
(285,60) *+{1},
(315,60) *+{1},
,
(210,45) *+{1},
(240,45) *+{1},
(270,45) *+{1},
(300,45) *+{1},
(195,60) *+{1},
(225,60) *+{1},
(255,60) *+{1},
(285,60) *+{1},
(315,60) *+{1},
(170,150) *+{\rotatebox{45}{$\cdots$}},
(150,225) *+{\cdots},
(225,180) *+{\rotatebox{90}{$\cdots$}},
(340,150) *+{\rotatebox{135}{$\cdots$}},
(360,225) *+{\cdots},
(285,180) *+{\rotatebox{90}{$\cdots$}},
{\ar@{--} (250,0); (-20,0)},
{\ar@{--} (-5,255); (-20,255)},
{\ar@{--} (0,250); (0,-10)},
{\ar@{--} (15,235); (15,-10)},
{\ar@{--} (10,240); (-10,240)},
{\ar@{--} (255,-5); (255,-10)},
{\ar@{--} (510,250); (510,-10)},
{\ar@{.} (0,255); (265,-10)},
{\ar@{.} (245,-10); (510,255)},
{\ar@{.} (255,260); (255,-10)},
\end{xy}
}
\hfill
\hfill

For example, when $n=5$, we have 

\scalebox{0.6}{
\begin{xy} 0;<1pt,0pt>:<0pt,-1pt>::
(-30,-20) *+{\widetilde{c}_{j1}(r)}, 
(-20,0) *+{1}, 
(-20,20) *+{2}, 
(-20,40) *+{3}, 
(-20,60) *+{4}, 
(-20,80) *+{5}, 
(-35,40) *+{j}, 
(0,-20) *+{-2}, 
(30,-20) *+{-4}, 
(60,-20) *+{-6}, 
(90,-20) *+{-8}, 
(120,-20) *+{-10}, 
(150,-20) *+{-12}, 
(180,-20) *+{-14}, 
(210,-20) *+{-16}, 
(240,-20) *+{-18}, 
(270,-20) *+{\cdots}, 
(270,40) *+{\cdots}, 
(120,-30) *+{r}, 
(0,0) *+{1}, 
(210,0) *+{1}, 
(30,20) *+{1}, 
(180,20) *+{1}, 
(60,40) *+{1}, 
(150,40) *+{1}, 
(90,60) *+{1}, 
(120,60) *+{1}, 
(90,80) *+{1}, 
(120,80) *+{1}, 
{\ar@{-} (-50,-10); (300,-10)},
{\ar@{-} (-10,-35); (-10,90)},
\end{xy}
}
\scalebox{0.6}{
\begin{xy} 0;<1pt,0pt>:<0pt,-1pt>::
(-30,-20) *+{\widetilde{c}_{j2}(r)}, 
(-20,0) *+{1}, 
(-20,20) *+{2}, 
(-20,40) *+{3}, 
(-20,60) *+{4}, 
(-20,80) *+{5}, 
(-35,40) *+{j}, 
(0,-20) *+{-2}, 
(30,-20) *+{-4}, 
(60,-20) *+{-6}, 
(90,-20) *+{-8}, 
(120,-20) *+{-10}, 
(150,-20) *+{-12}, 
(180,-20) *+{-14}, 
(210,-20) *+{-16}, 
(240,-20) *+{-18}, 
(270,-20) *+{\cdots}, 
(270,40) *+{\cdots}, 
(120,-30) *+{r}, 
(30,0) *+{1}, 
(180,0) *+{1}, 
(0,20) *+{1}, 
(60,20) *+{1}, 
(150,20) *+{1}, 
(210,20) *+{1}, 
(30,40) *+{1}, 
(90,40) *+{1}, 
(120,40) *+{1}, 
(180,40) *+{1}, 
(60,60) *+{1}, 
(90,60) *+{1}, 
(120,60) *+{1}, 
(150,60) *+{1}, 
(60,80) *+{1}, 
(90,80) *+{1}, 
(120,80) *+{1}, 
(150,80) *+{1}, 
{\ar@{-} (-50,-10); (300,-10)},
{\ar@{-} (-10,-35); (-10,90)},
\end{xy}
}

\scalebox{0.6}{
\begin{xy} 0;<1pt,0pt>:<0pt,-1pt>::
(-30,-20) *+{\widetilde{c}_{j3}(r)}, 
(-20,0) *+{1}, 
(-20,20) *+{2}, 
(-20,40) *+{3}, 
(-20,60) *+{4}, 
(-20,80) *+{5}, 
(-35,40) *+{j}, 
(0,-20) *+{-2}, 
(30,-20) *+{-4}, 
(60,-20) *+{-6}, 
(90,-20) *+{-8}, 
(120,-20) *+{-10}, 
(150,-20) *+{-12}, 
(180,-20) *+{-14}, 
(210,-20) *+{-16}, 
(240,-20) *+{-18}, 
(270,-20) *+{\cdots}, 
(270,40) *+{\cdots}, 
(120,-30) *+{r}, 
(60,0) *+{1}, 
(150,0) *+{1}, 
(30,20) *+{1}, 
(90,20) *+{1}, 
(120,20) *+{1}, 
(180,20) *+{1}, 
(0,40) *+{1}, 
(60,40) *+{1}, 
(90,40) *+{1}, 
(120,40) *+{1}, 
(150,40) *+{1}, 
(210,40) *+{1}, 
(30,60) *+{1}, 
(60,60) *+{1}, 
(90,60) *+{1}, 
(120,60) *+{1}, 
(150,60) *+{1}, 
(180,60) *+{1}, 
(30,80) *+{1}, 
(60,80) *+{1}, 
(90,80) *+{1}, 
(120,80) *+{1}, 
(150,80) *+{1}, 
(180,80) *+{1}, 
{\ar@{-} (-50,-10); (300,-10)},
{\ar@{-} (-10,-35); (-10,90)},
\end{xy}
}
\scalebox{0.6}{
\begin{xy} 0;<1pt,0pt>:<0pt,-1pt>::
(-30,-20) *+{\widetilde{c}_{j4}(r)}, 
(-20,0) *+{1}, 
(-20,20) *+{2}, 
(-20,40) *+{3}, 
(-20,60) *+{4}, 
(-20,80) *+{5}, 
(-35,40) *+{j}, 
(0,-20) *+{-2}, 
(30,-20) *+{-4}, 
(60,-20) *+{-6}, 
(90,-20) *+{-8}, 
(120,-20) *+{-10}, 
(150,-20) *+{-12}, 
(180,-20) *+{-14}, 
(210,-20) *+{-16}, 
(240,-20) *+{-18}, 
(270,-20) *+{\cdots}, 
(270,40) *+{\cdots}, 
(120,-30) *+{r}, 
(90,0) *+{1}, 
(120,0) *+{1}, 
(60,20) *+{1}, 
(90,20) *+{1}, 
(120,20) *+{1}, 
(150,20) *+{1}, 
(30,40) *+{1}, 
(60,40) *+{1}, 
(90,40) *+{1}, 
(120,40) *+{1}, 
(150,40) *+{1}, 
(180,40) *+{1}, 
(0,60) *+{1}, 
(30,60) *+{1}, 
(60,60) *+{1}, 
(90,60) *+{1}, 
(120,60) *+{1}, 
(150,60) *+{1}, 
(180,60) *+{1}, 
(210,60) *+{1}, 
(0,80) *+{1}, 
(30,80) *+{1}, 
(60,80) *+{1}, 
(90,80) *+{1}, 
(120,80) *+{1}, 
(150,80) *+{1}, 
(180,80) *+{1}, 
(210,80) *+{1}, 
{\ar@{-} (-50,-10); (300,-10)},
{\ar@{-} (-10,-35); (-10,90)},
\end{xy}
}

\scalebox{0.6}{
\begin{xy} 0;<1pt,0pt>:<0pt,-1pt>::
(-30,-20) *+{\widetilde{c}_{j5}(r)}, 
(-20,0) *+{1}, 
(-20,20) *+{2}, 
(-20,40) *+{3}, 
(-20,60) *+{4}, 
(-20,80) *+{5}, 
(-35,40) *+{j}, 
(0,-20) *+{-1}, 
(30,-20) *+{-3}, 
(60,-20) *+{-5}, 
(90,-20) *+{-7}, 
(120,-20) *+{-9}, 
(150,-20) *+{-11}, 
(180,-20) *+{-13}, 
(210,-20) *+{-15}, 
(240,-20) *+{-17}, 
(270,-20) *+{\cdots}, 
(270,40) *+{\cdots}, 
(120,-30) *+{r}, 
(120,0) *+{1}, 
(90,20) *+{1}, 
(150,20) *+{1}, 
(60,40) *+{1}, 
(120,40) *+{1}, 
(180,40) *+{1}, 
(30,60) *+{1}, 
(90,60) *+{1}, 
(150,60) *+{1}, 
(210,60) *+{1}, 
(0,80) *+{1}, 
(60,80) *+{1}, 
(120,80) *+{1}, 
(180,80) *+{1}, 
(240,80) *+{1}, 
{\ar@{-} (-50,-10); (300,-10)},
{\ar@{-} (-10,-35); (-10,90)},
\end{xy}
}
\end{example}

\subsection{Quantum tori}\label{ss:qtori}
Following \cite{H:qt}, we define a $t$-deformed version $\mathcal{Y}_t$ of $\mathcal{Y}$ (\emph{quantum torus}) associated with a quantum Cartan matrix $C(z)$.

\begin{definition}
The quantum torus $\mathcal{Y}_t$ associated with a quantum Cartan matrix $C(z)$ is defined as the $\mathbb{Z}$-algebra given by the generators $\widetilde{Y}_{i, r}^{\pm 1}$ $(i\in I, r\in \mathbb{Z})$, $t^{\pm 1/2}$ and the following relations\footnote{There are typos in \cite[Lemma 3.5 (2), Theorem 3.11]{H:qt} : $t_{\tilde{C}_{j,i}(z)(z^{r_j}-z^{-r_j})(-z^{(l-k)}+z^{(k-l)})}$ in Lemma 3.5 (2) should be replaced by $t_{\tilde{C}_{j,i}(z)(z^{r_j}-z^{-r_j})(z^{(l-k)}-z^{(k-l)})}$. Hence, in the equality of $\gamma(i,l,j,k)$ in Theorem 3.11, we should multiply the right-hand side by $-1$.} : 
\begin{itemize}
\item[(1)] $t^{\pm 1/2}$ are central,
\item[(2)] $t^{1/2}t^{-1/2}=1$ and $\widetilde{Y}_{i, r}\widetilde{Y}_{i, r}^{-1}=1=\widetilde{Y}_{i, r}^{-1}\widetilde{Y}_{i, r}$,
\item[(3)] for $i, j\in I$ and $r, s\in \mathbb{Z}$, 
\[
\widetilde{Y}_{i, r}\widetilde{Y}_{j, s}=t^{\gamma(i, r; j, s)}\widetilde{Y}_{j, s}\widetilde{Y}_{i, r},
\]
where $\gamma\colon (I\times \mathbb{Z})^2\to\mathbb{Z}$ is given by
\[
\gamma(i, r; j, s)=\widetilde{c}_{ji}(-r_j-r+s)+\widetilde{c}_{ji}(r_j+r-s)-\widetilde{c}_{ji}(r_j-r+s)-\widetilde{c}_{ji}(-r_j+r-s). 
\]
\end{itemize}
\end{definition}
\begin{remark}\label{r:commval}
If $r>s$, then 
\begin{align}
\gamma(i, r; j, s)=\widetilde{c}_{ji}(-r_j-r+s)-\widetilde{c}_{ji}(r_j-r+s), 
\end{align}
since $\widetilde{c}_{ij}(r')=0$ for $r'>-r_i$ (see the proof of Lemma \ref{l:explicitinvcarB}).

Moreover, $\gamma(i, r; j, r)=0$, that is, 
\begin{align}
\widetilde{Y}_{i, r}\widetilde{Y}_{j, r}=\widetilde{Y}_{j, r}\widetilde{Y}_{i, r} \label{eq:sameindex}
\end{align}
for $i, j\in I$ and $r\in \mathbb{Z}$. 
\end{remark}

There exists a $\mathbb{Z}$-algebra homomorphism $\mathrm{ev}_{t=1}\colon \mathcal{Y}_t\to \mathcal{Y}$ given by 
\begin{align*}
t^{1/2}\mapsto 1&&& \widetilde{Y}_{i, r}\mapsto Y_{i, r}.
\end{align*}
This map is called \emph{the specialization at $t=1$}. 

An element $\widetilde{m}\in \mathcal{Y}_t$ is called \emph{a monomial} if it is a product of the generators $\widetilde{Y}_{i, r}^{\pm 1}$ $(i\in I, r\in \mathbb{Z})$ and $t^{\pm 1/2}$. For a monomial $\widetilde{m}$ in $\mathcal{Y}_t$, we set $u_{i, r}(\widetilde{m}):=u_{i, r}(\mathrm{ev}_{t=1}(\widetilde{m}))$ (recall the notation \eqref{eq:power}). A monomial $\widetilde{m}$ in $\mathcal{Y}_t$ is said to be \emph{dominant} if $\mathrm{ev}_{t=1}(\widetilde{m})$ is dominant, that is, $u_{i, r}(\widetilde{m})\geq 0$ for all $i\in I$ and $r\in \mathbb{Z}$. Moreover, for monomials $\widetilde{m}$, $\widetilde{m}'$ in $\mathcal{Y}_t$, set
\[
\widetilde{m}\leq \widetilde{m}'\ \text{if and only if}\ \mathrm{ev}_{t=1}(\widetilde{m})\leq \mathrm{ev}_{t=1}(\widetilde{m}').
\]

It is shown in \cite[section 6.3]{H:qt} that there is a $\mathbb{Z}$-algebra anti-involution $\overline{(\cdot )}$ on $\mathcal{Y}_{t}$ given by 
\begin{align*}
t^{1/2}\mapsto t^{-1/2}&&& \widetilde{Y}_{i, r}\mapsto t^{-1}\widetilde{Y}_{i, r}.
\end{align*}

It is easy to show that, for any monomial $\widetilde{m}$ in $\mathcal{Y}_t$, there uniquely exists $r\in \mathbb{Z}$ such that $t^{r/2}\widetilde{m}$ is $\overline{(\cdot )}$-invariant, and this element is denoted by $\underline{\widetilde{m}}$. Note that $\underline{(t^{k/2}\widetilde{m})}=\underline{\widetilde{m}}$ for any $k\in \mathbb{Z}$. Hence $\underline{\widetilde{m}}$ depends only on $\mathrm{ev}_{t=1}(\widetilde{m})$. Therefore, for every monomial $m$ in $\mathcal{Y}$, the element $\underline{m}$ is well-defined. The elements of this form are called \emph{commutative monomials}. For example, $\underline{Y_{i, r}}=t^{-1/2}\widetilde{Y}_{i, r}$. Note that, for every monomial $m$ in $\mathcal{Y}$, we have $\underline{(m^{-1})}=(\underline{m})^{-1}(=:\underline{m}^{-1})$.  

\begin{remark}
In \cite{H:qt}, the quantum torus $\mathcal{Y}_t$ is a $\mathbb{Z}[t^{\pm 1}]$-algebra. However, to guarantee the existence of commutative monomials, we need the square root of $t^{\pm 1}$. This is the reason why we add $t^{\pm 1/2}$.
\end{remark}

We have 
\[
\mathcal{Y}_t=\bigoplus_{m: \text{monomial in}\ \mathcal{Y}}\mathbb{Z}[t^{\pm 1/2}]\underline{m}
\]
as $\mathbb{Z}[t^{\pm 1/2}]$-modules. 

For $i\in I, r\in \mathbb{Z}$, set  
\[
\widetilde{A}_{i, r}:=\underline{A_{i, r}}\in \mathcal{Y}_t. 
\]

\begin{proposition}[{\cite[Proposition 3.12]{H:qt}}]\label{p:A-comm}
The $\mathbb{Z}[t^{\pm 1}]$-subalgebra of $\mathcal{Y}_t$ generated by $\{\widetilde{A}_{i, r}^{-1}\mid i\in I, r\in \mathbb{Z}\}$ is, in an obvious way, isomorphic to the $\mathbb{Z}$-algebra defined by the generators $\widetilde{A}_{i, r}^{\pm 1}$ $(i\in I, r\in \mathbb{Z})$, $t^{\pm 1/2}$ and the following relations :
\begin{itemize}
\item[(1)] $t^{\pm 1/2}$ are central,
\item[(2)] $t^{1/2}t^{-1/2}=1$,
\item[(3)] for $i, j\in I$ and $r,s\in \mathbb{Z}$, 
\begin{align}
\widetilde{A}_{i, r}^{-1}\widetilde{A}_{j, s}^{-1}=t^{\alpha(i, r; j, s)}\widetilde{A}_{j, s}^{-1}\widetilde{A}_{i, r}^{-1},\label{eq:A-comm}
\end{align}
where $\alpha\colon (I\times \mathbb{Z})^2\to\mathbb{Z}$ is given by
\begin{align*}
\alpha(i, r; j, s)&=2(-\delta_{r-s, (\alpha_i, \alpha_j)}+\delta_{r-s, -(\alpha_i, \alpha_j)})\\
&=\begin{cases}2(-\delta_{r-s, 2r_i}+\delta_{r-s, -2r_i})&\text{if}\ i=j,\\
2\sum_{l=0}^{-c_{ij}-1}(-\delta_{r-s, -r_i+c_{ij}+1+2l}+\delta_{r-s, r_i+c_{ij}+1+2l})&\text{if}\ i\neq j.\end{cases} 
\end{align*}
\end{itemize}
Moreover, we have the following relations in $\mathcal{Y}_t$ : 
\begin{align}
\widetilde{Y}_{i, r}\widetilde{A}_{j, s}^{-1}=t^{\beta(i, r; j, s)}\widetilde{A}_{j, s}^{-1}\widetilde{Y}_{i, r},\label{eq:AY-comm}
\end{align}
where $\beta\colon (I\times \mathbb{Z})^2\to\mathbb{Z}$ is given by
\[
\beta(i, r; j, s)=2\delta_{ij}(-\delta_{r-s, -r_i}+\delta_{r-s, r_i}).
\]
\end{proposition}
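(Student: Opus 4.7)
The strategy is direct computation in $\mathcal{Y}_t$: expand each $\widetilde{A}_{i,r}^{-1}$ explicitly in terms of the $\widetilde{Y}$-generators via the defining formula for $A_{i,r}$, obtain the commutation relations by repeatedly applying the basic rule $\widetilde{Y}_{i,r}\widetilde{Y}_{j,s}=t^{\gamma(i,r;j,s)}\widetilde{Y}_{j,s}\widetilde{Y}_{i,r}$, and finally collapse the resulting sums of $\widetilde{c}_{ji}(\cdot)$-coefficients using the defining identity \eqref{eq:invcar}. Concretely, the definition of $A_{i,r}$ gives
\[
A_{i,r}^{-1}=Y_{i,r-r_i}^{-1}Y_{i,r+r_i}^{-1}\prod_{k\neq i}\prod_{l=0}^{-c_{ki}-1}Y_{k,r+c_{ki}+1+2l},
\]
and since $\widetilde{Y}$-generators at the same spectral parameter commute by \eqref{eq:sameindex}, there is for any fixed ordering a unique $\kappa_i\in\mathbb{Z}$ with
\[
\widetilde{A}_{i,r}^{-1}=t^{\kappa_i/2}\,\widetilde{Y}_{i,r-r_i}^{-1}\widetilde{Y}_{i,r+r_i}^{-1}\prod_{k\neq i}\prod_{l=0}^{-c_{ki}-1}\widetilde{Y}_{k,r+c_{ki}+1+2l},
\]
the explicit value of $\kappa_i$ being irrelevant for commutator computations.

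To prove the formula for $\beta(i,r;j,s)$, I would move $\widetilde{Y}_{i,r}$ across the above expansion of $\widetilde{A}_{j,s}^{-1}$ to obtain a prefactor $t^{\Sigma}$ with
\[
\Sigma=-\gamma(i,r;j,s-r_j)-\gamma(i,r;j,s+r_j)+\sum_{k\neq j}\sum_{l=0}^{-c_{kj}-1}\gamma(i,r;k,s+c_{kj}+1+2l).
\]
Expanding each $\gamma$ into four $\widetilde{c}_{ki}(\cdot)$-terms and regrouping, every inner sum over $k$ and $l$ matches (up to a uniform shift of the argument) the $N$-th coefficient of the product $\sum_k C(z)_{jk}\widetilde{C}(z)_{ki}=\delta_{ji}$. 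Hence all bulk contributions vanish, leaving only a handful of boundary terms of the form $\pm\widetilde{c}_{ji}\bigl(\pm r_j\pm(r-s)\bigr)$; the vanishing $\widetilde{c}_{ji}(N)=0$ for $N\geq 0$ then kills everything except the cases $i=j$, $r-s=\pm r_i$, producing the stated value $2\delta_{ij}(-\delta_{r-s,-r_i}+\delta_{r-s,r_i})$.

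The commutation relation for $\widetilde{A}_{i,r}^{-1}\widetilde{A}_{j,s}^{-1}$ follows by the same method with an extra layer: expanding both $\widetilde{A}$'s produces a double sum of $\gamma$-values indexed by the factors of each. One application of $(C(z)\widetilde{C}(z))_{ji}=\delta_{ji}$ collapses the sum over the factors of $\widetilde{A}_{j,s}^{-1}$ into an expression of the same shape as $\Sigma$ (via the argument above), and a second application collapses the remaining sum over the factors of $\widetilde{A}_{i,r}^{-1}$ to the announced formula $\alpha(i,r;j,s)=2(-\delta_{r-s,(\alpha_i,\alpha_j)}+\delta_{r-s,-(\alpha_i,\alpha_j)})$; the diagonal case $i=j$ contributes the additional $\delta_{r-s,\pm 2r_i}$ terms coming from the entry $C(z)_{ii}=z_i+z_i^{-1}$. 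Antisymmetry in $(i,r)\leftrightarrow(j,s)$ is automatic from the identity $\gamma(i,r;j,s)=-\gamma(j,s;i,r)$.

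Once the two relations are proved, the identification of the subalgebra generated by the $\widetilde{A}_{i,r}^{\pm 1}$ with the abstract algebra of the statement is immediate: the presented algebra surjects onto the subalgebra, and the images of the distinct reduced monomials $\underline{\prod_{i,r} A_{i,r}^{e_{i,r}}}$ form part of the $\mathbb{Z}[t^{\pm 1/2}]$-basis $\{\underline{m}\}$ of $\mathcal{Y}_t$, so the surjection is an isomorphism. The main obstacle in the whole argument is the careful combinatorial bookkeeping needed to identify each chain of $\widetilde{c}_{ki}(\cdot)$-terms with the correct coefficient of $C(z)\widetilde{C}(z)$; for Cartan matrices with $c_{ij}=-2$ or $-3$ the inner sums grow, but the collapse mechanism remains the same.
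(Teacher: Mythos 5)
Your proposal is correct in substance, but note that the paper itself gives no proof of this proposition: it is quoted from \cite[Proposition 3.12]{H:qt}, and the comparison has to be with the argument there. In \cite{H:qt} the quantum torus is built from a Heisenberg algebra (a vertex-operator-type construction) in which the $A$-type generators are the primitive ones, with pairing governed directly by the quantum Cartan matrix $C(z)$, while the $\widetilde{Y}_{i,r}$ are defined through $\widetilde{C}(z)$; the relations \eqref{eq:A-comm} and \eqref{eq:AY-comm} then drop out of the Heisenberg relations, with $C(z)\widetilde{C}(z)=\mathrm{Id}$ entering at the level of the construction. You run the computation in the opposite direction: you take the $\gamma$-presentation of $\mathcal{Y}_t$ as given in this paper, expand $\widetilde{A}_{j,s}^{-1}$ as a $\widetilde{Y}$-monomial, and collapse the resulting sums of $\widetilde{c}_{ki}(\cdot)$ via \eqref{eq:invcar}. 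This works and is self-contained given the presentation used here; the original route is structurally cleaner because $A$- and $Y$-variables sit on an equal footing from the start. I checked your key collapse: writing $\gamma(i,r;k,m)=f_k(m-r)-f_k(r-m)$ with $f_k(x)=\widetilde{c}_{ki}(x-r_k)-\widetilde{c}_{ki}(x+r_k)$, the generating series of your $\Sigma$ is $-\sum_k(z^{r_k}-z^{-r_k})C(z)_{kj}\widetilde{C}(z)_{ki}$, and after symmetrizing via $B(z)=D(z)C(z)$ this equals $-(z-z^{-1})[r_j]_z\,\delta_{ji}$, giving exactly $\beta$. Two small caveats: the step you describe as matching the sums "up to a uniform shift" with $\sum_k C(z)_{jk}\widetilde{C}(z)_{ki}$ is not literally uniform — what appears is $C(z)_{kj}$ weighted by $(z^{r_k}-z^{-r_k})$, and one must use the symmetry of $B(z)$ to convert to row-$j$ form before invoking \eqref{eq:invcar}; and the surviving delta terms come from the $\delta_{ji}$ on the right-hand side of \eqref{eq:invcar} (times the prefactor $z^{r_j}-z^{-r_j}$), not from the vanishing $\widetilde{c}_{ji}(N)=0$ for $N\geq 0$. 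Also, for \eqref{eq:A-comm} it is simpler, once \eqref{eq:AY-comm} is established, to expand only $\widetilde{A}_{i,r}^{-1}$ and sum the explicit $\beta$-values over its factors (the telescoping reproduces both the $i=j$ and $i\neq j$ formulas); your double expansion with a "second application" of the inversion identity is heavier than needed, though not wrong. The final identification of the presented algebra with the subalgebra, via commutative monomials $\underline{m}$ forming part of the $\mathbb{Z}[t^{\pm 1/2}]$-basis of $\mathcal{Y}_t$, is fine provided one notes that distinct exponent vectors in the $A_{i,r}$ give distinct monomials in the $Y$-variables, which follows from the invertibility of $C(z)$.
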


\section{Quantum tori for quantized coordinate algebras}\label{s:QCA}
We prepare the other quantum tori which arise from quantum cluster algebra structures of quantized coordinate algebras \cite{BZ:qcluster}.  
Quantum cluster algebras are specific subalgebras of skew field of fractions (and, \emph{a posteriori}, subalgebras of quantum tori) with infinitely many generators in general. It is known that many quantized coordinate algebras in Lie theory have quantum cluster algebra structures. See \cite{GLS:qcluster,GY:BZconj,GY:Memo} and references therein. 

In this section, we only recall the definitions concerning these quantum tori, and review the quantum tori associated with quantum cluster algebra structures of some quantized coordinate algebras. See Appendix \ref{a:qclus} for the complete definition of quantum cluster algebras. 

\subsection{Quantum tori in the theory of quantum cluster algebras}\label{ss:qclus}
Let $v$ be an indeterminate. Fix a finite set $J$ and take a subset $J_f\subset J$. Set $J_e:=J\setminus J_f$. 

Let $\Lambda=(\lambda_{ij})_{i,j\in J}$ be a skew-symmetric integer matrix and $\widetilde{B}=(b_{ij})_{i\in J,j\in J_e}$ an integer matrix.  The pair $(\Lambda,\widetilde{B})$ is said to be \emph{compatible} if, there exists $\bm{d}:=(d_{i})_{i\in J_e}\in (\mathbb{Z}_{>0})^{J_e}$ such that 
\[
\sum_{k\in J}b_{ki}\lambda_{kj}=d_{i}\delta_{ij}
\]
for all $i\in J_e$ and $j\in J$. Note that, if $(\Lambda,\widetilde{B})$ is compatible, then the rank of $\widetilde{B}$ is $\# J_e$ and 
\begin{align}
d_{i}b_{ij}=-d_jb_{ji} \label{eq:symble}
\end{align}
for all $i, j\in J_e$ \cite[Proposition 3.3]{BZ:qcluster}. The skew-symmetric integer matrix $\Lambda$ determines a skew-symmetric $\mathbb{Z}$-bilinear form $\mathbb{Z}^{J}\times\mathbb{Z}^{J}\to\mathbb{Z}$ by $\Lambda(\bm{e}_{i},\bm{e}_{j})=\lambda_{ij}$, $i,j\in J$, and vice versa. Here, $\{\bm{e}_{i}\mid i\in J\}$ is a standard basis of $\mathbb{Z}^{J}$. Hence we also write this form as $\Lambda$. Then \emph{the quantum torus $\mathcal{T}(=\mathcal{T}(\Lambda))$ associated with the bilinear form $\Lambda$} is the free $\mathbb{Z}[v^{\pm1/2}]$-module with a basis $\{X^{\bm{a}}\mid\bm{a}\in\mathbb{Z}^{J}\}$ equipped with the $\mathbb{Z}[v^{\pm1/2}]$-algebra structure given by 
\[
X^{\bm{a}}X^{\bm{b}}=v^{\Lambda(a,b)/2}X^{\bm{a}+\bm{b}}
\]
for $\bm{a},\bm{b}\in\mathbb{Z}^{J}$. Then 
\begin{itemize}
\item $X^{\bm{a}}X^{\bm{b}}=v^{\Lambda(\bm{a},\bm{b})}X^{\bm{b}}X^{\bm{a}}$, 
\item $X^{0}=1$ and $(X^{\bm{a}})^{-1}=X^{-\bm{a}}$ for $\bm{a}\in\mathbb{Z}^{J}$. 
\end{itemize}
Since the quantum torus $\mathcal{T}$ is an Ore domain \cite[Appendix A]{BZ:qcluster}, it is regarded as a subalgebra of the skew-field $\mathcal{F}(=\mathcal{F}(\Lambda))$ of fractions. Note that $\mathcal{F}$ is a $\mathbb{Q}(v^{1/2})$-algebra. 

For $i\in J$, set $X_i:=X^{\bm{e}_i}$. \emph{The quantum cluster algebra} $\mathcal{A}(\Lambda, \widetilde{B})$ is the $\mathbb{Z}[v^{\pm 1/2}]$-subalgebra of $\mathcal{F}$ generated by the union of the elements (called \emph{the quantum cluster variables}) obtained from $\{X_i\mid i\in J\}$ by arbitrary sequence of \emph{mutations in direction $k$} with $k\in J_e$. See Appendix \ref{a:qclus} for details. A fundamental theorem in the theory of quantum cluster algebras, called \emph{quantum Laurent phenomena} \cite[Corollary 5.2]{BZ:qcluster}, states that 
\[
\mathcal{A}(\Lambda, \widetilde{B})\subset\mathcal{T}(\Lambda). 
\]

For $j\in J_e$, write $\hat{X}_{j}:=X^{\widetilde{\bm{b}}^{j}}$ where $\widetilde{\bm{b}}^{j}:=(b_{ij})_{i\in J}\in \mathbb{Z}^J$. By the definition of compatibility and \eqref{eq:symble}, we have 
\begin{align*}
\Lambda(\bm{e}_i, \widetilde{\bm{b}}^{j})&=-d_i\delta_{ij}\ (i\in J, j\in J_e)
&
\Lambda(\widetilde{\bm{b}}^{i}, \widetilde{\bm{b}}^{j})&=d_ib_{ij}\ (i, j\in J_e).
\end{align*}
Therefore, we have 
\begin{align}
X_{i}\hat{X}_{j}&=v^{-d_i\delta_{ij}}\hat{X}_{j}\hat{X}_{i}\ (i\in J, j\in J_e)&
\hat{X}_{i}\hat{X}_{j}&=v^{d_ib_{ij}}\hat{X}_{j}\hat{X}_{i}\ (i, j\in J_e).\label{eq:Xhat-general}
\end{align}
In particular, these $v$-commutation relations depend only on the vector $\bm{d}$ and the submatrix $B=(b_{ij})_{i,j\in J_e}$ (called \emph{the principal part of $\widetilde{B}$}). 

There exists a $\mathbb{Z}$-linear automorphism $\overline{(\cdot)}$ on $\mathcal{T}$ given by 
\[
v^{r/2}X^{\bm{c}}\mapsto v^{-r/2}X^{\bm{c}}
\]
for $\bm{c}\in \mathbb{Z}^J$ and $r\in \mathbb{Z}$. By the defining relation of $\mathcal{T}$, $\overline{(\cdot)}$ is an algebra anti-involution. By \cite[Proposition 6.2]{BZ:qcluster}, it restricts to a $\mathbb{Z}$-algebra anti-involution on the quantum cluster algebra $\mathcal{A}(\Lambda, \widetilde{B})$. This is called \emph{the bar involution}. 

\subsection{Quantum tori for quantized coordinate algebras}\label{ss:qclus-coord}
Recall the setting in subsection \ref{ss:QLA}. 
\begin{notation}\label{n:indexplus}
When we fix $w\in W$ and $\bm{i}=(i_{1},\dots,i_{\ell})\in I(w)$, we write 
\begin{align*}
w_{\leq k}&:=s_{i_1}\cdots s_{i_k},\ w_{\leq 0}:=e,\\
k^{+} & :=\min(\{\ell+1\}\cup\{k+1\leq j\leq\ell\mid i_{j}=i_{k}\}),\\
k^{-} & :=\max(\{0\}\cup\{1\leq j\leq k-1\mid i_{j}=i_{k}\}),\\
k^-(i)&:=\max(\{0\}\cup \{1\leq j\leq k-1\mid i_{j}=i, c_{ii_k}\neq 0\}).
\end{align*}
for $k=1,\dots,\ell$ and $i\in I$.
\end{notation}
\begin{remark}\label{r:indexplus}
In \cite[subsection 5.4]{GLS:qcluster}, $k^-(i)$ is defined as $\max(\{0\}\cup \{1\leq j\leq k-1\mid i_{j}=i\})$. See also Notation \ref{n:compatible}. 
\end{remark}
The following compatible pairs give quantum tori for quantized coordinate algebras. See Theorem \ref{t:qcluster} for the precise relation to quantized coordinate algebras. 
\begin{proposition}[{\cite[Proposition 10.1, Lemma 11.3]{GLS:qcluster},\cite[Proposition 10.4]{GY:Memo}}]\label{p:compatible}
Let $w_0$ be the longest element of $W$. Fix $\bm{i}=(i_1,\dots i_{\ell(w_0)})\in I(w_0)$. 
Set 
\[
J:=\{1,\dots,\ell(w_0)\},\ J_f=\{j\in J\mid j^+=\ell(w_0)+1\}\ \text{and}\ J_e:=J\setminus J_f. 
\]
Define the $J\times J_e$-integer matrix $\widetilde{B}^{\bm{i}}=(b_{st})_{s\in J, t\in J_e}$ as  
\[
b_{st}=
\begin{cases}
1&\text{if}\ t=s^+, \\
c_{i_si_t}&\text{if}\ s<t<s^+<t^+,\\
-1&\text{if}\ t^+=s, \\
-c_{i_si_t}&\text{if}\ t<s<t^+<s^+, \\
0&\text{otherwise}.
\end{cases}
\]
Define $J\times J$-skew-symmetric integer matrix $\Lambda^{\bm{i}}=(\lambda_{st})_{s, t\in J}$ as 
\[
\lambda_{st}=(\varpi_{i_s}-w_{\leq s}\varpi_{i_s}, \varpi_{i_t}+w_{\leq t}\varpi_{i_t})\ \text{for}\ s<t.
\]
Then $(\Lambda^{\bm{i}}, \widetilde{B}^{\bm{i}})$ is a compatible pair, and 
\[
\sum_{k\in J}b_{ks}\lambda_{kt}=2r_{i_s}\delta_{st}
\] 
for all $s\in J_e$ and $t\in J$, (that is, $d_s=2r_{i_s}$ for all $s\in J_e$). 
\end{proposition}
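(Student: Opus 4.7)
The plan is to prove the single matrix identity
\[
(\ast)\qquad \sum_{k\in J} b_{ks}\lambda_{kt} = 2r_{i_s}\delta_{st}\qquad (s\in J_e,\ t\in J),
\]
which simultaneously encodes the compatibility of $(\Lambda^{\bm{i}},\widetilde{B}^{\bm{i}})$, the explicit value $d_s = 2r_{i_s}$, and the full-rank condition on $\widetilde{B}^{\bm{i}}$ (any nontrivial linear relation among the columns would contradict the diagonal right hand side for some $s$). The first preparation is to attach to $\bm{i}$ the positive roots $\beta_j := w_{\leq j-1}\alpha_{i_j}$ and to record the telescoping identity
\[
\varpi_i - w_{\leq k}\varpi_i = \sum_{\substack{j\leq k\\ i_j = i}}\beta_j,
\]
which follows by induction on $k$ using $s_{i_k}\varpi_i = \varpi_i - \delta_{i,i_k}\alpha_i$; the ``plus'' counterpart $\varpi_i + w_{\leq k}\varpi_i = 2\varpi_i - \sum_{j\leq k,\, i_j=i}\beta_j$ is then immediate. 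In particular $w_{\leq s}\varpi_{i_s} - w_{\leq s^-}\varpi_{i_s} = -\beta_s$ and $w_{\leq s^+}\varpi_{i_s} - w_{\leq s}\varpi_{i_s} = -\beta_{s^+}$.

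Next I analyse the nonzero positions in the $s$-th column of $\widetilde{B}^{\bm{i}}$: beyond $k\in\{s^-,s^+\}$ carrying $\pm 1$, the remaining contributions come from ``predecessors'' $k = j^-$ with $s < j < s^+$ and $i_j$ adjacent to $i_s$ in the Dynkin diagram (contributing $c_{i_j i_s}$), and from positions $k$ with $s < k < s^+$ whose label $i_k$ is adjacent to $i_s$ but whose successor $k^+$ lies past $s^+$ (contributing $-c_{i_k i_s}$). Substituting these explicit values into $(\ast)$ and expanding each $\lambda_{kt}$ through the telescoping formula, the intermediate contributions pair off via the Cartan identity $s_{i_s}\alpha_{i_j} = \alpha_{i_j} - c_{i_s i_j}\alpha_{i_s}$ applied inside the $\beta$-expansion, and the sum collapses onto a single boundary pairing proportional to $(\beta_s + \beta_{s^+},\, \varpi_{i_t} + w_{\leq t}\varpi_{i_t})$.

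The proof of $(\ast)$ then reduces to evaluating this boundary pairing. Writing $\beta_s = w_{\leq s^-}\alpha_{i_s}$ and $\beta_{s^+} = w_{\leq s}\alpha_{i_s}$ and using $W$-invariance of $(-,-)$, a second telescoping in $t$ shows that the pairing vanishes whenever $t\neq s$, while for $t = s$ it collapses to $(\alpha_{i_s}, \alpha_{i_s}) = 2r_{i_s}$. The main obstacle is the case-by-case sign bookkeeping: one must distinguish the sub-cases $t < s^-$, $t = s^-$, $s^- < t < s$, $t = s$, $s < t < s^+$, $t = s^+$ and $t > s^+$, keeping track simultaneously of the skew-symmetric extension of $\lambda_{kt}$ to $k > t$ and of the ``$k$ vs.~$s$'' signs in the definition of $b_{ks}$, as well as edge cases in which $s^- = 0$ (so that the $k = s^-$ summand is absent). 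Once these sub-cases are aligned the telescoping is clean and both claims of the proposition follow from the single identity $(\ast)$.
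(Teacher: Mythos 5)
The paper does not actually prove this statement --- it is quoted from \cite[Proposition 10.1, Lemma 11.3]{GLS:qcluster} and \cite[Proposition 10.4]{GY:Memo} --- so a direct verification of the single identity $(\ast)$ is a legitimate plan, and your reductions are fine: $(\ast)$ does encode compatibility, the value $d_s=2r_{i_s}$, and full rank of $\widetilde{B}^{\bm{i}}$; the telescoping identity $\varpi_i-w_{\leq k}\varpi_i=\sum_{j\leq k,\,i_j=i}\beta_j$ and your description of the nonzero entries of the $s$-th column are also correct.

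The central computational claim, however, is wrong. You assert that after expanding the $\lambda_{kt}$ the sum collapses onto a single pairing proportional to $(\beta_s+\beta_{s^+},\,\varpi_{i_t}+w_{\leq t}\varpi_{i_t})$, whose vanishing for $t\neq s$ then finishes the proof. This pairing does not vanish for $t\neq s$ in general: take type $\mathrm{A}_3$, $\bm{i}=(1,2,1,3,2,1)$, $s=3$ (so $i_s=1$, $s^-=1$, $s^+=6$, $\beta_3=\alpha_2$, $\beta_6=\alpha_3$) and $t=2$; then $(\beta_3+\beta_6,\,\varpi_2+w_{\leq 2}\varpi_2)=(\alpha_2+\alpha_3,\,2\varpi_2-\alpha_1-\alpha_2)=2\neq 0$, while a direct check gives $\sum_k b_{k3}\lambda_{k2}=\lambda_{12}-\lambda_{22}+\lambda_{52}-\lambda_{62}=-1-0+0+1=0$, as the proposition requires. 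So the sum cannot reduce to that boundary pairing, and the vanishing for $t\neq s$ cannot be obtained as you describe. (Moreover the identities $\beta_s=w_{\leq s^-}\alpha_{i_s}$ and $\beta_{s^+}=w_{\leq s}\alpha_{i_s}$ used in your last step are false --- in the example $w_{\leq s^-}\alpha_{i_s}=s_1\alpha_1=-\alpha_1\neq\beta_3$; the correct statements are $\beta_s=w_{\leq s-1}\alpha_{i_s}$ and $w_{\leq s}\alpha_{i_s}=-\beta_s$.) What is true, and what a corrected argument can rest on, is the root identity $\beta_s+\beta_{s^+}+\sum_{s<k<s^+}c_{i_ki_s}\beta_k=0$, which together with your telescoping formula yields the weight identity $\sum_k b_{ks}\,(\varpi_{i_k}-w_{\leq k}\varpi_{i_k})=0$; this settles $(\ast)$ whenever all contributing rows satisfy $k<t$ (e.g.\ $t\geq s^+$), but for smaller $t$ the rows $k\geq t$ enter with $\lambda_{kt}=-(\varpi_{i_t}-w_{\leq t}\varpi_{i_t},\,\varpi_{i_k}+w_{\leq k}\varpi_{i_k})$ rather than $(\varpi_{i_k}-w_{\leq k}\varpi_{i_k},\,\varpi_{i_t}+w_{\leq t}\varpi_{i_t})$, and the resulting corrections $-2\bigl((\varpi_{i_k},\varpi_{i_t})-(w_{\leq k}\varpi_{i_k},w_{\leq t}\varpi_{i_t})\bigr)$ are precisely where $2r_{i_s}\delta_{st}$ must come from. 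That remaining analysis, which you defer as ``sign bookkeeping,'' is the actual substance of the proof and is missing.
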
%ok
\begin{notation}\label{n:compatible}
It is easy to show that, if we take another reduced word $\bm{i}'$ of $w_0$ commutation equivalent to $\bm{i}$ in Proposition \ref{p:compatible}, we can obtain the same compatible pair up to an obvious simultaneous renaming of indices in $J$. In this sense, we may say that $(\Lambda^{\bm{i}}, \widetilde{B}^{\bm{i}})$ depends only on the commutation class of $\bm{i}$, and we will write it as $(\Lambda^{[\bm{i}]}, \widetilde{B}^{[\bm{i}]})$. Moreover, the index set $J$ can be regarded as the vertex set $\Delta_+$ of the combinatorial Auslander-Reiten quiver of $[\bm{i}]$. See Theorem \ref{t:commequiv}. After this identification, the operations $k^+$ and $k^-$ and $k^-(i)$ on $\Delta_+$ in Notation \ref{n:indexplus} depends only on the commutation class of $\bm{i}$ (see, for example, \cite[Proposition 2.9]{OhS:cAR}). Note that, if we exclude ``$c_{ii_k}\neq 0$'' from the definition of $k^-(i)$, the operation $k^-(i)$ on $\Delta_+$ depends on a choice of reduced words in $[\bm{i}]$. 
\end{notation}
When $\mathfrak{g}$ is of type $\mathrm{A}_N$, $\mathrm{D}_N$ or $\mathrm{E}_N$, the entries of $\widetilde{B}^{[\bm{i}]}$ are $0$ or $\pm 1$, and the principal part of $\widetilde{B}^{[\bm{i}]}$ is skew-symmetric. Hence, $\widetilde{B}^{[\bm{i}]}$ is described as the quiver whose vertex set is the same as the vertex set of $\Upsilon_{[\bm{i}]}$ and whose arrow set is given by 
\[
\{\beta\to \beta'\mid b_{\beta, \beta'}=1\ \text{or}\  b_{\beta', \beta}=-1\}. 
\]
Note that there are no arrows between the vertices in $J_f$.

\begin{example}\label{e:initial}
\noindent(i) Let $\mathcal{Q}$ be the one in Example \ref{e:AR} (i). Then the quiver of $\widetilde{B}^{[\mathcal{Q}]}$ is described as follows : 

\hfill
\scalebox{0.7}[0.7]{
\begin{xy} 0;<1pt,0pt>:<0pt,-1pt>::
(180,0) *+{\alpha_1+\alpha_2+\alpha_3+\alpha_4} ="0",
(120,30) *+{\alpha_1+\alpha_2+\alpha_3} ="1",
(240,30) *+{\alpha_2+\alpha_3+\alpha_4} ="2",
(60,60) *+{\alpha_1+\alpha_2} ="3",
(180,60) *+{\alpha_2+\alpha_3} ="4",
(300,60) *+{\alpha_3+\alpha_4} ="5",
(0,90) *+{\alpha_1} ="6",
(120,90) *+{\alpha_2} ="7",
(240,90) *+{\alpha_3} ="8",
(360,90) *+{\alpha_4} ="9",
"0", {\ar"1"},
"1", {\ar"3"},
"4", {\ar"1"},
"2", {\ar"4"},
"3", {\ar"6"},
"7", {\ar"3"},
"4", {\ar"7"},
"8", {\ar"4"},
"5", {\ar"8"},
"1", {\ar"2"},
"3", {\ar"4"},
"4", {\ar"5"},
"6", {\ar"7"},
"7", {\ar"8"},
"8", {\ar"9"},
\end{xy}
}
\hfill
\hfill

\noindent(ii) Let $\mathcal{Q}$ and $\xi$ be the ones in Example \ref{e:tw-AR} (ii). Then the quiver of $\widetilde{B}^{[\mathcal{Q}^>]}$ is described as follows (here we consider the labelling $\overline{I}_{\xi}^{\mathrm{tw}, >}$) : 

\hfill
\scalebox{0.7}[0.8]{
\begin{xy} 0;<1pt,0pt>:<0pt,-1pt>::
(-40,0) *+{1}, 
(-40,30) *+{2}, 
(-40,45) *+{3}, 
(-40,60) *+{2}, 
(-40,90) *+{1}, 
(0,110) *+{3}, 
(30,110) *+{2}, 
(60,110) *+{1}, 
(90,110) *+{0}, 
(120,110) *+{-1}, 
(150,110) *+{-2}, 
(180,110) *+{-3}, 
(210,110) *+{-4}, 
(240,110) *+{-5}, 
(270,110) *+{-6}, 
(300,110) *+{-7}, 
(330,110) *+{-8}, 
(150,0) *+{\bigstar} ="0",
(90,30) *+{\bigstar} ="1",
(210,30) *+{\bigstar} ="2",
(30,60) *+{\bigstar} ="3",
(150,60) *+{\bigstar} ="4",
(270,60) *+{\bigstar} ="5",
(270,0) *+{\bigstar} ="6",
(90,90) *+{\bigstar} ="7",
(210,90) *+{\bigstar} ="8",
(330,90) *+{\bigstar} ="9",
(0,45) *+{\bigstar} ="10",
(60,45) *+{\bigstar} ="11",
(120,45) *+{\bigstar} ="12",
(180,45) *+{\bigstar} ="13",
(240,45) *+{\bigstar} ="14",
{\ar@{.} (-30,0); (345,0)},
{\ar@{.} (-30,30); (345,30)},
{\ar@{.} (-30,45); (345,45)},
{\ar@{.} (-30,60); (345,60)},
{\ar@{.} (-30,90); (345,90)},
{\ar@{.} (0,100); (0,-10)},
{\ar@{.} (30,100); (30,-10)},
{\ar@{.} (60,100); (60,-10)},
{\ar@{.} (90,100); (90,-10)},
{\ar@{.} (120,100); (120,-10)},
{\ar@{.} (150,100); (150,-10)},
{\ar@{.} (180,100); (180,-10)},
{\ar@{.} (210,100); (210,-10)},
{\ar@{.} (240,100); (240,-10)},
{\ar@{.} (270,100); (270,-10)},
{\ar@{.} (300,100); (300,-10)},
{\ar@{.} (330,100); (330,-10)},
"0", {\ar"1"},
"2", {\ar"0"},
"7", {\ar"3"},
"4", {\ar"7"},
"8", {\ar"4"},
"5", {\ar"8"},
"3", {\ar"10"},
"13", {\ar"1"},
"12", {\ar"3"},
"4", {\ar"12"},
"2", {\ar"13"},
"14", {\ar"4"},
"10", {\ar"11"},
"11", {\ar"12"},
"12", {\ar"13"},
"13", {\ar"14"},
"13", {\ar"14"},
"0", {\ar"6"},
"1", {\ar"2"},
"3", {\ar"4"},
"4", {\ar"5"},
"7", {\ar"8"},
"8", {\ar"9"},
"1", {\ar"11"},
\end{xy}
}
\hfill
\hfill

\end{example}

\section{The isomorphism between quantum tori}\label{s:qtoriisom}
In this section, we prove an isomorphism between two kinds of quantum tori (Theorem \ref{t:torusisom}) : the appropriate subalgebra $\mathcal{Y}_{t, \mathcal{Q}^{\flat}}$ of $\mathcal{Y}_t$ of type $\mathrm{B}_n$, which will be defined before Theorem \ref{t:torusisom}, and the quantum torus corresponding to the compatible pair $(\Lambda^{[\mathcal{Q}^{\flat}]}, \widetilde{B}^{[\mathcal{Q}^{\flat}]})$ (Proposition \ref{p:compatible}, Notation \ref{n:compatible}). 

Recall the setting in subsection \ref{ss:subcat}, that is, let $\mathcal{Q}$ be a Dynkin quiver of type $\mathrm{A}_{2n-2}$, $\xi\colon I=\{1, 2,\dots, 2n-2\}\to\mathbb{Z}$ be an associated height function, and $\flat\in \{>, <\}$. Denote by $W^{\mathrm{A}_{2n-1}}$ the Weyl group of type $\mathrm{A}_{2n-1}$, and set $I_{\mathrm{A}}:=\{1,\dots, 2n-1\}$,  $I_{\mathrm{B}}:=\{1,\dots, n\}$. Recall that the first component of an element of $\overline{I}_{\xi}^{\mathrm{tw}, \flat}$ is in $I_{\mathrm{B}}$ and the set $I_{\mathrm{B}}$ is regarded as an index set of the simple roots of type $\mathrm{B}_n$. (However, note that the residue of an element of $\overline{I}_{\xi}^{\mathrm{tw}, \flat}$ is an element of $I_{\mathrm{A}}$.) In the following, we usually use the symbols $i, j,\dots$ for the elements of $I_{\mathrm{B}}$ and use the symbols $\imath, \jmath$ for the elements of $I_{\mathrm{A}}$.

By Proposition \ref{p:compatible} and Notation \ref{n:compatible}, we have a compatible pair $(\Lambda^{[\mathcal{Q}^{\flat}]}, \widetilde{B}^{[\mathcal{Q}^{\flat}]})$. We write the corresponding quantum torus $\mathcal{T}(\Lambda^{[\mathcal{Q}^{\flat}]})$ as $\mathcal{T}_{\mathcal{Q}^{\flat}}$ for short. 
The twisted Auslander-Reiten quiver $\Upsilon_{[\mathcal{Q}^{\flat}]}$ and the quiver of $\widetilde{B}^{[\mathcal{Q}^{\flat}]}$ are closely related as follows. Compare the quiver in Example \ref{e:tw-AR} (ii) with \ref{e:initial} (ii) :
\begin{proposition}\label{p:initial}
We regard the vertex set of $\Upsilon_{[\mathcal{Q}^{\flat}]}$ as $\overline{I}_{\xi}^{\mathrm{tw}, \flat}$ via $\overline{\Omega}_{\xi}^{\mathrm{tw}, \flat}$. Then the quiver of $\widetilde{B}^{[\mathcal{Q}^{\flat}]}$ is obtained from $\Upsilon_{[\mathcal{Q}^{\flat}]}$ by 
\begin{itemize}
\item[(B1)] attaching the arrows of the form $(i, r)\to (i, r-2r_i)$ whenever $(i, r), (i, r-2r_i)\in \overline{I}_{\xi}^{\mathrm{tw}, \flat}$, 
\item[(B2)] removing all the arrows of the form $(n, r)\to (n-1, r+1)$, 
\item[(B3)] attaching the arrows of the form $(n, r)\to (n-1, r+3)$ whenever $(n, r)\in \overline{I}_{\xi}^{\mathrm{tw}, \flat}$ and $(n-1, r+3)\in (\overline{I}_{\xi}^{\mathrm{tw}, \flat})_e$, 
\item[(B4)] removing the arrows between the vertices in  $(\overline{I}_{\xi}^{\mathrm{tw}, \flat})_f$. 
\end{itemize}
More explicitly, the set of arrows of the quiver of $\widetilde{B}^{[\mathcal{Q}^{\flat}]}$ is equal to the union of the following sets :
\begin{enumerate}
\item[(1)] $\{(i, r)\to (i, r-2r_i)\mid (i, r), (i, r-2r_i)\in \overline{I}_{\xi}^{\mathrm{tw}, \flat}\}$
\item[(2)] $\{(i, r)\to (j, r+2)\mid i, j\leq n-1, |i-j|=1, (i, r)\in \overline{I}_{\xi}^{\mathrm{tw}, \flat}, (j, r+2)\in (\overline{I}_{\xi}^{\mathrm{tw}, \flat})_e\}$
\item[(3)] $\{(n-1, r)\to (n, r+1)\mid (n-1, r)\in \overline{I}_{\xi}^{\mathrm{tw}, \flat}, (n, r+1)\in (\overline{I}_{\xi}^{\mathrm{tw}, \flat})_e\}$
\item[(4)] $\{(n, r)\to (n-1, r+3)\mid  (n, r)\in \overline{I}_{\xi}^{\mathrm{tw}, \flat}, (n-1, r+3)\in (\overline{I}_{\xi}^{\mathrm{tw}, \flat})_e\}$.
\end{enumerate}
\end{proposition}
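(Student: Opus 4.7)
The plan is to unpack the formula of Proposition \ref{p:compatible} for $\widetilde{B}^{[\mathcal{Q}^{\flat}]}$ directly, after first identifying $J$ with $\Delta_+$ via Theorem \ref{t:tw-const} and then with $\overline{I}_{\xi}^{\mathrm{tw},\flat}$ via $\overline{\Omega}_{\xi}^{\mathrm{tw},\flat}$. The first step is to show that under this identification, $\overline{\Omega}_{\xi}^{\mathrm{tw},\flat}(\beta^+) = (i, r - 2r_i)$ whenever $\overline{\Omega}_{\xi}^{\mathrm{tw},\flat}(\beta) = (i, r)$ and $\beta^+ \in J$, where $r_i$ is the $B_n$-symmetrizer (so $2r_i = 4$ for $i \leq n-1$ and $2r_n = 2$). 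This follows from Lemma \ref{l:tw-dia}: writing a compatible reading of $\Upsilon_{[\mathcal{Q}^{\flat}]}$ in order of decreasing $\widehat{I}$-spectral parameter, the next vertex of $I_A$-residue $\imath$ after $\beta$ is the unique one whose $\widehat{I}$-spectral parameter is $1$ smaller, and the doubling in the definition of $\overline{\Omega}_{\xi}^{\mathrm{tw},\flat}$ converts this into the shift above. The cases $t = s^+$ and $s = t^+$ of the formula then immediately give item~(1).

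For cases with $c_{i_s, i_t} \neq 0$ and $s \neq t$, the Cartan matrix of $A_{2n-1}$ forces $|i_s - i_t| = 1$ in $I_A$. Translating the interlacings $s < t < s^+ < t^+$ and $t < s < t^+ < s^+$ into inequalities on $\widehat{I}$-spectral parameters, one carries out a case split on the residue pair $\{i_s, i_t\} \subset I_A$. When this pair lies entirely in $\{1, \dots, n-1\}$ or entirely in $\{n+1, \dots, 2n-1\}$, both vertices have integer $\widehat{I}$-spectral parameters and the interlacing forces an $\widehat{I}$-spectral gap of $1$; after folding, the $I_B$-indices both belong to $\{1,\dots,n-1\}$ and the $\overline{I}$-$r$ gap is $2$, yielding item~(2). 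When $\{i_s, i_t\} = \{n-1,n\}$, one vertex has half-integer and the other integer $\widehat{I}$-spectral parameter, and the interlacing yields a gap of $1/2$, equivalently an $\overline{I}$-$r$ gap of $1$, producing item~(3). When $\{i_s, i_t\} = \{n, n+1\}$, the residue-$(n{+}1)$ vertex folds into the $i = n-1$ row of $\overline{I}_{\xi}^{\mathrm{tw},\flat}$ through the other copy, and Lemma \ref{l:tw-dia}~(2) forces the interlacing residue-$n$ partner to sit at $\widehat{I}$-spectral distance $3/2$, i.e., an $\overline{I}$-$r$ gap of $3$, yielding item~(4). In each case, the sign of $b_{st}$ as determined by Proposition \ref{p:compatible} fixes the arrow orientation to match the proposition; the exclusion of arrows between vertices of $(\overline{I}_{\xi}^{\mathrm{tw},\flat})_f$ is automatic because $\widetilde{B}^{[\bm{i}]}$ has columns only indexed by $J_e$.

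The main obstacle will be the analysis of item~(4): one must identify, among the $\overline{I}_{\xi}^{\mathrm{tw},\flat}$-vertices with $i$-coordinate $n-1$, exactly those whose unfolded residue $\imath$ equals $n+1$ rather than $n-1$, and verify that their interlacing partner at residue $n$ lies at $\widehat{I}$-spectral distance $3/2$, not $1/2$. This requires reverse-engineering $(T1)$--$(T4)$ to recover $\imath$ from the $\overline{I}$-label, and then applying Lemma \ref{l:tw-dia}~(2), (3) to pin down the $\widehat{I}$-spectral position of the residue-$n$ partner. Once this geometric picture is made precise, the formula of Proposition \ref{p:compatible} yields the asserted arrow sets of items~(1)--(4) and no others.
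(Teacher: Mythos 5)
Your overall strategy coincides with the paper's: identify $k^{+}$ with the shift $(i,r)\mapsto(i,r-2r_i)$ via twisted convexity (this gives item (1)), and then translate the interlacing conditions $s<t<s^{+}<t^{+}$ and $t<s<t^{+}<s^{+}$ of Proposition \ref{p:compatible} case by case. However, your case split at the spin node is incorrect, and this is exactly where the content of the proposition lies. You claim that a residue pair $\{i_s,i_t\}=\{n-1,n\}$ always yields an $\widehat{I}$-gap of $1/2$ and hence an item (3) arrow, while $\{n,n+1\}$ always yields a gap of $3/2$ and an item (4) arrow. Both implications are false: after folding, the unfolded residues $n-1$ and $n+1$ both land in row $n-1$ of $\overline{I}_{\xi}^{\mathrm{tw},\flat}$, and what decides between items (3) and (4) is not the residue pair but which end of the pair lies in row $n$ (equivalently, the direction of the interlacing); each of the two residue pairs occurs in both items. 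Concretely, in Example \ref{e:initial}(ii) (where $n=3$), the arrow $(3,-1)\to(2,2)$ is an item (4) arrow whose target has unfolded residue $2=n-1$ (pair $\{n-1,n\}$, $\widehat{I}$-gap $3/2$), while $(2,0)\to(3,1)$ is an item (3) arrow whose source has unfolded residue $4=n+1$ (pair $\{n,n+1\}$, gap $1/2$). Under your scheme these arrows are misclassified or omitted, so the arrow set you produce does not match the statement; your remark that the "main obstacle" for item (4) is to locate the row-$(n-1)$ vertices of residue $n+1$ confirms the misconception.

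Relatedly, your argument never engages with (B2): the arrows $(n,r)\to(n-1,r+1)$ of $\Upsilon_{[\mathcal{Q}^{\flat}]}$ have adjacent residues, yet they must be shown to be absent from the quiver of $\widetilde{B}^{[\mathcal{Q}^{\flat}]}$. Your phrasing "the interlacing yields a gap of $1/2$" presupposes that every adjacent-residue pair at small distance gives a nonzero entry, but for a pair $(n-1,s)$, $(n,s-1)$ with the row-$n$ vertex later, twisted convexity yields the nested chain $(n-1,s)\prec(n,s-1)\prec(n,s-3)=(n,s-1)^{+}\prec(n-1,s-4)=(n-1,s)^{+}$, which satisfies neither interlacing pattern, so the corresponding entry of $\widetilde{B}^{[\mathcal{Q}^{\flat}]}$ vanishes. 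The correct case analysis (this is what the paper does) splits according to whether neither vertex, the target, or the source lies in row $n$: in the first case convexity forces $r=s-2$ (your item (2) argument is essentially fine here, though you should also check that every pair listed in (2) really has adjacent unfolded residues, which follows from a parity consideration on $\xi$); when the target is in row $n$ one gets $r=s-1$, giving item (3) regardless of whether the source has residue $n-1$ or $n+1$; and when the source is in row $n$ one gets $r=s-1$ or $r=s-3$, the first possibility being excluded by the nesting above (this is (B2)) and the second giving item (4).
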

\begin{proof}
By the definition of compatible reading and twisted convexity of $\Upsilon_{[\mathcal{Q}^{\flat}]}$, we have 
\[
(i, r)^+=(i, r-2r_i)
\]
for $(i, r)\in (\overline{I}_{\xi}^{\mathrm{tw}, \flat})_e$ (recall Notation \ref{n:indexplus}, \ref{n:compatible}). Hence the arrows (1), which are not the arrows of $\Upsilon_{[\mathcal{Q}^{\flat}]}$,  are in the quiver of $\widetilde{B}^{[\mathcal{Q}^{\flat}]}$. The remaining arrows in the quiver of $\widetilde{B}^{[\mathcal{Q}^{\flat}]}$ are of the form $(i, r)\to (j, s)$ such that 
\begin{itemize}
\item $(i, r)\in \overline{I}_{\xi}^{\mathrm{tw}, \flat}$ and $(j, s)\in (\overline{I}_{\xi}^{\mathrm{tw}, \flat})_e$, 
\item $|\res^{[\mathcal{Q}^{\flat}]}((i, r))-\res^{[\mathcal{Q}^{\flat}]}((j, s))|=1$, 
\item $(j, s)\prec_{[\mathcal{Q}^{\flat}]}(i, r)\prec_{[\mathcal{Q}^{\flat}]}(j, s-2r_j)\prec_{[\mathcal{Q}^{\flat}]}(i, r)^+$. 
\end{itemize}
Here we regard $(i, r)^+$ as a maximum element with respect to $\prec_{[\mathcal{Q}^{\flat}]}$ if $(i, r)^+\notin \overline{I}_{\xi}^{\mathrm{tw}, \flat}$. From now, we assume the first condition. Note that all arrows in $\Upsilon_{[\mathcal{Q}^{\flat}]}$ whose target is in $(\overline{I}_{\xi}^{\mathrm{tw}, \flat})_f$ are removed by (B2) and (B4) because of the twisted convexity of $\Upsilon_{[\mathcal{Q}^{\flat}]}$. 

Assume that $i, j\leq n-1$. Then we have 
\begin{center}
$|\res^{[\mathcal{Q}^{\flat}]}((i, r))-\res^{[\mathcal{Q}^{\flat}]}((j, s))|=1$ and $(j, s)\prec_{[\mathcal{Q}^{\flat}]}(i, r)\prec_{[\mathcal{Q}^{\flat}]}(j, s-4)$ 
\end{center}
if and only if
\begin{center}
$|i-j|=1$ and $r=s-2$
\end{center}
by twisted convexity of $\Upsilon_{[\mathcal{Q}^{\flat}]}$. Moreover, in this case, $(j, s-4)<(i, r)^+$ is automatically satisfied. Hence the arrows (2) are  all the arrows $(i, r)\to (j, s)$ with $i, j\leq n-1$ in the quiver of $\widetilde{B}^{[\mathcal{Q}^{\flat}]}$, and they are also the ones in the quiver $\Upsilon_{[\mathcal{Q}^{\flat}]}$ except for the arrows between the vertices in $(\overline{I}_{\xi}^{\mathrm{tw}, \flat})_f$. 

Next, we consider the case that $j=n$. Then,  
\begin{center}
$|\res^{[\mathcal{Q}^{\flat}]}((i, r))-\res^{[\mathcal{Q}^{\flat}]}((n, s))|=1$ and $(n, s)\prec_{[\mathcal{Q}^{\flat}]}(i, r)\prec_{[\mathcal{Q}^{\flat}]}(n, s-2)$ 
\end{center}
if and only if
\begin{center}
$i=n-1$ and $r=s-1$
\end{center}
by twisted convexity of $\Upsilon_{[\mathcal{Q}^{\flat}]}$. Moreover, in this case, $(n, s-2)<(i, r)^+$ is automatically satisfied. Hence the arrows (3) are  all the arrows of the form $(i, r)\to (n, s)$ in the quiver of $\widetilde{B}^{[\mathcal{Q}^{\flat}]}$, and they are also the ones in the quiver $\Upsilon_{[\mathcal{Q}^{\flat}]}$ except for the arrows between the vertices in $(\overline{I}_{\xi}^{\mathrm{tw}, \flat})_f$. 

Finally, we consider the case that $i=n$. Then, 
\begin{center}
$|\res^{[\mathcal{Q}^{\flat}]}((n, r))-\res^{[\mathcal{Q}^{\flat}]}((j, s))|=1$ and $(j, s)\prec_{[\mathcal{Q}^{\flat}]}(n, r)\prec_{[\mathcal{Q}^{\flat}]}(j, s-2r_j)$ 
\end{center}
if and only if
\begin{center}
$j=n-1$ and [$r=s-1$ or $s-3$]
\end{center}
by twisted convexity of $\Upsilon_{[\mathcal{Q}^{\flat}]}$. If $r=s-1$, then $(n-1, s)\prec_{[\mathcal{Q}^{\flat}]}(n,s-1)\prec_{[\mathcal{Q}^{\flat}]}(n, s-1)^+=(n, s-3)\prec_{[\mathcal{Q}^{\flat}]}(n-1, s-4)$. Hence there are no arrows from $(n, s-1)$ to $(n-1, s)$ in the quiver of $\widetilde{B}^{[\mathcal{Q}^{\flat}]}$. Note that such an arrow exists in $\Upsilon_{[\mathcal{Q}^{\flat}]}$. If $r=s-3$, then $(n-1, s-4)<(n, r)^+$ is satisfied. Hence the arrows (4) are all the arrows of the form $(n, r)\to (n, s)$ in the quiver of $\widetilde{B}^{[\mathcal{Q}^{\flat}]}$, and they are not arrows in the quiver $\Upsilon_{[\mathcal{Q}^{\flat}]}$, which completes the proof.
\end{proof}
\begin{remark}\label{r:HLquiver}
It follows from Proposition \ref{p:initial} that the opposite quiver of $\widetilde{B}^{[\mathcal{Q}^{\flat}]}$ is isomorphic to a full subquiver of the quiver $G^-$ for type $\mathrm{B}_n^{(1)}$ in \cite[section 2]{HL:JEMS2016}, introduced by the first author and Leclerc. Here the overall difference of directions of arrows is not essential. Note that, if we identify the vertex set of the opposite quiver of $\widetilde{B}^{[\mathcal{Q}^{\flat}]}$ with $\overline{I}_{\xi}^{\mathrm{tw}, \flat}$, then the labellings of vertices of these two quivers are the same up to shift of second components. 
\end{remark}
Define $\mathcal{Y}_{t, \mathcal{Q}^{\flat}}$ as the $\mathbb{Z}[t^{\pm 1/2}]$-subalgebra of the quantum torus $\mathcal{Y}_t$ \emph{of type $\mathrm{B}_n$} generated by $\{\widetilde{Y}_{i, r}^{\pm 1}\mid (i, r)\in \overline{I}_{\xi}^{\mathrm{tw}, \flat}\}$. For $(i, r)\in \overline{I}_{\xi}^{\mathrm{tw}, \flat}$, we set 
\begin{align}
k(i, r):=\# \{r'\in \mathbb{Z}\mid (i, r')\in \overline{I}_{\xi}^{\mathrm{tw}, \flat}, r'-r\in 2r_i\mathbb{Z}_{\geq 0}\}.\label{eq:kir} 
\end{align}

The following is our first main theorem. 

\begin{theorem}\label{t:torusisom}
We regard the vertex set of $\Upsilon_{[\mathcal{Q}^{\flat}]}$ as $\overline{I}_{\xi}^{\mathrm{tw}, \flat}$ via $\overline{\Omega}_{\xi}^{\mathrm{tw}, \flat}$. There exists a $\mathbb{Z}$-algebra isomorphism $\widetilde{\Phi}^T\colon\mathcal{T}_{\mathcal{Q}^{\flat}} \to \mathcal{Y}_{t, \mathcal{Q}^{\flat}}$ given by 
\begin{align*}
v^{\pm 1/2}\mapsto t^{\mp 1/2}&&
&X_{(i, r)}\mapsto \underline{m_{k(i, r), r}^{(i)}}=\underline{\prod_{s=1}^{k(i, r)}Y_{i, r+2r_i(s-1)}},  
\end{align*}
for $(i, r)\in \overline{I}_{\xi}^{\mathrm{tw}, \flat}$. Moreover, $\widetilde{\Phi}^T\circ \overline{(\cdot)}=\overline{(\cdot)}\circ\widetilde{\Phi}^T$. 
\end{theorem}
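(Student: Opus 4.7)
The plan has three parts: (i) well-definedness of $\widetilde{\Phi}^T$ as a $\mathbb{Z}$-algebra homomorphism, (ii) bijectivity, and (iii) bar-compatibility. For (i), the quantum torus $\mathcal{T}_{\mathcal{Q}^\flat}$ is generated by invertible elements $X_{(i,r)}$ and central $v^{\pm 1/2}$ subject to the pairwise $v$-commutation relations $X_{(i,r)}X_{(j,s)}=v^{\Lambda^{[\mathcal{Q}^\flat]}((i,r),(j,s))}X_{(j,s)}X_{(i,r)}$; hence it suffices to verify that the images $\underline{m^{(i)}_{k(i,r),r}}$ obey the matching $t$-commutation relations in $\mathcal{Y}_{t,\mathcal{Q}^\flat}$, with $v^{1/2}$ replaced by $t^{-1/2}$. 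Expanding each image as an ordered product of $\widetilde{Y}_{i,r+2r_i a}$ together with the scalar $t^{*/2}$ required to make it bar-invariant, and iterating $\widetilde{Y}_{i,r}\widetilde{Y}_{j,s}=t^{\gamma(i,r;j,s)}\widetilde{Y}_{j,s}\widetilde{Y}_{i,r}$, the required identity reduces to the combinatorial equation
\begin{equation*}
\sum_{a=0}^{k(i,r)-1}\sum_{b=0}^{k(j,s)-1}\gamma\bigl(i,\,r+2r_i a;\,j,\,s+2r_j b\bigr)=-\Lambda^{[\mathcal{Q}^\flat]}\bigl((i,r),(j,s)\bigr)
\end{equation*}
for all $(i,r),(j,s)\in\overline{I}_\xi^{\mathrm{tw},\flat}$.

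Establishing this identity is the main obstacle, and I would handle it by case analysis on whether $i,j$ equal $n$. In each case the left-hand side is controlled by the explicit description of the inverse quantum Cartan matrix entries $\widetilde{c}_{ji}(r)$ for type $\mathrm{B}_n$ recorded in Example \ref{e:invcarB-pic} (to be proved in Appendix \ref{a:inv}): the nonzero $\widetilde{c}_{ji}(r)$ occupy a sparse, translation-invariant pattern of residues modulo $2(2n-1)$, so after substituting the formula $\gamma(i,r;j,s)=\widetilde{c}_{ji}(-r_j-r+s)+\widetilde{c}_{ji}(r_j+r-s)-\widetilde{c}_{ji}(r_j-r+s)-\widetilde{c}_{ji}(-r_j+r-s)$ the double sum telescopes to a short expression in $k(i,r)$, $k(j,s)$, and $r-s$. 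The right-hand side is computed from a compatible reading of $\Upsilon_{[\mathcal{Q}^\flat]}$, which by Theorem \ref{t:tw-const} yields a reduced word $(i_1,\dots,i_{n(2n-1)})$ of the longest element of the Weyl group of type $\mathrm{A}_{2n-1}$; the vertex labelled $(i,r)\in\overline{I}_\xi^{\mathrm{tw},\flat}$ via $\overline{\Omega}_\xi^{\mathrm{tw},\flat}$ corresponds to a position $u$ in this word, and Proposition \ref{p:compatible} expresses $\Lambda^{[\mathcal{Q}^\flat]}_{u,v}$ as the inner product $(\varpi_{i_u}-w_{\leq u}\varpi_{i_u},\,\varpi_{i_v}+w_{\leq v}\varpi_{i_v})$. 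Twisted convexity (Lemma \ref{l:tw-dia}) and the maxima $\Xi_\imath$ of Lemma \ref{l:max} pin down precisely which $\widetilde{Y}_{i,r'}$ enter $\underline{m^{(i)}_{k(i,r),r}}$, and matching these ranges against the periodic pattern of nonzero $\widetilde{c}_{ji}$ forces the two sides to coincide.

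Given (i), the remaining claims are quick. Surjectivity follows from the telescoping
\[
\widetilde{\Phi}^T\bigl(X_{(i,r)}X_{(i,r+2r_i)}^{-1}\bigr)=t^{c/2}\,\widetilde{Y}_{i,r}
\]
for some computable $c\in\mathbb{Z}$, with the second factor omitted whenever $(i,r+2r_i)\notin\overline{I}_\xi^{\mathrm{tw},\flat}$; twisted convexity guarantees $k(i,r)=k(i,r+2r_i)+1$ in the remaining case, so the $\widetilde{Y}$-factors of the two commutative monomials cancel except for $\widetilde{Y}_{i,r}$. Injectivity is then automatic: both algebras are quantum tori of the same rank $\#\overline{I}_\xi^{\mathrm{tw},\flat}=n(2n-1)$ (since the vertex set of $\Upsilon_{[\mathcal{Q}^\flat]}$ is $\Delta_+$ for type $\mathrm{A}_{2n-1}$), and a surjection of quantum tori of equal rank sending invertible generators to invertible monomials has no kernel. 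Finally, bar-compatibility holds on generators, $\overline{X_{(i,r)}}=X_{(i,r)}$ and $\overline{\underline{m^{(i)}_{k(i,r),r}}}=\underline{m^{(i)}_{k(i,r),r}}$ by construction, and on scalars via $\overline{v^{1/2}}=v^{-1/2}\mapsto t^{1/2}=\overline{t^{-1/2}}$; since $\widetilde{\Phi}^T\circ\overline{(\cdot)}$ and $\overline{(\cdot)}\circ\widetilde{\Phi}^T$ agree on a generating set of $\mathcal{T}_{\mathcal{Q}^\flat}$, they agree throughout.
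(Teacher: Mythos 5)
Your overall frame is sound: reducing the existence of $\widetilde{\Phi}^T$ to the single identity $\sum_{a=0}^{k(i,r)-1}\sum_{b=0}^{k(j,s)-1}\gamma(i,r+2r_ia;j,s+2r_jb)=-\Lambda^{[\mathcal{Q}^{\flat}]}((i,r),(j,s))$ is correct, and your bijectivity and bar-compatibility arguments are fine (the paper likewise treats those as routine). The problem is that this identity \emph{is} the theorem, and you do not prove it. Two concrete gaps. First, the asserted telescoping of the double sum only works cleanly when $r_i=r_j$; when exactly one of $i,j$ equals $n$ the inner sum (step $2r_j$) telescopes, but the outer sum has step $2r_i\neq 2r_j$, so what survives is not a boundary difference but a genuine sum of up to $\min\{k(i,r),k(j,s)\}$ values of $\widetilde{c}_{ji}$ along arithmetic progressions, which must be evaluated against the explicit pattern of Lemma \ref{l:explicitinvcarB}; moreover $\widetilde{c}_{ji}$ is anti-periodic, $\widetilde{c}_{ji}(-4n+2+r)=-\widetilde{c}_{ji}(r)$, so one must check that the relevant windows do not pick up sign changes. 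Second, you give no method for putting the right-hand side in closed form: $\Lambda^{[\mathcal{Q}^{\flat}]}((i,r),(j,s))=(\varpi_{i_u}-w_{\leq u}\varpi_{i_u},\,\varpi_{i_v}+w_{\leq v}\varpi_{i_v})$ requires computing $w_{\leq u}\varpi_{i_u}$ for a twisted adapted reduced word, a nontrivial combinatorial computation in its own right (in the paper, even the special case needed for a handful of ``frozen'' pairs occupies a case analysis over $\flat$ and over the positions of $\imath,\jmath$ relative to $n$). As written, ``matching these ranges \ldots forces the two sides to coincide'' states the goal rather than supplying the argument.

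For comparison, the paper avoids the full matrix verification altogether: it changes the generating set of $\mathcal{T}_{\mathcal{Q}^{\flat}}$ to $\{\hat{X}_{(i,r)}^{\pm1}\mid (i,r)\in(\overline{I}_{\xi}^{\mathrm{tw},\flat})_e\}\cup\{(X'_{\Pi_{\imath}})^{\pm1}\mid\imath\in I_{\mathrm{A}}\}$ and checks that the candidate map sends $\hat{X}_{(i,r)}\mapsto\widetilde{A}_{i,r-r_i}^{-1}$ and $X'_{\Pi_{\imath}}\mapsto\underline{\widetilde{Y}_{\Pi_{\imath}}}$. With this choice, the $\hat{X}$--$\hat{X}$ and $\hat{X}$--frozen relations are matched automatically by \eqref{eq:A-comm} and \eqref{eq:AY-comm} together with the explicit description of $\widetilde{B}^{[\mathcal{Q}^{\flat}]}$ in Proposition \ref{p:initial}, and the only place the inverse quantum Cartan matrix enters is the single family of frozen--frozen relations, where $\lambda_{\imath,\jmath}$ is computed via the twisted Coxeter element and compared with $\gamma(\Pi_{\imath};\Pi_{\jmath})=\delta(\imath\in{}^{\jmath}B)$. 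If you wish to keep your direct route you must actually carry out both evaluations flagged above; otherwise, adopting this change of generators reduces the hard computation to that one small family.
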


\begin{proof}
The equality $\widetilde{\Phi}^T\circ \overline{(\cdot)}=\overline{(\cdot)}\circ\widetilde{\Phi}^T$ follows from direct checks on generators. Indeed, $\widetilde{\Phi}^T\circ \overline{(\cdot)}$ and $\overline{(\cdot)}\circ\widetilde{\Phi}^T$ are algebra anti-homomorphisms. Moreover, if there exists a $\mathbb{Z}$-algebra homomorphism $\mathcal{T}_{\mathcal{Q}^{\flat}} \to \mathcal{Y}_{t, \mathcal{Q}^{\flat}}$ given by the correspondence in the theorem, then it is easy to show that it is an isomorphism. 

We will prove the existence of the $\mathbb{Z}$-algebra homomorphism $\widetilde{\Phi}^T$. It follows from the explicit form of $\widetilde{B}^{[\mathcal{Q}^{\flat}]}$ given by twisted convexity of $\Upsilon_{[\mathcal{Q}^{\flat}]}$ and Proposition \ref{p:initial} that $\hat{X}_{(i, r)}$ with $(i, r)\in (\overline{I}_{\xi}^{\mathrm{tw}, \flat})_e$ (recall the notation in subsection \ref{ss:qclus}) coincides, up to powers of $v^{\pm \frac{1}{2}}$, with 
\begin{align}
\begin{cases}
X_{(i, r)^-}X_{(i, r-4)}^{-1}\dprod_{j; |i-j|=1} \left(X_{(j, r-2)}X_{(j, r-2)^-}^{-1}\right)&\text{if}\  i\leq n-2\\
X_{(n-1, r)^-}X_{(n-1, r-4)}^{-1}X_{(n-2, r-2)}X_{(n-2, r-2)^-}^{-1}X_{(n, r-3)}X_{(n, r-1)^{-}}^{-1}&\text{if}\ i=n-1\\
X_{(n, r)^-}X_{(n, r-2)}^{-1}X_{(n-1, r-1)}X_{(n-1, r-1)^-}^{-1}&\text{if}\ i=n.
\end{cases}\label{eq:Xhat}
\end{align}
Here we set $X_0:=1$ ($0$ stands for $0$ in the sense of Notation \ref{n:indexplus}) and, when $n=2$, $X_{(0, r-2)}X_{(0, r-2)^-}^{-1}:=1$. For $\imath\in I_{\mathrm{A}}$, define $\Xi_{\imath}$ as in Lemma \ref{l:max} and set 
\begin{align*}
\Xi'_{\imath}&:=\begin{cases}
\xi(n-1)+\xi(n)&\text{if}\ \flat=> \text{and}\ \imath=n, \\
2\Xi_{\imath}&\text{otherwide},
\end{cases}
&
\Pi_{\imath}&:=\begin{cases}
(\imath, \Xi'_{\imath})&\text{if}\ 1\leq \imath\leq n, \\
(2n-\imath, \Xi'_{\imath})&\text{if}\ n+1\leq \imath \leq 2n-1.
\end{cases}
\end{align*}
Note that $\Xi'_{\imath}$ and $\Pi_{\imath}$ depend only on $\xi$, and they are independent of the choice of $\flat$. Let 
\[
X'_{\Pi_{\imath}}:=
\begin{cases}
v^{\frac{1}{2}\lambda_{(n, 2\Xi_n), \Pi_n}}X_{(n, 2\Xi_n)}^{-1}X_{\Pi_{n}}&\text{if}\ \flat=> \text{and}\ \imath=n,\\ 
X_{\Pi_{\imath}}&\text{otherwise} 
\end{cases}
\]
(recall the notation in Proposition \ref{p:compatible}). Then the explicit forms \eqref{eq:Xhat} imply that the quantum torus $\mathcal{T}_{\mathcal{Q}^{\flat}}$ is generated by 
\begin{align}
\{\hat{X}_{(i, r)}^{\pm 1}\mid (i, r)\in (\overline{I}_{\xi}^{\mathrm{tw}, \flat})_e\}\cup \{(X'_{\Pi_{\imath}})^{\pm 1}\mid \imath\in I_{\mathrm{A}}\}.\label{eq:gen}
\end{align} 
Moreover, because of the explicit forms \eqref{eq:Xhat} again, it suffices to show that there exists a $\mathbb{Z}$-algebra homomorphism $\widetilde{\Phi}^T\colon\mathcal{T}_{\mathcal{Q}^{\flat}} \to \mathcal{Y}_{t, \mathcal{Q}^{\flat}}$ given by 
\begin{align}
\begin{cases}
v^{1/2}\mapsto t^{-1/2},&\\
\hat{X}_{(i, r)}\mapsto \widetilde{A}_{i, r-r_i}^{-1}&\text{for}\ (i, r)\in (\overline{I}_{\xi}^{\mathrm{tw}, \flat})_e,\\
X'_{\Pi_{\imath}}\mapsto \underline{\widetilde{Y}_{\Pi_{\imath}}}&\text{for}\ \imath\in I_{\mathrm{A}}.
\end{cases}\label{eq:candidate}
\end{align}

For $\imath, \jmath\in I$, there exists $\lambda_{\imath, \jmath}\in \mathbb{Z}$ such that the generators of $\mathcal{T}_{\mathcal{Q}^{\flat}}$ satisfy the following relations (recall \eqref{eq:Xhat-general} and the notation in Proposition \ref{p:compatible}) :
\begin{align}
&\hat{X}_{(i, r)}\hat{X}_{(j, s)}=v^{2b_{(i, r), (j, s)}}\hat{X}_{(j, s)}\hat{X}_{(i, r)}\ \text{for}\ (i, r), (j, s)\in (\overline{I}_{\xi}^{\mathrm{tw}, \flat})_e, \label{eq:rel1}\\ % ok
&X_{\Pi_{\imath}}\hat{X}_{(j, s)}=v^{-2\delta(\Pi_{\imath}=(j, s))}\hat{X}_{(j, s)}X_{\Pi_{\imath}} \ \text{for}\ \imath \in I_{\mathrm{A}}\ \text{and}\ (j, s)\in (\overline{I}_{\xi}^{\mathrm{tw}, \flat})_e,
\label{eq:rel2}\\ % ok
&X'_{\Pi_{n}}\hat{X}_{(j, s)}=v^{2\delta((n, 2\Xi_n)=(j, s))-2\delta(\Pi_{n}=(j, s))}\hat{X}_{(j, s)}X'_{\Pi_{n}} \ \text{when}\ \flat=> \text{and}\ (j, s)\in (\overline{I}_{\xi}^{\mathrm{tw}, \flat})_e,
\label{eq:rel2'}\\
&X'_{\Pi_{\imath}}X'_{\Pi_{\jmath}}=v^{\lambda_{\imath, \jmath}}X'_{\Pi_{\jmath}}X'_{\Pi_{\imath}}\ \text{for}\ \imath, \jmath\in I_{\mathrm{A}}. \label{eq:rel3}
\end{align} 
Note that the relations of the generators \eqref{eq:gen} of $\mathcal{T}_{\mathcal{Q}^{\flat}}$ are exhausted by \eqref{eq:rel1}, \eqref{eq:rel2}, \eqref{eq:rel2'} and \eqref{eq:rel3}.

Here we compute $\lambda_{\imath, \jmath}$ explicitly. Define $\phi_{\mathcal{Q}}^{\mathrm{tw}}=s_{\imath_1}\cdots s_{\imath_{2n-1}}\in W^{\mathrm{A}_{2n-1}}$ by 
\begin{center}
$\{\imath_1,\dots, \imath_{2n-1}\}=I_{\mathrm{A}}$ with $k<k'$ whenever $\Pi_{\imath_k}\prec_{[\mathcal{Q}^{\flat}]}\Pi_{\imath_{k'}}$. 
\end{center}

By Proposition \ref{p:adapted} (3), the processes (T1)--(T4) in subsection \ref{ss:tadapted} and Proposition \ref{p:compatible}, we have the following : 

If $\flat=<$, then   
\begin{align*}
\lambda_{\imath, \jmath}&=(\varpi_{\imath}, \phi_{\mathcal{Q}}^{\mathrm{tw}}\varpi_{\jmath})-
(\phi_{\mathcal{Q}}^{\mathrm{tw}}\varpi_{\imath}, \varpi_{\jmath})\\
&=(\varpi_{\imath}, \phi_{\mathcal{Q}}^{\mathrm{tw}}\varpi_{\jmath}-\varpi_{\jmath})-(\varpi_{\imath}, (\phi_{\mathcal{Q}}^{\mathrm{tw}})^{-1}\varpi_{\jmath}-\varpi_{\jmath}).
\end{align*}

If $\flat=>$, $\imath\neq n$ and $\jmath\neq n$, then
\begin{align*}
\lambda_{\imath, \jmath}&=(\varpi_{\imath}, s_n\phi_{\mathcal{Q}}^{\mathrm{tw}}\varpi_{\jmath})-
(s_n\phi_{\mathcal{Q}}^{\mathrm{tw}}\varpi_{\imath}, \varpi_{\jmath})\\
&=(\varpi_{\imath}, \phi_{\mathcal{Q}}^{\mathrm{tw}}\varpi_{\jmath})-
(\phi_{\mathcal{Q}}^{\mathrm{tw}}\varpi_{\imath}, \varpi_{\jmath})\\
&=(\varpi_{\imath}, \phi_{\mathcal{Q}}^{\mathrm{tw}}\varpi_{\jmath}-\varpi_{\jmath})-(\varpi_{\imath}, (\phi_{\mathcal{Q}}^{\mathrm{tw}})^{-1}\varpi_{\jmath}-\varpi_{\jmath}).
\end{align*}

If $\flat=>$, $\imath=n$ and $\jmath\neq n$, then
\begin{align*}
\lambda_{n, \jmath}&=-(\alpha_n, \varpi_{\jmath}+s_n\phi_{\mathcal{Q}}^{\mathrm{tw}}\varpi_{\jmath})+(\varpi_{n}, s_n\phi_{\mathcal{Q}}^{\mathrm{tw}}\varpi_{\jmath})-
(s_n\phi_{\mathcal{Q}}^{\mathrm{tw}}\varpi_{n}, \varpi_{\jmath})\\
&=(\varpi_{n}, \phi_{\mathcal{Q}}^{\mathrm{tw}}\varpi_{\jmath})-
(\phi_{\mathcal{Q}}^{\mathrm{tw}}\varpi_{n}, \varpi_{\jmath})\\
&=(\varpi_{\imath}, \phi_{\mathcal{Q}}^{\mathrm{tw}}\varpi_{\jmath}-\varpi_{\jmath})-(\varpi_{\imath}, (\phi_{\mathcal{Q}}^{\mathrm{tw}})^{-1}\varpi_{\jmath}-\varpi_{\jmath}).
\end{align*}

Therefore, for all cases and all $\imath, \jmath\in I_{\mathrm{A}}$, we have
\begin{align*}
\lambda_{\imath, \jmath}=(\varpi_{\imath}, \phi_{\mathcal{Q}}^{\mathrm{tw}}\varpi_{\jmath}-\varpi_{\jmath})-(\varpi_{\imath}, (\phi_{\mathcal{Q}}^{\mathrm{tw}})^{-1}\varpi_{\jmath}-\varpi_{\jmath}).
\end{align*}
Let $\Upsilon_{\Xi}$ be the quiver whose vertex set is $I_{\mathrm{A}}$ and whose arrow set is $\{\imath\to \jmath\mid |\imath -\jmath|=1, \Xi'_{\jmath}>\Xi'_{\imath}\}$. (A reduced word of $\phi_{\mathcal{Q}}^{\mathrm{tw}}$ corresponds to a compatible reading of $\Upsilon_{\Xi}$.) For $\jmath\in I_{\mathrm{A}}$, set 
\begin{align*}
{}^{\jmath}B:=\{\jmath'\mid \text{there is a path from}\ \jmath\ \text{to}\ \jmath'\ \text{in}\ \Upsilon_{\Xi}\},\\
B^{\jmath}:=\{\jmath'\mid \text{there is a path from}\ \jmath'\ \text{to}\ \jmath\ \text{in}\ \Upsilon_{\Xi}\}.
\end{align*}
Then 
\begin{align*}
\phi_{\mathcal{Q}}^{\mathrm{tw}}\varpi_{\jmath}-\varpi_{\jmath}&=-\sum_{\jmath'\in {{}^{\jmath}B}}\alpha_{\jmath'},
&
(\phi_{\mathcal{Q}}^{\mathrm{tw}})^{-1}\varpi_{\jmath}-\varpi_{\jmath}&=-\sum_{\jmath'\in B^{\jmath}}\alpha_{\jmath'}.
\end{align*}
Therefore, for $\imath, \jmath\in I_{\mathrm{A}}$ with $\Xi'_{\imath}\geq \Xi'_{\jmath}$, we have
\begin{align}
X'_{\Pi_{\imath}}X'_{\Pi_{\jmath}}&=\begin{cases}
v^{-\delta(\imath\in {{}^{\jmath}B})}X'_{\Pi_{\jmath}}X'_{\Pi_{\imath}}&\text{if}\ \Xi'_{\imath}> \Xi'_{\jmath}\\
X'_{\Pi_{\jmath}}X'_{\Pi_{\imath}}&\text{if}\ \Xi'_{\imath}=\Xi'_{\jmath}. 
\end{cases}\label{eq:rel4}
\end{align}

It remains to show that the correspondence \eqref{eq:candidate} is compatible with the relations \eqref{eq:rel1}, \eqref{eq:rel2}, \eqref{eq:rel2'} and \eqref{eq:rel4}. The compatibility with \eqref{eq:rel1} directly follows from Proposition \ref{p:initial} and the relation \eqref{eq:A-comm} :
\begin{align*}
\widetilde{A}_{i, r}^{-1}\widetilde{A}_{j, s}^{-1}&=t^{\alpha(i, r; j, s)}\widetilde{A}_{j, s}^{-1}\widetilde{A}_{i, r}^{-1},\\
\text{where}\ \alpha(i, r; j, s)&=
\begin{cases}
2(-\delta_{r-s, 2r_i}+\delta_{r-s, -2r_i})&\text{if}\ i=j,\\
2(-\delta_{r-s, -2}+\delta_{r-s, 2})&\text{if}\ |i-j|=1,\\
0&\text{otherwise}.
\end{cases}
\end{align*}%ok
The compatibility with \eqref{eq:rel2} and \eqref{eq:rel2'} is a straightforward consequence of the relation \eqref{eq:AY-comm}. 

Finally, we check the compatibility with \eqref{eq:rel4}. Recall that, for $i, j\in I_{\mathrm{B}}$ and $r,s\in \mathbb{Z}$ with $r\geq s$, we have
\[
\widetilde{Y}_{i, r}\widetilde{Y}_{j, s}=t^{\gamma(i, r; j, s)}\widetilde{Y}_{j, s}\widetilde{Y}_{i, r},
\]
where
\begin{align*}
\gamma(i, r; j, s)=\begin{cases}
\widetilde{c}_{ji}(-r_j-r+s)-\widetilde{c}_{ji}(r_j-r+s)&\text{if}\ r>s,\\
0&\text{if}\ r=s. 
\end{cases}
\end{align*}
(See Remark \ref{r:commval}). Hence it suffices to show that $\gamma(\Pi_{\imath}; \Pi_{\jmath})=\delta(\imath\in {{}^{\jmath}B})$ for $\imath, \jmath\in I_{\mathrm{A}}$ with $\Xi'_{\imath}> \Xi'_{\jmath}$. Consider the following cases : 
\begin{align*}
\text{(a)}\ 1\leq \imath \leq n-1,&&& 
\text{(b)}\ \imath =n,&&&
\text{(c)}\ n+1\leq \imath\leq 2n-1.
\end{align*}

We deal with only the case (a). The cases (b) and (c) can be treated exactly in the similar way. The details are left to the reader. 

For $\jmath\in I_{\mathrm{A}}$, set 
\[
r^{(\jmath)}:=
\begin{cases}
-2|\jmath-\imath|&\text{if}\ 1\leq \jmath\leq n-1,\\
-2(n-\imath)+1&\text{if}\ \jmath=n,\\
-2(\jmath-\imath)+2&\text{if}\ n+1\leq \jmath\leq 2n-1.\\
\end{cases}
\]
Then, for $\imath\in {{}^{\jmath}B}$, we have 
\[
-\Xi'_{\imath}+\Xi'_{\jmath}\begin{cases}
=r^{(\jmath)}&\text{if}\ \imath\in {{}^{\jmath}B},\\
\in r^{(\jmath)}+4\mathbb{Z}_{>0}&\text{if}\ \imath\notin {{}^{\jmath}B}\ \text{and}\ \jmath\neq n,\\
\in r^{(n)}+2\mathbb{Z}_{>0}&\text{if}\ \imath\notin {{}^{\jmath}B}\ \text{and}\ \jmath=n.
\end{cases} 
\]
Moreover, for $k\in \mathbb{Z}_{\geq 0}$, we have
\[
\begin{cases}
\widetilde{c}_{\jmath\imath}(-2+r^{(\jmath)}+4k)=\delta_{0, k}&\text{if}\ 1\leq \jmath\leq n-1,\\
\widetilde{c}_{2n-\jmath, \imath}(-2+r^{(\jmath)}+4k)=\delta_{0, k}&\text{if}\ n+1\leq \jmath\leq 2n-1,\\
\widetilde{c}_{n, \imath}(-1+r^{(n)}+2k)=\delta_{0, k}&\text{if}\ \jmath=n,\\
\end{cases} 
\]
by Example \ref{e:invcarB-pic} (or Lemma \ref{l:explicitinvcarB}). Therefore, we have 
\[
\gamma(\Pi_{\imath}, \Pi_{\jmath})=\delta(\imath\in {{}^{\jmath}B}). 
\]
\end{proof}
\section{Quantum Grothendieck rings}\label{s:qGro}
In this section, we recall the definition of $t$-deformed $q$-characters, in other words, a $t$-deformed Grothendieck ring $K(\mathcal{C}_{\bullet})$, following the approach in \cite{H:qt}. We should remark that in the case of type $\mathrm{A}_N^{(1)}$, $\mathrm{D}_N^{(1)}$ and $\mathrm{E}_N^{(1)}$, such a $t$-deformed Grothendieck ring was also constructed in \cite{Nak:quiver,VV:qGro} by another method using quiver varieties (and a slightly different quantum torus). 

\subsection{Quantum Grothendieck rings}\label{ss:qGro}
We return to the general setting in section \ref{s:qtori}. For $i\in I$, let $\mathcal{K}_{i, t}$ be a $\mathbb{Z}[t^{\pm 1/2}]$-algebra of $\mathcal{Y}_t$ generated by $\{\widetilde{Y}_{i, r}(1+t\widetilde{A}_{i,r+r_i}^{-1}), \widetilde{Y}_{j, r}^{\pm 1}\mid r\in \mathbb{Z}, j\in I\setminus \{i\}\}$.

Set
\[
\mathcal{K}_{t}(=K_t(\mathcal{C}_{\bullet})):=\bigcap_{i\in I}\mathcal{K}_{i, t}.
\]
See \cite[section 4, 5]{H:qt} for the background of these algebras. The ring $\mathcal{K}_{t}$ is called \emph{the quantum Grothendieck ring of $\mathcal{C}_{\bullet}$}. A priori it is not clear that $\mathcal{K}_t$ contains non-zero elements. However we have the following.
\begin{theorem}[{\cite[Theorem 5.11]{H:qt}}]\label{t:F-elem}
For every dominant monomial $\widetilde{m}$ in $\mathcal{Y}_t$, there uniquely exists an element $F_t(\widetilde{m})$ of $\mathcal{K}_{t}$ such that $\widetilde{m}$ is the unique dominant monomial occurring in $F_t(\widetilde{m})$. Moreover, any monomial $\widetilde{m}'$ occurring in $F_t(\widetilde{m})-\widetilde{m}$ satisfies $\widetilde{m}'<\widetilde{m}$. In particular, the set $\{F_t(\underline{m})\mid m\ \text{is a dominant monomial in }\mathcal{Y}\}$ forms a $\mathbb{Z}[t^{\pm 1/2}]$-basis of $\mathcal{K}_{t}$. 
\end{theorem}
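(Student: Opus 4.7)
The plan follows the classical Frenkel–Reshetikhin/Frenkel–Mukhin paradigm adapted to the $t$-deformed setting: a rank-one analysis of each $\mathcal{K}_{i,t}$, a uniqueness argument using the Nakajima order, and an iterative construction for existence.

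First, for each $i \in I$, call a monomial $\widetilde{n}$ in $\mathcal{Y}_t$ \emph{$i$-dominant} if $u_{i,r}(\widetilde{n}) \geq 0$ for every $r \in \mathbb{Z}$. I would begin by showing that $\mathcal{K}_{i,t}$ admits a $\mathbb{Z}[t^{\pm 1/2}]$-basis $\{E_{i,t}(\widetilde{n})\}$ indexed by the $i$-dominant monomials, with $E_{i,t}(\widetilde{n}) = \widetilde{n} + (\text{terms strictly lower in the Nakajima order})$ and every monomial in $E_{i,t}(\widetilde{n})$ equal to $\widetilde{n}$ times a product of factors $\widetilde{A}_{i,s}^{-1}$. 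The construction is explicit: writing an $i$-dominant $\widetilde{n}$ as a product of $\widetilde{Y}_{i,r}^{a_r}$ times commuting factors $\widetilde{Y}_{j,s}^{\pm 1}$ ($j\neq i$), take $E_{i,t}(\widetilde{n})$ to be the corresponding $t$-normalized product of $\widetilde{Y}_{i,r}(1+t\widetilde{A}_{i,r+r_i}^{-1})$. This is essentially a quantum $\mathfrak{sl}_2$ calculation, and triangularity is transparent from the explicit form.

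Next, for uniqueness, suppose $z \in \mathcal{K}_t$ has no dominant monomial; I would show $z=0$. Pick a Nakajima-maximal monomial $\widetilde{m}'$ in the finite support of $z$; since $\widetilde{m}'$ is not dominant, there exist $i, r$ with $u_{i,r}(\widetilde{m}')<0$. Because $z \in \mathcal{K}_{i,t}$, expand $z = \sum_{\widetilde{n}\ i\text{-dominant}} c_{\widetilde{n}}\, E_{i,t}(\widetilde{n})$ in the basis from the previous step. By triangularity, the Nakajima-maximal monomial in $z$ equals the Nakajima-maximum $\widetilde{n}$ with $c_{\widetilde{n}}\neq 0$, which is $i$-dominant, contradicting the choice of $\widetilde{m}'$. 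This uniqueness pins down $F_t(\widetilde{m})$ whenever it exists, and reduces the basis claim to existence: linear independence of $\{F_t(\underline{m})\}$ is clear from their distinct unique dominant monomials, and spanning follows by iteratively subtracting from any $z \in \mathcal{K}_t$ a scalar multiple of $F_t(\underline{m})$ for a Nakajima-maximal dominant $\underline{m}$ appearing in $z$ until the remainder has no dominant monomial, hence vanishes.

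For existence, I would construct $F_t(\widetilde{m})$ by a Frenkel–Mukhin-type procedure. Set $G_0 := \widetilde{m}$. If $G_n \in \mathcal{K}_t$, stop; otherwise choose $i$ with $G_n \notin \mathcal{K}_{i,t}$, let $\widetilde{m}_n$ be the Nakajima-maximal monomial in $G_n$ that is not $i$-dominant, and use the rank-one basis to find the unique combination $C_n$ of $E_{i,t}(\widetilde{n})$ with $\widetilde{n} \geq \widetilde{m}_n$ whose addition cancels the obstruction to $G_n \in \mathcal{K}_{i,t}$ at every monomial $\geq \widetilde{m}_n$; put $G_{n+1} := G_n + C_n$. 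The principal obstacle here is termination/finiteness: every monomial appearing throughout the algorithm is of the form $\widetilde{m}\cdot \prod_k \widetilde{A}_{i_k,s_k}^{-1}$, and under the weight map $\widetilde{Y}_{i,r}\mapsto \varpi_i$, $t^{1/2}\mapsto 0$, each $\widetilde{A}_{i,s}^{-1}$ contributes $-\alpha_i$. Thus the weights decrease strictly in $Q_+$ at each correction; the set of weights encountered is bounded below by the lowest weight of the classical simple $L(m)$, and for each weight only finitely many spectral parameters $s$ are compatible with the structure of the initial $\widetilde{m}$, so the algorithm terminates after finitely many steps and yields $F_t(\widetilde{m}) \in \bigcap_i \mathcal{K}_{i,t} = \mathcal{K}_t$. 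Controlling this weight bookkeeping (together with checking that corrections from different $i$ do not produce new non-$i'$-dominant monomials above $\widetilde{m}_n$) is the main technical subtlety, after which the theorem follows at once.
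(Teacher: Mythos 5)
This theorem is not proved in the paper: it is imported verbatim from \cite[Theorem 5.11]{H:qt}, with the remark following it noting that the completion of $\mathcal{K}_t$ used there to construct the $F_t(\widetilde{m})$ can a posteriori be avoided (see \cite[subsection 7.1]{H:qt}, \cite[subsection 7.3]{H:monom}). Measured against that, the uniqueness half of your proposal is fine and is essentially the standard argument: with $\mathcal{K}_{i,t}$ \emph{defined} as the subalgebra generated by $\widetilde{Y}_{i,r}(1+t\widetilde{A}_{i,r+r_i}^{-1})$ and $\widetilde{Y}_{j,s}^{\pm 1}$, the ordered products indexed by $i$-dominant monomials form a triangular spanning set (every monomial is the index monomial times a product of $\widetilde{A}_{i,s}^{-1}$), and the ``take a maximal index with nonzero coefficient to rule out cancellation'' argument shows that any Nakajima-maximal monomial of a nonzero element of $\mathcal{K}_t$ is dominant; linear independence of the $F_t(\underline{m})$ and the spanning argument (which also quietly uses the finiteness \eqref{eq:finite}) then follow.

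The genuine gap is in the existence half, which is the real content of the theorem. Your Frenkel--Mukhin-type correction algorithm produces monomials of the form $\widetilde{m}\prod_k\widetilde{A}_{i_k,s_k}^{-1}$, so the weights descend in $\mathrm{wt}(m)-Q_+$; but that set is infinite, and nothing in the construction bounds the descent. The claim that the weights encountered are bounded below by the lowest weight of the classical simple $L(m)$ is unjustified: the intermediate elements $G_n$ are not characters of any module, and $F_t(\widetilde{m})$ itself is characterized by having a unique dominant monomial, so even its specialization at $t=1$ need not coincide with $\chi_q(L(m))$; there is no a priori containment of the algorithm's support in the weights (let alone the monomials) of $L(m)$. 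Establishing exactly this finiteness is the hard point, and it is why \cite{H:qt} first constructs $F_t(\widetilde{m})$ in a completion of $\mathcal{Y}_t$, where infinite sums are permitted, and then proves finiteness separately --- for fundamental monomials $\widetilde{Y}_{i,r}$ by comparison with the classical $q$-character theory of fundamental modules, and for general $\widetilde{m}$ not by rerunning the algorithm but by taking ordered products of the $F_t(\widetilde{Y}_{i,r})$ and inductively removing the finitely many dominant monomials strictly below $\widetilde{m}$ (again \eqref{eq:finite}). In addition, you flag but do not resolve the consistency issue that corrections in direction $i$ must not create new obstructions above $\widetilde{m}_n$ for other indices $i'$. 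As written, the existence of $F_t(\widetilde{m})$ --- and hence the basis statement --- is not established by your argument.
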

Note that $F_t(\underline{m})$ is $\overline{(\cdot )}$-invariant for any dominant monomial $m$. 
\begin{remark}
In \cite{H:qt}, the first author considered a completion of $\mathcal{K}_{t}$ to construct the elements $F_t(\widetilde{m})$. However, a posteriori, we do not need this completion. See \cite[subsection 7.1]{H:qt} and \cite[subsection 7.3]{H:monom}. 
\end{remark}
For a dominant monomial $\widetilde{m}$ in $\mathcal{Y}_t$, set
\begin{align*}
E_t(\widetilde{m})=\widetilde{m}
\left(\dprod_{r\in \mathbb{Z}}\left(\prod_{i\in I}\widetilde{Y}_{i, r}^{u_{i, r}(\widetilde{m})}\right)\right)^{-1}
\dprod_{r\in \mathbb{Z}}\left(\prod_{i\in I}F_{t}(\widetilde{Y}_{i, r})^{u_{i, r}(\widetilde{m})}\right),
\end{align*}
here $\prod_{i\in I}\widetilde{Y}_{i, r}^{u_{i, r}(\widetilde{m})}$ is well-defined by \eqref{eq:sameindex}, and $\prod_{i\in I}F_{t}(\widetilde{Y}_{i, r})^{u_{i, r}(\widetilde{m})}$ is well-defined by \cite[Lemma 5.12]{H:qt}. 
Note that $\displaystyle\widetilde{m}
\left(\dprod_{r\in \mathbb{Z}}\left(\prod_{i\in I}\widetilde{Y}_{i, r}^{u_{i, r}(\widetilde{m})}\right)\right)^{-1}=t^{\beta}$ for some $\beta\in \mathbb{Z}$.

There is a maximal monomial in $E_t(\widetilde{m})$ and it is equal to $\widetilde{m}$. In particular, by Theorem \ref{t:F-elem}, 
\begin{align}
E_{t}(\underline{m})=F_t(\underline{m})+\sum_{
m'<m}C_{m, m'}F_t(\underline{m}')\label{eq:EtoF}
\end{align}
with $C_{m, m'}\in \mathbb{Z}[t^{\pm 1/2}]$ (in fact, $\mathbb{Z}[t^{\pm 1}]$). Note that the set $\{E_t(\underline{m})\mid m\ \text{is a dominant monomial in }\mathcal{Y}\}$ also forms a $\mathbb{Z}[t^{\pm 1/2}]$-basis of $\mathcal{K}_{t}$ since 
\begin{align}
\#\{m'\mid m'<m, m'\ \text{is a dominant monomial in }\mathcal{Y}\}<\infty\label{eq:finite}
\end{align}
for any dominant monomial $m$ \cite[subsection 3.4]{H:qt}. 
For a monomial $\widetilde{m}$ in $\mathcal{Y}_t$, we have 
\begin{align*}
\mathrm{ev}_{t=1}(E_{t}(\widetilde{m}))&=\chi_q\left(M(m)\right)
&
\mathrm{ev}_{t=1}(\mathcal{K}_{t})&=\chi_q(K(\mathcal{C}_{\bullet}))
\end{align*}
\cite[Theorem 6.2]{H:qt}, here we set 
\[
M(m):=\overrightarrow{\bigotimes_{r\in \mathbb{Z}}}\left(\bigotimes_{i\in I}L(Y_{i, r})^{\otimes u_{i, r}(\widetilde{m})}\right),
\]
called \emph{a standard module}. Note that, for any fixed $r\in \mathbb{Z}$, the isomorphism class of the tensor product $\bigotimes_{i\in I}L(Y_{i, r})^{\otimes u_{i, r}(\widetilde{m})}$ does not depend on the ordering of the factors, and it is in fact simple \cite[Proposition 6.15]{FM}. The element $E_t(\underline{m})$ is called \emph{the $(q, t)$-character of the standard module $M(m)$}. 

We consider another kind of elements $L_t(\underline{m})$ which is conjectually a $t$-deformed version of the $q$-character of simple modules. See Conjecture \ref{co:qt} below. 
\begin{theorem}[{\cite[Theorem 8.1]{Nak:quiver}, \cite[Theorem 6.9]{H:qt}}]
\label{t:normqt}  
For a dominant monomial $m$ in $\mathcal{Y}$, there uniquely exists an element $L_t(\underline{m})$ in $\mathcal{K}_{t}$ such that 
\begin{enumerate}
\item[(S1)] $\overline{L_t(\underline{m})}=L_t(\underline{m})$, and
\item[(S2)] $L_t(\underline{m})=E_t(\underline{m})+\sum_{m'<m}Q_{m, m'}E_t(\underline{m}')$
with $Q_{m, m'}\in t^{-1}\mathbb{Z}[t^{-1}]$.
\end{enumerate}
Moreover, the set $\{L_t(\underline{m})\mid m\ \text{is a dominant monomial}\}$ forms a $\mathbb{Z}[t^{\pm 1/2}]$-basis of $\mathcal{K}_t$ .
\end{theorem}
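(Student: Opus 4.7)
The plan is to follow the classical Kazhdan--Lusztig construction, adapted to the quantum Grothendieck ring, which reduces the theorem to a finite-dimensional unitriangular problem thanks to the finiteness statement \eqref{eq:finite}. The argument has three stages: (i) establish that the bar involution acts triangularly on the basis $\{E_t(\underline{m'})\}$; (ii) run the Kazhdan--Lusztig algorithm on the resulting unitriangular matrix; (iii) deduce uniqueness and the basis property.

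For stage (i), I would compute $\overline{E_t(\underline{m})}$ using the explicit product formula
\[
E_t(\underline{m}) = \underline{m}\Bigl(\dprod_{r\in\mathbb{Z}}\prod_{i\in I}\widetilde{Y}_{i,r}^{u_{i,r}(\underline{m})}\Bigr)^{-1}\dprod_{r\in\mathbb{Z}}\prod_{i\in I} F_t(\widetilde{Y}_{i,r})^{u_{i,r}(\underline{m})}.
\]
Each factor $F_t(\widetilde{Y}_{i,r})$ is bar-invariant by Theorem \ref{t:F-elem} and $\underline{m}$ is bar-invariant by construction, so the only obstruction to bar-invariance of $E_t(\underline{m})$ comes from reversing the ordered product: each transposition of adjacent bar-invariant factors contributes a commutation scalar in $t^{\mathbb{Z}/2}$. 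A bookkeeping of these scalars, combined with Theorem \ref{t:F-elem}(monomials of $F_t(\widetilde{m})-\widetilde{m}$ are $<\widetilde{m}$) and finiteness \eqref{eq:finite}, yields
\[
\overline{E_t(\underline{m})} \;=\; E_t(\underline{m}) \;+\; \sum_{m'<m} R_{m,m'}(t^{1/2})\, E_t(\underline{m'}),
\qquad R_{m,m'}(t^{1/2})\in\mathbb{Z}[t^{\pm 1/2}],
\]
with only finitely many non-zero terms. Applying the bar involution twice produces the Kazhdan--Lusztig quadratic identity $\sum_{m''} R_{m,m''}\overline{R_{m'',m'}} = \delta_{m,m'}$.

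For stage (ii), I order the finite interval $\{m':m'\leq m\}$ compatibly with $<$ and define the coefficients $Q_{m,m'}\in t^{-1}\mathbb{Z}[t^{-1}]$ by descending induction on $m'$: setting $Q_{m,m}=1$ and assuming $Q_{m,m''}$ have been constructed for all $m''>m'$, the coefficient of $E_t(\underline{m'})$ in $\overline{L}-L$, where $L = E_t(\underline{m}) + \sum_{m''>m'} Q_{m,m''}E_t(\underline{m''})$, is some $P\in\mathbb{Z}[t^{\pm 1/2}]$ with $P+\overline{P}=0$; the unique decomposition of such an anti-bar-invariant element as $Q - \overline{Q}$ with $Q\in t^{-1}\mathbb{Z}[t^{-1}]$ then determines $Q_{m,m'}$. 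After finitely many steps, one obtains an element $L_t(\underline{m})$ satisfying (S1) and (S2).

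For stage (iii), uniqueness follows from a straightforward induction: if $L_t(\underline{m})$ and $L_t'(\underline{m})$ both satisfy (S1)--(S2), their difference is a bar-invariant element of $\sum_{m'<m} t^{-1}\mathbb{Z}[t^{-1}] E_t(\underline{m'})$; the coefficient of $E_t(\underline{m'})$ for a maximal $m'$ in its support would be both bar-invariant and in $t^{-1}\mathbb{Z}[t^{-1}]$, hence zero, giving a contradiction. The basis property is then immediate, since the transition matrix from $\{L_t(\underline{m})\}$ to $\{E_t(\underline{m})\}$ is lower unitriangular (with respect to any linear refinement of $<$) with finitely supported rows, and $\{E_t(\underline{m})\}$ is already a $\mathbb{Z}[t^{\pm 1/2}]$-basis of $\mathcal{K}_t$. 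The main obstacle is stage (i): the explicit control of the bar involution on the ordered-product definition of $E_t(\underline{m})$, requiring careful use of the commutation relations in $\mathcal{Y}_t$ together with the $F_t$-element machinery. The remaining stages are formal consequences of the Kazhdan--Lusztig lemma.
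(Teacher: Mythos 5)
This theorem is not proved in the present paper: it is imported from \cite{Nak:quiver} and \cite{H:qt}, and your overall strategy (a unitriangular action of $\overline{(\cdot)}$ on $\{E_t(\underline{m}')\}$, the descending Kazhdan--Lusztig induction of Lusztig's Lemma 24.2.1, uniqueness by inspecting a maximal term of the difference, and the basis property from unitriangularity together with \eqref{eq:finite}) is exactly the argument of those sources; stages (ii) and (iii) of your plan are fine as written.

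The genuine problem is the mechanism you propose for stage (i). The factors $F_t(\widetilde{Y}_{i,r})$ are not monomials of the quantum torus, so adjacent factors do not $t$-commute, and reversing the ordered product is not accounted for by ``commutation scalars in $t^{\mathbb{Z}/2}$''. If that claim were literally true, you would get $\overline{E_t(\underline{m})}=t^{k/2}E_t(\underline{m})$, and comparing the coefficient of the bar-invariant top monomial $\underline{m}$ would force $k=0$; then every $E_t(\underline{m})$ would be bar-invariant, all $Q_{m,m'}$ would vanish, and the construction would trivialize --- which is false. Moreover this contradicts the very expansion $\overline{E_t(\underline{m})}=E_t(\underline{m})+\sum_{m'<m}R_{m,m'}E_t(\underline{m}')$ that you go on to use, so your stage (i) is internally inconsistent. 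The correct route to that expansion is: verify on the generators $\widetilde{Y}_{i,r}(1+t\widetilde{A}_{i,r+r_i}^{-1})$ and $\widetilde{Y}_{j,r}^{\pm 1}$ that $\overline{(\cdot)}$ preserves each $\mathcal{K}_{i,t}$, hence $\overline{E_t(\underline{m})}\in\mathcal{K}_t$; note that the bar anti-involution sends every monomial of $\mathcal{Y}_t$ to a power of $t$ times the same underlying monomial, so $\overline{E_t(\underline{m})}$ has the same dominant monomials as $E_t(\underline{m})$, all $\leq m$, with $\underline{m}$ occurring with coefficient $1$; then expand via Theorem \ref{t:F-elem} and pass to the $E_t$-basis using \eqref{eq:EtoF} and \eqref{eq:finite}. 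Finally, you gloss over an integrality point: to solve $Q-\overline{Q}=P$ with $Q\in t^{-1}\mathbb{Z}[t^{-1}]$ in the inductive step you need the obstruction $P$ (equivalently the coefficients $R_{m,m'}$) to lie in $\mathbb{Z}[t^{\pm 1}]$ and not merely in $\mathbb{Z}[t^{\pm 1/2}]$; without recording this (cf.\ the parenthetical after \eqref{eq:EtoF}) your induction only delivers (S2) with $Q_{m,m'}\in t^{-1/2}\mathbb{Z}[t^{-1/2}]$.
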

The element $L_t(\underline{m})$ is called \emph{the $(q, t)$-character} of the simple module $L(m)$. In fact, in the case when $\mathcal{U}_q(\mathcal{L}\mathfrak{g})$ is of simply-laced type, Nakajima \cite[Theorem 8.1]{Nak:quiver} also proved that 
\[
\mathrm{ev}_{t=1}(L_{t}(\underline{m}))=\chi_q(L(m)). 
\]
Its proof is based on a geometric method using quiver varieties, and it is valid only in the simply-laced case. Hence the following is still a conjecture in general :
\begin{conj}[{\cite[Conjecture 7.3]{H:qt}}]\label{co:qt}
For all dominant monomial $m$ in $\mathcal{Y}$, we have 
\[
\mathrm{ev}_{t=1}(L_{t}(\underline{m}))=\chi_q(L(m)).
\]
\end{conj}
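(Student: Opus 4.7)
The plan is to take the identity proved in the paper for simples in the subcategory $\mathcal{C}_{\mathscr{Q},\mathrm{B}_n}$ as a starting point and try to bootstrap it to all dominant monomials of $\mathcal{Y}$ via spectral shifts, varying quiver data, and a gluing procedure. First I would note that for each choice of Dynkin quiver $\mathscr{Q}'$ of type $\mathrm{A}_{2n-2}$, sign $\flat\in\{>,<\}$, and height function $\xi$, the paper's isomorphism $K_t(\mathcal{C}_{\mathscr{Q},\mathrm{B}_n})\simeq \mathcal{A}_{t^{-1}}[N_-^{\mathrm{A}_{2n-1}}]$ identifies $L_t(\underline{m})$ with a dual canonical basis element; combining this with the isomorphism $K(\mathcal{C}_{\mathscr{Q},\mathrm{B}_n})\simeq K(\mathcal{C}_{\mathcal{Q},\mathrm{A}_{2n-1}})$ of Kashiwara-Kim-Oh (which preserves the basis of simples), the specialization identity $\mathrm{ev}_{t=1}(L_t(\underline{m}))=\chi_q(L(m))$ holds for every $m\in\mathbb{B}^{\xi,\flat}$.

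Next I would extend this by exploiting the shift automorphisms $\tau_k\colon \widetilde{Y}_{i,r}\mapsto \widetilde{Y}_{i,r+k}$ of $\mathcal{Y}_t$ for $k\in 2\mathbb{Z}$. Because the commutation exponent $\gamma(i,r;j,s)$ depends only on $r-s$, each $\tau_k$ is a $\mathbb{Z}$-algebra automorphism of $\mathcal{Y}_t$ that commutes with the bar involution and preserves $\mathcal{K}_t$; the defining properties (S1), (S2) of Theorem \ref{t:normqt} then force $L_t(\underline{\tau_k m})=\tau_k L_t(\underline{m})$, and likewise $\chi_q(L(\tau_k m))=\tau_k \chi_q(L(m))$ on the classical side. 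Varying over all admissible $(\mathscr{Q}',\flat,\xi)$ and all shifts $k$, we obtain the conjecture for every dominant $m$ whose support $\{(i,r)\mid u_{i,r}(m)>0\}$ is contained in some translate of an $\overline{I}^{\mathrm{tw},\flat}_\xi$.

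The main obstacle is that the sets $\overline{I}^{\mathrm{tw},\flat}_\xi$ have a rigid ``stripe'' shape, so a generic dominant monomial $m$ will have support straddling two or more such regions. To handle this, I would attempt a gluing argument in the spirit of \cite[sections 7-8]{HL:qGro}: decompose a translation-invariant completion of $K_t(\mathcal{C}_\bullet)$ as a limit of ``patched'' copies of the $K_t(\mathcal{C}_{\mathscr{Q},\mathrm{B}_n})$, and check that the dual-canonical-basis identification on each piece is compatible on overlaps, so that $L_t(\underline{m})$ is uniquely determined by its restrictions to covering subcategories. Since $\chi_q(L(m))$ satisfies exactly the same patching (thanks to multiplicativity and the fact that $L(m)$ has well-defined truncated $q$-characters), one could then match them region by region.

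The hard part will be constructing this gluing in the absence of a non-simply-laced analog of the derived Hall algebra machinery used in \cite{HL:qGro}. Concretely, one needs a replacement for the geometric input that ties different AR-regions together — perhaps via extended quantum $T$-systems of type $\mathrm{B}$ (generalizing Theorem \ref{t:T-sysaff}) that relate generators in shifted subcategories, or via an algebraic surrogate obtained by transporting simply-laced Hall-algebra relations through the $A$-$B$ isomorphism \eqref{isomt}. Absent such a mechanism, the proposal would at best cover those $m$ whose support fits (after shift and reflection) into \emph{some} twisted AR region, leaving the general case as a genuine obstruction that the paper itself flags as future work.
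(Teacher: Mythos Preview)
The statement is a \emph{conjecture} that the paper does not prove in full generality. Immediately after stating it, the paper says it will establish only the special case $m\in\mathbb{B}^{\xi,\flat}$, i.e.\ for simples in $\mathcal{C}_{\mathcal{Q}^{\flat}}$, and does so as Corollary~\ref{c:KO}. There is therefore no ``paper's own proof'' of the full statement to compare against.

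Your first paragraph correctly reproduces the paper's argument for the partial result: identify $L_t(m)$ with a normalized dual canonical basis element through $\widetilde{\Phi}$ (Theorem~\ref{t:mainisom}, Corollary~\ref{c:mainisom}), invoke the Kashiwara--Oh isomorphism of classical Grothendieck rings which preserves classes of simples (Theorem~\ref{t:KO}), and check that the two isomorphisms agree at $t=v=1$ on generators. That is exactly Corollary~\ref{c:KO}.

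Everything from your second paragraph onward attempts to go beyond the paper, and you correctly diagnose the obstruction yourself. Two remarks. First, the spectral-shift step is sound but buys nothing new: by Remark~\ref{r:shift} the height function $\xi$ is determined by $\mathcal{Q}'$ only up to an additive constant, so translating $\overline{I}^{\mathrm{tw},\flat}_\xi$ is the same as changing $\xi$, which you already allow in paragraph~1. Second, even after varying $(\mathcal{Q}',\flat,\xi)$ the regions $\overline{I}^{\mathrm{tw},\flat}_\xi$ all have bounded width in the spectral direction, so any dominant monomial with sufficiently spread-out support escapes every one of them; your gluing proposal is exactly the program the paper flags as future work (see the penultimate paragraph of the introduction), and the missing ingredient---a non-simply-laced substitute for the derived Hall algebra patching of \cite{HL:qGro}---is genuinely open. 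So your proposal is not a proof of the conjecture: it recovers the paper's partial result and then runs into the same wall the authors do, which you acknowledge in your final paragraph.
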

In this paper, we will prove Conjecture \ref{co:qt} in the case that  $L(m)$ is an object of $\mathcal{C}_{\mathcal{Q}^{\flat}}$, $\flat \in \{>, <\}$  (Corollary \ref{c:KO}).  
\begin{remark}\label{r:KLalg}
Let $m$ be a dominant monomial in $\mathcal{Y}$. By \eqref{eq:finite} and (S2), we have 
\begin{align*}
E_t(\underline{m})=\sum_{m'}P_{m, m'}(t)L_t(\underline{m'})%\label{eq:KLpolyn}
\end{align*}
%\begin{align*}
%E_t(\underline{m})=L_t(\underline{m})+\sum_{m'<m}P_{m, m'}(t)L_t(\underline{m'})%\label{eq:KLpolyn}
%\end{align*}
for some $P_{m, m'}(t)\in \mathbb{Z}[t^{-1}]$. More precisely, we have 
\[
P_{m, m'}(t)\begin{cases}
=1&\text{if }m'=m,\\
\in t^{-1}\mathbb{Z}[t^{-1}]&\text{if }m'<m,\\
=0&\text{otherwise.}
\end{cases}
\]
as \eqref{fconj}. Hence, if Conjecture \ref{co:qt} holds, we obtain 
\begin{align}
[M(m)]=\sum_{m'}P_{m, m'}(1)[L(m')].\label{eq:fconj}
\end{align}
in $K(\mathcal{C}_{\bullet})$. Therefore $P_{m, m'}(t)$ can be considered as an analogue of Kazhdan-Lusztig polynomials. Note that there is an inductive algorithm to compute $P_{m, m'}(t)$ (more precisely, $Q_{m, m'}$ in (S2)), sometimes called \emph{the Kazhdan-Lusztig algorithm} (see, for example,   \cite[Lemma 24.2.1]{Lus:intro} and Remark \ref{r:unitri-qt} below). 
\end{remark}
\begin{notation}\label{n:normalization}
For a monomial $m$ in $\mathcal{Y}$, we will write 
\begin{align*}
F_t(m)&:=F_t(\underline{m}) & E_t(m)&:=E_t(\underline{m})&L_t(m)&:=L_t(\underline{m})
\end{align*}
for simplicity. 
\end{notation}

\begin{remark}\label{r:unitri-qt}
The \emph{unitriangular property} of the $(q, t)$-characters of standard modules with respect to $\overline{(\cdot )}$ is also included in Theorem \ref{t:normqt}. Indeed, by \eqref{eq:finite} and (S2), we have 
\[
E_t(m)\in L_t(m)+\sum_{m'<m}t^{-1}\mathbb{Z}[t^{-1}]L_t(m').
\]
Hence, by (S1), 
\[
\overline{E_t(m)}\in L_t(m)+\sum_{m'<m}t\mathbb{Z}[t]L_t(m').
\] 
By (S2) again, we have
\[
\overline{E_t(m)}\in E_t(m)+\sum_{m'<m}\mathbb{Z}[t^{\pm 1}]E_t(m').
\] 
\end{remark}
\begin{remark}
\label{r:characterize-qt} 
The characterization property (S2) can be replaced by the following :
\begin{enumerate}
\item[(S2)$'$] $\displaystyle L_t(m)-E_t(m)\in\sum_{m'}t^{-1}\mathbb{Z}[t^{-1}]E_t(m')$.
\end{enumerate}
Indeed, if an element $x\in \mathcal{K}_t$ satisfies (S2)$'$, that is, $x-E_t(m)\in\sum_{m'}t^{-1}\mathbb{Z}[t^{-1}]E_t(m')$, we have 
\[
x\in L_t(m)+\sum_{m'}t^{-1}\mathbb{Z}[t^{-1}]L_t(m'),
\]
by \eqref{eq:finite} and (S2) (cf.~Remark \ref{r:unitri-qt}). Therefore $x$ is equal to $L_t(m)$ if $x$ also satisfies (S1) because it is the unique $\overline{(\cdot)}$-invariant element in $L_t(m)+\sum_{m'}t^{-1}\mathbb{Z}[t^{-1}]L_t(m')$. 
\end{remark}

\subsection{The quantum Grothendieck ring of $\mathcal{C}_{\mathcal{Q}^{\flat}}$}\label{ss:qGroB}
Recall the setting and notation in subsection \ref{ss:subcat}. Set
\[
\mathcal{K}_{t, \mathcal{Q}^{\flat}}(=K_t(\mathcal{C}_{\mathcal{Q}^{\flat}})):=\bigoplus_{m\in \mathbb{B}^{\xi, \flat}}\mathbb{Z}[t^{\pm 1/2}]F_t(m)\subset \mathcal{K}_{t}.
\]
Then, we have
\[
\mathcal{K}_{t, \mathcal{Q}^{\flat}}=\bigoplus_{m\in \mathbb{B}^{\xi, \flat}}\mathbb{Z}[t^{\pm 1/2}]E_t(m)=\bigoplus_{m\in \mathbb{B}^{\xi, \flat}}\mathbb{Z}[t^{\pm 1/2}]L_t(m)
\]
by \eqref{eq:EtoF}, \eqref{eq:finite} and (S2) in Theorem \ref{t:normqt}. Note that it follows from the argument in the proof of Lemma \ref{l:subcat} that, for dominant monomials $m, m'$ with $m'<m$, the condition $m\in \mathbb{B}^{\xi, \flat}$ implies $m'\in \mathbb{B}^{\xi, \flat}$. 
 
\begin{lemma}\label{l:monoidal}
The $\mathbb{Z}[t^{\pm 1/2}]$-module $\mathcal{K}_{t, \mathcal{Q}^{\flat}}$ is a $\mathbb{Z}[t^{\pm 1/2}]$-subalgebra of $\mathcal{Y}_t$ generated by $\{F_t(Y_{i, r})\mid (i, r)\in \overline{I}_{\xi}^{\mathrm{tw}, \flat}\}$. 
\end{lemma}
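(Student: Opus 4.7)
The plan is to establish two separate statements: first, that $\mathcal{K}_{t,\mathcal{Q}^{\flat}}$ is closed under the product of $\mathcal{Y}_{t}$ (hence is a $\mathbb{Z}[t^{\pm 1/2}]$-subalgebra of $\mathcal{Y}_{t}$); second, that $\{F_{t}(Y_{i,r})\mid (i,r)\in\overline{I}_{\xi}^{\mathrm{tw},\flat}\}$ generates it. The argument for the first part will essentially be a $(q,t)$-character analogue of Lemma~\ref{l:subcat}, and the second part follows from the triangularity between $F_{t}$ and $E_{t}$ together with the very definition of $E_{t}$.

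For closure under multiplication, I take $m_{1},m_{2}\in\mathbb{B}^{\xi,\flat}$ and expand $F_{t}(m_{1})F_{t}(m_{2})\in\mathcal{K}_{t}$ in the $F_{t}$-basis of Theorem~\ref{t:F-elem}:
\[
F_{t}(m_{1})F_{t}(m_{2})=\sum_{m}c_{m}F_{t}(m),
\]
where the sum runs over dominant monomials $m$. By Theorem~\ref{t:F-elem} every monomial appearing in $F_{t}(m_{1})$ (resp.~$F_{t}(m_{2})$) equals $\underline{m_{1}}$ (resp.~$\underline{m_{2}}$) times a product of $\widetilde{A}_{i,r}^{-1}$'s up to a power of $t^{1/2}$, so every monomial in the product has the form $\underline{m_{1}m_{2}M}$ with $M$ a product of $A_{i,r}^{-1}$'s, and is $\leq m_{1}m_{2}$. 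By the characterization in Theorem~\ref{t:F-elem} each $m$ with $c_{m}\neq 0$ is itself a dominant monomial occurring in $F_{t}(m_{1})F_{t}(m_{2})$ (one sees this by induction on the Nakajima ordering, peeling off the maximal surviving $F_{t}(m)$ at each step), hence $m=m_{1}m_{2}M$ for some product $M$ of $A_{i,r}^{-1}$'s. The key step is then to invoke the \emph{left and right negativity} of $A_{i,r}^{-1}$ (as recalled in the proof of Lemma~\ref{l:subcat}) to conclude that if $m_{1}m_{2}M$ is dominant, then every $A_{i,r}^{-1}$ occurring in $M$ must satisfy $(i,r\pm r_{i})\in\overline{I}_{\xi}^{\mathrm{tw},\flat}$. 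Twisted convexity of $\Upsilon_{[\mathcal{Q}^{\flat}]}$ (Lemma~\ref{l:tw-dia}) will then propagate this information to the positive $Y$-factors of $A_{i,r}^{-1}$, showing that every variable $Y_{j,s}$ appearing in $m=m_{1}m_{2}M$ has $(j,s)\in\overline{I}_{\xi}^{\mathrm{tw},\flat}$; hence $m\in\mathbb{B}^{\xi,\flat}$.

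For generation, let $\mathcal{R}$ denote the $\mathbb{Z}[t^{\pm 1/2}]$-subalgebra of $\mathcal{Y}_{t}$ generated by $\{F_{t}(Y_{i,r})\mid(i,r)\in\overline{I}_{\xi}^{\mathrm{tw},\flat}\}$. The definition of $E_{t}(m)$ for $m\in\mathbb{B}^{\xi,\flat}$ presents it, up to a factor $t^{\beta}$, as an ordered product of factors $F_{t}(Y_{i,r})^{u_{i,r}(m)}$ with $(i,r)\in\overline{I}_{\xi}^{\mathrm{tw},\flat}$, so $E_{t}(m)\in\mathcal{R}$ for all such $m$. Since $E_{t}(m)=F_{t}(m)+\sum_{m'<m}C_{m,m'}F_{t}(m')$ by \eqref{eq:EtoF}, and the argument of the previous paragraph shows $m'\in\mathbb{B}^{\xi,\flat}$ whenever $m\in\mathbb{B}^{\xi,\flat}$ and $m'\leq m$ is dominant, the finiteness \eqref{eq:finite} and unitriangularity of the transition matrix imply
\[
\mathcal{K}_{t,\mathcal{Q}^{\flat}}=\bigoplus_{m\in\mathbb{B}^{\xi,\flat}}\mathbb{Z}[t^{\pm 1/2}]E_{t}(m)\subseteq\mathcal{R}.
\]
The reverse inclusion $\mathcal{R}\subseteq\mathcal{K}_{t,\mathcal{Q}^{\flat}}$ is already contained in the first part (applied iteratively to products of the generators $F_{t}(Y_{i,r})$), which finishes the proof.

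The main obstacle is the first part: namely, controlling the dominant monomials that may appear in the expansion of $F_{t}(m_{1})F_{t}(m_{2})$. All the required input—left/right negativity of $A_{i,r}^{-1}$ and twisted convexity—is already available, so the argument is essentially a verification, parallel to the representation-theoretic argument of Lemma~\ref{l:subcat} but now at the level of $(q,t)$-characters.
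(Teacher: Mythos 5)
Your proposal is correct and takes essentially the same route as the paper: the paper's proof reduces the lemma to showing that the dominant monomials occurring in $F_t(m)F_t(m')$ for $m,m'\in\mathbb{B}^{\xi,\flat}$ again lie in $\mathbb{B}^{\xi,\flat}$, and proves this exactly by the left/right-negativity of $A_{i,r}^{-1}$ plus twisted convexity argument of Lemma \ref{l:subcat} that you invoke. The extra steps you spell out (expansion in the $F_t$-basis and generation via the definition of $E_t(m)$ together with $\mathcal{K}_{t,\mathcal{Q}^{\flat}}=\bigoplus_{m\in\mathbb{B}^{\xi,\flat}}\mathbb{Z}[t^{\pm 1/2}]E_t(m)$) are precisely the details the paper leaves implicit.
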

\begin{proof}
It suffices to show that, for $m, m'\in \mathbb{B}^{\xi, \flat}$, the dominant monomials appearing in $F_t(m)F_t(m')$ are contained in $\mathbb{B}^{\xi, \flat}$. We can prove it in the same manner as the proof of Lemma \ref{l:subcat}. 
\end{proof}
The algebra $\mathcal{K}_{t, \mathcal{Q}^{\flat}}$ is called \emph{the quantum Grothendieck ring of $\mathcal{C}_{\mathcal{Q}^{\flat}}$}. Recall $\mathcal{Y}_{t, \mathcal{Q}^{\flat}}$ defined before Theorem \ref{t:torusisom}. For $m\in \mathbb{B}^{\xi, \flat}$, define $E_t(m)^T$ as an element of $\mathcal{Y}_{t, \mathcal{Q}^{\flat}}$ which is obtained from $E_t(m)$ by discarding the monomials containing the terms $\widetilde{Y}_{i, r}^{\pm 1}$, $(i, r)\notin \overline{I}_{\xi}^{\mathrm{tw}, \flat}$. This is called \emph{the truncation of the $(q, t)$-character $E_t(m)$}. By the same argument as in \cite[Proposition 6.1]{HL:cluaff}, we obtain the following proposition. 
\begin{proposition}\label{p:trunc}
The assignment $E_t(m)\mapsto E_t(m)^T$ is extended to an injective algebra homomorphism $(\cdot)^T\colon \mathcal{K}_{t, \mathcal{Q}^{\flat}}\to \mathcal{Y}_{t, \mathcal{Q}^{\flat}}$. 
\end{proposition}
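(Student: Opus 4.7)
My plan is to realize $(\cdot)^T$ as the restriction to $\mathcal{K}_{t,\mathcal{Q}^\flat}$ of the $\mathbb{Z}[t^{\pm 1/2}]$-linear projection $\pi\colon\mathcal{Y}_t\to\mathcal{Y}_{t,\mathcal{Q}^\flat}$ that sends a monomial $\underline{m}$ to itself whenever every variable $\widetilde{Y}_{i,r}^{\pm 1}$ appearing in it satisfies $(i,r)\in\overline{I}_{\xi}^{\mathrm{tw},\flat}$, and to zero otherwise. By construction $\pi(E_t(m))=E_t(m)^T$ for each $m\in\mathbb{B}^{\xi,\flat}$, so the statement reduces to showing that $\pi|_{\mathcal{K}_{t,\mathcal{Q}^\flat}}$ is a multiplicative injective ring homomorphism.

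The main step is multiplicativity. Writing $E_t(m_1)E_t(m_2)$ as a sum $\sum M_1M_2$ over pairs of monomials $M_i$ appearing in $E_t(m_i)$, the identity $\pi(E_t(m_1)E_t(m_2))=\pi(E_t(m_1))\pi(E_t(m_2))$ reduces to the implication $M_1M_2\in\mathcal{Y}_{t,\mathcal{Q}^\flat}\Longrightarrow M_1,M_2\in\mathcal{Y}_{t,\mathcal{Q}^\flat}$. Following the strategy of \cite[Proposition 6.1]{HL:cluaff}, every monomial of $E_t(m)$ with $m\in\mathbb{B}^{\xi,\flat}$ arises as a product of monomials of the factors in $E_t(m)=(\mathrm{const})\cdot\dprod_{r}\prod_i F_t(Y_{i,r})^{u_{i,r}(m)}$, each lying in some $F_t(Y_{i_0,r_0})$ with $(i_0,r_0)\in\overline{I}_{\xi}^{\mathrm{tw},\flat}$ and decomposing as $Y_{i_0,r_0}\prod_k A_{j_k,s_k}^{-1}$ by Theorem~\ref{t:minuscule}. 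Combining the explicit form and the left/right negativity of the $A_{j,s}^{-1}$ (\cite{FM}), the thinness of fundamental modules in type $\mathrm{B}_n$ (\cite{H:minim}, used throughout Section~\ref{s:T-sys}), and the twisted convexity of $\Upsilon_{[\mathcal{Q}^\flat]}$ (Lemma~\ref{l:tw-dia}), one checks that any ``bad'' variable $\widetilde{Y}_{i,r}^{\pm 1}$ with $(i,r)\notin\overline{I}_{\xi}^{\mathrm{tw},\flat}$ appears with a sign depending only on $(i,r)$, not on the specific monomial of $E_t(m)$. This sign-consistency precludes cancellation of bad variables between $M_1$ and $M_2$, giving the desired implication.

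For injectivity, take $x=\sum_{m\in\mathbb{B}^{\xi,\flat}}c_mF_t(m)\neq 0$ and pick $m_0$ maximal in the Nakajima order among $\{m\colon c_m\neq 0\}$. By Theorem~\ref{t:F-elem} the dominant monomial $\underline{m_0}$ appears in $F_t(m_0)$ with coefficient $1$ and is strictly greater than any monomial of $F_t(m)$ for $m<m_0$; hence the coefficient of $\underline{m_0}$ in $x$ is exactly $c_{m_0}\neq 0$. Since $m_0\in\mathbb{B}^{\xi,\flat}$ forces $\underline{m_0}\in\mathcal{Y}_{t,\mathcal{Q}^\flat}$, this coefficient is preserved under $\pi$, whence $\pi(x)\neq 0$. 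The principal obstacle is the sign-consistency step: the shape of $\overline{I}_{\xi}^{\mathrm{tw},\flat}$ is more intricate than the column strip arising in \cite{HL:cluaff}, so the analysis must separate ``out-of-range'' bad positions (those whose $r$-coordinate lies beyond the column segment, handled directly by the right/left negativity of $A^{-1}$) from ``in-range but wrong-parity'' bad positions (handled by tracing back through the affine-minuscule recursion and invoking the segment property of Lemma~\ref{l:tw-dia}, together with the asymmetric Cartan entries $c_{n,n-1}=-2$ and $c_{n-1,n}=-1$ of type $\mathrm{B}_n$).
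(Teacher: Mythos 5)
Your scaffolding (realizing $(\cdot)^T$ as the monomial-killing projection $\pi$, reducing multiplicativity to the pairwise implication ``$M_1M_2$ good $\Rightarrow$ $M_1,M_2$ good'' for monomials $M_i$ of $E_t(m_i)$, and proving injectivity via a maximal dominant monomial) is exactly the strategy behind the argument the paper invokes from \cite[Proposition 6.1]{HL:cluaff}, and the injectivity part is fine. But the lemma carrying your whole multiplicativity step --- that every bad variable $Y_{i,r}^{\pm1}$ with $(i,r)\notin\overline{I}_{\xi}^{\mathrm{tw},\flat}$ occurs with a sign depending only on $(i,r)$ --- is false. Take $n=3$ and the window of Example \ref{e:subcat}, whose row $3$ is $\{-1,-3,-5,-7,-9\}$, so $(3,1)$ is bad while $(2,-2)$ and $(3,-1)$ are good. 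Since $c_{32}=-2$, one has $A_{2,0}^{-1}=Y_{2,-2}^{-1}Y_{2,2}^{-1}Y_{1,0}Y_{3,-1}Y_{3,1}$, so $\chi_q(L(Y_{2,-2}))$ contains the monomial $Y_{2,-2}A_{2,0}^{-1}=Y_{2,2}^{-1}Y_{1,0}Y_{3,-1}Y_{3,1}$, in which $Y_{3,1}$ appears with exponent $+1$; on the other hand $A_{3,0}^{-1}=Y_{3,-1}^{-1}Y_{3,1}^{-1}Y_{2,0}$, so $\chi_q(L(Y_{3,-1}))$ contains $Y_{3,-1}A_{3,0}^{-1}=Y_{3,1}^{-1}Y_{2,0}$, in which $Y_{3,1}$ appears with exponent $-1$. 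Both fundamental modules are in the category, so with $M_1=Y_{2,2}^{-1}Y_{1,0}Y_{3,-1}Y_{3,1}$ and $M_2=Y_{3,1}^{-1}Y_{2,0}$ the bad variable $Y_{3,1}$ genuinely cancels in $M_1M_2$ --- precisely the phenomenon your sign-consistency was supposed to exclude. (Here the product happens to retain the bad variables $Y_{2,2}^{-1}$ and $Y_{2,0}$, but your argument gives no reason why some bad variable must always survive.) Note also that $(3,1)$ is an ``out-of-range'' position, so your proposed dichotomy does not repair this: right/left negativity is a statement about the extremal spectral parameters occurring in a given monomial, not about a fixed bad position across different monomials.

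What a correct argument has to do --- and what the argument the paper points to does, in substance --- is work with the $A^{-1}$-decomposition of the product: write $M_1M_2=m_1m_2\prod_k A_{j_k,s_k}^{-1}$, observe (as in the proof of Lemma \ref{l:subcat}, using Lemma \ref{l:tw-dia}) that any factor $A_{j,s}^{-1}$ with $(j,s-r_j),(j,s+r_j)\in\overline{I}_{\xi}^{\mathrm{tw},\flat}$ involves only window variables, and then show that if some factor is not of this allowed type, an out-of-window variable survives in $M_1M_2$; this is done by an extremal choice among the $A^{-1}$-factors (e.g.\ maximal $s_k+r_{j_k}$) and a check that neither $m_1m_2$ nor the remaining factors can supply the inverse variable. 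It is exactly at the short node that this check is nontrivial, because $A_{n-1,s}^{-1}$ contains $Y_{n,s\pm1}$ with \emph{positive} exponent and can cancel the $Y_{n,s+1}^{-1}$ coming from $A_{n,s}^{-1}$ (your asymmetry $c_{n,n-1}=-2$, $c_{n-1,n}=-1$ is the source of my counterexample). So the gap is concrete: the central cancellation-exclusion step must be replaced by an extremal-factor argument of this kind; as currently written, the proof of multiplicativity does not go through.
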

The image of $\mathcal{K}_{t, \mathcal{Q}^{\flat}}$ under the truncation $(\cdot)^T$ will be denoted by $\mathcal{K}_{t, \mathcal{Q}^{\flat}}^{T}$. Note that the algebra anti-involution $\overline{(\cdot)}$ on $\mathcal{Y}_t$ restricts to that on $\mathcal{Y}_{t, \mathcal{Q}^{\flat}}$, and $(\cdot)^T$ commutes with $\overline{(\cdot)}$. 

\begin{remark}\label{r:tr-qt}  
By the injectivity of $(\cdot)^T$, its compatibility with $\overline{(\cdot)}$ and Remark \ref{r:characterize-qt}, the truncation of a $(q, t)$-character $L_t(m)^{T}$ is characterized in $\mathcal{K}_{t, \mathcal{Q}^{\flat}}^{T}$ by the following properties :  
\begin{enumerate}
\item[(tr-S1)] $\overline{L_t(m)^T}=L_t(m)^T$, and
\item[(tr-S2$'$)] $L_t(m)^T-E_t(m)^T\in\sum_{m'\in \mathbb{B}^{\xi, \flat}}t^{-1}\mathbb{Z}[t^{-1}]E_t(m')^T$.
\end{enumerate}
\end{remark}

\section{Quantized coordinate algebras}\label{s:dualcan}
Through our isomorphism $\widetilde{\Phi}^T$, the two algebras arising from different contexts are related. The counterpart of $\mathcal{K}_{t, \mathcal{Q}^{\flat}}$ is actually a quantized coordinate algebra, which is our second main theorem (Theorem \ref{t:mainisom}). In this section, we briefly review results about quantized coordinate algebras. See also Appendix  \ref{a:QEA}.

We return to the general setting in subsection \ref{ss:QLA}. \emph{The negative half $\Uv^-(=\Uv^-(\mathrm{X}_N))$ of the quantized enveloping algebra of type $\mathrm{X}_{N}$} over $\mathbb{Q}(v^{1/2})$ is the unital associative $\mathbb{Q}(v^{1/2})$-algebra defined by the generators $\{f_{i}\mid i\in I\}$ and the relations
\[
\sum_{k=0}^{1-c_{ij}}(-1)^{k}\left[\begin{array}{c}
1-c_{ij}\\
k
\end{array}\right]_{v_i}f_{i}^{k}f_{j}f_{i}^{1-c_{ij}-k}=0
\]
for all $i,j\in I$ with $i\neq j$.

This is a $-Q_+$-graded algebra by $\deg f_{i}=-\alpha_i$. For a homogeneous element $x$ of $\Uv^-$ with respect to this grading, we write $\wt x:=\deg(x)\in -Q_+$, and it is called \emph{the weight of $x$}. 

\emph{The quantized coordinate algebra} $\mathcal{A}_v[N_-](=\mathcal{A}_v[N_-^{\mathrm{X}_N}])$ is a specific $\mathbb{Z}[v^{\pm 1/2}]$-subalgebra of $\Uv^-$, which satisfies the properties : 
\begin{align*}
\mathbb{Q}(v^{\pm 1/2})\otimes_{\mathbb{Z}[v^{\pm 1/2}]}\mathcal{A}_v[N_-]&\simeq \Uv^-
&
\mathbb{C}\otimes_{\mathbb{Z}[v^{\pm 1/2}]}\mathcal{A}_v[N_-]&\simeq \mathbb{C}[N_-]. 
\end{align*}
Here $\mathbb{C}$ is regarded as a $\mathbb{Z}[v^{\pm 1/2}]$-module via $v^{\pm 1/2}\mapsto 1$, and $\mathbb{C}[N_-]$ is the coordinate algebra of the complex unipotent algebraic group $N_-$ whose Lie algebra is the negative half of $\mathfrak{g}$ with respect to its triangular decomposition. See Appendix \ref{a:QEA} for the precise definition of $\mathcal{A}_v[N_-]$, and, for example, \cite[subsection 5.1]{GLS:Kac-Moody}, \cite[subsection 4.9]{Kimura:qunip} for more details. In particular, $\mathbb{C}[N_-]$  is commutative. We have an obvious $\mathbb{Z}$-algebra homomorphism
\[
\mathrm{ev}_{v=1}\colon \mathcal{A}_v[N_-]\to \mathbb{C}\otimes_{\mathbb{Z}[v^{\pm 1/2}]}\mathcal{A}_v[N_-]\simeq \mathbb{C}[N_-], 
\]
called \emph{the specialization at $v=1$}. 

For $\lambda\in P_+$ and $w, w'\in W$, we can define a homogeneous element $\widetilde{D}_{w\lambda, w'\lambda}$ of $\mathcal{A}_v[N_-]$, called \emph{a normalized unipotent quantum minor}, as in \cite[section 6]{Kimura:qunip}, \cite[section 5]{GLS:qcluster}. Its precise definition is written in Appendix \ref{a:QEA}. We recall here their properties which are important to this paper. 

\begin{remark}\label{r:minorconv}
There are several conventions of the definitions of quantized coordinate algebras and (normalized) unipotent quantum minors. For example, in \cite{HL:qGro}, quantized coordinate algebras are defined through the \emph{positive} half of the quantized enveloping algebras. The reference \cite[Remark 7.16]{KiO:BZtwist} may be convenient to consider the relations between several conventions. Note that our convention coincides with the one in \cite{KiO:BZtwist} by just adding $v^{\pm 1/2}$ in the base ring (see Appendix \ref{ss:unipminor} for the normalization of unipotent quantum minors).
\end{remark}

For $\lambda\in P_+$ and $w, w'\in W$, we have  
\begin{align*}
\widetilde{D}_{w\lambda, w'\lambda}&=\begin{cases}
1&\text{if}\ w\lambda=w'\lambda,\\
0&\text{if}\ w\lambda\not\leq w'\lambda,
\end{cases}
&
\wt \widetilde{D}_{w\lambda, w'\lambda}&=w\lambda-w'\lambda\ \text{if}\ w\lambda\leq w'\lambda. 
\end{align*}

The quantized coordinate algebra $\mathcal{A}_v[N_-]$ has a quantum cluster algebra structure whose initial quantum cluster variables are described by the normalized unipotent quantum minors : 
\begin{theorem}[{\cite[Theorem 12.3]{GLS:qcluster},\cite[Theorem 10.1]{GY:Memo}}]\label{t:qcluster}
Let $w_0$ be the longest element of $W$. Fix $\bm{i}=(i_1,\dots i_{\ell})\in I(w_0)$. 
Recall the notations in Proposition \ref{p:compatible}. Then there is a $\mathbb{Q}(v^{1/2})$-algebra isomorphism 
\[
\mathrm{CL}\colon\mathbb{Q}(v^{1/2})\otimes_{\mathbb{Z}[v^{\pm 1/2}]}\mathcal{A}(\Lambda^{\bm{i}}, \widetilde{B}^{\bm{i}})\to \mathbb{Q}(v^{1/2})\otimes_{\mathbb{Z}[v^{\pm 1/2}]}\mathcal{A}_v[N_{-}]
\]
given by  
\[
X_k\mapsto \widetilde{D}_{w_{\leq k}\varpi_{i_{k}}, \varpi_{i_{k}}}
\]
for all $k\in J=\{1,\dots, \ell\}$. Moreover, $\mathrm{CL}(\mathcal{A}(\Lambda^{\bm{i}}, \widetilde{B}^{\bm{i}}))\supset \mathcal{A}_v[N_{-}]$. 
\end{theorem}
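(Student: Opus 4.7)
The plan is to establish, in order: (a) that the proposed initial cluster variables $\widetilde{D}_{w_{\leq k}\varpi_{i_k}, \varpi_{i_k}}$ satisfy precisely the $v$-commutation relations encoded by $\Lambda^{\bm{i}}$; (b) that they obey the quantum exchange relations dictated by $\widetilde{B}^{\bm{i}}$, so that the assignment $X_k \mapsto \widetilde{D}_{w_{\leq k}\varpi_{i_k}, \varpi_{i_k}}$ extends to an algebra morphism $\mathrm{CL}$ from the quantum cluster algebra; and (c) that this morphism becomes an isomorphism after tensoring with $\mathbb{Q}(v^{1/2})$, and that the integral image contains $\mathcal{A}_v[N_-]$.

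For step (a), I would invoke the standard $v$-commutation formula for normalized unipotent quantum minors, namely $\widetilde{D}_{u\varpi_i, \varpi_i}\widetilde{D}_{u'\varpi_j, \varpi_j} = v^{(u\varpi_i, u'\varpi_j) - (\varpi_i, \varpi_j)}\widetilde{D}_{u'\varpi_j, \varpi_j}\widetilde{D}_{u\varpi_i, \varpi_i}$ whenever $u\varpi_i$ and $u'\varpi_j$ arise as weights along a common reduced expression. Specialising to $u = w_{\leq s}$, $u' = w_{\leq t}$ and unravelling the definition of $\lambda_{st}$ reduces the claim to the numerical identity $(\varpi_{i_s} - w_{\leq s}\varpi_{i_s},\, \varpi_{i_t} + w_{\leq t}\varpi_{i_t}) = -\lambda_{st}$, which follows from $W$-invariance of the bilinear form and a rearrangement of $s_{i_1}\cdots s_{i_t}$ relative to $s_{i_1}\cdots s_{i_s}$.

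For step (b), the essential input is the quantum determinantal (Plücker-type) identity
\[
\widetilde{D}_{w_{\leq s^-}\varpi_{i_s}, \varpi_{i_s}}\,\widetilde{D}_{w_{\leq s^+}\varpi_{i_s}, \varpi_{i_s}} \;=\; v^{\alpha_s}\,\widetilde{D}_{w_{\leq s}\varpi_{i_s}, \varpi_{i_s}}\,\widetilde{D}_{w_{\leq s^+}\varpi_{i_s},\, w_{\leq s}\varpi_{i_s}} \;+\; v^{\beta_s}\prod_{j\neq i_s} \widetilde{D}_{w_{\leq s^-(j)}\varpi_{j}, \varpi_{j}}^{\,-c_{j,i_s}},
\]
valid for each $s \in J_e$, with explicit $v$-exponents $\alpha_s, \beta_s$. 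The RHS first factor of the leading term is a known cluster variable obtained by mutating $X_s$, so this identity matches the quantum exchange relation at $s$ for $\widetilde{B}^{\bm{i}}$. Deriving this identity requires combining the action of Lusztig's braid operators $T_{i}$ with the multiplicative structure of unipotent quantum minors inside $\Uv^-$; matching the $v$-powers on the nose is the step I expect to be the main obstacle, since any sign or half-integer error propagates through the entire cluster structure.

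For step (c), injectivity after tensoring with $\mathbb{Q}(v^{1/2})$ is immediate once both sides are shown to be domains of Gelfand--Kirillov dimension $\ell(w_0)$ and $\mathrm{CL}$ is nonzero on each generator. For surjectivity together with the containment $\mathcal{A}_v[N_-]\subset \mathrm{CL}(\mathcal{A}(\Lambda^{\bm{i}}, \widetilde{B}^{\bm{i}}))$, I would argue inductively on weight using the dual PBW basis adapted to $\bm{i}$: its generators are (up to a power of $v^{1/2}$) the ``one-step'' quantum minors $\widetilde{D}_{w_{\leq k}\varpi_{i_k},\, w_{\leq k-1}\varpi_{i_k}}$, each of which is obtained from the initial seed by an explicit mutation sequence derived iteratively from the identity above. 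Since these PBW generators generate $\mathcal{A}_v[N_-]$ as a $\mathbb{Z}[v^{\pm 1/2}]$-algebra, the containment follows, completing the proof.
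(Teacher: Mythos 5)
First, a point of reference: the paper does not prove this statement at all — Theorem \ref{t:qcluster} is quoted from \cite{GLS:qcluster} and \cite{GY:Memo} — so there is no internal proof to compare with; your proposal has to be measured against those cited works.

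Your steps (a) and (b) are reasonable in outline: the $v$-commutation of the initial minors and the quantum determinantal identities (Proposition \ref{p:T-sys} in this paper) are exactly the seed-level compatibility checks, and your step (c) gives the \emph{easy} containment $\mathcal{A}_v[N_-]\subset \mathrm{CL}(\mathcal{A}(\Lambda^{\bm{i}},\widetilde{B}^{\bm{i}}))$, once one verifies (as you assert but do not prove) that each one-step minor $\widetilde{D}_{w_{\leq k}\varpi_{i_k},\,w_{\leq k-1}\varpi_{i_k}}$ is reached by an explicit mutation sequence. The genuine gap is the opposite containment, which your proposal never addresses: for $\mathrm{CL}$ even to be well defined as a map into $\mathbb{Q}(v^{1/2})\otimes_{\mathbb{Z}[v^{\pm1/2}]}\mathcal{A}_v[N_-]$ (rather than merely into the skew field $\mathcal{F}(\Lambda^{\bm{i}})$), and hence an isomorphism onto it, one must show that \emph{every} quantum cluster variable — the output of an arbitrary mutation sequence, not just the finitely many mutations hitting the PBW generators — lies in $\mathbb{Q}(v^{1/2})\otimes\mathcal{A}_v[N_-]$. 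This is the hard core of the cited theorems, and it does not follow from seed-level identities together with a Gelfand--Kirillov dimension count (GK dimension can give injectivity, not this containment). In \cite{GLS:qcluster} it is obtained by specializing at $v=1$, invoking the categorified cluster structure on the classical coordinate ring built from preprojective algebras, and lifting back; in \cite{GY:Memo} it comes from the general theory of quantum cluster structures on quantum nilpotent algebras (CGL extensions). Some global input of this kind is indispensable, and your outline contains no substitute for it; as you yourself anticipate, even the seed-level step (b) requires matching the $v$-exponents $\alpha_s,\beta_s$ exactly against the $\Lambda^{\bm{i}}$-normalization, which is a nontrivial verification rather than a formality.
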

\begin{remark}
We can easily check that the isomorphism $\mathrm{CL}$ is still well-defined even if we identify the quantum cluster algebra $\mathcal{A}(\Lambda^{\bm{i}}, \widetilde{B}^{\bm{i}})$ with the one arising from a different reduced word commutation equivalent to $\bm{i}$ as in Notation \ref{n:compatible}. Hence we will regard $\mathrm{CL}$ as the map from $\mathbb{Q}(v^{1/2})\otimes_{\mathbb{Z}[v^{\pm 1/2}]}\mathcal{A}(\Lambda^{[\bm{i}]}, \widetilde{B}^{[\bm{i}]})$. 
\end{remark}
\begin{remark}\label{r:qcluster}
In the following, through $\mathrm{CL}^{-1}$, we will regard $\mathcal{A}_v[N_{-}]$ as a subalgebra of $\mathcal{T}_{[\bm{i}]}$. 
\end{remark}

Fix $\bm{i}=(i_1,\dots, i_{\ell})\in I(w_0)$. For $\beta, \gamma\in \Delta_+$ with $\res^{[\bm{i}]}(\beta)=\res^{[\bm{i}]}(\gamma)(=i)$ and $\gamma\preceq_{[\bm{i}]}\beta$, set
\[
\widetilde{D}^{[\bm{i}]}(\beta, \gamma):=\widetilde{D}_{w_{\leq d}\varpi_{i}, w_{\leq b}\varpi_i},
\]
here $\beta=\beta_d^{\bm{i}}$ and $\gamma=\beta_b^{\bm{i}}$ ($d\geq b$). It can be checked easily that the right-hand side depends only on the commutation class of $\bm{i}$ and $\beta, \gamma$, hence this element is well-defined. Moreover, we set $\widetilde{D}^{[\bm{i}]}(\beta, 0)=\widetilde{D}_{w_{\leq d}\varpi_{i_d}, \varpi_{i_d}}$, where $\beta=\beta_d^{\bm{i}}$, for $\beta\in \Delta_+$ ($0$ stands for $0$ in the sense of Notation \ref{n:indexplus}, and these elements are also well-defined) and $\widetilde{D}^{[\bm{i}]}(\beta, \beta):=1$ for $\beta\in \Delta_+$.  
These elements satisfy a specific system of equalities as follows (recall Notation \ref{n:indexplus} and \ref{n:compatible}) : 
\begin{proposition}[{A system of quantum determinantal identities \cite[Proposition 5.5]{GLS:qcluster}}]\label{p:T-sys}
Let $\bm{i}\in I(w_0)$, $\beta, \gamma\in \Delta_+$ with $\res^{[\bm{i}]}(\beta)=\res^{[\bm{i}]}(\gamma)(=i)$ and $\gamma\prec_{[\bm{i}]}\beta$. Then there exist $a, b\in \mathbb{Z}$ such that 
\begin{align}
\widetilde{D}^{[\bm{i}]}(\beta, \gamma)\widetilde{D}^{[\bm{i}]}(\beta^-, \gamma^-)&=v^{a/2}\widetilde{D}^{[\bm{i}]}(\beta, \gamma^-)\widetilde{D}^{[\bm{i}]}(\beta^-, \gamma)+v^{b/2}\dprod_{j\in I\setminus\{i\}}\widetilde{D}^{[\bm{i}]}(\beta^-(j), \gamma^-(j))^{-c_{ji}}\label{eq:T-sys'}
\end{align}
in $\mathcal{A}_v[N_-]$.  
\end{proposition}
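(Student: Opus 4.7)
The plan is to follow the representation-theoretic approach of Geiss--Leclerc--Schr\"oer in \cite{GLS:qcluster}, translating the classical Desnanot--Jacobi type determinantal identity into the quantum setting via matrix coefficients of fundamental $\mathcal{U}_v(\mathfrak{g})$-modules. First I would unpack the normalized unipotent quantum minor $\widetilde{D}^{[\bm{i}]}(\beta,\gamma) = \widetilde{D}_{w_{\leq d}\varpi_i, w_{\leq b}\varpi_i}$ as a pairing $\langle u_{w_{\leq d}\varpi_i}^\ast, \,\cdot\, u_{w_{\leq b}\varpi_i}\rangle$ on the fundamental representation $V(\varpi_i)$, where $u_{w\varpi_i}$ denotes the extremal weight vector of weight $w\varpi_i$ (normalized using the dual canonical basis up to a specific power of $v$, as in Appendix \ref{a:QEA}).

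Second, I would analyze how the Weyl group elements indexing the four minors are related. Since $\res^{[\bm{i}]}(\beta) = \res^{[\bm{i}]}(\gamma) = i$ and $\gamma \prec_{[\bm{i}]} \beta$, one can choose a reduced word in $[\bm{i}]$ so that $w_{\leq d} = w_{\leq d^-} s_i$ and $w_{\leq b} = w_{\leq b^-} s_i$; moreover, passing from $w_{\leq d^-}$ to $w_{\leq d}$ in a reduced word adapted to $[\bm i]$ necessarily interpolates the reflections $s_j$ for neighbors $j$ of $i$ (i.e.\ those with $c_{ji}<0$), which explains the occurrence of the products $\widetilde{D}^{[\bm{i}]}(\beta^-(j), \gamma^-(j))^{-c_{ji}}$ on the right-hand side.

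Third, the core algebraic input is a quantum Pl\"ucker identity in $V(\varpi_i) \otimes V(\varpi_i)$: the tensor $u_{w_{\leq d}\varpi_i} \otimes u_{w_{\leq d^-}\varpi_i}$ can be expanded via an application of $f_i$ (and correcting Serre-type terms for each neighbor $j$) into a sum involving $u_{w_{\leq d^-}\varpi_i} \otimes u_{w_{\leq d}\varpi_i}$ and product terms of weight vectors $u_{w_{\leq d^-(j)}\varpi_j}$. Pairing with the corresponding expansion on the $\gamma$-side yields exactly the two terms on the right-hand side of \eqref{eq:T-sys'}, with the scalars $v^{a/2}$ and $v^{b/2}$ arising as the $v$-commutation factors and as the normalization constants converting raw $D_{\bullet,\bullet}$ into $\widetilde{D}_{\bullet,\bullet}$.

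The main obstacle will be the explicit determination of the exponents $a$ and $b$, which involves tracking the bilinear form $(-,-)$ evaluated at the weights $w_{\leq d}\varpi_i - \varpi_i$, $w_{\leq b}\varpi_i - \varpi_i$, and their neighbors; however only the \emph{existence} of such $a,b\in\mathbb{Z}$ is asserted here, which follows once the representation-theoretic identity is established and one verifies that both sides have the same weight in $-Q_+$. The bookkeeping for $a,b$ is mechanical but tedious and we refer to \cite[Proposition 5.5]{GLS:qcluster} for the explicit values.
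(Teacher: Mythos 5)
First, a point of comparison: the paper does not prove this proposition at all. It is quoted from \cite[Proposition 5.5]{GLS:qcluster} (see also \cite[section 13]{GLS:Kac-Moody}), and the surrounding text only records that these identities are exchange relations for the quantum cluster structure of Theorem \ref{t:qcluster}. So your proposal is in effect an attempt to reprove the GLS result, and it has to stand on its own; it cannot lean on ``we refer to \cite[Proposition 5.5]{GLS:qcluster} for the explicit values'' for anything beyond the powers $a,b$.

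Judged on its own, the core of your argument is missing. Your step 3 --- the ``quantum Pl\"ucker identity in $V(\varpi_i)\otimes V(\varpi_i)$'' --- \emph{is} the content of the proposition, and you assert it rather than prove it. The second term on the right-hand side exists because $2\varpi_i-\alpha_i=\sum_{j\neq i}(-c_{ji})\varpi_j$, so one must compare the relevant extremal vectors of $V(\varpi_i)\otimes V(\varpi_i)$ with those of $\bigotimes_{j\neq i}V(\varpi_j)^{\otimes(-c_{ji})}$; ``an application of $f_i$ with correcting Serre-type terms'' stays inside $V(\varpi_i)\otimes V(\varpi_i)$ and cannot by itself produce products of minors attached to the \emph{other} fundamental weights. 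Nor does checking that both sides have the same weight in $-Q_+$ pin anything down: the existence of $a,b$ is not the difficulty, the two-term shape of the expansion is, and weight considerations are far too weak to force it. There is also a structural issue you pass over: the elements $\widetilde{D}_{w\lambda,w'\lambda}$ live in $\mathcal{A}_v[N_-]\subset\Uv^-$ and are multiplied there (via the pairing $(-,-)_L$ of Appendix \ref{a:QEA}), not as matrix coefficients on a group, so before ``pairing with the corresponding expansion on the $\gamma$-side yields exactly the two terms'' you must establish the compatibility between the product in $\Uv^-$ and products of matrix coefficients (this is exactly where the nontrivial $v$-powers arise, so it is not mere bookkeeping). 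Finally, a smaller but genuine error: $w_{\leq d}=w_{\leq d^-}s_i$ is false in general; what is true is only that the letters strictly between positions $d^-$ and $d$ differ from $i$ and hence fix $\varpi_i$, so $w_{\leq d-1}\varpi_i=w_{\leq d^-}\varpi_i$ while $w_{\leq d}\varpi_i=w_{\leq d-1}s_i\varpi_i$; the four minors are related through these weights, not through the factorization of Weyl group elements you state. In summary, the outline follows the classical generalized-minor strategy plausibly, but the decisive representation-theoretic identity and its transfer to the unipotent quantum setting are left unproved.
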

They are regarded as a quantum analogue of determinantal identities. It is shown in \cite[section 13]{GLS:Kac-Moody} and \cite[section 12]{GLS:qcluster} that they are also regarded as specific exchange relations of quantum clusters when we consider the quantum cluster algebra structure given in Theorem \ref{t:qcluster}.
\begin{remark}
The explicit powers ${a/2}, {b/2}$ of $v$ in \eqref{eq:T-sys'} can be calculated (see \cite[Proposition 5.5]{GLS:qcluster}). However we do not need them in this paper. 
\end{remark}
Let $\bm{i}=(i_1,\dots i_{\ell(w_0)})\in I(w_0)$. For $j=1,\dots, \ell (w_0)$, write 
\[
\widetilde{F_{-1}^{{\rm {up}}}}(\beta^{\bm{i}}_k, \bm{i}):=\widetilde{D}_{w_{\leq k}\varpi_{i_k}, w_{\leq k-1}\varpi_{i_k}},
\]
here $\beta^{\bm{i}}_k:=w_{\leq k-1}(\alpha_{i_k})$. Note that $\wt \widetilde{F_{-1}^{{\rm {up}}}}(\beta^{\bm{i}}_k, \bm{i})=-\beta^{\bm{i}}_k$. This is an analogue of the dual function of a root vector corresponding to $-\beta^{\bm{i}}_k$. For $\bm{c}=(c_{\beta})_{\beta\in \Delta_+}\in \mathbb{Z}_{\geq 0}^{\Delta_+}$, set
\begin{align}
\widetilde{F_{-1}^{\mathrm{up}}}\left(\bm{c},\bm{i}\right)=v^{-\sum_{s<t}c_sc_t(\beta^{\bm{i}}_s, \beta^{\bm{i}}_t)/2}\widetilde{F_{-1}^{\mathrm{up}}}(\beta^{\bm{i}}_{\ell},\bm{i})^{c_{\ell}}\cdots \widetilde{F_{-1}^{\mathrm{up}}}(\beta^{\bm{i}}_2,\bm{i})^{c_{2}}\widetilde{F_{-1}^{\mathrm{up}}}(\beta^{\bm{i}}_1,\bm{i})^{c_{1}}, %ok
\label{eq:normPBW}
\end{align}
here we write $c_{s}:=c_{\beta^{\bm{i}}_s}$ for $s=1,\dots, \ell$. 
\begin{remark}
The element $\widetilde{F_{-1}^{\mathrm{up}}}\left(\bm{c},\bm{i}\right)$ coincides with $\ast (F^{\mathrm{up}}\left(\bm{c},\bm{i}\right))$ in \cite{KiO:BZtwist} up to powers of $v^{\pm 1/2}$. See also \cite[Proposition 5.3.5]{KQ}. The subscript $-1$ is attached just for this conventional reason, and it originally comes from the one in $T_{i, -1}'$ in \cite[Chapter 37]{Lus:intro}. 
\end{remark}
In fact, for $\bm{i}'\in [\bm{i}]$, we have 
\[
\widetilde{F_{-1}^{\mathrm{up}}}\left(\bm{c},\bm{i}\right)=\widetilde{F_{-1}^{\mathrm{up}}}\left(\bm{c},\bm{i}'\right)
\]
for all $\bm{c}\in \mathbb{Z}_{\geq 0}^{\Delta_+}$. Therefore, we will write $\widetilde{F_{-1}^{\mathrm{up}}}\left(\bm{c},\bm{i}\right)$ as $\widetilde{F_{-1}^{\mathrm{up}}}\left(\bm{c},[\bm{i}]\right)$. 

\begin{proposition}[{see, for instance, \cite[Chapter 40, 41]{Lus:intro}}]\label{p:PBWbasis}
The set $\{\widetilde{F_{-1}^{\mathrm{up}}}\left(\bm{c},[\bm{i}]\right)\mid \bm{c}\in \mathbb{Z}_{\geq 0}^{\Delta_+}\}$ forms a $\mathbb{Z}[v^{\pm 1/2}]$-basis of $\mathcal{A}_v[N_-]$. 
\end{proposition}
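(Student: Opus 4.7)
The plan is to identify the elements $\widetilde{F_{-1}^{\mathrm{up}}}(\bm{c},[\bm{i}])$ with the \emph{dual PBW basis} of $\mathcal{A}_v[N_-]$ attached to $\bm{i}$, and then quote the fact that dual PBW bases are $\mathbb{Z}[v^{\pm 1/2}]$-bases of the integral form $\mathcal{A}_v[N_-]$. The starting point is Lusztig's construction: for each $\bm{i}=(i_1,\dots,i_\ell)\in I(w_0)$ one forms root vectors $F(\beta_k^{\bm{i}},\bm{i})=T''_{i_1}\cdots T''_{i_{k-1}}(f_{i_k})\in \Uv^-$, and the ordered monomials $F(\bm{c},\bm{i})=F(\beta^{\bm{i}}_\ell,\bm{i})^{c_\ell}\cdots F(\beta^{\bm{i}}_1,\bm{i})^{c_1}$ form a $\mathbb{Q}(v^{1/2})$-basis of $\Uv^-$. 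Taking duals with respect to Kashiwara's bilinear form on $\Uv^-$ produces a basis of the dual integral form $\mathcal{A}_v[N_-]$, and one checks by a direct computation (as in \cite{Kimura:qunip}) that the generator of weight $-\beta_k^{\bm{i}}$ in this dual basis is exactly the normalized unipotent quantum minor $\widetilde{D}_{w_{\leq k}\varpi_{i_k},w_{\leq k-1}\varpi_{i_k}}=\widetilde{F_{-1}^{\mathrm{up}}}(\beta^{\bm{i}}_k,\bm{i})$.

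Second, I would verify that the explicit $v$-power renormalization in \eqref{eq:normPBW} is exactly the one needed to pass from an ordered product of individual dual root vectors to the dual of the ordered product $F(\bm{c},\bm{i})$. Concretely, the dual root vectors satisfy, for $s<t$, a Levendorskii-Soibelman-type relation of the form
\[
\widetilde{F_{-1}^{\mathrm{up}}}(\beta^{\bm{i}}_t,\bm{i})\,\widetilde{F_{-1}^{\mathrm{up}}}(\beta^{\bm{i}}_s,\bm{i})=v^{(\beta^{\bm{i}}_s,\beta^{\bm{i}}_t)}\widetilde{F_{-1}^{\mathrm{up}}}(\beta^{\bm{i}}_s,\bm{i})\,\widetilde{F_{-1}^{\mathrm{up}}}(\beta^{\bm{i}}_t,\bm{i})+(\text{terms in }\widetilde{F_{-1}^{\mathrm{up}}}(\bm{c}',\bm{i}),\ \bm{c}'\prec\bm{c}),
\]
so the scalar $v^{-\sum_{s<t}c_sc_t(\beta^{\bm{i}}_s,\beta^{\bm{i}}_t)/2}$ in \eqref{eq:normPBW} is precisely what makes $\widetilde{F_{-1}^{\mathrm{up}}}(\bm{c},\bm{i})$ bar-invariant and therefore equal (up to the lower-order part) to the dual PBW basis element indexed by $\bm{c}$. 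Combined with the unitriangularity of the transition, this gives that $\{\widetilde{F_{-1}^{\mathrm{up}}}(\bm{c},\bm{i})\}_{\bm{c}}$ is a $\mathbb{Z}[v^{\pm 1/2}]$-basis of $\mathcal{A}_v[N_-]$.

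Third, for independence under commutation equivalence $\bm{i}\sim \bm{i}'$, it suffices to check invariance under swapping two adjacent letters $(i_k,i_{k+1})$ with $(\alpha_{i_k},\alpha_{i_{k+1}})=0$. In that situation $\beta^{\bm{i}}_k$ and $\beta^{\bm{i}}_{k+1}$ are orthogonal positive roots, the correponding unipotent quantum minors $v$-commute with trivial scalar, and the two indices $c_k,c_{k+1}$ simply get exchanged; the prefactor $v^{-\sum_{s<t}c_sc_t(\beta^{\bm{i}}_s,\beta^{\bm{i}}_t)/2}$ is insensitive to this swap because $(\beta^{\bm{i}}_k,\beta^{\bm{i}}_{k+1})=0$. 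Thus $\widetilde{F_{-1}^{\mathrm{up}}}(\bm{c},\bm{i})=\widetilde{F_{-1}^{\mathrm{up}}}(\bm{c},\bm{i}')$, so the notation $\widetilde{F_{-1}^{\mathrm{up}}}(\bm{c},[\bm{i}])$ is unambiguous and the basis assertion follows.

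The main obstacle I anticipate is the precise bookkeeping in step one: matching the various conventions for quantized coordinate algebras, braid operators ($T''_i$ vs.\ $T'_{i,-1}$), the $\ast$-involution, and the normalization of the bilinear form so that $\widetilde{D}_{w_{\leq k}\varpi_{i_k},w_{\leq k-1}\varpi_{i_k}}$ really is the dual of $F(\beta^{\bm{i}}_k,\bm{i})$ on the nose (with the correct half-integer power of $v$). The cleanest route is to appeal to the identifications already worked out in \cite{Kimura:qunip} and \cite{KiO:BZtwist} (as noted in Remark \ref{r:minorconv} and the remark following \eqref{eq:normPBW}), which reduces the problem to a direct verification and to the classical PBW theorem for $\Uv^-$.
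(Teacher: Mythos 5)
Your proposal takes the same route the paper intends: the paper gives no proof of Proposition~\ref{p:PBWbasis}, quoting it from Lusztig (Ch.~40--41) together with the dual PBW theory as in \cite{Kimura:qunip,KiO:BZtwist}, and your three steps (dualize the divided-power PBW basis of the integral form, identify the single-root dual vectors with the normalized minors $\widetilde{D}_{w_{\leq k}\varpi_{i_k},w_{\leq k-1}\varpi_{i_k}}$, and check invariance under commutation moves using $(\beta^{\bm{i}}_k,\beta^{\bm{i}}_{k+1})=0$) reconstruct exactly that argument; since your elements differ from the dual PBW monomials only by powers of $v^{\pm 1/2}$, which are units of $\mathbb{Z}[v^{\pm 1/2}]$, the basis property transfers and the conclusion stands. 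One claim should be corrected, though it does not affect the proposition: the prefactor $v^{-\sum_{s<t}c_sc_t(\beta^{\bm{i}}_s,\beta^{\bm{i}}_t)/2}$ does \emph{not} make $\widetilde{F_{-1}^{\mathrm{up}}}(\bm{c},\bm{i})$ invariant under $\sigma'$; by Remark~\ref{r:unitri-PBW} the normalized dual PBW monomials are fixed by $\sigma'$ only up to lower-order terms with respect to $<_{[\bm{i}]}$, and genuine $\sigma'$-invariance would, via the characterization in Theorem~\ref{t:dualcanonical}, force these monomials to coincide with the normalized dual canonical basis, which is false. The correct role of the prefactor is merely to normalize the leading coefficient so that the product of dual root vectors matches the dual of the divided-power PBW monomial up to a power of $v^{\pm 1/2}$ (cf.\ the remark following \eqref{eq:normPBW}), which is all the basis assertion requires.
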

The basis $\{\widetilde{F_{-1}^{\mathrm{up}}}\left(\bm{c},[\bm{i}]\right)\mid \bm{c}\in \mathbb{Z}_{\geq 0}^{\Delta_+}\}$ is called \emph{a normalized dual Poincar\'e-Birkhoff-Witt type basis} (henceforth, \emph{a normalized dual PBW-type basis}). In particular, $\mathcal{A}_v[N_-]$ is generated by $\{\widetilde{F_{-1}^{{\rm {up}}}}(\beta, [\bm{i}])\}_{\beta\in \Delta_+}=\{\widetilde{D}^{\bm{i}}(k, k^-)\mid k=1,\dots, \ell\}$. 

There exists a $\mathbb{Z}$-algebra anti-involution $\sigma'\colon \mathcal{A}_v[N_-]\to \mathcal{A}_v[N_-]$ given by 
\begin{align*}
v^{\pm 1/2}&\mapsto v^{\mp 1/2}
&
\widetilde{F_{-1}^{{\rm {up}}}}(\beta, [\bm{i}])&\mapsto \widetilde{F_{-1}^{{\rm {up}}}}(\beta, [\bm{i}]) 
\end{align*}
for $\beta\in \Delta_+$. We call $\sigma'$ \emph{the twisted dual bar involution}. This is compatible with the bar involution in the theory of quantum cluster algebras, that is, 
\begin{align}
\mathrm{CL}\circ \overline{(\cdot)}=\sigma'\circ \mathrm{CL}.\label{eq:barcompati}
\end{align}

Lusztig \cite{Lus:can1,Lus:quiper,Lus:intro}
and Kashiwara \cite{Kas:Qana} have constructed a specific $\mathbb{Z}[v^{\pm 1/2}]$-basis $\mathbf{B}^{\mathrm{up}}$ of $\mathcal{A}_v[N_-]$, which is called \emph{the dual canonical basis}. Moreover, we can consider the normlized dual canonical bases $\widetilde{\mathbf{B}}^{\mathrm{up}}$. Following the usual convention, we write $\widetilde{\mathbf{B}}^{\mathrm{up}}=\{\widetilde{\Gup}(b)\mid b\in\mathscr{B}(\infty)\}$. In this paper, $\mathscr{B}(\infty)$ is just an index set of the dual canonical basis. See, for instance, \cite{Kas:crys-survey} for the additional remarkable structure, called the Kashiwara crystal structure, on $\mathscr{B}(\infty)$ which will not be used in this paper. The normalized dual canonical basis $\widetilde{\mathbf{B}}^{\mathrm{up}}$ is characterized as follows :  
\begin{theorem}[{\cite[Theorem 4.29]{Kimura:qunip}}]
\label{t:dualcanonical} Let $\bm{i}\in I\left(w_0\right)$.
Then each element $\widetilde{\Gup}(b)$ of $\widetilde{\mathbf{B}}^{\mathrm{up}}$ is characterized by the following conditions :
\begin{enumerate}
\item[(NDCB1)] $\sigma'(\widetilde{\Gup}(b))=\widetilde{\Gup}(b)$, and
\item[(NDCB2)] $\widetilde{\Gup}(b)=\widetilde{F_{-1}^{\mathrm{up}}}(\bm{c},[\bm{i}])+\sum_{\bm{c}'<_{[\bm{i}]}\bm{c}}d_{\bm{c},\bm{c}'}^{[\bm{i}]}\widetilde{F_{-1}^{\mathrm{up}}}(\bm{c}',[\bm{i}])$ with $d_{\bm{c},\bm{c}'}^{[\bm{i}]}\in v\mathbb{Z}[v]$ for some $\bm{c}\in\mathbb{Z}_{\geq 0}^{\Delta_+}$.

Here the condition $\bm{c}'=(c'_{\beta})_{\beta\in \Delta_+}<_{[\bm{i}]}\bm{c}=(c_{\beta})_{\beta\in \Delta_+}$ means that, for all $\bm{i}'\in [\bm{i}]$, there exists $k\in \{1,\dots, \ell(w_0)\}$ such that $c'_{\beta^{\bm{i}'}_1}=c_{\beta^{\bm{i}'}_1},\dots, c'_{\beta^{\bm{i}'}_{k-1}}=c_{\beta^{\bm{i}'}_{k-1}}$ and $c'_{\beta^{\bm{i}'}_{k}}<c_{\beta^{\bm{i}'}_{k}}$.
\end{enumerate}
\end{theorem}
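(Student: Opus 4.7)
The plan is to prove the theorem by the standard Kazhdan--Lusztig uniqueness strategy, split into existence and uniqueness parts.

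\textbf{Existence.} I will start from the Lusztig/Kashiwara construction of the un-normalized dual canonical basis $\mathbf{B}^{\mathrm{up}}$, which is characterized by invariance under the (un-normalized) dual bar involution together with upper unitriangularity relative to the un-normalized dual PBW basis, with off-diagonal coefficients in $v\mathbb{Z}[v]$. Rescaling the PBW vectors by the factor $v^{-\sum_{s<t} c_s c_t (\beta^{\bm{i}}_s,\beta^{\bm{i}}_t)/2}$ appearing in \eqref{eq:normPBW}, and correspondingly rescaling the canonical basis elements, converts the un-normalized bar involution into $\sigma'$ (this is exactly how $\sigma'$ was defined, namely as the anti-involution fixing the normalized dual PBW vectors $\widetilde{F_{-1}^{\mathrm{up}}}(\beta,[\bm{i}])$ for $\beta \in \Delta_+$). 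The unitriangularity is preserved and still has coefficients in $v\mathbb{Z}[v]$, which gives (NDCB1) and (NDCB2) for $\widetilde{\Gup}(b)$.

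\textbf{Uniqueness.} Suppose $x, y \in \mathcal{A}_v[N_-]$ both satisfy (NDCB1) and (NDCB2) with the same leading index $\bm{c}$. Set $z := x - y$. Then $\sigma'(z)=z$ and $z$ has an expansion
\[
z = \sum_{\bm{c}' <_{[\bm{i}]} \bm{c}} e_{\bm{c}'}\,\widetilde{F_{-1}^{\mathrm{up}}}(\bm{c}',[\bm{i}]), \qquad e_{\bm{c}'} \in v\mathbb{Z}[v].
\]
The key technical input is a Levendorskii--Soibelman type relation in the normalized convention: for $\beta <_{\bm{i}} \beta'$ with $\bm{i} \in [\bm{i}]$ fixed,
\[
\widetilde{F_{-1}^{\mathrm{up}}}(\beta',[\bm{i}])\,\widetilde{F_{-1}^{\mathrm{up}}}(\beta,[\bm{i}]) = v^{(\beta,\beta')}\,\widetilde{F_{-1}^{\mathrm{up}}}(\beta,[\bm{i}])\,\widetilde{F_{-1}^{\mathrm{up}}}(\beta',[\bm{i}]) + (\text{strictly lower}),
\]
where the lower terms are $\mathbb{Z}[v^{\pm 1/2}]$-combinations of $\widetilde{F_{-1}^{\mathrm{up}}}(\bm{c}'',[\bm{i}])$ whose indices $\bm{c}''$ are supported strictly between $\beta$ and $\beta'$ in the total order $<_{\bm{i}}$. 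Iterating this shows that the anti-involution $\sigma'$, when expressed in the normalized dual PBW basis, acts by a matrix which is upper unitriangular with respect to $<_{\bm{i}}$, and hence (after refining $<_{[\bm{i}]}$ by any compatible total order, which can be taken to be $<_{\bm{i}}$ itself) also unitriangular with respect to the partial order $<_{[\bm{i}]}$.

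Assuming this, fix a reduced word $\bm{i} \in [\bm{i}]$ and suppose toward contradiction that $z\neq 0$. Let $\bm{c}^*$ be the $<_{\bm{i}}$-maximal index with $e_{\bm{c}^*}\neq 0$. Reading off the coefficient of $\widetilde{F_{-1}^{\mathrm{up}}}(\bm{c}^*,[\bm{i}])$ in both sides of $\sigma'(z)=z$, the upper unitriangularity of $\sigma'$ gives $\overline{e_{\bm{c}^*}} = e_{\bm{c}^*}$ (where the bar swaps $v^{1/2}\leftrightarrow v^{-1/2}$). Combined with $e_{\bm{c}^*} \in v\mathbb{Z}[v]$ this forces $e_{\bm{c}^*}=0$, a contradiction. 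Hence $z = 0$ and uniqueness follows.

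The main obstacle is verifying the Levendorskii--Soibelman relation in the normalized convention carefully, and ensuring that the resulting upper unitriangularity of $\sigma'$ is compatible with the partial order $<_{[\bm{i}]}$ appearing in (NDCB2), rather than only with the auxiliary total order $<_{\bm{i}}$; once this compatibility is settled, the Kazhdan--Lusztig uniqueness argument above is routine.
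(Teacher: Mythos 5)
This statement is not proved in the paper at all: it is quoted verbatim from Kimura (Theorem 4.29 of the cited reference), and your strategy — bar/\(\sigma'\)-triangularity over a dual PBW basis plus the Kazhdan--Lusztig uniqueness trick — is indeed the standard argument behind that result (going back to Lusztig and Saito for the triangularity of PBW bases, dualized by Kimura). So the skeleton is right, but as written there are two real gaps, one of which you flag yourself without closing it.

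First, the existence step is not a mere rescaling. The known input on the canonical-basis side is bar-invariance plus unitriangularity over the PBW basis; to transport this to $\widetilde{\mathbf{B}}^{\mathrm{up}}$ one must compute how the bar involution interacts with the bilinear form $(-,-)_L$, which replaces it by the composite of the bar involution with the $\ast$-antiautomorphism and a weight-dependent power of $v$ — that composite is what $\sigma'$ really is, and one must then \emph{prove} that it fixes each $\widetilde{F_{-1}^{\mathrm{up}}}(\beta,[\bm{i}])$ (for every commutation class simultaneously, so that $\sigma'$ is well defined independently of $[\bm{i}]$) and that the normalization in \eqref{eq:normPBW} makes the leading coefficient of $\sigma'(\widetilde{F_{-1}^{\mathrm{up}}}(\bm{c},[\bm{i}]))$ equal to $1$. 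This adjunction bookkeeping is precisely the content of Kimura's proof; saying ``rescaling converts the un-normalized bar involution into $\sigma'$'' assumes it. Second, the issue you call ``the main obstacle'' is resolved in the wrong place and left open. For \emph{uniqueness} no compatibility with $<_{[\bm{i}]}$ is needed: triangularity of $\sigma'$ with respect to a single total order $<_{\bm{i}}$ already lets you pick a maximal element of the (finite) support of $z=x-y$ and run the argument, so your worry there is unnecessary. Where $<_{[\bm{i}]}$ genuinely matters is \emph{existence}: the Levendorskii--Soibelman straightening for one reduced word only gives an expansion over $\bm{c}'<_{\bm{i}}\bm{c}$, which is strictly weaker than (NDCB2), and ``refining $<_{[\bm{i}]}$ by a compatible total order'' goes the wrong way (the statement with $<_{[\bm{i}]}$ is the stronger one, being the intersection of the conditions $<_{\bm{i}'}$ over all $\bm{i}'\in[\bm{i}]$). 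The standard fix, which you should make explicit, is that both $\widetilde{F_{-1}^{\mathrm{up}}}(\cdot,[\bm{i}])$ and $\widetilde{\Gup}(b)$ depend only on the commutation class, so one runs the single-word triangularity for every $\bm{i}'\in[\bm{i}]$ and intersects the resulting supports to obtain the $<_{[\bm{i}]}$-condition in (NDCB2). With these two points supplied, your argument becomes the proof given in the cited source.
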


\begin{definition}\label{d:PBWparam}
Theorem \ref{t:dualcanonical} says that each $F_{-1}^{{\rm {up}}}(\bm{c},[\bm{i}])$ determines a unique normalized dual canonical basis element $\Gup(b)$. We write the corresponding element of $\mathscr{B}(\infty)$ as $b_{-1}(\bm{c},[\bm{i}])$. This is called \emph{the Lusztig parametrization} of $\mathscr{B}(\infty)$. 
\end{definition}

In fact, the non-zero normalized unipotent quantum minors are elements of $\widetilde{\mathbf{B}}^{\mathrm{up}}$ \cite[Proposition 4.1]{Kas:Dem}. 
\begin{remark}\label{r:unitri-PBW}
The unitriangular property of dual PBW-type bases with respect to $\sigma'$ is also included in Theorem \ref{t:dualcanonical} in the same manner as in Remark \ref{r:unitri-qt}. That is, we have 
\[
\sigma'\left(\widetilde{F_{-1}^{\mathrm{up}}}(\bm{c},[\bm{i}])\right)\in \widetilde{F_{-1}^{\mathrm{up}}}(\bm{c},[\bm{i}])+\sum_{\bm{c}'<_{[\bm{i}]}\bm{c}}\mathbb{Z}[v^{\pm 1}]\widetilde{F_{-1}^{\mathrm{up}}}(\bm{c}',[\bm{i}]).
\] 
\end{remark}

\section{Quantum $T$-system for type $\mathrm{B}$}\label{s:T-sys}
In this section, we prove a $t$-analogue of the specific identities, called \emph{the $T$-system}, of $q$-characters for type $\mathrm{B}_n^{(1)}$. The corresponding equalities for type $\mathrm{A}_n^{(1)}$, $\mathrm{D}_n^{(1)}$ and $\mathrm{E}_n^{(1)}$ are proved in \cite[Proposition 5.6]{HL:qGro}, and in \cite{Nak:KR} by using other quantum tori. There are some technical differences with these cases as, for example, we do not know a priori certain positivity properties of $(q, t)$-characters. First we work in the general setting in section \ref{s:prel}. 

A $\mathcal{U}_q(\mathcal{L}\mathfrak{g})$-module $V$ is said to be \emph{thin} if its all $l$-weight spaces are of dimension $1$. 
\begin{proposition}[{\cite[Proposition 5.14]{H:small}}]\label{p:thin}
Let $V$ be a thin $\mathcal{U}_q(\mathcal{L}\mathfrak{g})$-module. Then every monomial $m$ occurring in $\chi_q(V)$ satisfies
\[
\max_{j\in I, s\in \mathbb{Z}}\{|u_{j, s}(m)|\}\leq 1. 
\] 
\end{proposition}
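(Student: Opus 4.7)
The strategy is to reduce the claim to the rank-one case $\mathcal{U}_{q_j}(\mathcal{L}\mathfrak{sl}_{2})$ by restriction. For each $j\in I$, let $\mathcal{U}_j\subset\mathcal{U}_q(\mathcal{L}\mathfrak{g})$ denote the subalgebra generated by $\{k_j^{\pm 1}, x_{j,r}^{\pm}, h_{j,r}\mid r\in\mathbb{Z}\}$; it is isomorphic to $\mathcal{U}_{q_j}(\mathcal{L}\mathfrak{sl}_{2})$. A standard computation going back to \cite{FR:q-chara} shows that the $q$-character of $V|_{\mathcal{U}_j}$ is obtained from $\chi_q(V)$ by the substitution $Y_{i,s}\mapsto 1$ for $i\neq j$. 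Consequently every monomial $m$ of $\chi_q(V)$ projects to its ``$j$-part'' $m_j:=\prod_s Y_{j,s}^{u_{j,s}(m)}$, and the $\mathcal{U}_j$-$l$-weight space of $V|_{\mathcal{U}_j}$ attached to $m_j$ is the direct sum of those $\mathcal{U}_q(\mathcal{L}\mathfrak{g})$-$l$-weight spaces $V_{\bm\gamma}$ of $V$ whose monomials all share this same $j$-part; by the thinness hypothesis each such summand is one-dimensional.

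First I would fix $j$ and reduce the problem to showing that in the $q$-character of $V|_{\mathcal{U}_j}$ no variable $Y_{j,s}$ appears to a power of absolute value greater than $1$. Since $V|_{\mathcal{U}_j}$ is finite-dimensional, it decomposes as a direct sum of simple $\mathcal{U}_j$-modules, each of which is, up to spectral shifts, a tensor product of evaluation modules coming from simple $\mathcal{U}_{q_j}(\mathfrak{sl}_2)$-modules. For such a simple $\mathcal{U}_j$-constituent, a direct inspection of its $q$-character (via the standard tableau-like parametrization of its basis) shows that every monomial has all exponents in $\{-1,0,1\}$ \emph{if and only if} the Drinfeld polynomial of the constituent is squarefree, while a repeated root of multiplicity $k$ forces the appearance of a monomial with some exponent of absolute value $k$.

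The hard part is to rule out the existence of any simple $\mathcal{U}_j$-constituent of $V|_{\mathcal{U}_j}$ whose Drinfeld polynomial is not squarefree. To do so I would exploit the full thinness hypothesis on $V$: if such a constituent existed, one could exhibit two distinct $\mathcal{U}_q(\mathcal{L}\mathfrak{g})$-$l$-weights $\bm\gamma_{1}\neq\bm\gamma_{2}$ of $V$ whose $j$-parts coincide at the repeated root, and then analyze the action of the remaining Cartan-like elements $\{h_{i,r}\mid i\neq j,\, r\in\mathbb{Z}\}$ on the cyclic $\mathcal{U}_j$-submodule generated by $V_{\bm\gamma_{1}}\oplus V_{\bm\gamma_{2}}$ to argue that the polynomial data $(Q_i(z),R_i(z))_{i\in I}$ attached to $\bm\gamma_{1}$ and $\bm\gamma_{2}$ must in fact coincide, collapsing the two $l$-weights into a single $l$-weight space of dimension at least two. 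This rigidity step, which tracks how the $\mathcal{U}_j$-evaluation structure propagates through the Drinfeld relations of the full algebra, is the technical core of the proof in \cite{H:small}.
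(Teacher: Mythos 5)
You should note at the outset that the paper does not prove this proposition at all: it is imported from \cite[Proposition 5.14]{H:small}, so the only internal point of comparison is the closely related reduction the paper does perform in the proof of Proposition \ref{p:thin-qt}. Your first two steps are sound: the $q$-character of $V|_{\mathcal{U}_j}$ is indeed obtained by setting $Y_{i,s}\mapsto 1$ for $i\neq j$, so the claim reduces, for each $j$, to bounding the exponents in the rank-one character; and your rank-one dichotomy (squarefree Drinfeld polynomial $\Leftrightarrow$ all exponents in $\{-1,0,1\}$, via the Chari--Pressley description of simple $\mathcal{U}_{q_j}(\mathcal{L}\mathfrak{sl}_2)$-modules as tensor products of string modules) is correct. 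The genuine gap is precisely the step you yourself flag as the technical core: ruling out a non-squarefree simple $\mathcal{U}_j$-constituent of $V|_{\mathcal{U}_j}$. Your sketched mechanism does not work. First, the contradiction you aim for is the wrong one: thinness of $V$ says each $l$-weight space is one-dimensional; it is in no way violated by the existence of two \emph{distinct} $l$-weights with the same $j$-component, and nothing forces their full polynomial data $(Q_i,R_i)_{i\in I}$ to coincide. Second, the object you propose to analyze, the cyclic $\mathcal{U}_j$-submodule generated by $V_{\bm{\gamma}_1}\oplus V_{\bm{\gamma}_2}$, is not stable under $h_{i,r}$ for $i\neq j$ (relation (V) of Definition \ref{d:QEA-Drinfeld} gives $[h_{i,r},x^{\pm}_{j,s}]\neq 0$ whenever $c_{ij}\neq 0$), so ``analyzing the action of the remaining $h_{i,r}$'' on it is not a well-founded plan. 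Third, and structurally, by passing to the plain restriction you have discarded exactly the information needed: a non-squarefree (hence non-thin) $\mathcal{U}_j$-constituent only yields a $\mathcal{U}_j$-$l$-weight space of dimension $\geq 2$, and such spaces are direct sums of full $l$-weight spaces, so this is perfectly compatible with thinness of $V$.

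The standard way to close this gap — the route taken in \cite{H:small} and the one the present paper mimics in Proposition \ref{p:thin-qt} — is to replace the coarse restriction by the finer decomposition $\chi_q(V)=\sum_{m'}\lambda_{\{j\}}(m')\,L_{\{j\}}(m')$ of \cite[Proposition 3.14]{H:small}, whose constituents are written in \emph{all} the variables $Y_{i,s}$ (the monomials of $L_{\{j\}}(m')$ are $m'$ times products of $A_{j,s}^{-1}$), i.e.\ the gradings by $\phi^{\pm}_{i,\pm r}$ for $i\neq j$ are retained. Since the $\lambda_{\{j\}}(m')$ and the internal coefficients are nonnegative, and since the monomial of $\chi_q(V)$ determines the $l$-weight, a multiplicity $\geq 2$ inside a single constituent would force an $l$-weight space of $V$ of dimension $\geq 2$, contradicting thinness; hence each constituent is (a shift of) the $q$-character of a thin simple $\mathcal{U}_{q_j}(\mathcal{L}\mathfrak{sl}_2)$-module, and your rank-one classification then yields $|u_{j,s}(m)|\leq 1$ for every monomial $m$ of $\chi_q(V)$. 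In short: keep your reduction to rank one, but transfer thinness through the refined character decomposition rather than through a rigidity argument on the naive restriction.
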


\begin{notation}\label{n:Vbar}
Consider a $\mathbb{Z}$-linear map $\mathcal{Y}\to \mathcal{Y}_t, f\mapsto \underline{f}$ given by $m\mapsto\underline{m}$ for all monomials $m$ in $\mathcal{Y}$.
Note that this map is not an algebra homomorphism. For a module $V$ in $\mathcal{C}_{\bullet}$, we set 
\[
[\underline{V}]:=\underline{\chi_q(V)}\in \mathcal{Y}_t. 
\]
Note that it clearly satisfies $\mathrm{ev}_{t=1}([\underline{V}])=\chi_q(V)$, but $[\underline{V}]\notin \mathcal{K}_t$ in general. 
\end{notation}

\begin{proposition}\label{p:thin-qt}
Let $V$ be a thin $\mathcal{U}_q(\mathcal{L}\mathfrak{g})$-module.
Then we have $[\underline{V}]\in \mathcal{K}_t$. 
\end{proposition}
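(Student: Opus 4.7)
Since $\mathcal{K}_t = \bigcap_{i \in I}\mathcal{K}_{i, t}$, it suffices to prove $[\underline{V}] \in \mathcal{K}_{i, t}$ for every $i \in I$. Fix $i$ and partition the monomials of $\chi_q(V)$ into \emph{$i$-strings}, where $m \sim_i m'$ iff $m/m'$ is a Laurent monomial in $\{A_{i, r} \mid r \in \mathbb{Z}\}$. The generators $\widetilde{Y}_{j, s}^{\pm 1}$ with $j \neq i$ already belong to $\mathcal{K}_{i, t}$, so it is enough to show, for each $i$-string $S$, that the $t$-lift $\sum_{m \in S} \underline{m}$ lies in $\mathcal{K}_{i, t}$.

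By Proposition \ref{p:thin}, the thinness of $V$ forces every monomial appearing in $\chi_q(V)$ to satisfy $|u_{k, s}(m)| \leq 1$. Viewing $V$ as a $\mathcal{U}_{q_i}(\hat{\mathfrak{sl}}_2)$-module via the natural inclusion $\mathcal{U}_{q_i}(\hat{\mathfrak{sl}}_2) \hookrightarrow \mathcal{U}_q(\mathcal{L}\mathfrak{g})$, each $i$-string $S$ is the $q_i$-character of a simple $\mathcal{U}_{q_i}(\hat{\mathfrak{sl}}_2)$-constituent $V_S$ of $V$, multiplied by a fixed monomial $m_S^{\sharp}$ in $\{Y_{j, s}^{\pm 1} \mid j \neq i\}$. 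The module $V_S$ inherits thinness and, by the Chari--Pressley classification for $\hat{\mathfrak{sl}}_2$, is a tensor product of Kirillov--Reshetikhin $\mathfrak{sl}_2$-modules attached to pairwise disjoint $q_i$-segments. Since the $(q_i, t)$-characters of Kirillov--Reshetikhin $\mathfrak{sl}_2$-modules lie in the $\hat{\mathfrak{sl}}_2$-analogue of $\mathcal{K}_{i, t}$ by Theorem \ref{t:F-elem}, we obtain a natural candidate lift $\Pi_S := \underline{m_S^\sharp} \cdot F_t^{(i)}(V_S)$ in $\mathcal{K}_{i, t}$ satisfying $\mathrm{ev}_{t = 1}(\Pi_S) = \chi_q(V)_S$.

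The technical heart of the argument is to upgrade this specialization to an equality $\Pi_S = \sum_{m \in S} \underline{m}$ already in $\mathcal{Y}_t$, not merely modulo $(t - 1)$. Expanding $\Pi_S$ using the commutation rules \eqref{eq:A-comm}, \eqref{eq:AY-comm} of Proposition \ref{p:A-comm} yields a sum $\sum_{m \in S} t^{e(m)} \underline{m}$ with certain exponents $e(m) \in \mathbb{Z}$, and the required identity reduces to the claim that $e(m) = 0$ uniformly in $m \in S$. This is the main obstacle: it amounts to a combinatorial cancellation among the values of $\gamma$, $\alpha$, $\beta$ from Proposition \ref{p:A-comm}, which we expect to follow from thinness (which prevents any $\widetilde{Y}_{k, ?}$ from contributing repeatedly to the commutation bookkeeping) together with the explicit symmetries of the inverse quantum Cartan matrix $\widetilde{C}(z)$ summarized in Section \ref{s:qtori}. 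Once the identity $\Pi_S = \sum_{m \in S} \underline{m}$ is established, summing over $i$-strings $S$ gives $[\underline{V}] \in \mathcal{K}_{i, t}$, and intersecting over $i \in I$ completes the proof.
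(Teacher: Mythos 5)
Your overall strategy is the same as the paper's: reduce to showing $[\underline{V}]\in\mathcal{K}_{i,t}$ for each fixed $i$, decompose $\chi_q(V)$ into pieces that are (monomials in the variables $Y_{j,s}^{\pm1}$, $j\neq i$, times) $q$-characters of thin $\widehat{\mathfrak{sl}}_2$-modules in the direction $i$, and use thinness to control these pieces. The paper does this via the decomposition $\chi_q(V)=\sum_{m'}\lambda_{\{i\}}(m')L_{\{i\}}(m')$ of \cite[Proposition 3.14]{H:small} for $J=\{i\}$, which is essentially your partition into $i$-strings. However, your argument stops exactly at the decisive point. You construct a candidate $\Pi_S\in\mathcal{K}_{i,t}$ with $\mathrm{ev}_{t=1}(\Pi_S)=\chi_q(V)_S$ and then state that the needed identity $\Pi_S=\sum_{m\in S}\underline{m}$ in $\mathcal{Y}_t$ ``reduces to'' a cancellation of $t$-exponents which you \emph{expect} to follow from thinness and properties of $\widetilde{C}(z)$. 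That expectation is precisely the content of the proposition in the $\widehat{\mathfrak{sl}}_2$-direction, and it is not established by anything you wrote; a specialization at $t=1$ plus membership of some element in $\mathcal{K}_{i,t}$ does not by itself force the commutative lift $\sum_{m\in S}\underline{m}$ to lie in $\mathcal{K}_{i,t}$.

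The missing ingredient is the explicit formula for the $(q,t)$-characters of \emph{thin simple} $\mathcal{U}_{q_i}(\mathcal{L}\mathfrak{sl}_2)$-modules (\cite[Lemma 4.13]{H:qt}), which says that for a thin simple $\widehat{\mathfrak{sl}}_2$-module the $(q,t)$-character is literally the sum of the commutative monomials $\underline{m}$, with all coefficients equal to $1$ and no extra powers of $t$. This is what the paper invokes: each piece $L_{\{i\}}(m')$ is, by thinness of $V$, built from $q$-characters of thin simple $\widehat{\mathfrak{sl}}_2$-modules, so its commutative lift is already an element of $\mathcal{K}_{i,t}$ and no exponent bookkeeping is required. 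If instead you insist on your route through $\Pi_S:=\underline{m_S^\sharp}\cdot F_t^{(i)}(V_S)$ with $F_t^{(i)}(V_S)$ a product of Kirillov--Reshetikhin $(q_i,t)$-characters, you face an additional normalization problem you did not address: such products are in general not $\overline{(\cdot)}$-invariant and differ from commutative lifts by powers of $t^{\pm1/2}$ both globally and monomial-by-monomial, so the claim $e(m)=0$ ``uniformly'' is exactly what must be proved and would require either the thin-$\widehat{\mathfrak{sl}}_2$ formula above or an equivalent explicit computation in the quantum torus. As written, the proof is therefore incomplete at its self-declared technical heart.
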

\begin{proof}
Let $j\in I$. By our definition of the quantum Grothendieck ring $\mathcal{K}_t$, we only have to show that $[\underline{V}]\in \mathcal{K}_{j, t}$. Consider the decomposition $\chi_q(V)=\sum_{m'}\lambda_{\{j\}}(m')L_{\{j\}}(m')$ as in \cite[Proposition 3.14]{H:small} for $J = \{j\}$. Then $[\underline{V}]=\sum_{m'}\lambda_{\{j\}}(m')\underline{L_{\{j\}}(m')}$. By Proposition \ref{p:thin}, each $L_{\{j\}}(m')$ is constructed from the $q$-characters of thin simple $\mathcal{U}_{q_j}(\mathcal{L}\mathfrak{g}_{j})(\simeq \mathcal{U}_{q_i}(\mathcal{L}\mathfrak{sl}_2))$-modules (see \cite{H:small} for the precise definition). Hence it follows from the well-known explicit formulas of the $(q, t)$-characters of thin simple $\mathcal{U}_{q_i}(\mathcal{L}\mathfrak{sl}_2))$-modules \cite[Lemma 4.13]{H:qt} that $[\underline{V}]=\sum_{m'}\lambda_{\{j\}}(m')\underline{L_{\{j\}}(m')}\in \mathcal{K}_{j, t}$. 
\end{proof}

As mentioned in the introduction, one crucial result for our proof is the following.

\begin{theorem}[{\cite[Theorem 3.10]{H:minim}}]\label{t:KRthin}
Assume that $\mathfrak{g}$ is of type $\mathrm{A}_n$ or $\mathrm{B}_n$. Then every Kirillov-Reshetikhin module $W^{(i)}_{k, r}$ over $\mathcal{U}_{q}(\mathcal{L}\mathfrak{g})$ is thin. 
\end{theorem}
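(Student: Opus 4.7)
The plan is to proceed by induction on $k$, using the $T$-system for Kirillov-Reshetikhin modules to propagate thinness from smaller $k$ to larger $k$, with the base case $k=1$ handled case by case.

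For $k=1$ in type $\mathrm{A}_n$, each fundamental module $W^{(i)}_{1,r}$ is the pullback by an evaluation homomorphism of the $i$-th fundamental (\emph{i.e.}~minuscule) representation of $\mathfrak{sl}_{n+1}$, so its $q$-character is a multiplicity-free sum of monomials indexed by semistandard columns of length $i$. In type $\mathrm{B}_n$, the fundamental modules $W^{(i)}_{1,r}$ with $i < n$ have $q$-characters given by Kashiwara--Nakashima $\mathrm{B}$-tableaux of column shape with multiplicity one on each, while the spin module $W^{(n)}_{1,r}$ is minuscule as a classical $\mathfrak{so}_{2n+1}$-module and its $q$-character is a multiplicity-free sum of $2^n$ monomials. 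Both facts can be verified efficiently using the Frenkel--Mukhin algorithm, which converges on these modules and is known to return the full $q$-character.

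For the inductive step I would invoke the $T$-system for KR modules, namely an identity of the form
\[
\chi_q(W^{(i)}_{k,r})\,\chi_q(W^{(i)}_{k,r+2r_i}) = \chi_q(W^{(i)}_{k+1,r})\,\chi_q(W^{(i)}_{k-1,r+2r_i}) + \prod_{j\colon c_{ji}<0} \chi_q(W^{(j)}_{\ast,\ast})^{-c_{ji}},
\]
with explicit spectral parameters $\ast$, which determines $\chi_q(W^{(i)}_{k+1,r})$ from KR characters of strictly smaller $k$. Combining the inductive hypothesis of thinness with the affine-minuscule property (Theorem \ref{t:minuscule}), every monomial of $\chi_q(W^{(i)}_{k+1,r})$ arises as $m^{(i)}_{k+1,r}$ multiplied by a product of $A_{j,s}^{-1}$, and one verifies by a direct analysis of the Nakajima ordering that distinct monomials appearing on the right of the $T$-system cannot cancel or recombine to produce a multiplicity greater than $1$ on the left.

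The main obstacle is the short-root node in type $\mathrm{B}_n$: because $r_n = 1$, the variables $Y_{n,s}$ occur at every integer $s$ instead of every other integer, which doubles the density of factors that could in principle collide in the $T$-system expansion. Ruling out such collisions requires the precise list of monomials occurring in each $\chi_q(W^{(n)}_{k,r})$, and it is essentially this step that forces the restriction to types $\mathrm{A}$ and $\mathrm{B}$; the analogous statement fails already in type $\mathrm{C}_2$ for $k \geq 2$.
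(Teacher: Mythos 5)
First, note that the present paper does not prove this statement at all: Theorem \ref{t:KRthin} is imported from \cite[Theorem 3.10]{H:minim}, where thinness in types $\mathrm{A}$ and $\mathrm{B}$ is established by producing explicit, manifestly multiplicity-free $q$-character formulas for the Kirillov-Reshetikhin modules (via a detailed analysis of dominant and $i$-dominant monomials), not by an induction on $k$ through the $T$-system. So your route is genuinely different from the source, and the question is whether it closes.

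It does not, as written: the inductive step has a real gap. From the classical $T$-system (which you may legitimately quote from \cite{H:KR-T}; be careful that the \emph{quantum} $T$-system of Theorem \ref{t:T-sysaff} in this paper is itself deduced from thinness, so only the classical version is available to you), you know
$\chi_q(W^{(i)}_{k,r})\chi_q(W^{(i)}_{k,r+2r_i}) - \chi_q(S^{(i)}_{k,r}) = \chi_q(W^{(i)}_{k+1,r})\chi_q(W^{(i)}_{k-1,r+2r_i})$.
But the product of two thin characters on the left is in general \emph{not} multiplicity-free, so thinness of the factors gives no a priori bound of $1$ on the multiplicities of the right-hand side; and even granting the affine-minuscule property, extracting a multiplicity bound for the single factor $\chi_q(W^{(i)}_{k+1,r})$ from the product $\chi_q(W^{(i)}_{k+1,r})\chi_q(W^{(i)}_{k-1,r+2r_i})$ requires showing, e.g., that monomials of the form $m\cdot m^{(i)}_{k-1,r+2r_i}$ occur with multiplicity at most $1$ in the left-hand side after subtracting $\chi_q(S^{(i)}_{k,r})$ — precisely the fine, explicit control of the monomials of $\chi_q(W^{(i)}_{k,r})$ that the induction was supposed to avoid, and exactly where the short node of $\mathrm{B}_n$ bites. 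Declaring that ``a direct analysis of the Nakajima ordering'' rules out multiplicity $\geq 2$ is asserting the theorem, not proving it. Two further inaccuracies: the claim that thinness already fails for type $\mathrm{C}_2$ with $k\geq 2$ is false, since $\mathrm{C}_2\simeq\mathrm{B}_2$ (the failures occur for $\mathrm{C}_n$, $n\geq 3$; in types $\mathrm{C}$ and $\mathrm{G}_2$ only fundamental modules are thin, as recalled in the introduction); and in type $\mathrm{B}_n$ with $i<n$ the $q$-character of $W^{(i)}_{1,r}$ is strictly larger than the set of Kashiwara-Nakashima column tableaux, because the restriction to $U_q(\mathfrak{so}_{2n+1})$ is not irreducible, so even your base case needs the more careful description found in \cite{H:minim}.
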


From now until the end of this section, we assume that $\mathfrak{g}$ is of type $\mathrm{B}_n$. 

\begin{proposition}\label{p:F-KR}
For $i\in I$, $r\in \mathbb{Z}$ and $k\in \mathbb{Z}_{\geq 0}$, we have $[\underline{W_{k,r}^{(i)}}]=F_t(m_{k, r}^{(i)}) \in \mathcal{K}_t$. 
\end{proposition}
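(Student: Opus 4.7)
The proof is a direct application of the uniqueness characterization of $F_t$ given in Theorem \ref{t:F-elem}: any element of $\mathcal{K}_t$ whose unique dominant monomial is $\underline{m_{k,r}^{(i)}}$ (occurring with coefficient $1$) must coincide with $F_t(m_{k,r}^{(i)})$. It therefore suffices to verify these two properties for $[\underline{W_{k,r}^{(i)}}]$, which I would split into two steps.

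The first step is to show $[\underline{W_{k,r}^{(i)}}] \in \mathcal{K}_t$. Here the crucial type-$\mathrm{B}$ input is Theorem \ref{t:KRthin}, which tells us that $W_{k,r}^{(i)}$ is thin. Feeding this into Proposition \ref{p:thin-qt} immediately yields the desired membership. Note that without thinness, $[\underline{W_{k,r}^{(i)}}]$ would not a priori lie in the quantum Grothendieck ring, so this is exactly the place where the feature of type $\mathrm{B}$ mentioned in the introduction enters.

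The second step is to verify that $\underline{m_{k,r}^{(i)}}$ is the unique dominant monomial appearing in $[\underline{W_{k,r}^{(i)}}]$, with coefficient $1$. By the affine-minuscule property (Theorem \ref{t:minuscule}), every monomial of $\chi_q(W_{k,r}^{(i)})$ other than $m_{k,r}^{(i)}$ is dominated by some $m_{k,r}^{(i)} A_{i, r+r_i(2k-1)}^{-1}$, hence is not dominant; and $m_{k,r}^{(i)}$ itself occurs with multiplicity $1$ since the $l$-highest weight space is one-dimensional. Passing to $[\underline{W_{k,r}^{(i)}}] = \sum_{m'} \underline{m'}$ (summed with multiplicities over monomials of $\chi_q(W_{k,r}^{(i)})$), this property is preserved under the $\underline{(\cdot)}$-normalization.

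Combining the two steps, the uniqueness clause of Theorem \ref{t:F-elem} forces $[\underline{W_{k,r}^{(i)}}] = F_t(\underline{m_{k,r}^{(i)}}) = F_t(m_{k,r}^{(i)})$, where the last equality is Notation \ref{n:normalization}. No serious obstacle arises: the proposition is essentially a formal synthesis of Theorem \ref{t:KRthin}, Proposition \ref{p:thin-qt}, Theorem \ref{t:minuscule}, and Theorem \ref{t:F-elem}, and it is exactly the bridge that will let the quantum $T$-system for type $\mathrm{B}$ be proved in the rest of section \ref{s:T-sys} by lifting $q$-character identities among KR modules to identities among $F_t(m_{k,r}^{(i)})$ in $\mathcal{K}_t$.
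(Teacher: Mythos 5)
Your proposal is correct and follows essentially the same argument as the paper: thinness of Kirillov--Reshetikhin modules (Theorem \ref{t:KRthin}) combined with Proposition \ref{p:thin-qt} gives $[\underline{W_{k,r}^{(i)}}]\in\mathcal{K}_t$, and then the affine-minuscule property (Theorem \ref{t:minuscule}) together with the uniqueness statement in Theorem \ref{t:F-elem} identifies this element with $F_t(m_{k,r}^{(i)})$. Your explicit verification that $m_{k,r}^{(i)}$ occurs with multiplicity one is a harmless elaboration of the same point.
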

\begin{proof}
By Proposition \ref{p:thin-qt} and Theorem \ref{t:KRthin}, we have $[\underline{W_{k,r}^{(i)}}]\in \mathcal{K}_t$, which implies that $[\underline{W_{k,r}^{(i)}}]=F_t(m_{k, r}^{(i)})$ because $W_{k,r}^{(i)}$ is affine-minuscule (see Theorem \ref{t:minuscule}, \ref{t:F-elem}). 
\end{proof}

Now let us establish the quantum $T$-systems in $\mathcal{K}_t$. Let us define the product : 
\[
[\underline{S_{k, r}^{(i)}}]= \begin{cases} [\underline{W_{k, r+2}^{(i-1)}}] [\underline{W_{k,r+2}^{(i+1)}}]\text{ if $i \leq n-2$}, 
\\ [\underline{W_{k,r+2}^{(n-2)}]} [\underline{W_{2k,r+1}^{(n)}}]\text{ if $i = n-1$},
\\ [\underline{W_{s,r+1}^{(n-1)}}][\underline{W_{s,r+3}^{(n-1)}}]  \text{ if $i = n$ and $k = 2s$ is even,}
\\ [\underline{W_{s+1,r+1}^{(n-1)}}][\underline{W_{s,r+3}^{(n-1)}}] \text{ if $i = n$ and $k = 2s+1$ is odd.}
\end{cases}
\]
Here we set $[\underline{W_{a, \ell}^{(0)}}]:=1$ for all $a\in \mathbb{Z}_{\geq 0}$ and $\ell\in \mathbb{Z}$. 
\begin{theorem}[{The quantum $T$-system of type $\mathrm{B}$}]\label{t:T-sysaff} 
For $i\in I$, $r\in \mathbb{Z}$ and $k\in \mathbb{Z}_{> 0}$, there exist $\alpha, \beta\in \mathbb{Z}$ such that the following identity holds in $\mathcal{K}_t$ :
\[
[\underline{W_{k,r}^{(i)}}][\underline{W_{k,r+2r_i}^{(i)}}] = t^{\alpha/2} [\underline{W_{k+1,r}^{(i)}}] [\underline{W_{k-1,r+2r_i}^{(i)}}] + t^{\beta/2} [\underline{S_{k, r}^{(i)}}].
\]
\end{theorem}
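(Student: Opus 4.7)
The plan is to reduce the identity to the classical ($t=1$) $T$-system of type $\mathrm{B}$ by a careful analysis of the dominant monomials appearing on each side. By Proposition \ref{p:F-KR}, every $[\underline{W^{(i)}_{k,r}}]$ lies in $\mathcal{K}_t$ and coincides with $F_t(m^{(i)}_{k,r})$, so the left-hand side of the putative $T$-system belongs to $\mathcal{K}_t$. The classical $T$-system for type $\mathrm{B}$ $q$-characters of Kirillov--Reshetikhin modules (cf.~\cite{Nak:KR, H:KR-T}) asserts that $\chi_q(W^{(i)}_{k,r})\chi_q(W^{(i)}_{k,r+2r_i})$ has exactly two dominant monomials: $m^{(i)}_{k+1,r}m^{(i)}_{k-1,r+2r_i}$ and the highest-weight monomial $m_S$ of $S^{(i)}_{k,r}$. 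Since $\mathrm{ev}_{t=1}$ is a bijection on the monomial supports in $\mathcal{Y}_t$, the same two dominant monomials arise in $[\underline{W^{(i)}_{k,r}}][\underline{W^{(i)}_{k,r+2r_i}}]$, and Theorem \ref{t:F-elem} yields a unique expansion
\[
[\underline{W^{(i)}_{k,r}}][\underline{W^{(i)}_{k,r+2r_i}}] = \lambda_1\, F_t\bigl(\underline{m^{(i)}_{k+1,r}m^{(i)}_{k-1,r+2r_i}}\bigr) + \lambda_2\, F_t(\underline{m_S})
\]
with $\lambda_1,\lambda_2\in\mathbb{Z}[t^{\pm 1/2}]$.

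Next, I would use thinness of KR modules in type $\mathrm{B}$ (Theorem \ref{t:KRthin}) to show that $\lambda_1$ and $\lambda_2$ are pure $t^{1/2}$-powers. Thinness means that $[\underline{W^{(i)}_{k,r+2r_i(j-1)}}]$ is the sum of $\underline{M}$ over the multiplicity-free monomials $M$ of the corresponding $q$-character. Expanding the product via the $t$-commutation relations of $\mathcal{Y}_t$, the coefficient of a given dominant $\underline{N}$ on the left-hand side takes the form $\sum t^{\gamma(M_1,M_2)/2}$ over pairs $(M_1,M_2)$ with $M_1M_2 = N$. The classical $T$-system together with thinness forces this preimage pair to be \emph{unique} for both $N = m^{(i)}_{k+1,r}m^{(i)}_{k-1,r+2r_i}$ and $N = m_S$; hence each such coefficient is a single monomial in $t^{1/2}$, which by Theorem \ref{t:F-elem} (the dominant coefficient of $F_t(\underline{m})$ being $1$) must coincide with $\lambda_j$.

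Finally, to re-express the two $F_t$'s as $[\underline{W^{(i)}_{k+1,r}}][\underline{W^{(i)}_{k-1,r+2r_i}}]$ and $[\underline{S^{(i)}_{k,r}}]$, I would check that each of these products has a \emph{unique} dominant monomial. This relies on thinness once more and on Theorem \ref{t:minuscule}, and amounts to the standard fact that the underlying tensor products of Kirillov--Reshetikhin modules are simple at these special spectral parameters. Each product then equals a $t^{1/2}$-power times the corresponding $F_t$, and substitution produces the desired identity with some $\alpha,\beta\in\mathbb{Z}$. The main obstacle I anticipate is the case analysis needed to rule out spurious dominant monomials: the definition of $S^{(i)}_{k,r}$ splits into four sub-cases ($i\leq n-2$, $i=n-1$, $i=n$ with $k$ even, and $i=n$ with $k$ odd), and the last (where $S^{(n)}_{2s+1,r}$ combines KR modules over node $n-1$ of levels $s$ and $s+1$ at shifted parameters) is the most delicate. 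Here one must verify, using the explicit form of the type-$\mathrm{B}$ KR $q$-characters and the affine-minuscule property, that no unexpected dominant monomial appears; once this combinatorial check is settled, the rest of the argument is uniform across the four cases.
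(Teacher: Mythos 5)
Your overall strategy -- both sides lie in $\mathcal{K}_t$ by Proposition \ref{p:F-KR}, and an element of $\mathcal{K}_t$ is determined by the coefficients of its dominant monomials (Theorem \ref{t:F-elem}), so one only has to match dominant monomials -- is exactly the paper's strategy. But your bookkeeping of the dominant monomials is wrong, and this breaks the middle of the argument. The product $\chi_q(W^{(i)}_{k,r})\chi_q(W^{(i)}_{k,r+2r_i})$ does \emph{not} have exactly two dominant monomials when $k\geq 2$: by \cite[Proposition 5.3, Lemma 5.6]{H:KR-T} (Lemma \ref{l:dominant} in the paper) it has $k+1$ of them, namely $M\prod_{s=0}^{l-1}A_{i,r+(2k-1-2s)r_i}^{-1}$ for $l=0,\dots,k$ with $M=m^{(i)}_{k,r}m^{(i)}_{k,r+2r_i}$, each with multiplicity one; the intermediate ones come from the ``staircase'' of monomials inside a single KR $q$-character. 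Consequently your two-term expansion $\lambda_1 F_t(\underline{M})+\lambda_2 F_t(\underline{m_S})$ cannot hold, since each $F_t(\underline{m})$ has $\underline{m}$ as its \emph{unique} dominant monomial. Symmetrically, your final step also fails: $[\underline{W^{(i)}_{k+1,r}}][\underline{W^{(i)}_{k-1,r+2r_i}}]$ is not a single $F_t$ and does not have a unique dominant monomial -- it has $k$ of them (Lemma \ref{l:dominant}(ii)); simplicity of the underlying tensor product does not imply speciality, and in any case you do not need that simplicity here.

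The repair is to match \emph{all} $k+1$ dominant monomials: $k$ of them are shared between the left-hand side and $[\underline{W^{(i)}_{k+1,r}}][\underline{W^{(i)}_{k-1,r+2r_i}}]$, and the remaining one, $M\prod_{s=0}^{k-1}A_{i,r+(2k-1-2s)r_i}^{-1}$, is the unique dominant monomial of $[\underline{S^{(i)}_{k,r}}]$ (this is where the case analysis on $i$ and the parity of $k$ actually enters, via Lemma \ref{l:dominant}(iii), rather than in ruling out extra dominant monomials of the left-hand side). The identity then reduces to a single computation in the quantum torus,
\begin{align*}
\Bigl(\sum_{l=0}^{k}\underline{M_{l}}\Bigr)\,\underline{m^{(i)}_{k,r+2r_i}}
= t^{\alpha/2}\,\underline{m^{(i)}_{k+1,r}}\Bigl(\sum_{l=0}^{k-1}\underline{M'_{l}}\Bigr)
+ t^{\beta'/2}\,\underline{m^{(i)}_{k,r}m^{(i)}_{k,r+2r_i}\textstyle\prod_{s=0}^{k-1}A_{i,r+(2k-1-2s)r_i}^{-1}},
\end{align*}
with $M_l$, $M'_l$ the truncated staircases, checking that the $t^{1/2}$-powers produced by the $t$-commutation relations (Proposition \ref{p:A-comm}) are uniform in $l$ on each side. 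Your intuition that thinness controls the multiplicities is correct, but it is used to get Proposition \ref{p:F-KR} and the multiplicity-one statements of Lemma \ref{l:dominant}, not to reduce the count of dominant monomials to two.
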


\begin{remark}
The powers $\alpha,\beta$ can be computed from the $t$-commutation relations in the quantum torus as in \cite[Proposition 5.6]{HL:qGro}. However we do not use these explicit values in this paper. 
\end{remark}
\begin{remark}
The classical counterpart of the equalities in Theorem \ref{t:T-sysaff} is proved in \cite[Theorem 3.4]{H:KR-T}. By definition, this classical counterpart is a system of identities in Grothendieck ring $K(\mathcal{C}_{\bullet})$, and the corresponding exact sequences are also obtained in \cite{H:KR-T}.
\end{remark}
Before proving Theorem \ref{t:T-sysaff}, we recall one lemma in \cite{H:KR-T}. 

\begin{lemma}[{\cite[Proposition 5.3, Lemma 5.6 and its proof]{H:KR-T}}]\label{l:dominant}
Let $i\in I$, $r\in \mathbb{Z}$ and $k\in \mathbb{Z}_{> 0}$. Set $M:=m_{k, r}^{(i)}m_{k,r+2r_i}^{(i)}=m_{k+1,r}^{(i)}m_{k-1,r+2r_i}^{(i)}$. 
\begin{itemize}
\item[(i)] The $q$-character $\chi_q(W_{k,r}^{(i)})$ contains monomials $\{m_{k, r}^{(i)}\prod_{s=0}^{l-1}A_{i, r+(2k-1-2s)r_i}^{-1}\mid l=0,\dots, k\}$ with multiplicity $1$, and $\{M\prod_{s=0}^{l-1}A_{i, r+(2k-1-2s)r_i}^{-1}\mid l=0,\dots, k\}$ are all the dominant monomials occurring in $\chi_q(W_{k,r}^{(i)})\chi_q(W_{k,r+2r_i}^{(i)})$. Moreover, they also occur with multiplicity $1$ in $\chi_q(W_{k,r}^{(i)})\chi_q(W_{k,r+2r_i}^{(i)})$. 
\item[(ii)] The $q$-character $\chi_q(W_{k-1,r+2r_i}^{(i)})$ contains monomials $\{m_{k-1,r+2r_i}^{(i)}\prod_{s=0}^{l-1}A_{i, r+(2k-1-2s)r_i}^{-1}\mid l=0,\dots, k-1\}$ with multiplicity $1$, and $\{M\prod_{s=0}^{l-1}A_{i, r+(2k-1-2s)r_i}^{-1}\mid l=0,\dots, k-1\}$ are all the dominant monomials occurring in $\chi_q(W_{k+1,r}^{(i)})\chi_q(W_{k-1,r+2r_i}^{(i)})$. Moreover they also occur with multiplicity $1$ in $\chi_q(W_{k+1,r}^{(i)})\chi_q(W_{k-1,r+2r_i}^{(i)})$. 
\item[(iii)] The monomial $M\prod_{s=0}^{k-1}A_{i, r+(2k-1-2s)r_i}^{-1}$ is the unique dominant monomial occurring in $\chi_q(S_{k, r}^{(i)})(:=\mathrm{ev}_{t=1}([\underline{S_{k, r}^{(i)}}]
))$. 
\end{itemize}
\end{lemma}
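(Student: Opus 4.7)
Since Lemma \ref{l:dominant} is quoted verbatim from \cite[Proposition~5.3, Lemma~5.6]{H:KR-T}, the plan is to outline the strategy that the proof there follows, so that the reader can see how the three pieces of information we depend on downstream (monomial multiplicities, list of dominants in a product of two $q$-characters, uniqueness of a dominant in $\chi_q(S_{k,r}^{(i)})$) all come out of a single analysis based on affine-minusculity, thinness, and the ``left/right negativity'' of the $A_{i,s}^{-1}$ factors.

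The first step is to describe $\chi_q(W_{k,r}^{(i)})$ precisely enough to see the monomials in (i). By the affine-minuscule property (Theorem~\ref{t:minuscule}), every non-highest monomial of $\chi_q(W_{k,r}^{(i)})$ is bounded above (in the Nakajima order) by $m_{k,r}^{(i)}A_{i,r+(2k-1)r_i}^{-1}$. Combined with the thinness of $W_{k,r}^{(i)}$ (Theorem~\ref{t:KRthin}), the Frenkel-Mukhin algorithm starting from $m_{k,r}^{(i)}$ computes $\chi_q(W_{k,r}^{(i)})$ exactly. One then checks by direct computation that iterating the $i$-lowering at the maximal admissible spectral parameter produces exactly the chain $m_{k,r}^{(i)},\,m_{k,r}^{(i)}A_{i,r+(2k-1)r_i}^{-1},\,\dots,\,m_{k,r}^{(i)}\prod_{s=0}^{k-1}A_{i,r+(2k-1-2s)r_i}^{-1}$, each step cancelling the rightmost surviving $Y_{i,\cdot}$ factor; thinness forces multiplicity $1$.

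For the list of dominant monomials in $\chi_q(W_{k,r}^{(i)})\chi_q(W_{k,r+2r_i}^{(i)})$ (part (i)) and in $\chi_q(W_{k+1,r}^{(i)})\chi_q(W_{k-1,r+2r_i}^{(i)})$ (part (ii)), write any monomial of the product as $M\cdot N_1 N_2$ where $N_j$ is a (Laurent) product of factors $A_{j',s}^{-1}$ arising from each factor of the tensor product. The key tool is the \emph{left (and right) negativity} of $A_{i,s}^{-1}$: any $Y_{j,s'}$ with $j\neq i$ that appears in an $A_{i,s}^{-1}$ appears with a negative exponent, and moreover such a factor is strictly lower (w.r.t.\ the Nakajima order) than the $Y_{i,\cdot}$ factors that can cancel against $M$. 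Consequently, for $M\cdot N_1 N_2$ to be dominant, the $j'\neq i$ parts of $N_1N_2$ must mutually cancel, which in turn forces $N_1 N_2$ to be a product of $A_{i,\cdot}^{-1}$'s only, and, together with the affine-minuscule bound on each $N_j$, their spectral parameters must lie in the specific arithmetic progression $r+(2k-1)r_i, r+(2k-3)r_i,\dots$; this leaves exactly the claimed list, and a matching multiplicity-$1$ count follows from the multiplicity-$1$ assertion in step one. Part (iii) is handled by the same mechanism applied to the tensor product $S_{k,r}^{(i)}$: the affine-minuscule bounds on each KR factor combine with left/right negativity to force dominance only through a full cascade, producing the unique monomial $M\prod_{s=0}^{k-1}A_{i,r+(2k-1-2s)r_i}^{-1}$.

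The main obstacle is the ``no other dominants'' direction in (i)(b) and (ii)(b): ruling out contributions where $N_1N_2$ contains an $A_{j',s}^{-1}$ with $j'\neq i$. This is subtle because such an $A_{j',s}^{-1}$ could in principle be cancelled by a matching $A_{j'',s''}^{-1}$ with $j''\neq i$ from the other factor. The resolution in \cite{H:KR-T} is an inductive bookkeeping of which $A_{j',s}^{-1}$ can legitimately arise in $\chi_q(W_{k,r+2r_i}^{(i)})$ relative to $\chi_q(W_{k,r}^{(i)})$: the affine-minuscule bound together with the segment structure of spectral parameters of the two highest monomials $m_{k,r}^{(i)}$ and $m_{k,r+2r_i}^{(i)}$ implies that the only indices $s$ at which $A_{i,s}^{-1}$ can appear in both expansions overlap precisely on the arithmetic progression appearing in the statement, and any off-progression appearance would leave an uncancelled negative $Y$ factor, contradicting dominance.
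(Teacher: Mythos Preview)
The paper itself gives no proof of this lemma; it is simply quoted from \cite{H:KR-T}. Your proposal is therefore not competing with an argument in the present paper but rather sketching a route to the cited result. As a sketch it is broadly on the right track (affine-minusculity to bound the monomials below the highest weight, then a dominance analysis in the product), but there are two points worth flagging.

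First, a factual slip: you write that ``any $Y_{j,s'}$ with $j\neq i$ that appears in an $A_{i,s}^{-1}$ appears with a negative exponent.'' This is backwards. From the definition one has $A_{i,s}^{-1}=Y_{i,s-r_i}^{-1}Y_{i,s+r_i}^{-1}\cdot\prod_{j}(\cdots)$ where the $Y_{j,\cdot}$ with $j\neq i$ carry \emph{positive} exponents. What makes $A_{i,s}^{-1}$ left/right negative is that the extremal spectral parameters $s\pm r_i$ sit on the $Y_{i,\cdot}^{-1}$ factors, not that the off-diagonal factors are negative. Your subsequent claim that dominance forces ``the $j'\neq i$ parts of $N_1N_2$ to mutually cancel, which in turn forces $N_1N_2$ to be a product of $A_{i,\cdot}^{-1}$'s only'' does not follow from the (incorrect) sign assertion, and in any case the monomials of $\chi_q(W_{k,r}^{(i)})$ are \emph{not} in general products of $A_{i,\cdot}^{-1}$ alone. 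The actual mechanism in \cite{H:KR-T} is different: one first shows that any non-highest monomial of the \emph{right} factor $\chi_q(W_{k,r+2r_i}^{(i)})$ is right-negative at a spectral parameter strictly beyond the support of the left factor, forcing $m_2=m_{k,r+2r_i}^{(i)}$; then one analyses which $m_1$ in $\chi_q(W_{k,r}^{(i)})$ make $m_1\cdot m_{k,r+2r_i}^{(i)}$ dominant, and this is where the specific $i$-string $\{m_{k,r}^{(i)}\prod_{s=0}^{l-1}A_{i,r+(2k-1-2s)r_i}^{-1}\}$ is singled out.

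Second, your appeal to thinness (Theorem~\ref{t:KRthin}) is a legitimate shortcut in type $\mathrm{B}$ for the multiplicity-$1$ claims, but note that \cite{H:KR-T} proves the lemma for all untwisted types, where KR modules need not be thin; there the multiplicity-$1$ statement for the $i$-string comes from restriction to the $\mathfrak{sl}_2$ corresponding to $i$ rather than from global thinness.
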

\begin{proof}[{Proof of Theorem \ref{t:T-sysaff}}]
The monomials occurring in $[\underline{W_{k,r}^{(i)}}][\underline{W_{k,r+2r_i}^{(i)}}]$, $[\underline{W_{k+1,r}^{(i)}}] [\underline{W_{k-1,r+2r_i}^{(i)}}]$ and $[\underline{S_{k, r}^{(i)}}]$ are the same as in the corresponding $q$-characters at $t = 1$. 
Hence the dominant monomials occurring in these elements are completely described by Lemma \ref{l:dominant}. Besides, an element in $\mathcal{K}_t$ is characterized by the multiplicities of its dominant monomials (recall the uniqueness of $F_t(m)$ in Theorem \ref{t:F-elem}), hence it suffices to show that the dominant monomials occurring in both-hand sides of the equality in the theorem match. More precisely, by Lemma \ref{l:dominant}, we only have to check that there exist $\alpha, \beta'\in \mathbb{Z}$ such that  
\begin{align}
\left(\sum_{l=0}^{k}\underline{M_{l}}\right) \underline{m_{k,r+2r_i}^{(i)}}=t^{\alpha/2} \underline{m_{k+1,r}^{(i)}}\left(\sum_{l=0}^{k-1}\underline{M'_{l}}\right)+t^{\beta'/2}\underline{m_{k, r}^{(i)}m_{k,r+2r_i}^{(i)}\prod_{s=0}^{k-1}A_{i, r+(2k-1-2s)r_i}^{-1}},\label{eq:dominant}
\end{align}
here 
$$M_{l}:=m_{k, r}^{(i)}\prod_{s=0}^{l-1}A_{i, r+(2k-1-2s)r_i}^{-1}\text{ and }M'_{l}:=m_{k-1,r+2r_i}^{(i)}\prod_{s=0}^{l-1}A_{i, r+(2k-1-2s)r_i}^{-1}.$$ 
We can show this equality directly. 
\end{proof}
\begin{remark}
There is another argument to prove \eqref{eq:dominant}  : the multiplicities of commutative monomials in $(\sum_{l=0}^{k}\underline{M_{l}}) \underline{m_{k,r+2r_i}^{(i)}}$ (resp.~$m_{k+1,r}^{(i)}(\sum_{l=0}^{k-1}\underline{M'_{l}})$) are the same as those for the corresponding $\mathcal{L}sl_2$-case up to overall power of $t^{1/2}$. This follows from the fact that, in the quantum torus $\mathcal{Y}_t$, the $t$-commutation relations between $A_{i,s}^{-1}$, $A_{i,s'}^{-1}$ and between $Y_{i,s}$,$A_{i,s'}^{-1}$ (with the same index $i$) are the same as in the $sl_2$-case ($q$ being replaced by $q_i$). See Proposition \ref{p:A-comm}. Consequently, as the quantum $T$-system has been established in the $\mathcal{L}sl_2$-case \cite[Proposition 5.6]{HL:qGro}, we obtain the desired equality. 
\end{remark}
\section{{The isomorphism $\widetilde{\Phi}$}}\label{s:mainisom}
In this section, we show that the isomorphism $\widetilde{\Phi}^T$ in Theorem \ref{t:torusisom} restricts to an isomorphism between the corresponding quantized coordinate algebra and the truncated quantum Grothendieck ring (Theorem \ref{t:mainisom}). Recall Remark \ref{r:qcluster}.
 This implies that the quantized coordinate algebra of type $\mathrm{A}_{2n-1}$ is isomorphic to the quantum Grothendieck ring of $\mathcal{C}_{\mathcal{Q}^{\flat}}$ (Corollary \ref{c:mainisom}). We also show that the elements of the normalized dual canonical basis are sent to the $(q, t)$-characters of simple modules under this isomorphism. 

Recall the settings and notations in subsection \ref{ss:subcat} and section \ref{s:qtoriisom}, that is, let $\mathcal{Q}$ be a Dynkin quiver of type $\mathrm{A}_{2n-2}$, $\xi\colon I=\{1, 2,\dots, 2n-2\}\to\mathbb{Z}$ be an associated height function, $\flat\in \{>, <\}$,  $I_{\mathrm{A}}:=\{1,\dots, 2n-1\}$, $I_{\mathrm{B}}:=\{1,\dots, n\}$. For $(i, r)\in \overline{I}_{\xi}^{\mathrm{tw}, \flat}$,  $k(i, r)$ is defined as \eqref{eq:kir}. By Theorem \ref{t:minuscule} and Proposition \ref{p:F-KR}, we have 
\begin{align}
F_t(m_{k(i, r), r}^{(i)})^T=[\underline{W_{k(i, r), r}^{(i)}}]^T=\underline{m_{k(i, r), r}^{(i)}}\label{eq:tr-KR}
\end{align}
for $(i, r)\in \overline{I}_{\xi}^{\mathrm{tw}, \flat}$.
\begin{theorem}\label{t:mainisom}
The $\mathbb{Z}$-algebra isomorphism $\widetilde{\Phi}^{T}\colon\mathcal{T}_{\mathcal{Q}^{\flat}} \to \mathcal{Y}_{t, \mathcal{Q}^{\flat}}$  restricts to an isomorphism 
\[
\mathcal{A}_v[N_-^{\mathrm{A}_{2n-1}}]\to \mathcal{K}_{t, \mathcal{Q}^{\flat}}^T.
\]
This isomorphism sends the system of quantum determinantal identities associated with $[\mathcal{Q}^{\flat}]$ in Proposition \ref{p:T-sys} to (the truncation of) the quantum $T$-system given in Theorem \ref{t:T-sysaff}. Moreover, this isomorphism induces a bijection between $\widetilde{\mathbf{B}}^{\mathrm{up}}$ and $\{L_t(m)^T\mid m\in \mathbb{B}^{\xi, \flat}\}$. More precisely, if $\widetilde{\Phi}^T(\widetilde{\Gup}(b_{-1}(\bm{c},[\mathcal{Q}^{\flat}])))=L_t(m)^T$, then 
\[
u_{i, r}(m)=c_{(i, r)}
\]
for all $(i, r)\in \overline{I}_{\xi}^{\mathrm{tw}, \flat}$. Here $\bm{c}\in \mathbb{Z}_{\geq 0}^{\Delta_+}$ is regarded as an element $\bm{c}=(c_{(i, r)})_{(i, r)\in \overline{I}_{\xi}^{\mathrm{tw}, \flat}}$ of $\mathbb{Z}_{\geq 0}^{\overline{I}_{\xi}^{\mathrm{tw}, \flat}}$ via $\overline{\Omega}_{\xi}^{\mathrm{tw}, \flat}$. This monomial $m$ will be denoted by $m_{\mathrm{B}}(\bm{c})$. 
\end{theorem}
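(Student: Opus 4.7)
The plan is to establish the theorem in two main stages: first the algebra isomorphism $\widetilde{\Phi}^T(\mathcal{A}_v[N_-^{\mathrm{A}_{2n-1}}])=\mathcal{K}_{t,\mathcal{Q}^{\flat}}^T$, then the basis correspondence. By Theorem \ref{t:torusisom} together with \eqref{eq:tr-KR}, each initial quantum cluster variable $X_{(i,r)}\in\mathcal{T}_{\mathcal{Q}^{\flat}}$ is sent to the truncated $(q,t)$-character $F_t(m_{k(i,r),r}^{(i)})^T$ of a Kirillov-Reshetikhin module. The core technical step is to prove by induction on the ``segment length'' that every unipotent quantum minor $\widetilde{D}^{[\mathcal{Q}^{\flat}]}(\beta,\gamma)$ of $\mathcal{A}_v[N_-^{\mathrm{A}_{2n-1}}]$ is mapped, up to an explicit power of $t^{\pm 1/2}$, to the truncated $(q,t)$-character of a specific KR module of type $\mathrm{B}_n^{(1)}$. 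The engine of this induction is the termwise matching of the system of quantum determinantal identities (Proposition \ref{p:T-sys}) with the quantum $T$-system (Theorem \ref{t:T-sysaff}): both have a product on the left and a sum of two products on the right, and Proposition \ref{p:initial}, together with twisted convexity (Lemma \ref{l:tw-dia}), ensures that the indexing data of both recursions correspond. In particular $\widetilde{D}^{[\mathcal{Q}^{\flat}]}(\beta,\beta^{-})$ maps to $F_t(Y_{i,r})^T$ for every $(i,r)\in\overline{I}_{\xi}^{\mathrm{tw},\flat}$: when $\beta^{-}\neq 0$ this comes from the first instance of the $T$-system, and when $\beta^{-}=0$ it is direct because $k(i,r_{\max})=1$. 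Proposition \ref{p:PBWbasis} gives that the elements $\widetilde{D}^{[\mathcal{Q}^{\flat}]}(\beta,\beta^{-})$ generate $\mathcal{A}_v[N_-^{\mathrm{A}_{2n-1}}]$, while Lemma \ref{l:monoidal} together with Proposition \ref{p:trunc} gives that the elements $F_t(Y_{i,r})^T$ generate $\mathcal{K}_{t,\mathcal{Q}^{\flat}}^T$, so both inclusions follow.

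For the basis correspondence, I would first observe that $\widetilde{\Phi}^T$ intertwines the two bar involutions on the quantum tori (Theorem \ref{t:torusisom}), hence by \eqref{eq:barcompati} it intertwines $\sigma'$ on $\mathcal{A}_v[N_-^{\mathrm{A}_{2n-1}}]$ with $\overline{(\cdot)}$ on $\mathcal{K}_{t,\mathcal{Q}^{\flat}}^T$. Next I would identify $\widetilde{\Phi}^T(\widetilde{F_{-1}^{\mathrm{up}}}(\bm{c},[\mathcal{Q}^{\flat}]))$ with $E_t(m_{\mathrm{B}}(\bm{c}))^T$: since each factor $\widetilde{F_{-1}^{\mathrm{up}}}(\beta,[\mathcal{Q}^{\flat}])=\widetilde{D}^{[\mathcal{Q}^{\flat}]}(\beta,\beta^{-})$ matches a fundamental $F_t(Y_{i,r})^T$, and since both the normalization \eqref{eq:normPBW} of the dual PBW monomial and the definition of $E_t$ are designed to make the ordered product $\overline{(\cdot)}$- (resp.~$\sigma'$-) invariant, the two ordered products differ only by the canonical normalizations and agree under $\widetilde{\Phi}^T$. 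Combining with the bijection $\bm{c}\leftrightarrow m_{\mathrm{B}}(\bm{c})$, $c_{(i,r)}=u_{i,r}(m_{\mathrm{B}}(\bm{c}))$, I would then verify that the partial order $<_{[\mathcal{Q}^{\flat}]}$ on $\mathbb{Z}_{\geq 0}^{\Delta_+}$ controlling (NDCB2) is sent to the Nakajima order on truncated dominant monomials in $\mathbb{B}^{\xi,\flat}$ controlling (tr-S2$'$); this uses twisted convexity (Lemma \ref{l:tw-dia}) to translate differences $\bm{c}-\bm{c}'$ with prescribed sign along a compatible reading into products of terms $\widetilde{A}_{i,r}^{-1}$ acting on $m_{\mathrm{B}}(\bm{c})$. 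With these ingredients the characterizing properties (NDCB1)--(NDCB2) of $\widetilde{\Gup}(b_{-1}(\bm{c},[\mathcal{Q}^{\flat}]))$ and (tr-S1)--(tr-S2$'$) (Remark \ref{r:tr-qt}) of $L_t(m_{\mathrm{B}}(\bm{c}))^T$ match under $\widetilde{\Phi}^T$, and the bijection follows by uniqueness.

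The main obstacle will be the precise matching of the quantum determinantal identities with the quantum $T$-system, including all scalar factors $v^{a/2}, v^{b/2}$ versus $t^{\alpha/2}, t^{\beta/2}$. Although both recursions have the same form, the ``source'' term in the quantum determinantal identity is $\prod_{j\in I_{\mathrm{A}}\setminus\{\imath\}}\widetilde{D}^{[\mathcal{Q}^{\flat}]}(\beta^{-}(j),\gamma^{-}(j))^{-c_{j\imath}^{\mathrm{A}}}$ and is governed by the type $\mathrm{A}_{2n-1}$ Cartan integers, whereas $[\underline{S_{k,r}^{(i)}}]$ is governed by type $\mathrm{B}_{n}$ data; reconciling these requires the explicit structure of (B1)--(B4) in Proposition \ref{p:initial}, which implements the ``folding'' from $\mathrm{A}_{2n-1}$ to $\mathrm{B}_n$, and crucially uses the thinness of Kirillov-Reshetikhin modules in type $\mathrm{B}$ (Theorem \ref{t:KRthin}) that underlies Theorem \ref{t:T-sysaff}. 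Bookkeeping of the $t^{1/2}$-powers throughout the induction is the most delicate routine part, but is forced by the fact that $\widetilde{\Phi}^T$ is an algebra isomorphism: whenever the two sides of an identity agree in the respective quantum torus up to scalars, the $t$-commutation relations computed in Theorem \ref{t:torusisom} pin down the scalar uniquely.
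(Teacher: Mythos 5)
Your overall architecture is the same as the paper's: reduce the equality $\widetilde{\Phi}^T(\mathcal{A}_v[N_-^{\mathrm{A}_{2n-1}}])=\mathcal{K}_{t,\mathcal{Q}^{\flat}}^T$ to the correspondence $\widetilde{D}^{[\mathcal{Q}^{\flat}]}((i,r),(i,r+2sr_i))\mapsto F_t(m^{(i)}_{s,r})^T$, propagate it from the initial cluster variables (Theorem \ref{t:torusisom} plus \eqref{eq:tr-KR}) by matching the quantum determinantal identities with the quantum $T$-system, with the case analysis of the ``source'' terms resting on Proposition \ref{p:initial} and twisted convexity; then deduce the basis statement from the characterizations of $\widetilde{\Gup}(b)$ and $L_t(m)^T$. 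One point you should make precise in the first part: the scalars are not pinned down merely by ``$t$-commutation relations''; the paper's mechanism is to pass to the skew-fields of fractions, use that the minors entering one determinantal identity lie in a common quantum cluster (the GLS mutation sequence), so that each side is characterized as the \emph{unique} $\overline{(\cdot)}$-invariant element of the exchange-relation quotient form, and then invoke $\widetilde{\Phi}^T\circ\overline{(\cdot)}=\overline{(\cdot)}\circ\widetilde{\Phi}^T$. Your gesture is in the right direction but needs this formulation to avoid ever computing $a,b$ versus $\alpha,\beta$.

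There is a genuine error in your basis step. You assert that the normalizations \eqref{eq:normPBW} and of $E_t$ are ``designed to make the ordered product $\overline{(\cdot)}$- (resp.\ $\sigma'$-) invariant'': this is false. Neither $\widetilde{F_{-1}^{\mathrm{up}}}(\bm{c},[\mathcal{Q}^{\flat}])$ nor $E_t(m)$ is invariant under the respective (anti-)involution; both are only \emph{unitriangularly} invariant (Remarks \ref{r:unitri-PBW} and \ref{r:unitri-qt}), which is exactly why the dual canonical basis and the $L_t(m)$ exist as separate objects. Consequently your identification $\widetilde{\Phi}^T(\widetilde{F_{-1}^{\mathrm{up}}}(\bm{c},[\mathcal{Q}^{\flat}]))=E_t(m_{\mathrm{B}}(\bm{c}))^T$ is only justified up to an unknown power $t^{\beta(\bm{c})/2}$, and killing that power requires an argument: the paper applies $\overline{(\cdot)}$ (equivalently $\sigma'$) to both expressions and compares the coefficient of the leading term in the two unitriangular expansions, forcing $\beta(\bm{c})/2=-\beta(\bm{c})/2$. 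Separately, your plan to check that the order $<_{[\mathcal{Q}^{\flat}]}$ in (NDCB2) is carried to the Nakajima order is unnecessary and is a potential dead end: the characterization (tr-S2$'$) from Remark \ref{r:tr-qt} involves no ordering at all, only membership in $\sum_{m'}t^{-1}\mathbb{Z}[t^{-1}]E_t(m')^T$, so once the PBW/standard correspondence and bar-compatibility are in place the bijection follows immediately, with no comparison of partial orders required (and such a comparison is nowhere established in the paper).
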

\begin{proof}
Recall the notation in Proposition \ref{p:T-sys}, and identify $\Delta_+$ with $\overline{I}_{\xi}^{\mathrm{tw}, \flat}$ via $\overline{\Omega}_{\xi}^{\mathrm{tw}, \flat}$. In this proof, we abbreviate $\widetilde{D}^{[\mathcal{Q}^{\flat}]}((i, r), (j, s))$ to $\widetilde{D}((i, r), (j, s))$. 

To prove $\widetilde{\Phi}^T(\mathcal{A}_v[N_-^{\mathrm{A}_{2n-1}}])=\mathcal{K}_{t, \mathcal{Q}^{\flat}}^T$, we only have to show that 
\begin{align}
\widetilde{\Phi}^T(\widetilde{D}((i, r), (i, r+2sr_i)))=F_t(m_{s, r}^{(i)})^T\label{eq:corresp}
\end{align}
for $s=0, 1,\dots, k(i, r)$, where we set $(i, r+2k(i, r)r_i):=0$, here $0$ stands for $0$ in the sense of Notation \ref{n:indexplus} (then we have $(i, r)^{-}=(i, r+2r_i)$). Indeed, the equality \eqref{eq:corresp} implies that the generators of $\mathcal{A}_v[N_-^{\mathrm{A}_{2n-1}}]$ are sent to the generators of $\mathcal{K}_{t, \mathcal{Q}^{\flat}}^T$ (see Lemma \ref{l:monoidal} and Proposition \ref{p:PBWbasis}). 
Denote the equality \eqref{eq:corresp} by $\mathrm{T}((i, r), (i, r+2sr_i))$, and define $\mathrm{T}(0, 0)$ as the trivial equality $1=1$.
\begin{claim*}
Suppose that 
\begin{itemize}
\item $\mathrm{T}((i, r+2r_i), (i, r+2(s+1)r_i))$, 
\item $\mathrm{T}((i, r), (i, r+2(s+1)r_i))$, 
\item $\mathrm{T}((i, r+2r_i), (i, r+2sr_i))$, and 
\item $\mathrm{T}((i, r)^-(\jmath), (i, r+2sr_i)^-(\jmath))$ for $\jmath\in I_{\mathrm{A}}\setminus\{\res^{[\mathcal{Q}^{\flat}]}(i, r)\}$
\end{itemize}
hold for some $(i, r)\in \overline{I}_{\xi}^{\mathrm{tw}, \flat}$ with $k(i, r)\geq 2$, and some $s\in \{1,2,\dots, k(i, r)-1\}$. Then $\mathrm{T}((i, r), (i, r+2sr_i))$ holds. 
\end{claim*}
\begin{proof}[Proof of Claim]
It is shown in \cite[section 13]{GLS:Kac-Moody} and \cite[section 12]{GLS:qcluster} that the equality \eqref{eq:T-sys'} correspond to a specific exchange relations of quantum clusters in the quantum cluster algebra structure given in Theorem \ref{t:qcluster}. In particular, 
\begin{itemize}
\item[(QC)] $\widetilde{D}((i, r+2r_i), (i, r+2(s+1)r_i))$, $\widetilde{D}((i, r), (i, r+2(s+1)r_i))$, $\widetilde{D}((i, r+2r_i), (i, r+2sr_i))$ and non-trivial $\widetilde{D}((i, r)^-(\jmath), (i, r+2sr_i)^-(\jmath))$ for $\jmath\in I_{\mathrm{A}}\setminus\{\res^{[\mathcal{Q}^{\flat}]}(i, r)\}$ simultaneously belong to a certain quantum cluster (see Appendix \ref{a:qclus} for the definition of quantum clusters). 
\end{itemize}

Let $\mathcal{F}(\mathcal{T}_{\mathcal{Q}^{\flat}})$ and $\mathcal{F}(\mathcal{Y}_{t, \mathcal{Q}^{\flat}})$ be the skew-field of fractions of $\mathcal{T}_{\mathcal{Q}^{\flat}}$ and $\mathcal{Y}_{t, \mathcal{Q}^{\flat}}$, respectively. Note that $\mathcal{T}_{0}$ and $\mathcal{Y}_{t, \mathcal{Q}^{\flat}}$ are Ore domains (cf.~\cite[Appendix A]{BZ:qcluster}). We can extend the $\mathbb{Z}$-algebra anti-involution $\overline{(\cdot)}$ on $\mathcal{T}_{\mathcal{Q}^{\flat}}$ (resp.~$\mathcal{Y}_{t, \mathcal{Q}^{\flat}}$) to the $\mathbb{Q}$-algebra anti-involution on $\mathcal{F}(\mathcal{T}_{\mathcal{Q}^{\flat}})$ (resp.~$\mathcal{F}(\mathcal{Y}_{t, \mathcal{Q}^{\flat}})$), denoted again by $\overline{(\cdot)}$, and extend the $\mathbb{Z}$-algebra isomorphism $\widetilde{\Phi}^T$ to the $\mathbb{Q}$-algebra isomorphim $\widetilde{\Phi}^T\colon\mathcal{F}(\mathcal{T}_{\mathcal{Q}^{\flat}}) \to \mathcal{F}(\mathcal{Y}_{t, \mathcal{Q}^{\flat}})$. (They still satisfy $\widetilde{\Phi}^T\circ \overline{(\cdot)}=\overline{(\cdot)}\circ\widetilde{\Phi}^T$. ) Note that $\mathcal{F}(\mathcal{T}_{\mathcal{Q}^{\flat}})$ can be also regarded as the skew-field of fractions of $\mathcal{A}_v[N_-^{\mathrm{A}_{2n-1}}]$ and the extension of the algebra anti-involution $\sigma'$ to that on $\mathcal{F}(\mathcal{T}_{\mathcal{Q}^{\flat}})$ coincides with $\overline{(\cdot)}$. 

By \eqref{eq:T-sys'} and (QC) (see also \eqref{eq:qmonominv} in Appendix \ref{a:qclus}), the element $\widetilde{D}((i, r), (i, r+2sr_i))$ is characterized as the $\overline{(\cdot)}$-invariant element of the form 
\begin{align*}
&\left(v^{\alpha/2}\widetilde{D}((i, r), (i, r+2(s+1)r_i))\widetilde{D}((i, r+2r_i), (i, r+2sr_i))\right.\\
&\left.+v^{\beta/2}\dprod_{\jmath:|\jmath-\res^{[\mathcal{Q}^{\flat}]}(i, r)|=1}\widetilde{D}((i, r)^-(\jmath), (i, r+2sr_i)^-(\jmath))\right)\\
&\times\widetilde{D}((i, r+2r_i), (i, r+2(s+1)r_i))^{-1}
\end{align*}
for some $\alpha, \beta\in \mathbb{Z}$ in $\mathcal{F}(\mathcal{T}_{\mathcal{Q}^{\flat}})$. (The integers $\alpha$ and $\beta$ are uniquely determined from the $\overline{(\cdot)}$-invariance property.) Therefore, taking our assumption into the account, we can deduce that the element $\widetilde{\Phi}^T(\widetilde{D}((i, r), (i, r+2sr_i)))$ is characterized as the $\overline{(\cdot)}$-invariant element of the form 
\begin{align*}
&\left(t^{\alpha'/2}F_t(m_{s+1, r}^{(i)})^T
F_t(m_{s-1, r+2r_i}^{(i)})^T+t^{\beta'/2}\dprod_{\jmath: |\jmath-\res^{[\mathcal{Q}^{\flat}]}(i, r)|=1}F_t(m_{0}(\jmath))^T\right)(F_t(m_{s, r+2r_i}^{(i)})^T)^{-1}
\end{align*}
for some $\alpha', \beta'\in \mathbb{Z}$ in $\mathcal{F}(\mathcal{Y}_{t, \mathcal{Q}^{\flat}})$, here $m_0(\jmath)$ is the monomial such that $\widetilde{\Phi}^T(\widetilde{D}((i, r)^-(\jmath), (i, r+2sr_i)^-(\jmath)))=F_t(m_{0}(\jmath))^T$.

On the other hand, by Theorem \ref{t:T-sysaff}, $F_t(m_{s, r}^{(i)})^T(=[\underline{W_{s, r}^{(i)}}]^T)$ is the $\overline{(\cdot)}$-invariant element of the form 
\begin{align*}
\left(t^{\alpha''/2}F_t(m_{s+1, r}^{(i)})^TF_t(m_{s-1, r+2r_i}^{(i)})^T+t^{\beta''/2}[\underline{S_{s, r}^{(i)}}]^T\right)(F_t(m_{s, r+2r_i}^{(i)})^T)^{-1} 
\end{align*}
for some $\alpha'', \beta''\in \mathbb{Z}$ in $\mathcal{F}(\mathcal{Y}_{t, \mathcal{Q}^{\flat}})$. Therefore, it remains to show that 
\begin{align}
&\{ m_0(\jmath)\mid \jmath\in I_{\mathrm{A}}, |\jmath-\res^{[\mathcal{Q}^{\flat}]}(i, r)|=1\}\label{eq:comparison}\\ 
&=\{m_{s', r'}^{(i')}\mid F_t(m_{s', r'}^{(i')})\ \text{occurs in the product in the definition of}\ [\underline{S_{s, r}^{(i)}}]\}. \notag 
\end{align}
Denote the left-hand side of \eqref{eq:comparison} by $M_0$, and write the set $\{ ((i, r)^-(\jmath), (i, r+2sr_i)^-(\jmath))\mid \jmath\in I_{\mathrm{A}}, |\jmath-\res^{[\mathcal{Q}^{\flat}]}(i, r)|=1\}$ as $R_0$. 

If $i\leq n-2$, then, by twisted convexity of $\Upsilon_{[\mathcal{Q}^{\flat}]}$, we have
\[
R_0=\{((i-1, r+2), (i-1, r+4s+2)), ((i+1, r+2), (i+1, r+4s+2))\}.
\]
(If $i=1$, we ignore $((0, r+2), (0, r+4s+2))$. Henceforth, we always ignore such undefined terms.) Note that, when $k(i, r)\geq 2$ and $s\in\{1,\dots, k(i,r)-1\}$, we have $(i\pm 1, r+2), (i\pm 1, r+4s-2)\in \overline{I}_{\xi}^{\mathrm{tw}, \flat}$ ($(i\pm 1, r+4s+2)$ may not belong to $\overline{I}_{\xi}^{\mathrm{tw}, \flat}$). Therefore, 
\[
M_0=\{m_{s, r+2}^{(i-1)}, m_{s, r+2}^{(i+1)}\},
\]
which satisfies \eqref{eq:comparison}. 

If $i=n-1$, then similarly we have
\[
R_0=\{((n-2, r+2), (n-2, r+4s+2)), ((n, r+1), (n, r+4s+1))\}.
\]
Note that, when $k(n-1, r)\geq 2$ and $s\in \{1,\dots, k(n-1,r)-1\}$, we have $(n-2, r+2), (n-2, r+4s-2), (n, r+1), (n, r+4s-1)\in \overline{I}_{\xi}^{\mathrm{tw}, \flat}$ ($(n-2, r+4s+2)$ and $(n, r+4s+1)$ may not belong to $\overline{I}_{\xi}^{\mathrm{tw}, \flat}$). Therefore, 
\[
M_0=\{m_{s, r+2}^{(n-2)}, m_{2s, r+1}^{(n)}\},
\]
which satisfies \eqref{eq:comparison}.

If $i=n$ and $s$ is even, then similarly we have 
\[
R_0=\{((n-1, r+1), (n-1, r+2s+1)), ((n-1, r+3), (n-1, r+2s+3))\}.
\]
Note that, when $k(n, r)\geq 3$ and $s\in \{2,\dots, k(n,r)-1\}$ (remark that $s$ is even), we have $(n-1, r+1), (n-1, r+2s-3), (n-1, r+3), (n-1, r+2s-1)\in \overline{I}_{\xi}^{\mathrm{tw}, \flat}$ ($(n-1, r+2s+3)$ and $(n-1, r+2s+1)$ may not belong to $\overline{I}_{\xi}^{\mathrm{tw}, \flat}$). Therefore, 
\[
M_0=\{m_{s/2, r+1}^{(n-1)}, m_{s/2, r+3}^{(n-1)}\},
\]
which satisfies \eqref{eq:comparison}.

If $i=n$ and $s$ is odd, then similarly we have
\[
R_0=\{((n-1, r+1), (n-1, r+2s+3)), ((n-1, r+3), (n-1, r+2s+1))\}.
\]
Note that, when $s=1$, $(n-1, r+1), (n-1, r+2s-1)\in\overline{I}_{\xi}^{\mathrm{tw}, \flat}$ and $\widetilde{D}((n-1, r+3), (n-1, r+2s+1))=1$ (its corresponding monomial is $1$), otherwise,
 we have $(n-1, r+1), (n-1, r+2s-1), (n-1, r+3), (n-1, r+2s-3)\in \overline{I}_{\xi}^{\mathrm{tw}, \flat}$ ($(n-1, r+2s+3)$ and $(n-1, r+2s+1)$ may not belong to $\overline{I}_{\xi}^{\mathrm{tw}, \flat}$). Therefore, 
\[
M_0=\{m_{(s+1)/2, r+1}^{(n-1)}, m_{(s-1)/2, r+3}^{(n-1)}\},
\]
which satisfies \eqref{eq:comparison}, which completes the proof of Claim.
\end{proof}

It follows form Theorem \ref{t:torusisom} and \eqref{eq:tr-KR} that the equality $\mathrm{T}((i, r), 0)$ holds for all $(i, r)\in \overline{I}_{\xi}^{\mathrm{tw}, \flat}$ ($0$ stands for $0$ in the sense of Notation \ref{n:indexplus}). By Claim and the specific mutation sequence in \cite[section 13]{GLS:Kac-Moody} and \cite[section 12]{GLS:qcluster}, we can obtain $\mathrm{T}((i, r), (i, r+2sr_i))$ for all $s=1,\dots, k(i, r)$ and all $(i, r)\in \overline{I}_{\xi}^{\mathrm{tw}, \flat}$ from the equalities $\mathrm{T}((i, r), 0)$, $(i, r)\in \overline{I}_{\xi}^{\mathrm{tw}, \flat}$. This proves $\Phi(\mathcal{A}_v[N_-^{\mathrm{A}_{2n-1}}])=\mathcal{K}_{t, \mathcal{Q}^{\flat}}^{T}$. Moreover, in the proof of Claim, we prove that our isomorphism sends a system of quantum determinantal identities associated with $[\mathcal{Q}^{\flat}]$ in Proposition \ref{p:T-sys} to (the truncation of) the quantum $T$-system given in Theorem \ref{t:T-sysaff}.

Next, we show the bijection between distinguished bases. It follows from (NDCB1) in Theorem \ref{t:dualcanonical} and the equality $\widetilde{\Phi}^T\circ \overline{(\cdot)}=\overline{(\cdot)}\circ\widetilde{\Phi}^T$ that 
\[
\overline{\widetilde{\Phi}^T(\widetilde{\Gup}(b))}=\widetilde{\Phi}^T(\widetilde{\Gup}(b)).
\]
Hence, by Theorem \ref{t:normqt} (Remark \ref{r:tr-qt}) and Theorem \ref{t:dualcanonical}, it remains to show that 
\begin{align}
\widetilde{\Phi}^T(\widetilde{F_{-1}^{\mathrm{up}}}(\bm{c},[\mathcal{Q}^{\flat}]))=E_t(m_{\mathrm{B}}(\bm{c}))^{T}
\label{eq:PBWcorresp}
\end{align}
for every $\bm{c}\in \mathbb{Z}_{\geq 0}^{\overline{I}_{\xi}^{\mathrm{tw}, \flat}}$.

Recall that we have 
\[
\widetilde{F_{-1}^{\mathrm{up}}}(\bm{e}_{(i, r)}, [\mathcal{Q}^{\flat}])=\widetilde{D}((i, r), (i, r)^-)
\]
for all $(i, r)\in \overline{I}_{\xi}^{\mathrm{tw}, \flat}$, here $\{\bm{e}_{(i, r)}\mid (i, r)\in \overline{I}_{\xi}^{\mathrm{tw}, \flat}\}$ is the standard basis of $\bm{c}\in \mathbb{Z}^{\overline{I}_{\xi}^{\mathrm{tw}, \flat}}$. Hence, by \eqref{eq:corresp}, 
\begin{align}
\widetilde{\Phi}^T(\widetilde{F_{-1}^{\mathrm{up}}}(\bm{e}_{(i, r)},[\mathcal{Q}^{\flat}]))=F_t(Y_{i, r})^T=E_t(Y_{i, r})^{T}.\label{eq:rootcprresp}
\end{align}
This proves \eqref{eq:PBWcorresp} for $\bm{c}=\bm{e}_{(i, r)}$. By \eqref{eq:normPBW}, 
\begin{align*}
\widetilde{F_{-1}^{\mathrm{up}}}(\bm{c},[\mathcal{Q}^{\flat}])=v^{\alpha/2}\dprod_{r\in \mathbb{Z}}\left(\prod_{i; (i, r)\in \overline{I}_{\xi}^{\mathrm{tw}, \flat}}\widetilde{F_{-1}^{\mathrm{up}}}(\bm{e}_{(i, r)},[\mathcal{Q}^{\flat}])^{c_{(i, r)}}\right)
\end{align*}
for some $\alpha\in\mathbb{Z}$. Note that this ordering of the product matches the compatible reading of $\Upsilon_{[\mathcal{Q}^{\flat}]}$. On the other hand, for $m\in \mathbb{B}^{\xi, \flat}$, 
\begin{align*}
E_t(m)=t^{\beta/2}\dprod_{r\in \mathbb{Z}}\left(\prod_{i; (i, r)\in \overline{I}_{\xi}^{\mathrm{tw}, \flat}}F_{t}(Y_{i, r})^{u_{i, r}(m)}\right).
\end{align*}
Therefore, by \eqref{eq:rootcprresp}, we have 
\[
\widetilde{\Phi}^T(\widetilde{F_{-1}^{\mathrm{up}}}(\bm{c},[\mathcal{Q}^{\flat}]))=t^{\beta(\bm{c})/2}E_t(m_{\mathrm{B}}(\bm{c}))^T
\]
for some $\beta(\bm{c})\in\mathbb{Z}$. It suffices to show that $\beta(\bm{c})=0$ for every $\bm{c}\in \mathbb{Z}_{\geq 0}^{\overline{I}_{\xi}^{\mathrm{tw}, \flat}}$. By Remark \ref{r:unitri-qt} and \ref{r:unitri-PBW}, we have 
\begin{align*}
&\overline{E_t(m)}\in E_t(m)+\sum_{m'<m}\mathbb{Z}[t^{\pm 1}]E_t(m'), \\
&\sigma'(\widetilde{F_{-1}^{\mathrm{up}}}(\bm{c},[\mathcal{Q}^{\flat}]))\in \widetilde{F_{-1}^{\mathrm{up}}}(\bm{c},[\mathcal{Q}^{\flat}])+\sum_{\bm{c}'<_{[\mathcal{Q}^{\flat}]}\bm{c}}\mathbb{Z}[v^{\pm 1}]\widetilde{F_{-1}^{\mathrm{up}}}(\bm{c}',[\mathcal{Q}^{\flat}]). 
\end{align*}
Therefore, 
\begin{align*}
&\overline{\widetilde{\Phi}^T(\widetilde{F_{-1}^{\mathrm{up}}}(\bm{c},[\mathcal{Q}^{\flat}]))}=\widetilde{\Phi}^T(\sigma'(\widetilde{F_{-1}^{\mathrm{up}}}(\bm{c},[\mathcal{Q}^{\flat}])))\in t^{\beta(\bm{c})/2}E_t(m_{\mathrm{B}}(\bm{c}))^T+\sum_{m'\neq m_{\mathrm{B}}(\bm{c})}\mathbb{Z}[t^{\pm 1/2}]E_t(m')^T\\
&\overline{\widetilde{\Phi}^T(\widetilde{F_{-1}^{\mathrm{up}}}(\bm{c},[\mathcal{Q}^{\flat}]))}=
\overline{t^{\beta(\bm{c})/2}E_t(m_{\mathrm{B}}(\bm{c}))^T}
\in t^{-\beta(\bm{c})/2}E_t(m_{\mathrm{B}}(\bm{c}))^T+\sum_{m'\neq m_{\mathrm{B}}(\bm{c})}\mathbb{Z}[t^{\pm 1/2}]E_t(m')^T. 
\end{align*}
Hence, $\beta(\bm{c})/2=-\beta(\bm{c})/2$, which implies $\beta(\bm{c})=0$. 
\end{proof}

The following is an immediate consequence of Theorem \ref{t:mainisom} obtained from Proposition \ref{p:trunc}.
\begin{corollary}\label{c:mainisom}
There exists a $\mathbb{Z}$-algebra isomorphism $\widetilde{\Phi}\colon\mathcal{A}_v[N_-^{\mathrm{A}_{2n-1}}]\to \mathcal{K}_{t, \mathcal{Q}^{\flat}}$ such that $(\cdot)^{T}\circ \widetilde{\Phi}=\widetilde{\Phi}^T$. This isomorphism induces a bijection between $\widetilde{\mathbf{B}}^{\mathrm{up}}$ and $\widetilde{\mathbf{S}}^{\xi, \flat}:=\{L_t(m)\mid m\in \mathbb{B}^{\xi, \flat}\}$. 
\end{corollary}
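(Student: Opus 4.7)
The plan is to observe that this corollary is a formal consequence of Theorem \ref{t:mainisom} combined with the injectivity of the truncation map established in Proposition \ref{p:trunc}. First I would note that by Proposition \ref{p:trunc}, truncation provides an injective $\mathbb{Z}[t^{\pm 1/2}]$-algebra homomorphism $(\cdot)^T\colon \mathcal{K}_{t, \mathcal{Q}^{\flat}} \to \mathcal{Y}_{t, \mathcal{Q}^{\flat}}$ whose image is, by definition of the notation, exactly $\mathcal{K}_{t, \mathcal{Q}^{\flat}}^{T}$. Hence the corestriction $(\cdot)^T\colon \mathcal{K}_{t, \mathcal{Q}^{\flat}} \to \mathcal{K}_{t, \mathcal{Q}^{\flat}}^{T}$ is a $\mathbb{Z}$-algebra isomorphism.

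Next, I would invoke Theorem \ref{t:mainisom}, which supplies a $\mathbb{Z}$-algebra isomorphism $\widetilde{\Phi}^T|_{\mathcal{A}_v[N_-^{\mathrm{A}_{2n-1}}]}\colon \mathcal{A}_v[N_-^{\mathrm{A}_{2n-1}}] \to \mathcal{K}_{t, \mathcal{Q}^{\flat}}^{T}$. Composing with the inverse of the corestricted truncation map yields a $\mathbb{Z}$-algebra isomorphism
\[
\widetilde{\Phi} := \left((\cdot)^T\right)^{-1} \circ \widetilde{\Phi}^T|_{\mathcal{A}_v[N_-^{\mathrm{A}_{2n-1}}]}\colon \mathcal{A}_v[N_-^{\mathrm{A}_{2n-1}}] \to \mathcal{K}_{t, \mathcal{Q}^{\flat}},
\]
and by construction $(\cdot)^T \circ \widetilde{\Phi} = \widetilde{\Phi}^T$, which is the first claim.

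For the bijection between distinguished bases, Theorem \ref{t:mainisom} already supplies a bijection between $\widetilde{\mathbf{B}}^{\mathrm{up}}$ and $\{L_t(m)^T \mid m \in \mathbb{B}^{\xi, \flat}\}$ under $\widetilde{\Phi}^T$. Since truncation restricted to $\mathcal{K}_{t, \mathcal{Q}^{\flat}}$ is an isomorphism sending the $\mathbb{Z}[t^{\pm 1/2}]$-basis $\{L_t(m) \mid m \in \mathbb{B}^{\xi, \flat}\}$ bijectively onto $\{L_t(m)^T \mid m \in \mathbb{B}^{\xi, \flat}\}$, the identity $(\cdot)^T \circ \widetilde{\Phi} = \widetilde{\Phi}^T$ transports this bijection back to a bijection between $\widetilde{\mathbf{B}}^{\mathrm{up}}$ and $\widetilde{\mathbf{S}}^{\xi, \flat}$.

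Because the substantive work (the matching of quantum determinantal identities with the quantum $T$-system, and the identification of normalized dual PBW elements with truncated $(q,t)$-characters of standard modules) has already been carried out in Theorem \ref{t:mainisom}, and because Proposition \ref{p:trunc} has already ensured that no information is lost by truncation, I do not anticipate any serious obstacle here: the corollary is a routine unpacking of the isomorphism $\mathcal{K}_{t, \mathcal{Q}^{\flat}} \simeq \mathcal{K}_{t, \mathcal{Q}^{\flat}}^{T}$ given by truncation.
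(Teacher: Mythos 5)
Your argument is correct and is precisely how the paper obtains this corollary: it is stated there as an immediate consequence of Theorem \ref{t:mainisom} via Proposition \ref{p:trunc}, i.e.\ the injective truncation map corestricts to an isomorphism $\mathcal{K}_{t, \mathcal{Q}^{\flat}}\to\mathcal{K}_{t, \mathcal{Q}^{\flat}}^{T}$, and composing its inverse with $\widetilde{\Phi}^T$ gives $\widetilde{\Phi}$ together with the transported bijection of bases. No issues.
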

\begin{remark}\label{r:qclustr}
Combining Corollary \ref{c:mainisom} with Theorem \ref{t:qcluster}, we can immediately conclude that the quantum Grothendieck ring $\mathcal{K}_{t, \mathcal{Q}^{\flat}}$ has a quantum cluster algebra structure. It follows from the observation in Remark \ref{r:HLquiver} that the initial matrix $\widetilde{B}^{[\mathcal{Q}^{\flat}]}$ of this quantum cluster algebra is a submatrix of the infinite matrix introduced in \cite{HL:JEMS2016} for type $\mathrm{B}_n^{(1)}$. 
\end{remark}
\section{{Corollaries of the isomorphism $\widetilde{\Phi}$}}\label{s:cor}
In this section, we provide applications of the isomorphism $\widetilde{\Phi}$ which is obtained in section \ref{s:mainisom}. In subsection \ref{ss:cor}, we prove several corollaries of the isomorphism $\widetilde{\Phi}$, including the  positivity properties (P1)--(P3) in the introduction and Conjecture \ref{co:qt} for $\mathcal{C}_{\mathcal{Q}^{\flat}}$, in particular the conjectural formula \eqref{eq:fconj} for the multiplicity of simple modules in standard modules. Once Conjecture \ref{co:qt} for $\mathcal{C}_{\mathcal{Q}^{\flat}}$ is proved, we can efficiently use its quantum Grothendieck ring to investigate  $\mathcal{C}_{\mathcal{Q}^{\flat}}$. In subsection \ref{ss:ex}, we provide an example of such calculation. 

\subsection{Corollaries of the isomorphism $\widetilde{\Phi}$}\label{ss:cor}
\begin{corollary}[{(P1) : Positivity of the coefficients of Kazhdan-Lusztig type polynomials}]\label{c:posKL}
Recall the notation in Theorem \ref{t:mainisom} and Corollary \ref{c:mainisom}. We have the following :
\begin{itemize}
\item[(1)] $\widetilde{\Phi}(\widetilde{F_{-1}^{\mathrm{up}}}(\bm{c},[\mathcal{Q}^{\flat}]))=E_t(m_{\mathrm{B}}(\bm{c}))$ for all $\bm{c}\in \mathbb{Z}_{\geq 0}^{\overline{I}_{\xi}^{\mathrm{tw}, \flat}}$.
\item[(2)] For $\bm{c}\in \mathbb{Z}_{\geq 0}^{\overline{I}_{\xi}^{\mathrm{tw}, \flat}}$ and $m\in \mathbb{B}^{\xi, \flat}$, write 
\begin{align*}
\widetilde{F_{-1}^{\mathrm{up}}}(\bm{c},[\mathcal{Q}^{\flat}])=\sum _{\bm{c}'}p_{\bm{c}, \bm{c}'}(v)\widetilde{\Gup}(b_{-1}(\bm{c}',[\mathcal{Q}^{\flat}]))&&&E_t(m)=\sum_{m'}P_{m, m'}(t)L_t(m').
\end{align*}
with $p_{\bm{c}, \bm{c}'}(v)\in \mathbb{Z}[v]$ and $P_{m, m'}(t)\in \mathbb{Z}[t^{-1}]$. Then we have
\[
p_{\bm{c}, \bm{c}'}(t^{-1})=P_{m_{\mathrm{B}}(\bm{c}), m_{\mathrm{B}}(\bm{c}')}(t).
\]
Moreover, $P_{m_{\mathrm{B}}(\bm{c}), m_{\mathrm{B}}(\bm{c}')}(t)\in \mathbb{Z}_{\geq 0}[t^{-1}]$.
\end{itemize}
\end{corollary}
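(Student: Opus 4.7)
The plan is to derive the corollary as a direct consequence of Theorem \ref{t:mainisom} and Corollary \ref{c:mainisom}, combined with the classical positivity of the Kazhdan--Lusztig type polynomials for the dual canonical basis of $\mathcal{A}_v[N_-^{\mathrm{A}_{2n-1}}]$. No new combinatorial or categorical input is needed; all the substantive work has already been done in Theorem \ref{t:mainisom}.

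For assertion (1), I would simply remove the truncation from the identity \eqref{eq:PBWcorresp} appearing in the proof of Theorem \ref{t:mainisom}, which reads $\widetilde{\Phi}^T(\widetilde{F_{-1}^{\mathrm{up}}}(\bm{c},[\mathcal{Q}^{\flat}]))=E_t(m_{\mathrm{B}}(\bm{c}))^T$. By Corollary \ref{c:mainisom} we have $(\cdot)^T\circ\widetilde{\Phi}=\widetilde{\Phi}^T$, so this equality can be rewritten as
\[
(\cdot)^T\!\left(\widetilde{\Phi}(\widetilde{F_{-1}^{\mathrm{up}}}(\bm{c},[\mathcal{Q}^{\flat}]))\right)=(\cdot)^T\!\left(E_t(m_{\mathrm{B}}(\bm{c}))\right).
\]
The injectivity of the truncation map $(\cdot)^T$ (Proposition \ref{p:trunc}) yields (1) at once.

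For assertion (2), the plan is to apply $\widetilde{\Phi}$ to the defining expansion of $\widetilde{F_{-1}^{\mathrm{up}}}(\bm{c},[\mathcal{Q}^{\flat}])$ in the normalized dual canonical basis. Using part (1) to identify the image of the left-hand side with $E_t(m_{\mathrm{B}}(\bm{c}))$, the bijection $\widetilde{\Phi}(\widetilde{\Gup}(b_{-1}(\bm{c}',[\mathcal{Q}^{\flat}])))=L_t(m_{\mathrm{B}}(\bm{c}'))$ supplied by Corollary \ref{c:mainisom} and the explicit description of $m_{\mathrm{B}}(\bm{c}')$ in Theorem \ref{t:mainisom}, and the scalar rule $\widetilde{\Phi}(v^{\pm 1/2})=t^{\mp 1/2}$ inherited from Theorem \ref{t:torusisom}, I would obtain
\[
E_t(m_{\mathrm{B}}(\bm{c}))=\sum_{\bm{c}'}p_{\bm{c},\bm{c}'}(t^{-1})\,L_t(m_{\mathrm{B}}(\bm{c}')).
\]
Since $\{L_t(m_{\mathrm{B}}(\bm{c}'))\}_{\bm{c}'}$ forms a $\mathbb{Z}[t^{\pm 1/2}]$-basis of $\mathcal{K}_{t,\mathcal{Q}^{\flat}}$, a comparison with the expansion $E_t(m)=\sum_{m'}P_{m,m'}(t)L_t(m')$ (noting that the sum over $m'$ is in fact supported on $\mathbb{B}^{\xi,\flat}$ because $\mathcal{K}_{t,\mathcal{Q}^{\flat}}$ is spanned by the corresponding simple $(q,t)$-characters) forces the desired equality $p_{\bm{c},\bm{c}'}(t^{-1})=P_{m_{\mathrm{B}}(\bm{c}),m_{\mathrm{B}}(\bm{c}')}(t)$.

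The positivity $P_{m_{\mathrm{B}}(\bm{c}),m_{\mathrm{B}}(\bm{c}')}(t)\in\mathbb{Z}_{\geq 0}[t^{-1}]$ then reduces to the positivity $p_{\bm{c},\bm{c}'}(v)\in\mathbb{Z}_{\geq 0}[v]$ of the coefficients expressing the normalized dual PBW basis in the normalized dual canonical basis of $\mathcal{A}_v[N_-^{\mathrm{A}_{2n-1}}]$. As the ambient Lie type is the simply-laced type $\mathrm{A}_{2n-1}$, this positivity is a classical theorem of Lusztig, obtained from the geometric realization of the canonical basis via simple perverse sheaves on Lusztig's nilpotent varieties (or equivalently via Nakajima's quiver varieties), where the transition coefficients $p_{\bm{c},\bm{c}'}(v)$ arise as local intersection cohomology Poincar\'e polynomials. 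The only ``obstacle'' to address is therefore a careful citation of this known positivity result; the rest of the argument is purely formal, which is why the corollary fits naturally as an immediate application of the main isomorphism.
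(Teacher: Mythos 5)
Your treatment of (1) and of the first identity in (2) is essentially the paper's argument: untruncate \eqref{eq:PBWcorresp} via $(\cdot)^T\circ\widetilde{\Phi}=\widetilde{\Phi}^T$ and the injectivity of the truncation, then apply $\widetilde{\Phi}$ to the expansion of $\widetilde{F_{-1}^{\mathrm{up}}}(\bm{c},[\mathcal{Q}^{\flat}])$ in $\widetilde{\mathbf{B}}^{\mathrm{up}}$ and compare coefficients against the basis $\{L_t(m)\}_{m\in\mathbb{B}^{\xi,\flat}}$, using $v^{1/2}\mapsto t^{-1/2}$. That part is fine.

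The gap is in the final positivity step. You reduce to $p_{\bm{c},\bm{c}'}(v)\in\mathbb{Z}_{\geq 0}[v]$ and attribute this to Lusztig's classical geometric positivity for the transition from the (dual) PBW basis to the (dual) canonical basis. But Lusztig's theorem (\cite[Corollary 10.7]{Lus:can1}) concerns PBW bases attached to \emph{adapted} reduced words of $w_0$, whereas the commutation class $[\mathcal{Q}^{\flat}]$ used here is a twisted adapted class and is \emph{never} adapted (Remark \ref{r:adapted}). So the result you invoke does not directly apply to $\widetilde{F_{-1}^{\mathrm{up}}}(\bm{c},[\mathcal{Q}^{\flat}])$, and your claim that only a careful citation is missing understates the issue: one needs positivity of the PBW-to-canonical transition for an arbitrary (in particular non-adapted) reduced word. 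The paper closes this by citing the categorification of dual PBW-type bases via quiver-Hecke algebras (Kato \cite[Theorem 4.17]{Kato:KLR} and McNamara \cite[Theorem 3.1]{McN:finite}), which is valid for all reduced words in finite type; alternatively, the second author's argument in \cite[subsection 5.1]{Oya:qcoord} deduces the non-adapted case from Lusztig's adapted-case result, but that requires an extra argument you have not supplied. With either of these substituted for your citation, your proof matches the paper's.
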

\begin{proof}
The assertion (1) immediately follows from \eqref{eq:PBWcorresp}. The first statement of (2) is a consequence of Theorem \ref{t:mainisom}, Corollary \ref{t:mainisom} and (1). The second statement in (2) follows from the known positivity of $p_{\bm{c}, \bm{c}'}(v)$, which was proved by Kato \cite[Theorem 4.17]{Kato:KLR} and McNamara \cite[Theorem 3.1]{McN:finite}, through the categorification of dual PBW-type bases by using the quiver-Hecke algebras. 
\end{proof}
\begin{remark}
The positivity of $p_{\bm{c}, \bm{c}'}(v)$ for type $\mathrm{A}_N$, $\mathrm{D}_N$ and $\mathrm{E}_N$ was originally proved by Lusztig in his original paper of canonical bases \cite[Corollary 10.7]{Lus:can1} for \emph{adapted} reduced words of $w_0$, through his geometric realization of the elements of the canonical bases and PBW-type bases. However every reduced word in $[\mathcal{Q}^{\flat}]$ is non-adapted (Remark \ref{r:adapted}). In \cite[subsection 5.1]{Oya:qcoord}, the second author provides a method for deducing this positivity for \emph{arbitrary} reduced words of $w_0$ (when $\mathfrak{g}$ is of type $\mathrm{A}_N$, $\mathrm{D}_N$ and $\mathrm{E}_N$) from Lusztig's results. 
\end{remark}
\begin{corollary}[(P2) : Positivity of structure constants]\label{c:posstr}
For $m_1, m_2\in \mathbb{B}^{\xi, \flat}$, write 
\begin{align*}
L_t(m_1)L_t(m_2)
=\sum_{m\in\mathbb{B}^{\xi, \flat}}c_{m_1, m_2}^{m}L_t(m).
\end{align*}
Then we have
\[
c_{m_1, m_2}^{m}\in \mathbb{Z}_{\geq 0}[t^{\pm 1/2}].
\]
\end{corollary}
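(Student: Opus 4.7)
The plan is to transport the question to the quantized coordinate algebra side through the ring isomorphism $\widetilde{\Phi}\colon\mathcal{A}_v[N_-^{\mathrm{A}_{2n-1}}]\to\mathcal{K}_{t,\mathcal{Q}^{\flat}}$ of Corollary~\ref{c:mainisom}. Recall that $\widetilde{\Phi}$ sends $v^{\pm 1/2}$ to $t^{\mp 1/2}$ (Theorem~\ref{t:torusisom}) and induces a bijection between $\widetilde{\mathbf{B}}^{\mathrm{up}}=\{\widetilde{\Gup}(b)\mid b\in\mathscr{B}(\infty)\}$ and $\widetilde{\mathbf{S}}^{\xi,\flat}=\{L_t(m)\mid m\in\mathbb{B}^{\xi,\flat}\}$. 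Choose $b_1,b_2,b\in\mathscr{B}(\infty)$ with $\widetilde{\Phi}(\widetilde{\Gup}(b_j))=L_t(m_j)$ for $j=1,2$ and $\widetilde{\Phi}(\widetilde{\Gup}(b))=L_t(m)$, and expand
\[
\widetilde{\Gup}(b_1)\,\widetilde{\Gup}(b_2)=\sum_{b'\in\mathscr{B}(\infty)} d^{\,b'}_{b_1,b_2}(v^{1/2})\,\widetilde{\Gup}(b')
\]
in $\mathcal{A}_v[N_-^{\mathrm{A}_{2n-1}}]$, with $d^{\,b'}_{b_1,b_2}\in\mathbb{Z}[v^{\pm 1/2}]$. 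Applying $\widetilde{\Phi}$ and matching basis elements under the bijection yields $c^{m}_{m_1,m_2}(t^{1/2})=d^{\,b}_{b_1,b_2}(t^{-1/2})$. Hence the claim reduces to the positivity $d^{\,b'}_{b_1,b_2}\in\mathbb{Z}_{\geq 0}[v^{\pm 1/2}]$ of the structure constants of the normalized dual canonical basis of $\mathcal{A}_v[N_-^{\mathrm{A}_{2n-1}}]$.

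This is a well-known simply-laced (type~$\mathrm{A}$) positivity. It follows from the categorification of $\mathcal{A}_v[N_-]$ by the category of finitely generated graded projective modules over the quiver-Hecke (KLR) algebra, due to Khovanov-Lauda, Rouquier and Varagnolo-Vasserot: under this categorification, the normalized dual canonical basis elements $\widetilde{\Gup}(b)$ correspond (up to graded shift) to classes of indecomposable graded projective modules, and multiplication in $\mathcal{A}_v[N_-]$ corresponds to the induction product of graded projectives. The decomposition of an induced projective into indecomposable graded projectives therefore produces structure constants with coefficients in $\mathbb{Z}_{\geq 0}[v^{\pm 1/2}]$. Alternatively, since we are in type $\mathrm{A}$, the positivity is also available from Lusztig's original geometric construction of the canonical basis via perverse sheaves on quiver representation varieties.

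Combining the two steps above gives $c^{m}_{m_1,m_2}\in\mathbb{Z}_{\geq 0}[t^{\pm 1/2}]$ as required. There is no substantial obstacle once Corollary~\ref{c:mainisom} is in hand: the content of (P2) is transferred entirely to a known positivity statement on the quantized coordinate algebra side, for which we can directly cite the KLR-theoretic references already used in the proof of Corollary~\ref{c:posKL} (e.g.\ the categorification results of Kato and McNamara and the underlying KLR framework).
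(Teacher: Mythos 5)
Your proposal is correct and follows essentially the same route as the paper: the paper's proof is a one-line transfer through $\widetilde{\Phi}$ (Corollary \ref{c:mainisom}) to the positivity of the structure constants of the dual canonical basis, for which it cites Lusztig's perverse-sheaf result \cite[Theorem 11.5]{Lus:quiper} — exactly the ``alternative'' justification you mention. One small correction to your primary (KLR) justification: in the categorification of $\mathcal{A}_v[N_-]$ the normalized dual canonical basis corresponds to self-dual \emph{simple} modules, so the positivity comes from graded composition multiplicities of induction products, not from indecomposable projectives (these categorify the canonical basis, not its dual); with that adjustment the KLR argument also works.
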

\begin{proof}
Through $\widetilde{\Phi}$, this assertion follows from the corresponding positivity of structure constants of (dual) canonical bases proved in \cite[Theorem 11.5]{Lus:quiper}.
\end{proof}
\begin{corollary}[(P3) : Positivity of the coefficients of the truncated $(q, t)$-characters]\label{c:poscoeff}
For $m\in \mathbb{B}^{\xi, \flat}$, write 
\begin{align*}
L_t(m)^T=\sum_{m':\text{monomial in }\mathcal{Y}}a[m; m']\underline{m'}
\end{align*}
with $a[m; m']\in \mathbb{Z}[t^{\pm 1/2}]$. Then, 
\[
a[m; m']\in \mathbb{Z}_{\geq 0}[t^{\pm 1/2}].
\]
\end{corollary}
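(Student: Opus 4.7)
The plan is to deduce (P3) by combining two positivity statements: first, that the truncated $(q, t)$-character $E_t(m)^T$ of any standard module has non-negative coefficients in the $\{\underline{m'}\}$-basis of $\mathcal{Y}_{t, \mathcal{Q}^{\flat}}$; and second, that $L_t(m)$ expands in the basis $\{E_t(m')\}$ with non-negative coefficients. The first will come from the thinness of fundamental modules in type $\mathrm{B}$ (Theorem~\ref{t:KRthin}), while the second is a consequence of Kato--McNamara positivity transported through the isomorphism $\widetilde{\Phi}$, exactly as in the proof of~(P1) in Corollary~\ref{c:posKL}.

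For the first input, I would argue as follows. For any $(i, r) \in \overline{I}_{\xi}^{\mathrm{tw}, \flat}$, Proposition~\ref{p:F-KR} gives $F_t(Y_{i, r}) = [\underline{L(Y_{i, r})}]$, and Theorem~\ref{t:KRthin} guarantees that $L(Y_{i, r})$ is thin, so every monomial of $\chi_q(L(Y_{i, r}))$ appears with multiplicity exactly one. Hence $F_t(Y_{i, r})$, and therefore also its truncation $F_t(Y_{i, r})^T$, is a $\{0, 1\}$-linear combination of commutative monomials $\underline{m'}$. Multiplication in $\mathcal{Y}_t$ satisfies $\underline{m_1} \cdot \underline{m_2} = t^{\alpha/2} \underline{m_1 m_2}$ for some $\alpha \in \mathbb{Z}$ (Remark~\ref{r:commval}); since $t^{\alpha/2}$ is a single monomial in $t^{\pm 1/2}$, it lies in $\mathbb{Z}_{\geq 0}[t^{\pm 1/2}]$, so positivity of $\{\underline{m'}\}$-coefficients is preserved under products. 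Writing
\[
E_t(m) = t^{\beta/2} \dprod_{r \in \mathbb{Z}} \prod_{i \in I_{\mathrm{B}}} F_t(Y_{i, r})^{u_{i, r}(m)}
\]
and applying the algebra homomorphism $(\cdot)^T$ of Proposition~\ref{p:trunc}, one concludes that $E_t(m)^T$ has coefficients in $\mathbb{Z}_{\geq 0}[t^{\pm 1/2}]$.

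For the second input, Corollary~\ref{c:mainisom} identifies $L_t(m_{\mathrm{B}}(\bm{c}))$ with $\widetilde{\Gup}(b_{-1}(\bm{c}, [\mathcal{Q}^{\flat}]))$ and $E_t(m_{\mathrm{B}}(\bm{c}))$ with $\widetilde{F_{-1}^{\mathrm{up}}}(\bm{c}, [\mathcal{Q}^{\flat}])$. Combining the triangularity property~(NDCB2) of Theorem~\ref{t:dualcanonical} with the Kato--McNamara positivity used in the proof of Corollary~\ref{c:posKL} yields
\[
L_t(m) = E_t(m) + \sum_{m' < m} Q_{m, m'}(t)\, E_t(m') \qquad \text{with } Q_{m, m'}(t) \in t^{-1} \mathbb{Z}_{\geq 0}[t^{-1}].
\]
Truncating and substituting the non-negative expansion of each $E_t(m')^T$ from the previous paragraph gives the claim. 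I do not foresee a substantial obstacle, since both ingredients are already in active use in the paper (thinness drives the quantum $T$-system of Theorem~\ref{t:T-sysaff}, and Kato--McNamara is the backbone of~(P1)). The only technical point to verify is that the scalar $t^{\alpha/2}$ arising from the commutation of two commutative monomials always has integer exponent in $t^{\pm 1/2}$, which is immediate from the integrality of $\gamma(i, r; j, s)$ in Remark~\ref{r:commval}.
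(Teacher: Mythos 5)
Your first step (positivity of the coefficients of $E_t(m)^T$ via thinness of the fundamental modules and the fact that a product of commutative monomials is $t^{\alpha/2}$ times a commutative monomial) is correct, but your second input is not. Kato--McNamara positivity, as used in Corollary \ref{c:posKL}, gives nonnegativity of the expansion of the \emph{standard} objects in terms of the \emph{simple} ones, i.e.\ $E_t(m)=\sum_{m'}P_{m,m'}(t)L_t(m')$ with $P_{m,m'}(t)\in\mathbb{Z}_{\geq 0}[t^{-1}]$ (dual PBW in terms of dual canonical). What you need is the inverse expansion, and inverting a unitriangular matrix with nonnegative entries does not preserve positivity (compare inverse Kazhdan--Lusztig polynomials); condition (NDCB2) of Theorem \ref{t:dualcanonical} only asserts $d^{[\bm{i}]}_{\bm{c},\bm{c}'}\in v\mathbb{Z}[v]$, not $v\mathbb{Z}_{\geq 0}[v]$. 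In fact your claimed expansion $L_t(m)=E_t(m)+\sum_{m'<m}Q_{m,m'}(t)E_t(m')$ with $Q_{m,m'}(t)\in t^{-1}\mathbb{Z}_{\geq 0}[t^{-1}]$ is false inside $\mathcal{C}_{\mathcal{Q}^{\flat}}$ itself: by Proposition \ref{p:length2}, for $m=Y_{n,r}Y_{n,r+2p}$ with $p$ odd, $p\leq 2n-1$, one has $E_t(m)=L_t(m)+t^{-1}L_t(m')$ with $m'=Y_{n-(p+1)/2,r+p}$, and since $L_t(m')=E_t(m')$ (fundamental module), this gives $L_t(m)=E_t(m)-t^{-1}E_t(m')$, a genuinely negative coefficient. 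So the two inputs cannot be combined as you propose.

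The actual argument in the paper avoids the $E_t$-basis altogether. Under $(\widetilde{\Phi}^T)^{-1}$ each variable $\widetilde{Y}_{i,r}$ corresponds, up to a power of $t^{1/2}$, to the ratio $\widetilde{D}((i,r+2r_i),0)^{-1}\widetilde{D}((i,r),0)$ of unipotent quantum minors, so the expansion of $L_t(m)^T$ in commutative monomials becomes an expansion of a normalized dual canonical basis element in (possibly negative) powers of the minors $\widetilde{D}((i,r),0)$. One then clears denominators by multiplying by a suitable product $\mathcal{D}$ of these minors; such products lie in $v^{\mathbb{Z}/2}\widetilde{\mathbf{B}}^{\mathrm{up}}$ \cite{Kimura:qunip}, so the resulting identity is an expansion of a product of dual canonical basis elements in the dual canonical basis, and the positivity of the structure constants \cite{Lus:quiper} (the same ingredient as (P2), not (P1)) yields the nonnegativity of all coefficients. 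If you want to salvage your strategy, you would have to replace your second input by an argument of this structure-constant type rather than a decomposition-number one.
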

\begin{proof}
By the definition of $\widetilde{\Phi}^T$ in Theorem \ref{t:torusisom}, we have 
\[
\widetilde{\Phi}^T(\widetilde{D}((i, r+2r_i), 0)^{-1}\widetilde{D}((i, r), 0))=t^{\alpha(i, r)/2}\widetilde{Y}_{i, r}
\]
for some $\alpha(i, r)\in \mathbb{Z}$.  Therefore, via $(\widetilde{\Phi}^T)^{-1}$, the truncated $(q, t)$-characters of simple modules deduce the following expression of the elements of the normalized dual canonical basis :
\begin{align}
\widetilde{\Gup}(b)=\sum_{\bm{v}=(v_{(i, r)})\in \mathbb{Z}^{\overline{I}_{\xi}^{\mathrm{tw}, \flat}}}\alpha[b; \bm{v}]\dprod_{(i, r)\in \overline{I}_{\xi}^{\mathrm{tw}, \flat}}\widetilde{D}((i, r), 0)^{v_{(i, r)}}\label{eq:exp}
\end{align}
for some $\alpha[b; \bm{v}]\in \mathbb{Z}[v^{\pm 1/2}]$ by Theorem \ref{t:mainisom}, here we consider an arbitrary total ordering on $\overline{I}_{\xi}^{\mathrm{tw}, \flat}$. 
It suffices to prove that $\alpha[b; \bm{v}]\in \mathbb{Z}_{\geq 0}[v^{\pm 1/2}]$ for every $b$ and $\bm{v}$. 

Multiplying an element $\mathcal{D}:=\overrightarrow{\prod}_{(i, r)\in \overline{I}_{\xi}^{\mathrm{tw}, \flat}}\widetilde{D}((i, r), 0)^{w_{(i, r)}}$ such that $w_{(i, r)}, w_{(i, r)}+v_{(i, r)} \in \mathbb{Z}_{\geq 0}$  for all $(i, r)\in \overline{I}_{\xi}^{\mathrm{tw}, \flat}$ by the both-hand sides of \eqref{eq:exp}, we obtain 
\begin{align}
\mathcal{D}\widetilde{\Gup}(b)=\sum_{\bm{v}=(v_{(i, r)})\in \mathbb{Z}^{\overline{I}_{\xi}^{\mathrm{tw}, \flat}}}\alpha'[b; \bm{v}]\dprod_{(i, r)\in \overline{I}_{\xi}^{\mathrm{tw}, \flat}}\widetilde{D}((i, r), 0)^{w_{(i, r)}+v_{(i, r)}}\label{eq:exp'}
\end{align}
for some $\alpha'[b; \bm{v}]\in \mathbb{Z}[v^{\pm 1/2}]$. It is known that any products of the elements $\widetilde{D}((i, r), 0)$, $(i, r)\in \overline{I}_{\xi}^{\mathrm{tw}, \flat}$ belong to $v^{\mathbb{Z}/2}\widetilde{\mathbf{B}}^{\mathrm{up}}$ \cite[Theorem 6.26]{Kimura:qunip}. Therefore, $\mathcal{D}$ and $\overrightarrow{\prod}_{(i, r)\in \overline{I}_{\xi}^{\mathrm{tw}, \flat}}\widetilde{D}((i, r), 0)^{w_{(i, r)}+v_{(i, r)}}$ belong to $v^{\mathbb{Z}/2}\widetilde{\mathbf{B}}^{\mathrm{up}}$. Hence the equality \eqref{eq:exp'} is the expansion of $\mathcal{D}\widetilde{\Gup}(b_{-1}(\bm{c},\bm{i}_{0}))$ with respect to the normalized dual canonical basis. By the positivity of the structure constants of dual canonical bases proved in \cite[Theorem 11.5]{Lus:quiper}, we obtain $\alpha'[b; \bm{v}]\in \mathbb{Z}_{\geq 0}[v^{\pm 1/2}]$ for every $b$ and $\bm{v}$, which is equivalent to $\alpha[b; \bm{v}]\in \mathbb{Z}_{\geq 0}[v^{\pm 1/2}]$. 
\end{proof}

\begin{corollary}[The $(q, t)$-characters of Kirillov-Reshetikhin modules]\label{c:qtKR}
For a Kirillov-Reshetikhin module $W^{(i)}_{k, r}$ in $\mathcal{C}_{\mathcal{Q}^{\flat}}$, we have
\[
L_t(m^{(i)}_{k, r})=[\underline{W^{(i)}_{k, r}}].  
\]
In particular, $\mathrm{ev}_{t=1}(L_t(m^{(i)}_{k, r}))=[W^{(i)}_{k, r}]$. 
\end{corollary}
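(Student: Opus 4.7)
The plan is to combine Proposition \ref{p:F-KR} with the explicit dictionary between normalized unipotent quantum minors and truncated $(q,t)$-characters of Kirillov-Reshetikhin modules that was built into the proof of Theorem \ref{t:mainisom}. First, by Proposition \ref{p:F-KR} we already know the key identity
\[
[\underline{W^{(i)}_{k,r}}] \;=\; F_t(m^{(i)}_{k,r})
\]
in $\mathcal{K}_t$, and since $W^{(i)}_{k,r}\in\mathcal{C}_{\mathcal{Q}^{\flat}}$ means $m^{(i)}_{k,r}\in\mathbb{B}^{\xi,\flat}$, both sides actually lie in $\mathcal{K}_{t,\mathcal{Q}^{\flat}}$. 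So the real content is to upgrade ``$F_t$'' to ``$L_t$''.

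Next, I would invoke the relation \eqref{eq:corresp} from the proof of Theorem \ref{t:mainisom}: writing $(i,r)\in\overline{I}_{\xi}^{\mathrm{tw},\flat}$ and $k\leq k(i,r)$ (which is forced by $W^{(i)}_{k,r}\in\mathcal{C}_{\mathcal{Q}^{\flat}}$ via twisted convexity, Lemma \ref{l:tw-dia}), we have
\[
\widetilde{\Phi}^T\bigl(\widetilde{D}((i,r),(i,r+2kr_i))\bigr)\;=\;F_t(m^{(i)}_{k,r})^T\;=\;[\underline{W^{(i)}_{k,r}}]^T.
\]
Since every non-zero normalized unipotent quantum minor is an element of $\widetilde{\mathbf{B}}^{\mathrm{up}}$ (cited just after Definition \ref{d:PBWparam}), the bijection $\widetilde{\mathbf{B}}^{\mathrm{up}}\leftrightarrow\widetilde{\mathbf{S}}^{\xi,\flat}$ furnished by Corollary \ref{c:mainisom} sends $\widetilde{D}((i,r),(i,r+2kr_i))$ to $L_t(m')^T$ for a uniquely determined $m'\in\mathbb{B}^{\xi,\flat}$. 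Combining the two displays, $L_t(m')^T=F_t(m^{(i)}_{k,r})^T$ in $\mathcal{Y}_{t,\mathcal{Q}^{\flat}}$, and the injectivity of the truncation map $(\cdot)^T\colon\mathcal{K}_{t,\mathcal{Q}^{\flat}}\to\mathcal{Y}_{t,\mathcal{Q}^{\flat}}$ from Proposition \ref{p:trunc} then gives $L_t(m')=F_t(m^{(i)}_{k,r})$ in $\mathcal{K}_{t,\mathcal{Q}^{\flat}}$.

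To identify $m'$, I would use the uniqueness part of Theorem \ref{t:F-elem} together with the affine-minuscule property (Theorem \ref{t:minuscule}): the unique dominant monomial occurring in $F_t(m^{(i)}_{k,r})$ is $m^{(i)}_{k,r}$ itself, while the unique maximal dominant monomial occurring in $L_t(m')$ is $m'$ (by the characterizing properties (S2) and the fact that each $E_t(m'')$ has maximal monomial $m''$). Therefore $m'=m^{(i)}_{k,r}$, so
\[
L_t(m^{(i)}_{k,r})\;=\;F_t(m^{(i)}_{k,r})\;=\;[\underline{W^{(i)}_{k,r}}].
\]
Finally, the specialization statement is immediate: $\mathrm{ev}_{t=1}([\underline{W^{(i)}_{k,r}}])=\mathrm{ev}_{t=1}(\underline{\chi_q(W^{(i)}_{k,r})})=\chi_q(W^{(i)}_{k,r})=[W^{(i)}_{k,r}]$ via the injective homomorphism $\chi_q$.

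I do not anticipate a significant obstacle here: the only subtle point is the bookkeeping to ensure that, given $W^{(i)}_{k,r}\in\mathcal{C}_{\mathcal{Q}^{\flat}}$, the index $(i,r)$ can be taken so that $k\leq k(i,r)$ and the formula \eqref{eq:corresp} from the proof of Theorem \ref{t:mainisom} directly applies; this is precisely the segment statement of twisted convexity in Lemma \ref{l:tw-dia}.
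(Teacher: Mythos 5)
Your proposal is correct and follows essentially the same route as the paper: apply \eqref{eq:corresp} to identify $F_t(m^{(i)}_{k,r})$ (equivalently $[\underline{W^{(i)}_{k,r}}]$, via Proposition \ref{p:F-KR}) with a normalized unipotent quantum minor, hence with an element of $\widetilde{\mathbf{B}}^{\mathrm{up}}$, use the bijection of Corollary \ref{c:mainisom} to conclude it equals some $L_t(m')$, and pin down $m'=m^{(i)}_{k,r}$ by comparing maximal dominant monomials. Your extra remarks (injectivity of the truncation and the segment bound $k\leq k(i,r)$ from twisted convexity) are just explicit bookkeeping of steps the paper leaves implicit.
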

\begin{proof}
By \eqref{eq:corresp}, $F_t(m^{(i)}_{k, r})$ corresponds to a certain normalized unipotent quantum minor via $\widetilde{\Phi}$, in particular, an element of $\widetilde{\mathbf{B}}^{\mathrm{up}}$. Therefore, by Corollary \ref{c:mainisom},  $F_t(m^{(i)}_{k, r})=L_t(m)$ for some $m\in \mathbb{B}^{\xi, \flat}$. The maximum monomial in $F_t(m^{(i)}_{k, r})$ is $\underline{m^{(i)}_{k, r}}$, and that in $L_t(m)$ is $\underline{m}$. Hence $m=m^{(i)}_{k, r}$. Therefore, by Proposition \ref{p:F-KR}, 
\[
L_t(m^{(i)}_{k, r})=F_t(m^{(i)}_{k, r})=[\underline{W^{(i)}_{k, r}}].
\] 
\end{proof}

The following remarkable theorem, due to Kashiwara and Oh, is proved by means of the celebrated \emph{generalized quantum affine Schur-Weyl dualities}, which is developed by Kang, Kashiwara, Kim and Oh \cite{KKK:1,KKK:2,KKKO:3,KKKO:4} (see also \cite{KKKO:Simp}) : 

\begin{theorem}[{\cite[Corollary 6.6, Corollary 6.7]{KO}}]\label{t:KO}
A $\mathbb{Z}$-algebra isomorphism 
\[
[\mathscr{F}]\colon \mathrm{ev}_{v=1}(\mathcal{A}_v[N_-^{\mathrm{A}_{2n-1}}])\to K(\mathcal{C}_{\mathcal{Q}^{\flat}})
\]
given by 
\[
\mathrm{ev}_{v=1}(\widetilde{F_{-1}^{\mathrm{up}}}(\bm{e}_{(i, r)},[\mathcal{Q}^{\flat}]))\mapsto [L(Y_{i, r})], 
\]
for $(i, r)\in \overline{I}_{\xi}^{\mathrm{tw}, \flat}$ (see just before \eqref{eq:rootcprresp} for the definition of $\bm{e}_{(i, r)}$) maps the dual canonical basis $\mathrm{ev}_{v=1}(\widetilde{\mathbf{B}}^{\mathrm{up}})$ specialized at $v=1$ to the set of classes of simple modules $\{[L(m)]\mid m\in \mathbb{B}^{\xi, \flat}\}$ .
\end{theorem}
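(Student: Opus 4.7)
The plan is to construct the isomorphism $[\mathscr{F}]$ via the generalised quantum affine Schur-Weyl duality of Kang-Kashiwara-Kim-Oh. First I would choose, for each $\imath\in I_{\mathrm{A}}$, a fundamental module $V(\varpi_{\imath})_{a_{\imath}}$ of $\mathcal{U}_q(\mathrm{B}_n^{(1)})$, taking the spectral parameter $a_{\imath}$ (via the dictionary of Remark \ref{r:convention}) to correspond to the ``top'' vertex $\Pi_{\imath}$ of $\Upsilon_{[\mathcal{Q}^{\flat}]}$ of residue $\imath$ appearing in the proof of Theorem \ref{t:torusisom}. Together with the renormalised R-matrices between these modules, the resulting datum satisfies the hypotheses of the Kang-Kashiwara-Kim-Oh construction and produces a monoidal duality functor $\mathscr{F}$ from the category of finite-dimensional graded modules over the symmetric quiver Hecke (KLR) algebra $R^{\mathrm{A}_{2n-1}}$ to the category $\mathcal{C}$ of type $\mathrm{B}_n^{(1)}$.

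Next I would verify that the image of $\mathscr{F}$ actually lies in $\mathcal{C}_{\mathcal{Q}^{\flat}}$. Monoidality of $\mathscr{F}$ reduces this to checking it on the simple modules of $R^{\mathrm{A}_{2n-1}}$-mod corresponding to the simple roots, which are sent (by construction) to the fundamental modules $L(Y_{i,r})$ with $(i, r) \in \overline{I}_{\xi}^{\mathrm{tw},\flat}$; then twisted convexity (Lemma \ref{l:tw-dia}) together with the argument in Lemma \ref{l:subcat} guarantees that every composition factor of iterated tensor products stays in $\mathcal{C}_{\mathcal{Q}^{\flat}}$. I would then invoke the Khovanov-Lauda-Rouquier categorification theorem, which identifies the (graded) Grothendieck ring of $R^{\mathrm{A}_{2n-1}}$-mod with $\mathcal{A}_v[N_-^{\mathrm{A}_{2n-1}}]$ and matches classes of self-dual simple modules with $\widetilde{\mathbf{B}}^{\mathrm{up}}$. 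Specialising $v\to 1$ through $\mathscr{F}$ produces the ring homomorphism $[\mathscr{F}]$, and its prescribed action on the classes $\mathrm{ev}_{v=1}(\widetilde{F_{-1}^{\mathrm{up}}}(\bm{e}_{(i,r)},[\mathcal{Q}^{\flat}]))$ is built in because these are precisely the classes of the simples indexed by the simple roots of $\mathrm{A}_{2n-1}$.

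The main obstacle will be to upgrade $\mathscr{F}$ to a bijection between self-dual simples of $R^{\mathrm{A}_{2n-1}}$-mod and simple objects of $\mathcal{C}_{\mathcal{Q}^{\flat}}$. A general feature of Kang-Kashiwara-Kim-Oh duality functors is the simple-to-simple-or-zero dichotomy, but ruling out zero is delicate and is the heart of the argument. The approach I would take is to proceed by induction along the Lusztig ordering $<_{[\mathcal{Q}^{\flat}]}$: given a self-dual simple $L^{\mathrm{KLR}}(b)$, one uses the known structure of the renormalised R-matrices between fundamental modules of type $\mathrm{B}_n^{(1)}$ (where the $\mathrm{A}_{2n-1}$-type combinatorics enters through the denominator formulas) to identify the highest dominant monomial in the $q$-character of $\mathscr{F}(L^{\mathrm{KLR}}(b))$ with a specific element of $\mathbb{B}^{\xi,\flat}$, forcing non-vanishing and pinning down its isomorphism class. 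Surjectivity onto $\{[L(m)]\mid m\in \mathbb{B}^{\xi,\flat}\}$ then follows from Proposition \ref{p:tensor}, since every $L(m)$ with $m\in \mathbb{B}^{\xi,\flat}$ appears as a composition factor in a tensor product of fundamental modules $L(Y_{i,r})$ with $(i,r)\in \overline{I}_{\xi}^{\mathrm{tw},\flat}$. Finally, a dimension count at the level of Grothendieck rings (both sides are polynomial algebras on $\#\overline{I}_{\xi}^{\mathrm{tw},\flat}=\ell(w_0^{\mathrm{A}_{2n-1}})$ homogeneous generators) upgrades surjectivity to bijectivity, and the matching of specialised dual canonical basis with classes of simples becomes automatic from the Khovanov-Lauda-Rouquier theorem.
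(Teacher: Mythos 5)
The first thing to say is that the paper does not prove this statement: Theorem \ref{t:KO} is imported verbatim from Kashiwara--Oh \cite{KO} (their Corollaries 6.6 and 6.7), and the authors state explicitly that it is established there via the generalized quantum affine Schur--Weyl dualities of \cite{KKK:1} and its sequels. So there is no internal argument to compare yours with; what you have written is an outline of the external Kashiwara--Oh proof, not an alternative to anything done in this paper, and if citing \cite{KO} is permitted the statement requires no proof at all (which is exactly how the paper treats it).

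Judged as a self-contained sketch of the cited result, your outline is structurally reasonable: choose a duality datum of fundamental $\mathrm{B}_n^{(1)}$-modules governed by the twisted AR quiver, obtain a monoidal functor $\mathscr{F}$ from finite type $\mathrm{A}_{2n-1}$ quiver Hecke modules, check via twisted convexity (Lemma \ref{l:tw-dia}) and the argument of Lemma \ref{l:subcat} that the image lies in $\mathcal{C}_{\mathcal{Q}^{\flat}}$, and use the KLR categorification of $\mathcal{A}_v[N_-^{\mathrm{A}_{2n-1}}]$ to descend to Grothendieck rings. But the decisive step --- that $\mathscr{F}$ sends every self-dual simple to a \emph{nonzero} simple and induces a bijection on isomorphism classes, so that $\mathrm{ev}_{v=1}(\widetilde{\mathbf{B}}^{\mathrm{up}})$ really maps onto $\{[L(m)]\mid m\in\mathbb{B}^{\xi,\flat}\}$ --- is precisely the point you label ``the heart of the argument'' and then only gesture at (``induction along the Lusztig ordering'', ``denominator formulas''). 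That step is the actual mathematical content of \cite{KO}; without it, simples could a priori be annihilated or identified by $\mathscr{F}$, and your final dimension count cannot repair this because it presupposes that basis elements land on simple classes. (Exactness of $\mathscr{F}$, needed even to define $[\mathscr{F}]$ on Grothendieck groups, is likewise a quoted feature of the KKKO machinery rather than something your sketch supplies.) So either cite \cite{KO} as the paper does, or accept that your proposal has a genuine gap at the simple-preservation and bijectivity step.
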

\begin{remark}
Kashiwara and Oh also deal with some monoidal subcategories of $\mathcal{C}_{\bullet}$ for type $\mathrm{C}_n^{(1)}$ in \cite{KO} (in this case, the corresponding quantized coordinate algebra is of type $\mathrm{D}_{n+1}$). Their isomorphisms are obtained from generalized quantum affine Schur-Weyl duality functors which are conjectually equivalences of categories \cite[Conjecture 6.10]{KO}. See also Remark \ref{r:convention}. 
\end{remark}
We already knew that $\mathrm{ev}_{t=1}(F_t(Y_{i, r}))=\chi_q(L(Y_{i, r}))$. Therefore, by Theorem \ref{t:KO}, we have the following : 
\begin{corollary}\label{c:KO}
\begin{itemize}
\item[(1)] Let $\widetilde{\Phi}\mid_{v=t=1}$ be the $\mathbb{Z}$-algebra isomorphism $\mathrm{ev}_{v=1}(\mathcal{A}_v[N_-^{\mathrm{A}_{2n-1}}])\to \mathrm{ev}_{t=1}(\mathcal{K}_{t, \mathcal{Q}^{\flat}})\simeq K(\mathcal{C}_{\mathcal{Q}^{\flat}})$ induced from $\widetilde{\Phi}$. Then we have $\widetilde{\Phi}\mid_{v=t=1}=[\mathscr{F}]$. 
\item[(2)] $\chi_q(L(m))=\mathrm{ev}_{t=1}(L_t(m))$ for all $m\in \mathbb{B}^{\xi, \flat}$. 
\end{itemize}
\end{corollary}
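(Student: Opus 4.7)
The plan is to prove (1) first by checking that the two $\mathbb{Z}$-algebra homomorphisms coincide on a generating set, and then to derive (2) from (1) by comparing the highest monomials occurring in each expression.

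For (1), I would first note that by Proposition \ref{p:PBWbasis} the quantized coordinate algebra $\mathcal{A}_{v}[N_{-}^{\mathrm{A}_{2n-1}}]$ is generated as a $\mathbb{Z}[v^{\pm 1/2}]$-algebra by the normalized dual root vectors $\{\widetilde{F_{-1}^{\mathrm{up}}}(\bm{e}_{(i,r)},[\mathcal{Q}^{\flat}])\mid (i,r)\in\overline{I}_{\xi}^{\mathrm{tw},\flat}\}$; consequently $\mathrm{ev}_{v=1}(\mathcal{A}_{v}[N_{-}^{\mathrm{A}_{2n-1}}])$ is generated by their specializations. By \eqref{eq:rootcprresp} one has $\widetilde{\Phi}(\widetilde{F_{-1}^{\mathrm{up}}}(\bm{e}_{(i,r)},[\mathcal{Q}^{\flat}]))=F_{t}(Y_{i,r})$, and $\mathrm{ev}_{t=1}(F_{t}(Y_{i,r}))=\chi_{q}(L(Y_{i,r}))$ identifies, via the injective $\chi_{q}$, with $[L(Y_{i,r})]\in K(\mathcal{C}_{\mathcal{Q}^{\flat}})$. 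On the other side, Theorem \ref{t:KO} says precisely that $[\mathscr{F}]$ sends $\mathrm{ev}_{v=1}(\widetilde{F_{-1}^{\mathrm{up}}}(\bm{e}_{(i,r)},[\mathcal{Q}^{\flat}]))$ to $[L(Y_{i,r})]$. Hence $\widetilde{\Phi}|_{v=t=1}$ and $[\mathscr{F}]$ are $\mathbb{Z}$-algebra homomorphisms that agree on the generators, so they coincide.

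For (2), by Corollary \ref{c:mainisom} there exists a unique $b(m)\in\mathscr{B}(\infty)$ with $\widetilde{\Phi}(\widetilde{G}^{\mathrm{up}}(b(m)))=L_{t}(m)$. Applying part (1) and Theorem \ref{t:KO} we obtain
\begin{equation*}
\mathrm{ev}_{t=1}(L_{t}(m)) \;=\; [\mathscr{F}]\bigl(\mathrm{ev}_{v=1}(\widetilde{G}^{\mathrm{up}}(b(m)))\bigr) \;=\; [L(m')]
\end{equation*}
for some $m'\in\mathbb{B}^{\xi,\flat}$, since $[\mathscr{F}]$ maps $\mathrm{ev}_{v=1}(\widetilde{\mathbf{B}}^{\mathrm{up}})$ bijectively onto $\{[L(m'')]\mid m''\in\mathbb{B}^{\xi,\flat}\}$. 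It remains to identify $m'$ with $m$, and I would do this by comparing maximal monomials with respect to the Nakajima ordering. On the left-hand side, Theorem \ref{t:normqt}(S2) gives $L_{t}(m)\in E_{t}(m)+\sum_{m''<m}t^{-1}\mathbb{Z}[t^{-1}]E_{t}(m'')$, and $\underline{m}$ is the unique maximum monomial of $E_{t}(m)$ with coefficient $1$; hence $\mathrm{ev}_{t=1}(L_{t}(m))$ has $m$ as its unique maximum monomial, occurring with coefficient $1$. On the right-hand side, Theorem \ref{t:highest} states that $m'$ is the unique maximum monomial of $\chi_{q}(L(m'))$. Comparing, $m=m'$, which is exactly assertion (2).

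The two parts are straightforward once the set-up is in place; the only genuine subtlety is part (2), where I need to be careful that specialization at $t=1$ preserves the structure of the highest-monomial filtration. This is guaranteed because the unitriangular relation between $L_{t}(m)$ and $E_{t}(m)$ specializes well, and the coefficient of $\underline{m}$ in $L_{t}(m)$ is an integer equal to $1$, not a polynomial in $t$ that could vanish at $t=1$. So there is no real obstacle beyond organizing the bookkeeping carefully.
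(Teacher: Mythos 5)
Your proof is correct and follows essentially the same route as the paper, which simply observes that $\mathrm{ev}_{t=1}(F_t(Y_{i,r}))=\chi_q(L(Y_{i,r}))$ so that $\widetilde{\Phi}\mid_{v=t=1}$ and $[\mathscr{F}]$ agree on the generators $\mathrm{ev}_{v=1}(\widetilde{F_{-1}^{\mathrm{up}}}(\bm{e}_{(i,r)},[\mathcal{Q}^{\flat}]))$, and then invokes Theorem \ref{t:KO} together with Corollary \ref{c:mainisom}. Your extra step identifying $m'=m$ by comparing the (unique, coefficient-one) maximal monomials of $\mathrm{ev}_{t=1}(L_t(m))$ and $\chi_q(L(m'))$ is exactly the bookkeeping the paper leaves implicit, and it is sound.
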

Corollary \ref{c:KO} (2) gives the affirmative answer to Conjecture \ref{co:qt} for $\mathcal{C}_{\mathcal{Q}^{\flat}}$. Therefore, by Remark \ref{r:KLalg}, the multiplicity of the simple module $L(m')$ in the standard module $M(m)$ for $m, m'\in \mathbb{B}^{\xi, \flat}$ can be computed by  the Kazhdan-Lusztig algorithm in the quantum Grothendieck ring $\mathcal{K}_{t, \mathcal{Q}^{\flat}}$. 

\subsection{An example of calculation in quantum Grothendieck rings}\label{ss:ex}
By Corollary \ref{c:KO}, we can use the quantum Grothendieck ring $\mathcal{K}_{t, \mathcal{Q}^{\flat}}$ to obtain certain results on $\mathcal{C}_{\mathcal{Q}^{\flat}}$. Let us give an explicit example of such computations. 
Consider the monomial 
\[
m =Y_{n,r}Y_{n,r + 2p}
\]
where $p\geq 0$ in $\mathcal{Y}$ of type $\mathrm{B}_n^{(1)}$. We calculate the following : 

\begin{proposition}\label{p:length2} The module $L(m)$ is affine-minuscule (see just after Theorem \ref{t:minuscule} for the definition). 
Moreover, the standard module 
\[
M(m) = L(Y_{n, r})\otimes L(Y_{n, r+2p})
\] 
has length at most $2$ :

If $p$ is odd and $p\leq 2n - 1$,  then we have 
\[
[M(m)] = [L(m)] + [L(m')] 
\]
in $K(\mathcal{C}_{\bullet})$ (in particular, $M(m)$ is of length $2$) with 
\[
m' = Y_{n - (p+1)/2, r + p}.
\]
Otherwise $M(m)$ is simple and $L_t(m) = E_t(m)$.
\end{proposition}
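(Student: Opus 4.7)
The overall plan is to enumerate the dominant monomials in $\chi_q(M(m))=\chi_q(L(Y_{n,r}))\cdot \chi_q(L(Y_{n,r+2p}))$ and deduce everything from the structure of this list. Both factors are fundamental Kirillov-Reshetikhin modules $W_{1,r}^{(n)}$ and $W_{1,r+2p}^{(n)}$; by Theorem \ref{t:minuscule} each is affine-minuscule, and by Theorem \ref{t:KRthin} each is thin. Exploiting thinness I would run the Frenkel-Mukhin algorithm on $Y_{n,r}$ to obtain an explicit description of $\chi_q(L(Y_{n,r}))$: there are $2^n$ monomials, indexed by sign sequences $\varepsilon\in\{+,-\}^n$, with spectral parameters at node $n$ occurring in the long chain $\{r,r+2,\dots,r+4n-2\}$ and at each shorter node $n-k$ ($k\geq 1$) occurring in $r+(2k-1)+4\mathbb{Z}_{\geq 0}$ within a bounded window. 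Crucially every factor appearing in any monomial has spectral parameter in $[r,r+4n-2]$; moreover, at node $n$ the positive factors occur at spectral parameters in $r+4\mathbb{Z}$ while the negative factors occur at spectral parameters in $r+2+4\mathbb{Z}$.

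The combinatorial heart is to enumerate pairs $(m_1,m_2)$ with $m_i\in \chi_q(L(Y_{n,r+2p(i-1)}))$ such that $m_1m_2$ is dominant. Writing $m_i=Y_{n,r+2p(i-1)}M_i$, dominance of $m_1m_2=m\cdot M_1M_2$ amounts to every negative exponent in $M_1M_2$ being cancelled. Using right-negativity (the largest spectral parameter in the support of $M_i$ carries a negative exponent that cannot be cancelled from within $M_i$) together with the support bound above, I will argue: (a) if $p>2n-1$, the supports of $M_1$ and $M_2$ are node-wise disjoint, so their rightmost negatives cannot cancel, forcing $M_1=M_2=1$ and hence $m_1m_2=m$; (b) if $p$ is even, the parity observation at node $n$ (negatives of $M_1$ lie in $r+2+4\mathbb{Z}$ and positives of $M_2$ in $r+2p+4\mathbb{Z}\subset r+4\mathbb{Z}$, which are disjoint) combined with an iterative descent to shorter nodes rules out all nontrivial cancellations; (c) if $p=2k-1$ is odd with $1\leq k\leq n$, the parities at node $n$ are aligned, and a direct case analysis shows that exactly one new dominant product arises: take $m_1$ to be the chain-descent monomial $Y_{n,r}\cdot A_{n,r+1}^{-1}A_{n-1,r+3}^{-1}\cdots A_{n-k+1,r+2k-1}^{-1}\in \chi_q(L(Y_{n,r}))$ and $m_2=Y_{n,r+2p}$ the highest monomial of the right factor (with the convention $Y_{0,\cdot}=1$ when $k=n$); explicit expansion yields $m_1m_2=Y_{n-k,r+p}=m'$ with multiplicity one. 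This is Claim A: the dominant monomials of $\chi_q(M(m))$ are $\{m\}$ in general and $\{m,m'\}$ (each of multiplicity one) in the odd case $p\leq 2n-1$.

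In the ``otherwise'' case ($p$ even, or $p>2n-1$), Claim A gives $[M(m)]=[L(m)]$ in $K(\mathcal{C}_{\bullet})$ and hence $M(m)\simeq L(m)$ is simple, so $L(m)$ is affine-minuscule. For the quantum statement in this case, $E_t(m)\in\mathcal{K}_t$ has $m$ as its unique dominant monomial (the specialization at $t=1$ reproduces $\chi_q(M(m))$, whose dominant monomials are exactly those from Claim A), so the uniqueness part of Theorem \ref{t:F-elem} identifies $E_t(m)$ with $F_t(m)$. Since $F_t(m)$ is bar-invariant and, by Claim A, no $m''<m$ contributes a correction, it satisfies (S1) and (S2$'$) trivially and equals $L_t(m)$.

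The main obstacle is the odd case $p=2k-1$, $1\leq k\leq n$: Claim A gives $[M(m)]=[L(m)]+c'[L(m')]$ with $c'\in\{0,1\}$, and it remains to prove $c'=1$, i.e., that $L(Y_{n,r})\otimes L(Y_{n,r+2p})$ is reducible. For this I plan to invoke the denominator formula for the normalized $R$-matrix between two spin fundamentals of type $\mathrm{B}_n^{(1)}$ due to Akasaka-Kashiwara (see also the explicit tabulation in Oh-Scrimshaw): its zero locus is precisely the set of spectral-parameter gaps $2p$ with $p$ odd and $1\leq p\leq 2n-1$, forcing reducibility of the tensor product in exactly this range. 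Affine-minusculeness of $L(m)$ then follows a posteriori, since the multiplicity-one dominant monomial $m'$ of $\chi_q(M(m))$ is entirely accounted for by $\chi_q(L(m'))$, leaving $m$ as the sole dominant monomial in $\chi_q(L(m))$.
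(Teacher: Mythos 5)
Your overall architecture matches the paper for the first half (explicit thin $q$-character of the spin fundamental plus right-negativity to pin down the dominant monomials of $\chi_q(M(m))$), and diverges for the crucial odd case: where the paper computes the multiplicities of $\underline{m}$ and $\underline{m}'$ in $L_t(Y_{n,r})L_t(Y_{n,r+2p})$ in the quantum torus, uses bar-invariance to get $E_t(m)=L_t(m)+t^{-1}L_t(m')$, and then specializes via Corollary \ref{c:KO}~(2), you instead import the known reducibility of spin$\otimes$spin tensor products from $R$-matrix denominator formulas. That external route is mathematically legitimate (the reducibility for odd $p\le 2n-1$ is classical, cf.\ also Dorey's rule \cite{CP:Dorey} which the paper's remark invokes; note however the $\mathrm{B}_n^{(1)}$ spin--spin denominators are due to Oh rather than Akasaka--Kashiwara), and your a posteriori deduction of affine-minusculeness and of $c'=1$ from ``length at most $2$ plus reducibility'' is sound — but it bypasses the quantum Grothendieck ring, which is precisely the machinery this proposition is meant to illustrate, and it yields less (no identity $E_t(m)=L_t(m)+t^{-1}L_t(m')$).

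There is, however, a concrete error in the combinatorial heart (your Claim A, case (c)). The monomial you exhibit, $m_1=Y_{n,r}\,A_{n,r+1}^{-1}A_{n-1,r+3}^{-1}\cdots A_{n-k+1,r+2k-1}^{-1}$ (a single descending chain), does \emph{not} satisfy $m_1\,Y_{n,r+2p}=m'$ for $k\ge 2$. For instance, for $n=3$, $k=2$ ($p=3$) one computes $Y_{3,r}A_{3,r+1}^{-1}A_{2,r+3}^{-1}=Y_{1,r+3}\,Y_{2,r+5}^{-1}\,Y_{3,r+4}$, whose negative factor sits at node $n-1$ and cannot be cancelled by $Y_{3,r+6}$; the product is neither dominant nor equal to $m'$. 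The monomial that actually works is the term of \eqref{exp} with the full staircase of $k$ chains ($i_1=k-1,\dots,i_k=0$), namely $M_1=Y_{n,r+4k-2}^{-1}Y_{n-k,r+2k-1}$, whose unique negative factor is $Y_{n,r+4k-2}^{-1}$ and is cancelled exactly when $2p=4k-2$. So your claimed ``explicit expansion yields $m_1m_2=m'$'' fails, and since the uniqueness part of your case analysis rests on the same structural picture, Claim A(c) as argued is broken; it is repairable, but only by replacing your witness with the staircase monomial and rerunning the (existence and uniqueness) check as in the paper, i.e.\ by observing that dominance forces the second tensor factor to contribute its highest monomial and forces the negative part of $m_1$ to be a single node-$n$ variable, which singles out $M_1$ above.
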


\begin{remark}
In particular cases of the above proposition, the socle (or head) of $M(m)$ was computed in \cite[Theorem 8.1]{CP:Dorey}. % and \cite{Oh:den}.
\end{remark}
\begin{proof} First the $q$-character of $L(Y_{n,r})$ is explicitly known (see references in \cite{H:minim}) and is equal to
\begin{equation}\label{exp}\sum_{0\leq k\leq n, n > i_1 > \cdots > i_k\geq 0}
(A_{n,r+1}A_{n-1,r+3}\cdots A_{n-i_1,r + 1 + 2i_1})^{-1} (A_{n,r+5}A_{n-1,r+7}\cdots A_{n - i_2,r + 5 + 2i_2})^{-1}$$
$$\cdots (A_{n,r + 4k -3}\cdots A_{n-i_k,r +4k - 3 + 2i_k})^{-1}.\end{equation}
This can be checked directly as this formula has a unique dominant monomial $Y_{n,r}$ and for each $1\leq j\leq n$ it has a decomposition as in \cite[Proposition 2.15]{H:minim}.

Now consider a dominant monomial $M$ in $\chi_q(L(Y_{n,r}))\chi_q(L(Y_{n,r + 2p}))$. Then it is of the form 
$$M = M_1 Y_{n,r+2p}$$ 
where $M_1$ is a monomial occurring in $\chi_q(L(Y_{n,r}))$ 
(otherwise $M$ would be right-negative in the sense of \cite{FM} and would not be dominant). 
Hence $M_1 = 1$ or  $M_1$ is of the form described in Equation (\ref{exp}) 
with $i_k = 0, i_{k-1} = 1, i_{k-2} = 2,\cdots, i_1 = k-1$ and $r + 2p = r + 4k - 3 + 2i_k + 1$ that is
$p = 2k - 1$. In this case $M_1 = Y_{n, r + 4k - 2}^{-1}Y_{n - k,r +2k -1}$ and $M = Y_{n - (p+1)/2, r + p}$. 

Suppose that $p$ is even or $p > 2n -1$ : $m$ is the unique dominant monomial in $\chi_q(M(m))$. 
Then $M(m)$ is special and so is simple. Hence we obtain the desired result. 

Suppose that $p$ is odd and $p\leq 2n - 1$ :  $\chi_q(M(m))$ has two dominant monomials $m$ and $m'$. A direct computation in the quantum torus gives that the respective multiplicities of $\underline{m}$ and $\underline{m}'$ in $L_t(Y_{n, r})L_t(Y_{n, r+2p})$ are $t^{-\alpha}$ and $t^{-\alpha - 1}$ for a certain $\alpha$, here note that, by Proposition \ref{p:F-KR} (or Corollary \ref{c:qtKR}),   
\begin{align}
L_t(Y_{i, r})=[\underline{L(Y_{i, r})}](=F_t(Y_{i, r})) \label{eq:fundamental}
\end{align}
for any $i\in I$. Hence the respective multiplicities of $\underline{m}$ and $\underline{m}'$ in $E_t(m)$ are $1$ and $t^{- 1}$. By Theorem \ref{t:normqt}, 
\[
E_t(m)= L_t(m)+ t^{-1}P'_{m,m'}(t)L_t(m')
\]
for some $P'_{m,m'}(t)\in \mathbb{Z}[t^{-1}]$. Let $P_1(t)$ be the multiplicity of $\underline{m}'$ in $L_t(m)$. Then, by the above argument, we have 
\[
t^{-1}=P_1(t)+t^{-1}P'_{m,m'}(t). 
\]
Since $L_t(m)$ is invariant by the bar involution, we have $P_1(t^{-1})=P_1(t)$, hence $P_1(t)=0$ and $P'_{m,m'}(t)=1$. Therefore, 
\[
E_t(m)= L_t(m)+ t^{-1}L_t(m'). 
\]
Hence, by Corollary \ref{c:KO} (2), we obtain 
\[
[M(m)]=[L(m)]+[L(m')]
\]
and $L(m)$ is affine-minuscule. 
\end{proof}
\begin{remark}
We can check that this result is ``correct'' : let us prove Proposition \ref{p:length2} in a more ``representation-theoretic'' way without using the $(q,t)$-characters and the theorems in this paper. Suppose that $p$ is odd and $p\leq 2n - 1$ (this case is the only case where we used $(q,t)$-characters in the proof). It follows from  \cite[Theorem 5.1]{H:small}\footnote{There is a typo in condition (iv) of Theorem 5.1 of that paper : $m''\neq M$} that $m'$ does not
occur in $\chi_q(L(m))$. Hence $M(m)$ has two simple special constituents $L(m)$ and $L(m')$ :
\[
[M(m)] = [L(m)] + [L(m')].
\]
\end{remark}

\section{Comparison between type A and B}\label{s:AB}
In this section, we combine our results with the results of \cite{HL:qGro}. In \cite{HL:qGro}, it is shown that the category $\mathcal{C}_{\mathcal{Q}}$ for type $\mathrm{A}_{2n-1}^{(1)}$ (recall subsection \ref{ss:HLsubcat}) is also isomorphic to the quantized coordinate algebra of $\mathrm{A}_{2n-1}$. We recall the precise statement in subsection \ref{ss:HLisom}. Combining this result with Corollary \ref{c:mainisom}, we obtain an isomorphism between quantum Grothendieck rings of distinguished monoidal subcategories of $\mathcal{C}_{\bullet}$ of type $\mathrm{A}_{2n-1}^{(1)}$ and $\mathrm{B}_n^{(1)}$. In subsection \ref{ss:explicit}, we give an explicit correspondence of the $(q, t)$-characters of simple modules under this isomorphism for special quivers (Theorem \ref{t:explicitcorresp}). This is computed from the well-known formulas for the change of the Lusztig parametrizations (Definition \ref{d:PBWparam}) associated with a change of reduced words.

\subsection{Quantum Grothendieck ring isomorphisms between type $\mathrm{A}$ and $\mathrm{B}$}\label{ss:HLisom}
Let $\mathcal{Q}'_{2n-1}$ be a Dynkin quiver of type $\mathrm{A}_{2n-1}$, and $\xi'$ an associated height function. Write $I_{\mathrm{A}}:=\{1, \dots, 2n-1\}$ and denote by $\mathcal{Y}^{\mathrm{A}}$ (resp.~$\mathcal{K}^{\mathrm{A}}_t$, $L^{\mathrm{A}}(m)$, $E_t^{\mathrm{A}}(m)$ and  $L_t^{\mathrm{A}}(m)$) the Laurent polynomial algebra $\mathcal{Y}$ (resp.~the quantum Grothendieck ring $\mathcal{K}_t$, the simple module $L(m)$ in $\mathcal{C_{\bullet}}$, the $(q, t)$-character of $M(m)$ and $L(m)$) for type $\mathrm{A}_{2n-1}^{(1)}$. Recall the labelling $(\widehat{I}_{\mathrm{A}})_{\xi'}$ of the positive roots $\Delta_+$ of type $\mathrm{A}_{2n-1}$ defined just after Proposition \ref{p:ARlabel}. Recall $\mathbb{B}^{\xi'}\subset\mathcal{Y}^{\mathrm{A}}$ in Definition \ref{d:HLsubcat}. 

Set 
\[
\mathcal{K}_{t, \mathcal{Q}'_{2n-1}}(=K_t(\mathcal{C}_{\mathcal{Q}'_{2n-1}})):=\bigoplus_{m\in \mathbb{B}^{\xi'}}\mathbb{Z}[t^{\pm 1/2}]E_t^{\mathrm{A}}(m)=\bigoplus_{m\in \mathbb{B}^{\xi'}}\mathbb{Z}[t^{\pm 1/2}]L_t^{\mathrm{A}}(m)\subset \mathcal{K}^{\mathrm{A}}_t. 
\]
This is a subalgebra of $\mathcal{K}^{\mathrm{A}}_t$ (cf.~subsection \ref{ss:qGroB}), called \emph{the quantum Grothendieck ring of $\mathcal{C}_{\mathcal{Q}'_{2n-1}}$}. The first author and Leclerc proved the following : 
\begin{theorem}[{\cite[Theorem 1.2, Theorem 6.1]{HL:qGro}}]\label{t:isomtypeA}
We identify $\Delta_+$ with its lebelling set $(\widehat{I}_{\mathrm{A}})_{\xi'}$. Then there exists a $\mathbb{Z}$-algebra isomorphism
\[
\widetilde{\Phi}_{\mathrm{A}}\colon\mathcal{A}_{v}[N_-^{\mathrm{A}_{2n-1}}]\to \mathcal{K}_{t, \mathcal{Q}'_{2n-1}}
\]
given by 
\begin{align*}
v^{\pm 1/2}\mapsto t^{\mp 1/2}&&& \widetilde{D}^{[\mathcal{Q}'_{2n-1}]}((i, r), (i, r+2s))\mapsto L_t^{\mathrm{A}}(m_{s, r}^{(i)})
\end{align*}
for $(i, r)\in (\widehat{I}_{\mathrm{A}})_{\xi'}$ (we adopt the convention concerning $0$ similar to \eqref{eq:corresp}). Moreover, this isomorphism induces a bijection between $\widetilde{\mathbf{B}}^{\mathrm{up}}$ and $\widetilde{\mathbf{S}}^{\xi'}:=\{L_t^{\mathrm{A}}(m)\mid m\in \mathbb{B}^{\xi'}\}$. More precisely, if $\widetilde{\Phi}_{\mathrm{A}}(\widetilde{\Gup}(b_{-1}(\bm{c},[\mathcal{Q}'_{2n-1}]))=L_t^{\mathrm{A}}(m)$, then 
\[
c_{(i, r)}=u_{i, r}(m)
\]
for all $(i, r)\in (\widehat{I}_{\mathrm{A}})_{\xi'}$. This monomial $m$ will be denoted by $m_{\mathrm{A}}(\bm{c})$. 
\end{theorem}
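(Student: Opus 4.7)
The plan is to mirror the entire proof strategy of Theorem \ref{t:mainisom}, specialized to the simply-laced setting where the argument is uniformly simpler. First I would establish a quantum torus isomorphism $\widetilde{\Phi}_{\mathrm{A}}^T \colon \mathcal{T}_{\mathcal{Q}'_{2n-1}} \to \mathcal{Y}_{t, \mathcal{Q}'_{2n-1}}^{\mathrm{A}}$ analogous to Theorem \ref{t:torusisom}, sending $v^{\pm 1/2} \mapsto t^{\mp 1/2}$ and $X_{(i,r)} \mapsto \underline{m_{k(i,r),r}^{(i)}}$, where $k(i,r) = \#\{r' \mid (i,r') \in \widehat{I}_{\xi'},\ r'-r \in 2\mathbb{Z}_{\geq 0}\}$. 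In type $\mathrm{A}_{2n-1}$, where $r_i=1$ for all $i$, the quiver of $\widetilde{B}^{[\mathcal{Q}'_{2n-1}]}$ is obtained from $\Gamma_{\mathcal{Q}'_{2n-1}}$ by adding the horizontal arrows $(i,r) \to (i,r-2)$ between consecutive same-residue vertices and deleting arrows among the frozen vertices (a simply-laced analogue of Proposition \ref{p:initial}). Checking the isomorphism amounts to comparing $\hat{X}_{(i,r)}$ with $\widetilde{A}_{i,r-1}^{-1}$ through (\ref{eq:A-comm})--(\ref{eq:AY-comm}) and verifying that the $v$-commutation of the boundary generators $X_{\Pi_\imath}$ matches via $(\varpi_\imath, \phi_{\mathcal{Q}'_{2n-1}} \varpi_\jmath - \varpi_\jmath) - (\varpi_\imath, \phi_{\mathcal{Q}'_{2n-1}}^{-1} \varpi_\jmath - \varpi_\jmath)$ against the inverse quantum Cartan matrix of type $\mathrm{A}$ (a straightforward analogue of Example \ref{e:invcarB-pic}).

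Next I would show $\widetilde{\Phi}_{\mathrm{A}}^T$ restricts to a $\mathbb{Z}$-algebra isomorphism $\mathcal{A}_v[N_-^{\mathrm{A}_{2n-1}}] \to (\mathcal{K}_{t, \mathcal{Q}'_{2n-1}})^T$ sending $\widetilde{D}^{[\mathcal{Q}'_{2n-1}]}((i,r),(i,r+2s))$ to $L_t^{\mathrm{A}}(m_{s,r}^{(i)})^T$. Since Kirillov-Reshetikhin modules of type $\mathrm{A}$ are thin (Theorem \ref{t:KRthin}), Proposition \ref{p:thin-qt} gives $L_t^{\mathrm{A}}(m_{s,r}^{(i)}) = F_t(m_{s,r}^{(i)}) = [\underline{W_{s,r}^{(i)}}]$, so $L_t^{\mathrm{A}}(m_{s,r}^{(i)})^T = \underline{m_{s,r}^{(i)}}$ and the cases $s=0,1$ follow directly from the definition of $\widetilde{\Phi}_{\mathrm{A}}^T$. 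For $s \geq 2$ I would induct, using that the system of quantum determinantal identities (Proposition \ref{p:T-sys}) represents certain exchange relations in the cluster structure, and on the other side invoking the quantum $T$-system for simply-laced types (established in \cite[Proposition 5.6]{HL:qGro}); both sides are characterized as the unique $\overline{(\cdot)}$-invariant element of the appropriate shape in the skew field of fractions, so they must agree.

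For the basis correspondence, I would first promote $\widetilde{\Phi}_{\mathrm{A}}^T$ to $\widetilde{\Phi}_{\mathrm{A}} \colon \mathcal{A}_v[N_-^{\mathrm{A}_{2n-1}}] \to \mathcal{K}_{t, \mathcal{Q}'_{2n-1}}$ using injectivity of the truncation (as in Corollary \ref{c:mainisom}). The characterizations of $\widetilde{\mathbf{B}}^{\mathrm{up}}$ (Theorem \ref{t:dualcanonical}) and of $L_t^{\mathrm{A}}(m)$ (Theorem \ref{t:normqt} with Remark \ref{r:characterize-qt}) are parallel: each singles out its basis by an involution invariance plus unitriangularity with respect to the normalized dual PBW / standard-module basis. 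Since $\widetilde{\Phi}_{\mathrm{A}} \circ \sigma' = \overline{(\cdot)} \circ \widetilde{\Phi}_{\mathrm{A}}$ by \eqref{eq:barcompati}, it suffices to establish $\widetilde{\Phi}_{\mathrm{A}}(\widetilde{F_{-1}^{\mathrm{up}}}(\bm{c},[\mathcal{Q}'_{2n-1}])) = E_t^{\mathrm{A}}(m_{\mathrm{A}}(\bm{c}))$ exactly, with no spurious power of $t^{1/2}$: the PBW product ordering agrees with a compatible reading of $\Gamma_{\mathcal{Q}'_{2n-1}}$, so the two sides agree up to $t^{\beta(\bm{c})/2}$, and applying $\sigma'$ on one side and $\overline{(\cdot)}$ on the other forces $\beta(\bm{c}) = 0$.

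The main obstacle will be the inductive step identifying the system of quantum determinantal identities with the quantum $T$-system under $\widetilde{\Phi}_{\mathrm{A}}^T$. What must be verified is that the monomial $\dprod_{j \in I \setminus \{i\}} \widetilde{D}^{[\mathcal{Q}'_{2n-1}]}(\beta^-(j), \gamma^-(j))^{-c_{ji}}$ corresponds under $\widetilde{\Phi}_{\mathrm{A}}^T$ to $[\underline{S_{k,r}^{(i)}}]^T = [\underline{W_{k,r+2}^{(i-1)}}]^T [\underline{W_{k,r+2}^{(i+1)}}]^T$; this reduces to cataloguing, via convexity of the Auslander-Reiten quiver (Lemma \ref{l:dia}), the identities $(i,r)^-(j) = (j,r+1)$ and $(i,r-2s)^-(j) = (j,r-2s+1)$ for $|i-j|=1$, and noting that in the simply-laced case there is no bifurcation (no analogue of the split at the short node $n$ of the type $\mathrm{B}$ argument), so a single uniform check suffices.
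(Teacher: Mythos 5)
The paper gives no internal proof of Theorem \ref{t:isomtypeA}: it is imported from \cite{HL:qGro} (Theorems 1.2 and 6.1 there) and used as a black box in section \ref{s:AB}. So your proposal cannot be checked against an argument in this paper; what it does is reconstruct the cited proof by specializing the machinery of Theorem \ref{t:mainisom} back to the simply-laced case, and that is indeed essentially the route of \cite{HL:qGro} (the paper itself says its type $\mathrm{B}$ strategy is modelled on it). The skeleton you give --- a torus isomorphism in the style of Theorem \ref{t:torusisom}, identification of the Gei\ss--Leclerc--Schr\"oer determinantal identities with the quantum $T$-system via the bar-invariant characterization in the skew field, and then the parallel characterizations of $\widetilde{\mathbf{B}}^{\mathrm{up}}$ and of the $(q,t)$-characters of simples, with the $\sigma'$/$\overline{(\cdot)}$ trick killing the stray power $t^{\beta(\bm{c})/2}$ --- is the right one and would go through.

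Two points need repair. First, your type $\mathrm{A}$ $S$-term has the wrong spectral shift: with $r_i=1$ the quantum $T$-system of \cite[Proposition 5.6]{HL:qGro} reads $[\underline{W^{(i)}_{k,r}}][\underline{W^{(i)}_{k,r+2}}]=t^{\alpha/2}[\underline{W^{(i)}_{k+1,r}}][\underline{W^{(i)}_{k-1,r+2}}]+t^{\beta/2}[\underline{W^{(i-1)}_{k,r+1}}][\underline{W^{(i+1)}_{k,r+1}}]$, not with $r+2$ in the last two factors; this is in fact forced by your own catalogue $(i,r)^-(j)=(j,r+1)$, so the slip would surface in the analogue of \eqref{eq:comparison} and must be corrected there. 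Second, thinness plus the affine-minuscule property only give $[\underline{W^{(i)}_{s,r}}]=F_t(m^{(i)}_{s,r})$ (the type $\mathrm{A}$ analogue of Proposition \ref{p:F-KR}); the further identification with $L_t^{\mathrm{A}}(m^{(i)}_{s,r})$ does not follow from Proposition \ref{p:thin-qt} alone, and in the present paper the corresponding fact is obtained only a posteriori (Corollary \ref{c:qtKR}). In type $\mathrm{A}$ you may invoke Nakajima's results on $(q,t)$-characters of Kirillov--Reshetikhin modules, or, cleaner and parallel to the paper, state the inductive correspondence in terms of $F_t(m^{(i)}_{s,r})^T$ exactly as in \eqref{eq:corresp} and deduce the $L_t$-form of the theorem at the very end from the dual canonical basis correspondence.
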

Let $\mathcal{Q}_{2n-2}$ be a Dynkin quiver of type $\mathrm{A}_{2n-2}$ and $\xi$ an associated height function. Denote by $\widetilde{\Phi}_{\mathrm{B}}$ (resp.~$L_t^{\mathrm{B}}(m)$) the isomorphism $\widetilde{\Phi}$ in Corollary \ref{c:mainisom} for $\mathcal{C}_{\mathcal{Q}_{2n-2}^{\flat}}$ (resp.~the $(q, t)$-character of $L_t(m)$ for type $\mathrm{B}_{n}^{(1)}$). 

By Corollary \ref{c:mainisom} and Theorem \ref{t:isomtypeA}, we obtain the following : 
\begin{theorem}\label{t:maincorresp}
There exists a $\mathbb{Z}[t^{\pm 1/2}]$-algebra isomorphism $\widetilde{\Phi}_{\mathrm{A}\to \mathrm{B}}\colon \mathcal{K}_{t, \mathcal{Q}'_{2n-1}}\to \mathcal{K}_{t, \mathcal{Q}_{2n-2}^{\flat}}$ given by $\widetilde{\Phi}_{\mathrm{B}}\circ \widetilde{\Phi}_{\mathrm{A}}^{-1}$. Moreover, this restricts to a bijection from $\widetilde{\mathbf{S}}^{\xi'}$ to $\widetilde{\mathbf{S}}^{\xi, \flat}$. 
\end{theorem}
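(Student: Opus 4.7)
The plan is to define $\widetilde{\Phi}_{\mathrm{A}\to \mathrm{B}}$ as the composition $\widetilde{\Phi}_{\mathrm{B}}\circ \widetilde{\Phi}_{\mathrm{A}}^{-1}$ exactly as indicated by the statement, so the actual task is to verify that the required two properties drop out immediately from the two preceding isomorphism theorems. Thus my first step is to invoke Theorem \ref{t:isomtypeA} to obtain $\widetilde{\Phi}_{\mathrm{A}}$ as a $\mathbb{Z}$-algebra isomorphism $\mathcal{A}_v[N_-^{\mathrm{A}_{2n-1}}] \to \mathcal{K}_{t,\mathcal{Q}'_{2n-1}}$ sending $v^{\pm 1/2}\mapsto t^{\mp 1/2}$, and Corollary \ref{c:mainisom} to obtain $\widetilde{\Phi}_{\mathrm{B}}$ as a $\mathbb{Z}$-algebra isomorphism $\mathcal{A}_v[N_-^{\mathrm{A}_{2n-1}}] \to \mathcal{K}_{t,\mathcal{Q}_{2n-2}^{\flat}}$ with the same behaviour on $v^{\pm 1/2}$ (inherited from the definition of $\widetilde{\Phi}^T$ in Theorem \ref{t:torusisom}). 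Both source algebras are the \emph{same} quantized coordinate algebra $\mathcal{A}_v[N_-^{\mathrm{A}_{2n-1}}]$, which is the key point allowing the composition to make sense.

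Next I will observe that the composition $\widetilde{\Phi}_{\mathrm{B}}\circ \widetilde{\Phi}_{\mathrm{A}}^{-1}$ is then automatically a $\mathbb{Z}$-algebra isomorphism $\mathcal{K}_{t,\mathcal{Q}'_{2n-1}}\to \mathcal{K}_{t,\mathcal{Q}_{2n-2}^{\flat}}$, and the action on the scalars gives $t^{\pm 1/2} \mapsto v^{\mp 1/2} \mapsto t^{\pm 1/2}$, so it is in fact a $\mathbb{Z}[t^{\pm 1/2}]$-algebra isomorphism as claimed. For the second assertion, I will recall that Theorem \ref{t:isomtypeA} states $\widetilde{\Phi}_{\mathrm{A}}(\widetilde{\mathbf{B}}^{\mathrm{up}}) = \widetilde{\mathbf{S}}^{\xi'}$ as a bijection, and Corollary \ref{c:mainisom} states $\widetilde{\Phi}_{\mathrm{B}}(\widetilde{\mathbf{B}}^{\mathrm{up}}) = \widetilde{\mathbf{S}}^{\xi,\flat}$ as a bijection; composing, the restriction of $\widetilde{\Phi}_{\mathrm{A}\to\mathrm{B}}$ to $\widetilde{\mathbf{S}}^{\xi'}$ factors through $\widetilde{\mathbf{B}}^{\mathrm{up}}$ and lands bijectively on $\widetilde{\mathbf{S}}^{\xi,\flat}$.

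In short, the proof is a formal composition, with no obstacle, because all of the substantive work has been done either in \cite{HL:qGro} (for $\widetilde{\Phi}_{\mathrm{A}}$) or in Corollary \ref{c:mainisom} (for $\widetilde{\Phi}_{\mathrm{B}}$, which is the central contribution of this paper). The one conceptual point worth flagging in the writeup is that this type $\mathrm{A}$--type $\mathrm{B}$ correspondence at the level of $(q,t)$-characters is not defined intrinsically on the representation-theoretic side: it is forced to pass through the common quantized coordinate algebra $\mathcal{A}_v[N_-^{\mathrm{A}_{2n-1}}]$, and computing it concretely on simple modules requires understanding the transition between the two Lusztig parametrizations $b_{-1}(\bm{c},[\mathcal{Q}'_{2n-1}])$ and $b_{-1}(\bm{c},[\mathcal{Q}_{2n-2}^{\flat}])$, which is the subject of the explicit examples in section \ref{s:AB}.
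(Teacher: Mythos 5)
Your proposal is correct and coincides with the paper's own (essentially tacit) argument: the paper states Theorem \ref{t:maincorresp} as an immediate consequence of composing $\widetilde{\Phi}_{\mathrm{B}}$ from Corollary \ref{c:mainisom} with $\widetilde{\Phi}_{\mathrm{A}}^{-1}$ from Theorem \ref{t:isomtypeA}, both defined on the same algebra $\mathcal{A}_v[N_-^{\mathrm{A}_{2n-1}}]$ and both sending $v^{\pm 1/2}\mapsto t^{\mp 1/2}$, so the composite is $\mathbb{Z}[t^{\pm 1/2}]$-linear and carries $\widetilde{\mathbf{S}}^{\xi'}$ bijectively onto $\widetilde{\mathbf{S}}^{\xi,\flat}$ through $\widetilde{\mathbf{B}}^{\mathrm{up}}$. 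Your closing remark about the explicit computation via the change of Lusztig parametrizations correctly identifies the content deferred to section \ref{s:AB} and is not needed for the theorem itself.
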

\begin{remark}\label{r:stdPBW}
The quiver $\mathcal{Q}_{2n-2}$ is irrelevant to $\mathcal{Q}'_{2n-1}$. We should also remark that the $(q, t)$-characters of standard modules in $\mathcal{C}_{\mathcal{Q}'_{2n-1}}$ are not necessarily sent to those in $\mathcal{C}_{\mathcal{Q}_{2n-2}^{\flat}}$ by $\widetilde{\Phi}_{\mathrm{A}\to \mathrm{B}}$. From the viewpoint of quantized coordinate algebras, this observation corresponds to the fact that there are several choices of normalized dual PBW-type bases depending on the choice of reduced words of the longest element $w_0$, unlike the dual canonical basis. In other words, we can deal with two irrelevant objects, the $(q, t)$-characters of standard modules in $\mathcal{C}_{\mathcal{Q}'_{2n-1}}$ and $\mathcal{C}_{\mathcal{Q}_{2n-2}^{\flat}}$, in the same framework of normalized dual PBW-type bases by considering the quantized coordinate algebra $\mathcal{A}_{v}[N_-^{\mathrm{A}_{2n-1}}]$.
\end{remark}
The inverse of $\widetilde{\Phi}_{\mathrm{A}\to \mathrm{B}}$ is denoted by $\widetilde{\Phi}_{\mathrm{B}\to \mathrm{A}}$. 

\subsection{The explicit description of $\widetilde{\Phi}_{\mathrm{A}\to \mathrm{B}}$ and $\widetilde{\Phi}_{\mathrm{B}\to \mathrm{A}}$}\label{ss:explicit}
In this subsection, we provide a more explicit description of $\widetilde{\Phi}_{\mathrm{A}\to \mathrm{B}}$ and $\widetilde{\Phi}_{\mathrm{B}\to \mathrm{A}}$ under the following assumptions :
\begin{itemize}
\item  $\mathcal{Q}_{2n-2}(=:\overleftarrow{\mathcal{Q}})$ is a quiver 

\hfill
\begin{xy} 0;<1pt,0pt>:<0pt,-1pt>:: 
(210,0) *+{{2n-2}} ="5",
(160,0) *+{ } ="4",
(80,0) *+{ } ="3",
(40,0) *+{2} ="2",
(0,0) *+{1} ="1",
"5", {\ar"4"},
"4", {\ar@{--}"3"},
"3", {\ar"2"},
"2", {\ar"1"},
\end{xy}
\hfill\hfill

endowed with the associated height function $\xi_0$ such that $\xi_0(1)=0$.

\item $\mathcal{Q}'_{2n-1}(=:\overleftarrow{\mathcal{Q}}')$ is a quiver 

\hfill
\begin{xy} 0;<1pt,0pt>:<0pt,-1pt>:: 
(280,0) *+{{2n-1}} ="6",
(210,0) *+{{2n-2}} ="5",
(160,0) *+{ } ="4",
(80,0) *+{ } ="3",
(40,0) *+{2} ="2",
(0,0) *+{1} ="1",
"6", {\ar"5"},
"5", {\ar"4"},
"4", {\ar@{--}"3"},
"3", {\ar"2"},
"2", {\ar"1"},
\end{xy}
\hfill\hfill

endowed with the associated height function such that $\xi'_0(1)=0$.

\item $\flat=<$. 
\end{itemize}

In this section, we write $J_{\mathrm{B}}:=\widehat{I}_{\xi_0}^{\mathrm{tw}, <}$ and $J_{\mathrm{A}}:=(\widehat{I}_{\mathrm{A}})_{\xi'_0}$ for short. 

Let $\bm{c}_0, \bm{c}_1\in \mathbb{Z}_{\geq 0}^{\Delta_+}$. Suppose that 
\begin{align}
\widetilde{\Phi}_{\mathrm{A}\to \mathrm{B}}(L_t^{\mathrm{A}}(m_{\mathrm{A}}(\bm{c}_1)))=L_t^{\mathrm{B}}(m_{\mathrm{B}}(\bm{c}_0)).\label{eq:ABcorresp}
\end{align}
Then 
\[
\widetilde{\Gup}(b_{-1}(\bm{c}_1,[\overleftarrow{\mathcal{Q}}']))=\widetilde{\Gup}(b_{-1}(\bm{c}_0,[\overleftarrow{\mathcal{Q}}^{<}])).
\]
Hence, to describe the correspondence $\widetilde{\Phi}_{\mathrm{A}\to \mathrm{B}}$ and $\widetilde{\Phi}_{\mathrm{B}\to \mathrm{A}}$,  we only have to calculate the change of the Lusztig parametrizations associated with the change of the corresponding commutation classes. Recall the following  proposition : 
\begin{proposition}[{\cite[Chapter 42]{Lus:intro}}]\label{p:localchange}
Let $\bm{i}=(\imath_1, \imath_2,\dots,\imath_{\ell})$ and $\bm{i}'=(\imath'_1, \imath'_2,\dots,\imath'_{\ell})$ be reduced words of $w_0^{\mathrm{A}_{2n-1}}$. 
Suppose that, for some $k\in \{2, 3,\dots, {\ell}-1\}$ and $\imath, \jmath\in I_{\mathrm{A}}$ with $|\imath-\jmath|=1$, 
\begin{align}
\begin{cases}
(\imath_{k-1}, \imath_k, \imath_{k+1})=(\imath, \jmath, \imath),\\
(\imath'_{k-1}, \imath'_k, \imath'_{k+1})=(\jmath, \imath, \jmath),\\
\imath_s=\imath'_s\ \text{for all}\ s\ \text{with}\ s\neq k, k\pm 1,
\end{cases}\label{eq:3move}
\end{align}
Then $b_{-1}(\bm{c},[\bm{i}])=b_{-1}(\bm{c}',[\bm{i}'])$ if and only if
\[
\begin{cases}
c'_{k-1}=c_k+c_{k+1}-\min\{c_{k-1}, c_{k+1}\},\\
c'_k=\min\{c_{k-1}, c_{k+1}\},\\
c'_{k+1}=c_{k-1}+c_k-\min\{c_{k-1}, c_{k+1}\},\\
c'_s=c_s\ \text{for all}\ s\ \text{with}\ s\neq k, k\pm 1,
\end{cases}
\]
here $c_t:=c_{\beta_t^{\bm{i}}}$ and $c'_t:=c_{\beta_t^{\bm{i}'}}$ for $t=1,\dots, \ell$. 
\end{proposition}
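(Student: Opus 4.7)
The strategy is to reduce to the rank~$2$ case (type~$\mathrm{A}_2$) and then perform the resulting calculation explicitly. I first explain the reduction. Since $\bm{i}$ and $\bm{i}'$ agree in all positions outside $\{k-1, k, k+1\}$ and the $\mathrm{A}_2$ braid relation gives $s_{\imath_{k-1}}s_{\imath_k}s_{\imath_{k+1}} = s_{\imath'_{k-1}}s_{\imath'_k}s_{\imath'_{k+1}}$ in $W^{\mathrm{A}_{2n-1}}$, the partial products $w_{\leq s}$ for $\bm{i}$ and $\bm{i}'$ coincide whenever $s \notin \{k-1, k, k+1\}$. Hence $\beta^{\bm{i}}_s = \beta^{\bm{i}'}_s$, and consequently $\widetilde{F_{-1}^{\mathrm{up}}}(\beta^{\bm{i}}_s, \bm{i}) = \widetilde{F_{-1}^{\mathrm{up}}}(\beta^{\bm{i}'}_s, \bm{i}')$ as unipotent quantum minors, for every such $s$. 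Writing the single identity $\widetilde{\Gup}(b_{-1}(\bm{c}, [\bm{i}])) = \widetilde{\Gup}(b_{-1}(\bm{c}', [\bm{i}']))$ via the two PBW expansions given by (NDCB2) in Theorem~\ref{t:dualcanonical}, and invoking the triangularity of dual PBW bases with respect to $<_{[\bm{i}]}$ and $<_{[\bm{i}']}$ (Remark~\ref{r:unitri-PBW}), one concludes that $c_s = c'_s$ for every $s \notin \{k-1, k, k+1\}$.

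It therefore remains to determine the $\mathrm{A}_2$ transition rule: given nonnegative integers $(c_{k-1}, c_k, c_{k+1})$, to find $(c'_{k-1}, c'_k, c'_{k+1})$ so that the normalized dual PBW element indexed by $(c_{k-1}, c_k, c_{k+1})$ with respect to the ordering $(\imath, \jmath, \imath)$ has the same leading term, in the ordering $<_{[(\jmath, \imath, \jmath)]}$, as the one indexed by $(c'_{k-1}, c'_k, c'_{k+1})$ with respect to $(\jmath, \imath, \jmath)$. Conjugating by Lusztig's braid-group symmetry associated to $w_{\leq k-2}$ moves the three-factor subproblem into the rank~$2$ quantum unipotent subalgebra, which is isomorphic to $\mathcal{A}_v[N_-^{\mathrm{A}_2}]$ together with its two dual PBW bases corresponding to the two reduced expressions of the longest element of $\mathfrak{S}_3$. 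Thus the problem reduces to a calculation purely inside $\mathcal{A}_v[N_-^{\mathrm{A}_2}]$.

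The rank~$2$ calculation is carried out by rewriting the ordered monomial in the generators associated to $(\imath, \jmath, \imath)$ as an ordered monomial for $(\jmath, \imath, \jmath)$, using the quantum Serre relation
\[
f_\imath^2 f_\jmath - (v + v^{-1}) f_\imath f_\jmath f_\imath + f_\jmath f_\imath^2 = 0
\]
(or its dual counterpart), and tracking only the leading exponent vector with respect to $<_{[(\jmath, \imath, \jmath)]}$. A finite induction on $c_{k-1} + c_k + c_{k+1}$, together with careful bookkeeping of the normalization prefactor $v^{-\sum_{s<t} c_s c_t (\beta^{\bm{i}}_s, \beta^{\bm{i}}_t)/2}$ from \eqref{eq:normPBW}, produces the piecewise-linear formula
\[
c'_{k-1} = c_k + c_{k+1} - m, \qquad c'_k = m, \qquad c'_{k+1} = c_{k-1} + c_k - m, \qquad m := \min\{c_{k-1}, c_{k+1}\},
\]
which is the tropicalization of the classical $\mathrm{A}_2$ transition matrix. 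The uniqueness characterization (NDCB1)--(NDCB2) then identifies $b_{-1}(\bm{c}, [\bm{i}])$ with $b_{-1}(\bm{c}', [\bm{i}'])$ precisely under this relation.

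The main obstacle is the rank~$2$ calculation itself: one must track the $v^{\pm 1/2}$ normalization factors through the application of the quantum Serre relation and show that the resulting leading coefficient in the $(\jmath, \imath, \jmath)$-PBW basis is a unit (so that the two dual canonical basis elements match on the nose). The reduction from general rank to rank~$2$ is a formal consequence of triangularity once the locality of the braid move has been established, but the explicit $\mathrm{A}_2$ manipulation genuinely requires the combinatorics of quantum divided powers.
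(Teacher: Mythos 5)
The paper offers no proof of Proposition \ref{p:localchange}: it is quoted as a known result from \cite[Chapter 42]{Lus:intro}, so there is no in-paper argument to compare with; what you have written is essentially a reconstruction of Lusztig's own proof. Your outline is correct: the braid move is local (for $s\notin\{k-1,k,k+1\}$ the partial products $w_{\leq s}$, hence the roots $\beta^{\bm{i}}_s=\beta^{\bm{i}'}_s$ and the corresponding normalized dual root vectors, coincide, while the three window roots are the same set $\{w(\alpha_\imath), w(\alpha_\imath+\alpha_\jmath), w(\alpha_\jmath)\}$ with $w=w_{\leq k-2}$ and the outer two swapped), and the window factors rewrite among themselves inside a copy of the rank-two quantum unipotent subalgebra transported by the braid symmetry attached to $w_{\leq k-2}$, so the whole statement reduces to the $\mathrm{A}_2$ transition rule, and the piecewise-linear formula you state is the correct one. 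Two caveats. First, your deduction that $c_s=c'_s$ outside the window ``by triangularity'' comes before you have established locality of the rewriting; triangularity alone does not give it, since a priori expanding an $[\bm{i}]$-PBW monomial in the $[\bm{i}']$-PBW basis could produce terms with altered outside coordinates. The argument is fine once you first prove (via the $T_{w_{\leq k-2}}$-conjugation into rank two) that the rewriting of the three middle factors stays within products of those same three root vectors, and only then invoke the unitriangularity of Theorem \ref{t:dualcanonical} / Remark \ref{r:unitri-PBW}. Second, the rank-two core, which you rightly flag as the real content, is handled in the source not by an induction on $c_{k-1}+c_k+c_{k+1}$ through Serre-relation rewriting, but by the explicit description of the canonical basis of $\mathcal{U}_v^-(\mathrm{A}_2)$ (the divided-power monomials $f_\imath^{(a)}f_\jmath^{(b)}f_\imath^{(c)}$ with $b\geq a+c$ and their $\imath\leftrightarrow\jmath$ counterparts), from which the rule $c'_{k-1}=c_k+c_{k+1}-m$, $c'_k=m$, $c'_{k+1}=c_{k-1}+c_k-m$ with $m=\min\{c_{k-1},c_{k+1}\}$ is read off directly; your proposed brute-force computation should also succeed, but as written it is only a sketch, and tracking the $v^{\pm 1/2}$ normalizations there is exactly the part you have not carried out. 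Since the paper itself simply cites this result, taking the rank-two step from the literature, your proposal is an acceptable (if partly schematic) account of the standard argument rather than a new route.
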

When two commutation classes of reduced words of $w_0^{\mathrm{A}_{2n-1}}$ contain reduced words $\bm{i}$ and $\bm{i}'$, respectively, satisfying \eqref{eq:3move}, we say that these commutation classes are related via \emph{3-move}. If we find explicit 3-moves which connect $[\overleftarrow{\mathcal{Q}}']$ and $[\overleftarrow{\mathcal{Q}}^<]$, we can describe $\widetilde{\Phi}_{\mathrm{A}\to \mathrm{B}}$ and $\widetilde{\Phi}_{\mathrm{B}\to \mathrm{A}}$ explicitly by Proposition \ref{p:localchange}. From now on, we calculate such 3-moves. 

Let 
\[
\widetilde{J}:=I_{\mathrm{A}}\times \left( [0, -2(2n-3)]\cap \frac{1}{2}\mathbb{Z}\right). 
\]

Then $J_{\mathrm{B}}$ is considered as a subset of $\widetilde{J}$. More explicitly, $J_{\mathrm{B}}$ is the following set : 
\begin{align*}
&\{(\imath, -\imath +2-2k)\mid k=0,\dots, 2n-1-\imath\ \text{and}\  \imath=n+1,\dots, 2n-1\}\\
&\cup\{(n, -n+\frac{3}{2}-k)\mid k=0, \dots, 2n-2\}\\
&\cup\{(\imath, -\imath+1-2k)\mid k=0,\dots, 2n-2-\imath\ \text{and}\  \imath=1,\dots, n-1\}.
\end{align*}
For example, the subset $J_{\mathrm{B}}$ for $n=5$ is visualized as follows :

\hfill
\scalebox{0.7}{
\hspace{-50pt}
\begin{xy} 0;<1pt,0pt>:<0pt,-1pt>:: 
(245,0) *+{\bigstar},
(210,25) *+{\bigstar},
(280,25) *+{\bigstar},
(175,50) *+{\bigstar},
(245,50) *+{\bigstar},
(315,50) *+{\bigstar},
(140,75) *+{\bigstar},
(210,75) *+{\bigstar},
(280,75) *+{\bigstar},
(350,75) *+{\bigstar},
(122.5,100) *+{\bigstar},
(157.5,100) *+{\bigstar},
(192.5,100) *+{\bigstar},
(227.5,100) *+{\bigstar},
(262.5,100) *+{\bigstar},
(297.5,100) *+{\bigstar},
(332.5,100) *+{\bigstar},
(367.5,100) *+{\bigstar},
(402.5,100) *+{\bigstar},
(105,125) *+{\bigstar},
(175,125) *+{\bigstar},
(245,125) *+{\bigstar},
(315,125) *+{\bigstar},
(385,125) *+{\bigstar},
(70,150) *+{\bigstar},
(140,150) *+{\bigstar},
(210,150) *+{\bigstar},
(280,150) *+{\bigstar},
(350,150) *+{\bigstar},
(420,150) *+{\bigstar},
(35,175) *+{\bigstar},
(105,175) *+{\bigstar},
(175,175) *+{\bigstar},
(245,175) *+{\bigstar},
(315,175) *+{\bigstar},
(385,175) *+{\bigstar},
(455,175) *+{\bigstar},
(0,200) *+{\bigstar},
(70,200) *+{\bigstar},
(140,200) *+{\bigstar},
(210,200) *+{\bigstar},
(280,200) *+{\bigstar},
(350,200) *+{\bigstar},
(420,200) *+{\bigstar},
(490,200) *+{\bigstar},
(-25,100) *+{\imath},
(-15,200) *+{1},
(-15,175) *+{2},
(-15,150) *+{3},
(-15,125) *+{4},
(-15,100) *+{5},
(-15,75) *+{6},
(-15,50) *+{7},
(-15,25) *+{8},
(-15,0) *+{9},
(0,215) *+{0},
(17.5,215) *+{-\frac{1}{2}},
(35,215) *+{-1},
(52.5,215) *+{-\frac{3}{2}},
(70,215) *+{-2},
(87.5,215) *+{-\frac{5}{2}},
(105,215) *+{-3},
(122.5,215) *+{-\frac{7}{2}},
(140,215) *+{-4},
(157.5,215) *+{-\frac{9}{2}},
(175,215) *+{-5},
(192.5,215) *+{-\frac{11}{2}},
(210,215) *+{-6},
(227.5,215) *+{-\frac{13}{2}},
(245,215) *+{-7},
(262.5,215) *+{-\frac{15}{2}},
(280,215) *+{-8},
(297.5,215) *+{-\frac{17}{2}},
(315,215) *+{-9},
(332.5,215) *+{-\frac{19}{2}},
(350,215) *+{-10},
(367.5,215) *+{-\frac{21}{2}},
(385,215) *+{-11},
(402.5,215) *+{-\frac{23}{2}},
(420,215) *+{-12},
(437.5,215) *+{-\frac{25}{2}},
(455,215) *+{-13},
(472.5,215) *+{-\frac{27}{2}},
(490,215) *+{-14},
{\ar@{.} (0,200); (0,0)},
{\ar@{.} (35,200); (35,0)},
{\ar@{.} (70,200); (70,0)},
{\ar@{.} (105,200); (105,0)},
{\ar@{.} (122.5,200); (122.5,0)},
{\ar@{.} (140,200); (140,0)},
{\ar@{.} (157.5,200); (157.5,0)},
{\ar@{.} (175,200); (175,0)},
{\ar@{.} (192.5,200); (192.5,0)},
{\ar@{.} (210,200); (210,0)},
{\ar@{.} (227.5,200); (227.5,0)},
{\ar@{.} (245,200); (245,0)},
{\ar@{.} (262.5,200); (262.5,0)},
{\ar@{.} (280,200); (280,0)},
{\ar@{.} (297.5,200); (297.5,0)},
{\ar@{.} (315,200); (315,0)},
{\ar@{.} (332.5,200); (332.5,0)},
{\ar@{.} (350,200); (350,0)},
{\ar@{.} (367.5,200); (367.5,0)},
{\ar@{.} (385,200); (385,0)},
{\ar@{.} (402.5,200); (402.5,0)},
{\ar@{.} (420,200); (420,0)},
{\ar@{.} (455,200); (455,0)},
{\ar@{.} (490,200); (490,0)},
{\ar@{.} (17.5,200); (17.5,0)},
{\ar@{.} (52.5,200); (52.5,0)},
{\ar@{.} (87.5,200); (87.5,0)},
{\ar@{.} (437.5,200); (437.5,0)},
{\ar@{.} (472.5,200); (472.5,0)},
{\ar@{.} (0,0); (490,0)},
{\ar@{.} (0,25); (490,25)},
{\ar@{.} (0,50); (490,50)},
{\ar@{.} (0,75); (490,75)},
{\ar@{.} (0,100); (490,100)},
{\ar@{.} (0,125); (490,125)},
{\ar@{.} (0,150); (490,150)},
{\ar@{.} (0,175); (490,175)},
{\ar@{.} (0,200); (490,200)},
\end{xy}
}
\hfill
\hfill

Any subset $J'$ of $\widetilde{J}$ is represented as a function $\delta_{J'}\colon \widetilde{J}\to \{0, 1\}$ given by 
\[
\delta_{J}(\imath, r):=\begin{cases}
1&\text{if}\ (\imath, r)\in J',\\
0&\text{if}\ (\imath, r)\notin J'.
\end{cases}
\]

We call a map $\delta\colon \widetilde{J}\to \{0, 1\}$ \emph{well-arranged} if, for every fixed $r$, the simple reflections $\{s_{\imath}\mid \imath\in I_{\mathrm{A}}, \delta(\imath, r)=1\}$ mutually commute. 

For a well-arranged map $\delta\colon \widetilde{J}\to \{0, 1\}$, set 
\begin{align}
\bm{s}(\delta):=\dprod_{r\in \mathbb{Z}}\left(\prod_{\imath: \delta(\imath, -r)=1} s_{\imath}\right)\in W^{\mathrm{A}_{2n-1}}, \label{eq:def-sdelta}
\end{align}
and let 
\[
[\bm{i}(\delta)]:=[(\imath_1^{(0)},\dots, \imath_{k_0}^{(0)}, \imath_1^{(\frac{1}{2})},\dots, \imath_{k_{\frac{1}{2}}}^{(\frac{1}{2})}, \dots, \imath_1^{(-2(2n-3))},\dots, \imath_{k_{-2(2n-3)}}^{(-2(2n-3))})]
\]
such that 
\[
\{\imath_1^{(s)},\dots, \imath_{k_s}^{(s)}\}=\{\imath\mid \delta(\imath, s)=1\}. 
\]
Note that by the definition of well-arranged property the element $\bm{s}(\delta)$ and the commutation class $[\bm{i}(\delta)]$ is well-defined. We call $\delta$ \emph{reduced} if $[\bm{i}(\delta)]$ is a commutation class of a reduced word of $\bm{s}(\delta)$. 

Let $\delta_{\mathrm{B}}\colon \widetilde{J}\to \{0, 1\}$ be the function associated with $J_{\mathrm{B}}$. This map is well-arranged and reduced.

\begin{lemma}\label{l:braidrel}
Let $\imath_0\in I_{\mathrm{A}}\setminus\{1\}$, $r_0$ an integer with $0<r_0<-2(2n-3)$ and $\delta\colon\widetilde{J}\to \{0, 1\}$ a well-arranged map. Suppose that $\delta$ satisfies 
\begin{align*}
&\delta(\jmath, r_0+\frac{1}{2})=\delta(\jmath, r_0-\frac{1}{2})=\delta_{\imath_0\jmath}\ \text{for all}\ \jmath\in I_{\mathrm{A}},\\
&\delta(\imath_0-1, r_0)=1,\ \delta(\imath_0+1, r_0)=0\ (\delta(2n, r_0):=0).
\end{align*}
Set
\begin{align}
\delta'(\imath, r):=&\delta(\imath, r)-\delta_{(\imath, r), (\imath_0, r_0+\frac{1}{2})}-\delta_{(\imath, r), (\imath_0, r_0-\frac{1}{2})}-\delta_{(\imath, r), (\imath_0-1, r_0)}\\ \label{eq:braid}
&+\delta_{(\imath, r), (\imath_0-1, r_0+\frac{1}{2})}+\delta_{(\imath, r), (\imath_0-1, r_0-\frac{1}{2})}+\delta_{(\imath, r), (\imath_0, r_0)}\notag
\end{align}
for $(\imath, r)\in \widetilde{J}$, here we consider the usual addition and subtraction of integers $0, 1$. Then $\delta'$ is a map from $\widetilde{J}$ to $\{0, 1\}$, and it is well-arranged. Moreover, $s(\delta)=s(\delta')$, and $[\bm{i}(\delta)]$, $[\bm{i}(\delta)]$ are related via 3-move. In particular, $\delta$ is reduced if and only if $\delta'$ is reduced. 
\end{lemma}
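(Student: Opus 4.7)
The plan is to verify, position by position, that the local modification \eqref{eq:braid} of $\delta$ is admissible and corresponds to a single braid move on the associated reduced words. First I would check that $\delta'$ takes values in $\{0,1\}$. At $(\imath_0, r_0 + \frac{1}{2})$, $(\imath_0, r_0 - \frac{1}{2})$ and $(\imath_0 - 1, r_0)$ the value of $\delta$ is $1$ and \eqref{eq:braid} subtracts $1$, giving $0$. At $(\imath_0 - 1, r_0 \pm \frac{1}{2})$ the value of $\delta$ is $0$ by the hypothesis $\delta(\jmath, r_0 \pm \frac{1}{2}) = \delta_{\imath_0 \jmath}$. At $(\imath_0, r_0)$ the value of $\delta$ must be $0$, for otherwise $s_{\imath_0 - 1}$ and $s_{\imath_0}$ would both be selected at $r_0$, contradicting well-arrangedness of $\delta$ since they do not commute. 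Hence the three additions in \eqref{eq:braid} turn $0$ into $1$, and $\delta'$ lands in $\{0,1\}$.

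Next I would verify well-arrangedness of $\delta'$. The only affected time slices are $r_0 \pm \frac{1}{2}$ and $r_0$. At $r_0 \pm \frac{1}{2}$ the new selected set is the singleton $\{\imath_0 - 1\}$, trivially pairwise commuting. At $r_0$ the selected set under $\delta'$ differs from that of $\delta$ by removing $\imath_0 - 1$ and adding $\imath_0$. Every other selected index $\jmath$ at $r_0$ satisfies $|\jmath - (\imath_0 - 1)| \geq 2$ by well-arrangedness of $\delta$, so $\jmath \leq \imath_0 - 3$ or $\jmath \geq \imath_0 + 1$; the hypothesis $\delta(\imath_0 + 1, r_0) = 0$ excludes $\jmath = \imath_0 + 1$, leaving $|\jmath - \imath_0| \geq 2$, so $s_\jmath$ commutes with $s_{\imath_0}$.

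The equality $\bm{s}(\delta) = \bm{s}(\delta')$ and the 3-move then reduce to the braid relation $s_{\imath_0} s_{\imath_0 - 1} s_{\imath_0} = s_{\imath_0 - 1} s_{\imath_0} s_{\imath_0 - 1}$ in $W^{\mathrm{A}_{2n-1}}$. In the ordered product \eqref{eq:def-sdelta}, the three slices $r_0 + \frac{1}{2}$, $r_0$, $r_0 - \frac{1}{2}$ together contribute, under $\delta$, a block of the form $s_{\imath_0} \cdot (s_{\imath_0 - 1} \cdot X) \cdot s_{\imath_0}$, where $X$ is the product of the $s_\jmath$ for the other indices $\jmath$ active at $r_0$; by the previous paragraph each such $s_\jmath$ commutes with both $s_{\imath_0 - 1}$ and $s_{\imath_0}$, so $X$ can be pulled out, isolating the triple $s_{\imath_0} s_{\imath_0 - 1} s_{\imath_0}$. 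Under $\delta'$ the corresponding triple reads $s_{\imath_0 - 1} s_{\imath_0} s_{\imath_0 - 1}$, and the braid relation gives $\bm{s}(\delta) = \bm{s}(\delta')$; choosing any compatible ordering of each time slice produces explicit reduced words representing $[\bm{i}(\delta)]$ and $[\bm{i}(\delta')]$ which differ exactly as in \eqref{eq:3move}. The "in particular" clause follows because a 3-move preserves word length. The only real bookkeeping hurdle is organising the orderings within each time slice so that the triple actually appears adjacently, but this is automatic once one exploits the commutation of $X$ with $s_{\imath_0}$ and $s_{\imath_0 - 1}$ established in the well-arrangedness check.
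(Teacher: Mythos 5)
Your argument is correct and is essentially the paper's proof written out in detail: the paper's two-line proof likewise observes that well-arrangedness forces $\delta(\imath_0,r_0)=\delta(\imath_0-2,r_0)=0$ and then invokes the braid relation in $W^{\mathrm{A}_{2n-1}}$, which is exactly the content of your value check, the commutation of the remaining slice factors $X$ with $s_{\imath_0}$ and $s_{\imath_0-1}$, and the isolated substitution $s_{\imath_0}s_{\imath_0-1}s_{\imath_0}=s_{\imath_0-1}s_{\imath_0}s_{\imath_0-1}$. Only a cosmetic remark: where you write that compatible readings give ``explicit reduced words,'' one should say ``words,'' since reducedness of $\delta$ is not assumed (it only enters in the final ``in particular'' equivalence, which your length argument handles correctly).
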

\begin{remark}
The difference between the subsets corresponding to $\delta$ and $\delta'$ is visualized as follows : 

\hfill
$\delta$
\begin{xy}0;<1pt,0pt>:<0pt,-1pt>:: 
(0,15) *+{\imath_0+1},
(0,45) *+{\imath_0},
(0,75) *+{\imath_0-1},
(0,105) *+{\imath_0-2},
(20,135) *+{r_0+\frac{1}{2}},
(45,135) *+{r_0},
(70,135) *+{r_0-\frac{1}{2}},
(20,45) *+{\bigstar},
(70,45) *+{\bigstar},
(45,75) *+{\bigstar},
{\ar@{.} (20,120); (20,0)},
{\ar@{.} (45,120); (45,0)},
{\ar@{.} (70,120); (70,0)},
{\ar@{.} (15,15); (75,15)},
{\ar@{.} (15,45); (75,45)},
{\ar@{.} (15,75); (75,75)},
{\ar@{.} (15,105); (75,105)},
\end{xy}
\hfill
$\delta'$
\begin{xy}0;<1pt,0pt>:<0pt,-1pt>:: 
(0,15) *+{\imath_0+1},
(0,45) *+{\imath_0},
(0,75) *+{\imath_0-1},
(0,105) *+{\imath_0-2},
(20,135) *+{r_0+\frac{1}{2}},
(45,135) *+{r_0},
(70,135) *+{r_0-\frac{1}{2}},
(45,45) *+{\bigstar},
(20,75) *+{\bigstar},
(70,75) *+{\bigstar},
{\ar@{.} (20,120); (20,0)},
{\ar@{.} (45,120); (45,0)},
{\ar@{.} (70,120); (70,0)},
{\ar@{.} (15,15); (75,15)},
{\ar@{.} (15,45); (75,45)},
{\ar@{.} (15,75); (75,75)},
{\ar@{.} (15,105); (75,105)},
\end{xy}
\hfill\hfill

\end{remark}
\begin{proof}
Since $\delta$ is well-arranged, we have 
\[
\delta(\imath_0, r_0)=\delta(\imath_0-2, r_0)=0.
\]
Therefore the assertions follow from the braid relations in $W^{\mathrm{A}_{2n-1}}$. 
\end{proof}
The map $\delta'$ in Lemma \ref{l:braidrel} will be denoted by $\beta(\imath_0, r_0)(\delta)$. Set 
\begin{align*}
J_{\delta}:=\{(\imath, r)\mid \delta(\imath, r)=1\}&&&J_{\beta(\imath_0, r_0)(\delta)}:=\{(i, r)\mid \beta(\imath_0, r_0)(\delta)(\imath, r)=1\}.
\end{align*}
Assume that $\delta$ is reduced and $s(\delta)=w_0^{\mathrm{A}_{2n-1}}$. Then $J_{\delta}$ and $J_{\beta(i_0, r_0)(\delta)}$ are naturally considered as the vertex sets of the combinatorial Auslander-Reiten quivers $\Upsilon_{[\bm{i}(\delta)]}$ and $\Upsilon_{[\bm{i}(\beta(\imath_0, r_0)(\delta))]}$, hence they give labellings of $\Delta_+$.

Let $\bm{c}=(c_{(\imath, r)})_{(\imath, r)\in J_{\delta}}\in \mathbb{Z}_{\geq 0}^{J_{\delta}}$ and $\bm{c}'=(c'_{(\imath, r)})_{(\imath, r)\in J_{\beta(\imath_0, r_0)(\delta)}}\in \mathbb{Z}_{\geq 0}^{J_{\beta(\imath_0, r_0)(\delta)}}$. By Proposition \ref{p:localchange},  $b_{-1}(\bm{c},[\bm{i}(\delta)])=b_{-1}(\bm{c}',[\bm{i}(\beta(\imath_0, r_0)(\delta))])$ if and only if
\[
\begin{cases}
c'_{(\imath_0-1, r_0+\frac{1}{2})}=c_{(\imath_0-1, r_0)}+c_{(\imath_0, r_0-\frac{1}{2})}-\min\{c_{(\imath_0, r_0+\frac{1}{2})}, c_{(\imath_0, r_0-\frac{1}{2})}\},\\
c'_{(\imath_0, r_0)}=\min\{c_{(\imath_0, r_0+\frac{1}{2})}, c_{(\imath_0, r_0-\frac{1}{2})}\},\\
c'_{(\imath_0-1, r_0-\frac{1}{2})}=c_{(\imath_0, r_0+\frac{1}{2})}+c_{(\imath_0-1, r_0)}-\min\{c_{(\imath_0, r_0+\frac{1}{2})}, c_{(\imath_0, r_0-\frac{1}{2})}\},\\
c'_{(\imath, r)}=c_{(\imath, r)}\ \text{for all}\ (\imath, r)\in J_{\delta}\setminus \{(\imath_0, r_0+\frac{1}{2}), (\imath_0-1, r_0), (\imath_0, r_0-\frac{1}{2})\}.
\end{cases}
\] 
Note that 
\[
J_{\delta}\setminus \{(\imath_0, r_0+\frac{1}{2}), (\imath_0-1, r_0), (\imath_0, r_0-\frac{1}{2})\}=J_{\beta(\imath_0, r_0)(\delta)}\setminus \{(\imath_0-1, r_0+\frac{1}{2}), (\imath_0, r_0), (\imath_0-1, r_0-\frac{1}{2})\}.
\] 
In particular, 

\noindent if $(c_{(\imath_0, r_0+\frac{1}{2})}, c_{(\imath_0-1, r_0)}, c_{(\imath_0, r_0-\frac{1}{2})})=\begin{cases}
(1, 0, 0)\\
(0, 1, 0)\\
(0, 0, 1)
\end{cases}$, then $(c'_{(\imath_0-1, r_0+\frac{1}{2})}, c'_{(\imath_0, r_0)}, c'_{(\imath_0-1, r_0-\frac{1}{2})})=\begin{cases}
(0, 0, 1)\\
(1, 0, 1)\\
(1, 0, 0),
\end{cases}$

\noindent if $(c'_{(\imath_0-1, r_0+\frac{1}{2})}, c'_{(\imath_0, r_0)}, c'_{(\imath_0-1, r_0-\frac{1}{2})})=\begin{cases}
(1, 0, 0)\\
(0, 1, 0)\\
(0, 0, 1)
\end{cases}$, then $(c_{(\imath_0, r_0+\frac{1}{2})}, c_{(\imath_0-1, r_0)}, c_{(\imath_0, r_0-\frac{1}{2})})=\begin{cases}
(0, 0, 1)\\
(1, 0, 1)\\
(1, 0, 0).
\end{cases}$

The following proposition provides explicit 3-moves between $[\overleftarrow{\mathcal{Q
}}]$ and $[\overleftarrow{\mathcal{Q
}}']$. See also Example \ref{e:n=5pictures} below.

\begin{proposition}\label{p:transform}
For $\imath=2,3,\dots, n$, set 
\[
\bm{\beta}_{\imath}:=\dprod_{k=0, 1,\dots, \imath-2}\beta(\imath, -2n+\imath-2k). 
\]
Then the following hold : 
\begin{itemize}
\item[(1)] $\bm{\beta}_2\bm{\beta}_3\cdots \bm{\beta}_n(\delta_{\mathrm{B}})$, denoted by $\delta_{\mathrm{A}}$, is well-defined and the corresponding subset $\widehat{J}_{\mathrm{A}}$ of $\widetilde{J}$ is given by 
\begin{align*}
&\{(\imath, -\imath+1-2k)\mid k=0,\dots, n-1-\imath\ \text{and}\ \imath=1,\dots, n-1\},\\
&\cup\{(\imath, -2n+\imath+\frac{3}{2})\mid \imath=1,\dots, n\}\\
&\cup\{(\imath, -2n+\imath-2k)\mid k=0,\dots, \imath -2\ \text{and}\ \imath=2,\dots, n\}\\
&\cup\{(\imath, -2n-\imath +\frac{5}{2})\mid \imath=1,\dots, n-1\}\\
&\cup\{(\imath, -2n-\imath+1-2k)\mid k=0,\dots, n-2-\imath\ \text{and}\ \imath=1,\dots, n-2\}\\
&\cup\{(\imath, -\imath +2-2k)\mid k=0,1,\dots, 2n-1-\imath\ \text{and}\ \imath=n+1,\dots, 2n-1\}.
\end{align*}
\item[(2)] $[\bm{i}(\delta_{\mathrm{A}})]=[\overleftarrow{\mathcal{Q}}']$. 
\end{itemize}
\end{proposition}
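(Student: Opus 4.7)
The plan is to verify the proposition by a careful inductive unwinding of the composition of braid moves, and then to identify the resulting reduced word with one adapted to $\overleftarrow{\mathcal{Q}}'$. Throughout, we use only Lemma \ref{l:braidrel}, the explicit description of $J_{\mathrm{B}}$ recalled above, and the characterization of the adapted commutation class $[\overleftarrow{\mathcal{Q}}']$ via its combinatorial Auslander-Reiten quiver (Theorem~\ref{t:AR}, Proposition~\ref{p:adapted}).

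First, I would establish by induction on the pair $(\imath,k)$, running over $\imath=n,n-1,\dots,2$ and, for each $\imath$, over $k=0,1,\dots,\imath-2$, the following loop invariant: after applying the initial segment of $\bm{\beta}_n\bm{\beta}_{n-1}\cdots\bm{\beta}_{\imath+1}\cdot\beta(\imath,-2n+\imath)\beta(\imath,-2n+\imath-2)\cdots\beta(\imath,-2n+\imath-2(k-1))$ to $\delta_{\mathrm{B}}$, one obtains a well-arranged, reduced map whose support differs from $J_{\mathrm{B}}$ in a precisely described local way: the half-integer columns that previously carried vertices only at row $n$ have been partially converted, and a triangular region of vertices has migrated upward. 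Concretely, the hypothesis needed for Lemma \ref{l:braidrel} to apply at the next slot $(\imath_0,r_0)=(\imath,-2n+\imath-2k)$ is that the two neighboring half-integer columns $r_0\pm\tfrac{1}{2}$ each contain exactly one vertex and it lies in row $\imath_0$, and that $\delta(\imath_0-1,r_0)=1$, $\delta(\imath_0+1,r_0)=0$. I would verify these three conditions by inspection of the previous step of the induction, using the explicit form of $J_{\mathrm{B}}$ near row $n$ and the fact that each $\beta$ leaves columns away from the three affected ones untouched. Since Lemma \ref{l:braidrel} also preserves well-arrangedness, reducedness and the underlying Weyl group element, this inductive step is propagated automatically.

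Next, once the induction has run to completion, I would read off the final support set. Each block $\bm{\beta}_{\imath}$ shifts a length-$(\imath-1)$ piece of the column pattern one row upward while inserting a new half-integer vertex, and the composition $\bm{\beta}_2\bm{\beta}_3\cdots\bm{\beta}_n$ reshuffles the half of $J_{\mathrm{B}}$ sitting in rows $\le n$ into the three strata in rows $\le n$ that appear in the statement of (1); the rows $n+1,\dots,2n-1$ stay untouched because no $\beta(\imath_0,r_0)$ in the product has $\imath_0>n$ and none of its three affected cells meets those rows. Matching the results column by column with the claimed description of $\widehat{J}_{\mathrm{A}}$ then gives (1).

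Finally, for (2), I would identify $[\bm{i}(\delta_{\mathrm{A}})]$ with $[\overleftarrow{\mathcal{Q}}']$ by showing that, after discarding the half-integer columns (which contain just one vertex each and can be absorbed into an adjacent integer column using the commutation between commuting simple reflections), the support $\widehat{J}_{\mathrm{A}}$ read in order of decreasing second coordinate is precisely the standard height function of the linear quiver $\overleftarrow{\mathcal{Q}}'$ as described in Example \ref{e:AR}(i) up to a constant shift. Concretely, I would produce an explicit reduced word in $[\bm{i}(\delta_{\mathrm{A}})]$ by a compatible reading of the associated combinatorial Auslander-Reiten quiver (Theorem~\ref{t:comread}) and check that it is adapted to $\overleftarrow{\mathcal{Q}}'$ in the sense of subsection~\ref{ss:adapted}; by Proposition~\ref{p:adapted}(2) this determines $[\overleftarrow{\mathcal{Q}}']$ uniquely. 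The main obstacle is the bookkeeping in the inductive step, in particular checking that when $\bm{\beta}_{\imath}$ has been completed the local configuration near the top of the next block $\bm{\beta}_{\imath-1}$ is exactly what Lemma~\ref{l:braidrel} requires; this is best handled by drawing the pictures as in Example~\ref{e:n=5pictures} (invoked right after the statement) and verifying that each successive move affects three adjacent cells in the expected pattern.
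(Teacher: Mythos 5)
Your treatment of part (1) is essentially the paper's argument: the paper also proceeds by writing down the intermediate configurations (it defines explicit interpolating maps $\delta_{\imath}$ between $\delta_{\mathrm{B}}$ and the target set and checks that the hypotheses of Lemma \ref{l:braidrel} hold at each application of $\beta(\imath,r)$), so your move-by-move induction with the stated loop invariant is the same direct verification, just organized per move rather than per block $\bm{\beta}_{\imath}$. The observation that rows $n+1,\dots,2n-1$ are never touched is also correct.

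For part (2), however, there is a genuine gap. Your identification of the commutation class rests on the claim that each half-integer-column letter of $\widehat{J}_{\mathrm{A}}$ "can be absorbed into an adjacent integer column" by commutations, after which the support would match the adapted labeling of $\overleftarrow{\mathcal{Q}}'$ up to shift. This is false in general: take the stratum-two vertex $(\imath,-2n+\imath+\tfrac{3}{2})$ with $2\leq \imath\leq n-1$. Its right neighbouring integer column $-2n+\imath+2$ contains the vertex $(\imath-1,-2n+\imath+2)$ (from the first stratum, with $k=n-\imath$), and its left neighbouring column $-2n+\imath+1$ contains $(\imath+1,-2n+\imath+1)$ (from the third stratum, with $k=0$); since $s_{\imath}$ commutes with neither $s_{\imath-1}$ nor $s_{\imath+1}$, this letter cannot be moved into either adjacent column. (Concretely, for $n=5$ the residue-$3$ letter at column $-\tfrac{11}{2}$ in the last picture of Example \ref{e:n=5pictures} is blocked by residue $2$ at column $-5$ and residue $4$ at column $-6$.) Consequently the support does not become the height-function pattern of $J_{\mathrm{A}}$ "up to a constant shift" (indeed the two supports have different shapes and column ranges), and the subsequent step --- exhibiting a compatible reading and checking it is adapted --- is left without any actual verification, since that verification was supposed to come from the absorption. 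The paper avoids this entirely: by Theorem \ref{t:commequiv} it suffices to show that the combinatorial Auslander--Reiten quiver $\Upsilon_{[\bm{i}(\delta_{\mathrm{A}})]}$ coincides with $\Upsilon_{[\overleftarrow{\mathcal{Q}}']}$, and the latter is characterized by two properties checkable directly from the explicit set $\widehat{J}_{\mathrm{A}}$: the number of vertices of residue $\imath$ is $2n-\imath$, and the convexity (interleaving) property of Lemma \ref{l:dia}, namely that between two column-consecutive vertices of residue $\imath$ there is exactly one vertex of each adjacent residue $\imath'$. If you want to keep your route, you would need to replace the absorption step by an argument of this kind (or produce, uniformly in $n$, an explicit adapted reduced word together with the commutations taking the column reading of $\widehat{J}_{\mathrm{A}}$ to it), since adaptedness of a reading cannot be read off from the set $\widehat{J}_{\mathrm{A}}$ alone without such an interleaving analysis.
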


\begin{proof}
Let $\delta_{\mathrm{A}}'$ be the function $\widetilde{J}\to\{0, 1\}$ which corresponds to the subset explicitly written in the statement (1). For $\imath=1, 2,\dots, n$, define $\delta_{\imath}\colon \widetilde{J}\to\{0, 1\}$ by
\begin{align*}
\delta_{\imath}(\jmath, r):=\begin{cases}
\delta_{\mathrm{A}}'(\jmath, r)&\text{if}\ \jmath>\imath,\\
\displaystyle \sum_{k=0}^{n-\imath -1}\delta_{r, -\imath+1-2k}+\sum_{k=0}^{2\imath -1-\delta_{\imath, n}}\delta_{r, -2n+\imath+\frac{3}{2}-k}+\sum_{k=0}^{n-\imath -2}\delta_{r, -2n-\imath+1-2k}&\text{if}\ \jmath=\imath,\\
\delta_{\mathrm{B}}(\jmath, r)&\text{if}\ \jmath<\imath.
\end{cases}
\end{align*}
Note that $\delta_{n}=\delta_{\mathrm{B}}$ and $\delta_1=\delta_{\mathrm{A}}'$. To prove (1), it remains to show that $\bm{\beta}_k\delta_{k}$ is well-defined and is equal to $\delta_{k-1}$. By the explicit description, we can check the assumption of Lemma \ref{l:braidrel} directly and obtain the desired assertion straightforwardly. 

Next, by Theorem \ref{t:commequiv}, we only have to show that the combinatorial Auslander-Reiten quiver $\Upsilon_{[\bm{i}(\delta_{\mathrm{A}})]}$ is isomorphic to $\Upsilon_{[\overleftarrow{\mathcal{Q}}']}$ as a quiver by the correspondence preserving residues. Now $\Upsilon_{[\overleftarrow{\mathcal{Q}}']}$ is characterized by the following two properties :
\begin{itemize}
\item[(a)] the number of vertices of residue $\imath$ is equal to $2n-\imath$ for $\imath\in I_{\mathrm{A}}$, 
\item[(b)] convexity in the sense of Lemma \ref{l:dia}.  
\end{itemize}
By the explicit description, the quiver $\Upsilon_{[\bm{i}(\delta_{\mathrm{A}})]}$ satisfies (a). The property (b) corresponds to the property that 
\begin{itemize}
\item[] for any $\imath, \imath'\in I_{\mathrm{A}}$ with $|\imath-\imath'|=1$ and any pair $\{(\imath, r), (\imath, r^-)\}\subset\widehat{J}_{\mathrm{A}}$ with $r^-=\min\{s\mid s>r, (\imath, s)\in \widehat{J}_{\mathrm{A}}\}$, there uniquely exists $(\imath', r')\in \widehat{J}_{\mathrm{A}}$ such that $r<r'<r^-$.
\end{itemize}
We can also prove this property straightforwardly by the explicit description. 
\end{proof}

\begin{example}\label{e:n=5pictures}
The following is an example of the procedure in Proposition \ref{p:transform} for $n=5$. 

\scalebox{0.45}{
\hspace{-50pt}
\begin{xy} 0;<1pt,0pt>:<0pt,-1pt>:: 
(245,0) *+{\bigstar},
(210,25) *+{\bigstar},
(280,25) *+{\bigstar},
(175,50) *+{\bigstar},
(245,50) *+{\bigstar},
(315,50) *+{\bigstar},
(140,75) *+{\bigstar},
(210,75) *+{\bigstar},
(280,75) *+{\bigstar},
(350,75) *+{\bigstar},
(122.5,100) *+{\bigstar},
(157.5,100) *+{\bigstar},
(192.5,100) *+{\bigstar},
(227.5,100) *+{\bigstar},
(262.5,100) *+{\bigstar},
(297.5,100) *+{\bigstar},
(332.5,100) *+{\bigstar},
(367.5,100) *+{\bigstar},
(402.5,100) *+{\bigstar},
(105,125) *+{\bigstar},
(175,125) *+{\bigstar},
(245,125) *+{\bigstar},
(315,125) *+{\bigstar},
(385,125) *+{\bigstar},
(70,150) *+{\bigstar},
(140,150) *+{\bigstar},
(210,150) *+{\bigstar},
(280,150) *+{\bigstar},
(350,150) *+{\bigstar},
(420,150) *+{\bigstar},
(35,175) *+{\bigstar},
(105,175) *+{\bigstar},
(175,175) *+{\bigstar},
(245,175) *+{\bigstar},
(315,175) *+{\bigstar},
(385,175) *+{\bigstar},
(455,175) *+{\bigstar},
(0,200) *+{\bigstar},
(70,200) *+{\bigstar},
(140,200) *+{\bigstar},
(210,200) *+{\bigstar},
(280,200) *+{\bigstar},
(350,200) *+{\bigstar},
(420,200) *+{\bigstar},
(490,200) *+{\bigstar},
(-15,200) *+{1},
(-15,175) *+{2},
(-15,150) *+{3},
(-15,125) *+{4},
(-15,100) *+{5},
(-15,75) *+{6},
(-15,50) *+{7},
(-15,25) *+{8},
(-15,0) *+{9},
(-15,-15) *+{\delta_{\mathrm{B}} :},
(-15,200) *+{1},
(-15,175) *+{2},
(-15,150) *+{3},
(-15,125) *+{4},
(-15,100) *+{5},
(-15,75) *+{6},
(-15,50) *+{7},
(-15,25) *+{8},
(-15,0) *+{9},
(0,215) *+{0},
(17.5,215) *+{-\frac{1}{2}},
(35,215) *+{-1},
(52.5,215) *+{-\frac{3}{2}},
(70,215) *+{-2},
(87.5,215) *+{-\frac{5}{2}},
(105,215) *+{-3},
(122.5,215) *+{-\frac{7}{2}},
(140,215) *+{-4},
(157.5,215) *+{-\frac{9}{2}},
(175,215) *+{-5},
(192.5,215) *+{-\frac{11}{2}},
(210,215) *+{-6},
(227.5,215) *+{-\frac{13}{2}},
(245,215) *+{-7},
(262.5,215) *+{-\frac{15}{2}},
(280,215) *+{-8},
(297.5,215) *+{-\frac{17}{2}},
(315,215) *+{-9},
(332.5,215) *+{-\frac{19}{2}},
(350,215) *+{-10},
(367.5,215) *+{-\frac{21}{2}},
(385,215) *+{-11},
(402.5,215) *+{-\frac{23}{2}},
(420,215) *+{-12},
(437.5,215) *+{-\frac{25}{2}},
(455,215) *+{-13},
(472.5,215) *+{-\frac{27}{2}},
(490,215) *+{-14},
{\ar@{.} (0,200); (0,0)},
{\ar@{.} (35,200); (35,0)},
{\ar@{.} (70,200); (70,0)},
{\ar@{.} (105,200); (105,0)},
{\ar@{.} (122.5,200); (122.5,0)},
{\ar@{.} (140,200); (140,0)},
{\ar@{.} (157.5,200); (157.5,0)},
{\ar@{.} (175,200); (175,0)},
{\ar@{.} (192.5,200); (192.5,0)},
{\ar@{.} (210,200); (210,0)},
{\ar@{.} (227.5,200); (227.5,0)},
{\ar@{.} (245,200); (245,0)},
{\ar@{.} (262.5,200); (262.5,0)},
{\ar@{.} (280,200); (280,0)},
{\ar@{.} (297.5,200); (297.5,0)},
{\ar@{.} (315,200); (315,0)},
{\ar@{.} (332.5,200); (332.5,0)},
{\ar@{.} (350,200); (350,0)},
{\ar@{.} (367.5,200); (367.5,0)},
{\ar@{.} (385,200); (385,0)},
{\ar@{.} (402.5,200); (402.5,0)},
{\ar@{.} (420,200); (420,0)},
{\ar@{.} (455,200); (455,0)},
{\ar@{.} (490,200); (490,0)},
{\ar@{.} (17.5,200); (17.5,0)},
{\ar@{.} (52.5,200); (52.5,0)},
{\ar@{.} (87.5,200); (87.5,0)},
{\ar@{.} (437.5,200); (437.5,0)},
{\ar@{.} (472.5,200); (472.5,0)},
{\ar@{.} (0,0); (490,0)},
{\ar@{.} (0,25); (490,25)},
{\ar@{.} (0,50); (490,50)},
{\ar@{.} (0,75); (490,75)},
{\ar@{.} (0,100); (490,100)},
{\ar@{.} (0,125); (490,125)},
{\ar@{.} (0,150); (490,150)},
{\ar@{.} (0,175); (490,175)},
{\ar@{.} (0,200); (490,200)},
\end{xy}
}
\scalebox{0.45}{
\begin{xy} 0;<1pt,0pt>:<0pt,-1pt>:: 
(245,0) *+{\bigstar},
(210,25) *+{\bigstar},
(280,25) *+{\bigstar},
(175,50) *+{\bigstar},
(245,50) *+{\bigstar},
(315,50) *+{\bigstar},
(140,75) *+{\bigstar},
(210,75) *+{\bigstar},
(280,75) *+{\bigstar},
(350,75) *+{\bigstar},
(122.5,100) *+{\bigstar},
(175,100) *+{\bigstar},
(245,100) *+{\bigstar},
(315,100) *+{\bigstar},
(385,100) *+{\bigstar},
(105,125) *+{\bigstar},
(157.5,125) *+{\bigstar},
(192.5,125) *+{\bigstar},
(227.5,125) *+{\bigstar},
(262.5,125) *+{\bigstar},
(297.5,125) *+{\bigstar},
(332.5,125) *+{\bigstar},
(367.5,125) *+{\bigstar},
(402.5,125) *+{\bigstar},
(70,150) *+{\bigstar},
(140,150) *+{\bigstar},
(210,150) *+{\bigstar},
(280,150) *+{\bigstar},
(350,150) *+{\bigstar},
(420,150) *+{\bigstar},
(35,175) *+{\bigstar},
(105,175) *+{\bigstar},
(175,175) *+{\bigstar},
(245,175) *+{\bigstar},
(315,175) *+{\bigstar},
(385,175) *+{\bigstar},
(455,175) *+{\bigstar},
(0,200) *+{\bigstar},
(70,200) *+{\bigstar},
(140,200) *+{\bigstar},
(210,200) *+{\bigstar},
(280,200) *+{\bigstar},
(350,200) *+{\bigstar},
(420,200) *+{\bigstar},
(490,200) *+{\bigstar},
(-15,200) *+{1},
(-15,175) *+{2},
(-15,150) *+{3},
(-15,125) *+{4},
(-15,100) *+{5},
(-15,75) *+{6},
(-15,50) *+{7},
(-15,25) *+{8},
(-15,0) *+{9},
(-15,-15) *+{\bm{\beta}_5(\delta_{\mathrm{B}}) :},
(-15,200) *+{1},
(-15,175) *+{2},
(-15,150) *+{3},
(-15,125) *+{4},
(-15,100) *+{5},
(-15,75) *+{6},
(-15,50) *+{7},
(-15,25) *+{8},
(-15,0) *+{9},
(0,215) *+{0},
(17.5,215) *+{-\frac{1}{2}},
(35,215) *+{-1},
(52.5,215) *+{-\frac{3}{2}},
(70,215) *+{-2},
(87.5,215) *+{-\frac{5}{2}},
(105,215) *+{-3},
(122.5,215) *+{-\frac{7}{2}},
(140,215) *+{-4},
(157.5,215) *+{-\frac{9}{2}},
(175,215) *+{-5},
(192.5,215) *+{-\frac{11}{2}},
(210,215) *+{-6},
(227.5,215) *+{-\frac{13}{2}},
(245,215) *+{-7},
(262.5,215) *+{-\frac{15}{2}},
(280,215) *+{-8},
(297.5,215) *+{-\frac{17}{2}},
(315,215) *+{-9},
(332.5,215) *+{-\frac{19}{2}},
(350,215) *+{-10},
(367.5,215) *+{-\frac{21}{2}},
(385,215) *+{-11},
(402.5,215) *+{-\frac{23}{2}},
(420,215) *+{-12},
(437.5,215) *+{-\frac{25}{2}},
(455,215) *+{-13},
(472.5,215) *+{-\frac{27}{2}},
(490,215) *+{-14},
{\ar@{.} (0,200); (0,0)},
{\ar@{.} (35,200); (35,0)},
{\ar@{.} (70,200); (70,0)},
{\ar@{.} (105,200); (105,0)},
{\ar@{.} (122.5,200); (122.5,0)},
{\ar@{.} (140,200); (140,0)},
{\ar@{.} (157.5,200); (157.5,0)},
{\ar@{.} (175,200); (175,0)},
{\ar@{.} (192.5,200); (192.5,0)},
{\ar@{.} (210,200); (210,0)},
{\ar@{.} (227.5,200); (227.5,0)},
{\ar@{.} (245,200); (245,0)},
{\ar@{.} (262.5,200); (262.5,0)},
{\ar@{.} (280,200); (280,0)},
{\ar@{.} (297.5,200); (297.5,0)},
{\ar@{.} (315,200); (315,0)},
{\ar@{.} (332.5,200); (332.5,0)},
{\ar@{.} (350,200); (350,0)},
{\ar@{.} (367.5,200); (367.5,0)},
{\ar@{.} (385,200); (385,0)},
{\ar@{.} (402.5,200); (402.5,0)},
{\ar@{.} (420,200); (420,0)},
{\ar@{.} (455,200); (455,0)},
{\ar@{.} (490,200); (490,0)},
{\ar@{.} (17.5,200); (17.5,0)},
{\ar@{.} (52.5,200); (52.5,0)},
{\ar@{.} (87.5,200); (87.5,0)},
{\ar@{.} (437.5,200); (437.5,0)},
{\ar@{.} (472.5,200); (472.5,0)},
{\ar@{.} (0,0); (490,0)},
{\ar@{.} (0,25); (490,25)},
{\ar@{.} (0,50); (490,50)},
{\ar@{.} (0,75); (490,75)},
{\ar@{.} (0,100); (490,100)},
{\ar@{.} (0,125); (490,125)},
{\ar@{.} (0,150); (490,150)},
{\ar@{.} (0,175); (490,175)},
{\ar@{.} (0,200); (490,200)},
\end{xy}
}
\scalebox{0.45}{
\hspace{-50pt}
\begin{xy} 0;<1pt,0pt>:<0pt,-1pt>:: 
(245,0) *+{\bigstar},
(210,25) *+{\bigstar},
(280,25) *+{\bigstar},
(175,50) *+{\bigstar},
(245,50) *+{\bigstar},
(315,50) *+{\bigstar},
(140,75) *+{\bigstar},
(210,75) *+{\bigstar},
(280,75) *+{\bigstar},
(350,75) *+{\bigstar},
(122.5,100) *+{\bigstar},
(175,100) *+{\bigstar},
(245,100) *+{\bigstar},
(315,100) *+{\bigstar},
(385,100) *+{\bigstar},
(105,125) *+{\bigstar},
(157.5,125) *+{\bigstar},
(192.5,150) *+{\bigstar},
(227.5,150) *+{\bigstar},
(262.5,150) *+{\bigstar},
(297.5,150) *+{\bigstar},
(332.5,150) *+{\bigstar},
(367.5,150) *+{\bigstar},
(402.5,125) *+{\bigstar},
(70,150) *+{\bigstar},
(140,150) *+{\bigstar},
(210,125) *+{\bigstar},
(280,125) *+{\bigstar},
(350,125) *+{\bigstar},
(420,150) *+{\bigstar},
(35,175) *+{\bigstar},
(105,175) *+{\bigstar},
(175,175) *+{\bigstar},
(245,175) *+{\bigstar},
(315,175) *+{\bigstar},
(385,175) *+{\bigstar},
(455,175) *+{\bigstar},
(0,200) *+{\bigstar},
(70,200) *+{\bigstar},
(140,200) *+{\bigstar},
(210,200) *+{\bigstar},
(280,200) *+{\bigstar},
(350,200) *+{\bigstar},
(420,200) *+{\bigstar},
(490,200) *+{\bigstar},
(-15,200) *+{1},
(-15,175) *+{2},
(-15,150) *+{3},
(-15,125) *+{4},
(-15,100) *+{5},
(-15,75) *+{6},
(-15,50) *+{7},
(-15,25) *+{8},
(-15,0) *+{9},
(-15,-15) *+{\bm{\beta}_4\bm{\beta}_5(\delta_{\mathrm{B}}) :},
(-15,200) *+{1},
(-15,175) *+{2},
(-15,150) *+{3},
(-15,125) *+{4},
(-15,100) *+{5},
(-15,75) *+{6},
(-15,50) *+{7},
(-15,25) *+{8},
(-15,0) *+{9},
(0,215) *+{0},
(17.5,215) *+{-\frac{1}{2}},
(35,215) *+{-1},
(52.5,215) *+{-\frac{3}{2}},
(70,215) *+{-2},
(87.5,215) *+{-\frac{5}{2}},
(105,215) *+{-3},
(122.5,215) *+{-\frac{7}{2}},
(140,215) *+{-4},
(157.5,215) *+{-\frac{9}{2}},
(175,215) *+{-5},
(192.5,215) *+{-\frac{11}{2}},
(210,215) *+{-6},
(227.5,215) *+{-\frac{13}{2}},
(245,215) *+{-7},
(262.5,215) *+{-\frac{15}{2}},
(280,215) *+{-8},
(297.5,215) *+{-\frac{17}{2}},
(315,215) *+{-9},
(332.5,215) *+{-\frac{19}{2}},
(350,215) *+{-10},
(367.5,215) *+{-\frac{21}{2}},
(385,215) *+{-11},
(402.5,215) *+{-\frac{23}{2}},
(420,215) *+{-12},
(437.5,215) *+{-\frac{25}{2}},
(455,215) *+{-13},
(472.5,215) *+{-\frac{27}{2}},
(490,215) *+{-14},
{\ar@{.} (0,200); (0,0)},
{\ar@{.} (35,200); (35,0)},
{\ar@{.} (70,200); (70,0)},
{\ar@{.} (105,200); (105,0)},
{\ar@{.} (122.5,200); (122.5,0)},
{\ar@{.} (140,200); (140,0)},
{\ar@{.} (157.5,200); (157.5,0)},
{\ar@{.} (175,200); (175,0)},
{\ar@{.} (192.5,200); (192.5,0)},
{\ar@{.} (210,200); (210,0)},
{\ar@{.} (227.5,200); (227.5,0)},
{\ar@{.} (245,200); (245,0)},
{\ar@{.} (262.5,200); (262.5,0)},
{\ar@{.} (280,200); (280,0)},
{\ar@{.} (297.5,200); (297.5,0)},
{\ar@{.} (315,200); (315,0)},
{\ar@{.} (332.5,200); (332.5,0)},
{\ar@{.} (350,200); (350,0)},
{\ar@{.} (367.5,200); (367.5,0)},
{\ar@{.} (385,200); (385,0)},
{\ar@{.} (402.5,200); (402.5,0)},
{\ar@{.} (420,200); (420,0)},
{\ar@{.} (455,200); (455,0)},
{\ar@{.} (490,200); (490,0)},
{\ar@{.} (17.5,200); (17.5,0)},
{\ar@{.} (52.5,200); (52.5,0)},
{\ar@{.} (87.5,200); (87.5,0)},
{\ar@{.} (437.5,200); (437.5,0)},
{\ar@{.} (472.5,200); (472.5,0)},
{\ar@{.} (0,0); (490,0)},
{\ar@{.} (0,25); (490,25)},
{\ar@{.} (0,50); (490,50)},
{\ar@{.} (0,75); (490,75)},
{\ar@{.} (0,100); (490,100)},
{\ar@{.} (0,125); (490,125)},
{\ar@{.} (0,150); (490,150)},
{\ar@{.} (0,175); (490,175)},
{\ar@{.} (0,200); (490,200)},
\end{xy}
}
\scalebox{0.45}{
\begin{xy} 0;<1pt,0pt>:<0pt,-1pt>:: 
(245,0) *+{\bigstar},
(210,25) *+{\bigstar},
(280,25) *+{\bigstar},
(175,50) *+{\bigstar},
(245,50) *+{\bigstar},
(315,50) *+{\bigstar},
(140,75) *+{\bigstar},
(210,75) *+{\bigstar},
(280,75) *+{\bigstar},
(350,75) *+{\bigstar},
(122.5,100) *+{\bigstar},
(175,100) *+{\bigstar},
(245,100) *+{\bigstar},
(315,100) *+{\bigstar},
(385,100) *+{\bigstar},
(105,125) *+{\bigstar},
(157.5,125) *+{\bigstar},
(210,125) *+{\bigstar},
(280,125) *+{\bigstar},
(350,125) *+{\bigstar},
(402.5,125) *+{\bigstar},
(70,150) *+{\bigstar},
(140,150) *+{\bigstar},
(192.5,150) *+{\bigstar},
(245,150) *+{\bigstar},
(315,150) *+{\bigstar},
(367.5,150) *+{\bigstar},
(420,150) *+{\bigstar},
(35,175) *+{\bigstar},
(105,175) *+{\bigstar},
(175,175) *+{\bigstar},
(227.5,175) *+{\bigstar},
(262.5,175) *+{\bigstar},
(297.5,175) *+{\bigstar},
(332.5,175) *+{\bigstar},
(385,175) *+{\bigstar},
(455,175) *+{\bigstar},
(0,200) *+{\bigstar},
(70,200) *+{\bigstar},
(140,200) *+{\bigstar},
(210,200) *+{\bigstar},
(280,200) *+{\bigstar},
(350,200) *+{\bigstar},
(420,200) *+{\bigstar},
(490,200) *+{\bigstar},
(-15,200) *+{1},
(-15,175) *+{2},
(-15,150) *+{3},
(-15,125) *+{4},
(-15,100) *+{5},
(-15,75) *+{6},
(-15,50) *+{7},
(-15,25) *+{8},
(-15,0) *+{9},
(-15,-15) *+{\bm{\beta}_3\bm{\beta}_4\bm{\beta}_5(\delta_{\mathrm{B}}) :},
(-15,200) *+{1},
(-15,175) *+{2},
(-15,150) *+{3},
(-15,125) *+{4},
(-15,100) *+{5},
(-15,75) *+{6},
(-15,50) *+{7},
(-15,25) *+{8},
(-15,0) *+{9},
(0,215) *+{0},
(17.5,215) *+{-\frac{1}{2}},
(35,215) *+{-1},
(52.5,215) *+{-\frac{3}{2}},
(70,215) *+{-2},
(87.5,215) *+{-\frac{5}{2}},
(105,215) *+{-3},
(122.5,215) *+{-\frac{7}{2}},
(140,215) *+{-4},
(157.5,215) *+{-\frac{9}{2}},
(175,215) *+{-5},
(192.5,215) *+{-\frac{11}{2}},
(210,215) *+{-6},
(227.5,215) *+{-\frac{13}{2}},
(245,215) *+{-7},
(262.5,215) *+{-\frac{15}{2}},
(280,215) *+{-8},
(297.5,215) *+{-\frac{17}{2}},
(315,215) *+{-9},
(332.5,215) *+{-\frac{19}{2}},
(350,215) *+{-10},
(367.5,215) *+{-\frac{21}{2}},
(385,215) *+{-11},
(402.5,215) *+{-\frac{23}{2}},
(420,215) *+{-12},
(437.5,215) *+{-\frac{25}{2}},
(455,215) *+{-13},
(472.5,215) *+{-\frac{27}{2}},
(490,215) *+{-14},
{\ar@{.} (0,200); (0,0)},
{\ar@{.} (35,200); (35,0)},
{\ar@{.} (70,200); (70,0)},
{\ar@{.} (105,200); (105,0)},
{\ar@{.} (122.5,200); (122.5,0)},
{\ar@{.} (140,200); (140,0)},
{\ar@{.} (157.5,200); (157.5,0)},
{\ar@{.} (175,200); (175,0)},
{\ar@{.} (192.5,200); (192.5,0)},
{\ar@{.} (210,200); (210,0)},
{\ar@{.} (227.5,200); (227.5,0)},
{\ar@{.} (245,200); (245,0)},
{\ar@{.} (262.5,200); (262.5,0)},
{\ar@{.} (280,200); (280,0)},
{\ar@{.} (297.5,200); (297.5,0)},
{\ar@{.} (315,200); (315,0)},
{\ar@{.} (332.5,200); (332.5,0)},
{\ar@{.} (350,200); (350,0)},
{\ar@{.} (367.5,200); (367.5,0)},
{\ar@{.} (385,200); (385,0)},
{\ar@{.} (402.5,200); (402.5,0)},
{\ar@{.} (420,200); (420,0)},
{\ar@{.} (455,200); (455,0)},
{\ar@{.} (490,200); (490,0)},
{\ar@{.} (17.5,200); (17.5,0)},
{\ar@{.} (52.5,200); (52.5,0)},
{\ar@{.} (87.5,200); (87.5,0)},
{\ar@{.} (437.5,200); (437.5,0)},
{\ar@{.} (472.5,200); (472.5,0)},
{\ar@{.} (0,0); (490,0)},
{\ar@{.} (0,25); (490,25)},
{\ar@{.} (0,50); (490,50)},
{\ar@{.} (0,75); (490,75)},
{\ar@{.} (0,100); (490,100)},
{\ar@{.} (0,125); (490,125)},
{\ar@{.} (0,150); (490,150)},
{\ar@{.} (0,175); (490,175)},
{\ar@{.} (0,200); (490,200)},
\end{xy}
}

\hfill
\scalebox{0.45}{
\begin{xy} 0;<1pt,0pt>:<0pt,-1pt>:: 
(245,0) *+{\bigstar},
(210,25) *+{\bigstar},
(280,25) *+{\bigstar},
(175,50) *+{\bigstar},
(245,50) *+{\bigstar},
(315,50) *+{\bigstar},
(140,75) *+{\bigstar},
(210,75) *+{\bigstar},
(280,75) *+{\bigstar},
(350,75) *+{\bigstar},
(122.5,100) *+{\bigstar},
(175,100) *+{\bigstar},
(245,100) *+{\bigstar},
(315,100) *+{\bigstar},
(385,100) *+{\bigstar},
(105,125) *+{\bigstar},
(157.5,125) *+{\bigstar},
(210,125) *+{\bigstar},
(280,125) *+{\bigstar},
(350,125) *+{\bigstar},
(402.5,125) *+{\bigstar},
(70,150) *+{\bigstar},
(140,150) *+{\bigstar},
(192.5,150) *+{\bigstar},
(245,150) *+{\bigstar},
(315,150) *+{\bigstar},
(367.5,150) *+{\bigstar},
(420,150) *+{\bigstar},
(35,175) *+{\bigstar},
(105,175) *+{\bigstar},
(175,175) *+{\bigstar},
(227.5,175) *+{\bigstar},
(262.5,200) *+{\bigstar},
(297.5,200) *+{\bigstar},
(332.5,175) *+{\bigstar},
(385,175) *+{\bigstar},
(455,175) *+{\bigstar},
(0,200) *+{\bigstar},
(70,200) *+{\bigstar},
(140,200) *+{\bigstar},
(210,200) *+{\bigstar},
(280,175) *+{\bigstar},
(350,200) *+{\bigstar},
(420,200) *+{\bigstar},
(490,200) *+{\bigstar},
(-15,200) *+{1},
(-15,175) *+{2},
(-15,150) *+{3},
(-15,125) *+{4},
(-15,100) *+{5},
(-15,75) *+{6},
(-15,50) *+{7},
(-15,25) *+{8},
(-15,0) *+{9},
(-15,-15) *+{\bm{\beta}_2\bm{\beta}_3\bm{\beta}_4\bm{\beta}_5(\delta_{\mathrm{B}}) :},
(-15,200) *+{1},
(-15,175) *+{2},
(-15,150) *+{3},
(-15,125) *+{4},
(-15,100) *+{5},
(-15,75) *+{6},
(-15,50) *+{7},
(-15,25) *+{8},
(-15,0) *+{9},
(0,215) *+{0},
(17.5,215) *+{-\frac{1}{2}},
(35,215) *+{-1},
(52.5,215) *+{-\frac{3}{2}},
(70,215) *+{-2},
(87.5,215) *+{-\frac{5}{2}},
(105,215) *+{-3},
(122.5,215) *+{-\frac{7}{2}},
(140,215) *+{-4},
(157.5,215) *+{-\frac{9}{2}},
(175,215) *+{-5},
(192.5,215) *+{-\frac{11}{2}},
(210,215) *+{-6},
(227.5,215) *+{-\frac{13}{2}},
(245,215) *+{-7},
(262.5,215) *+{-\frac{15}{2}},
(280,215) *+{-8},
(297.5,215) *+{-\frac{17}{2}},
(315,215) *+{-9},
(332.5,215) *+{-\frac{19}{2}},
(350,215) *+{-10},
(367.5,215) *+{-\frac{21}{2}},
(385,215) *+{-11},
(402.5,215) *+{-\frac{23}{2}},
(420,215) *+{-12},
(437.5,215) *+{-\frac{25}{2}},
(455,215) *+{-13},
(472.5,215) *+{-\frac{27}{2}},
(490,215) *+{-14},
{\ar@{.} (0,200); (0,0)},
{\ar@{.} (35,200); (35,0)},
{\ar@{.} (70,200); (70,0)},
{\ar@{.} (105,200); (105,0)},
{\ar@{.} (122.5,200); (122.5,0)},
{\ar@{.} (140,200); (140,0)},
{\ar@{.} (157.5,200); (157.5,0)},
{\ar@{.} (175,200); (175,0)},
{\ar@{.} (192.5,200); (192.5,0)},
{\ar@{.} (210,200); (210,0)},
{\ar@{.} (227.5,200); (227.5,0)},
{\ar@{.} (245,200); (245,0)},
{\ar@{.} (262.5,200); (262.5,0)},
{\ar@{.} (280,200); (280,0)},
{\ar@{.} (297.5,200); (297.5,0)},
{\ar@{.} (315,200); (315,0)},
{\ar@{.} (332.5,200); (332.5,0)},
{\ar@{.} (350,200); (350,0)},
{\ar@{.} (367.5,200); (367.5,0)},
{\ar@{.} (385,200); (385,0)},
{\ar@{.} (402.5,200); (402.5,0)},
{\ar@{.} (420,200); (420,0)},
{\ar@{.} (455,200); (455,0)},
{\ar@{.} (490,200); (490,0)},
{\ar@{.} (17.5,200); (17.5,0)},
{\ar@{.} (52.5,200); (52.5,0)},
{\ar@{.} (87.5,200); (87.5,0)},
{\ar@{.} (437.5,200); (437.5,0)},
{\ar@{.} (472.5,200); (472.5,0)},
{\ar@{.} (0,0); (490,0)},
{\ar@{.} (0,25); (490,25)},
{\ar@{.} (0,50); (490,50)},
{\ar@{.} (0,75); (490,75)},
{\ar@{.} (0,100); (490,100)},
{\ar@{.} (0,125); (490,125)},
{\ar@{.} (0,150); (490,150)},
{\ar@{.} (0,175); (490,175)},
{\ar@{.} (0,200); (490,200)},
\end{xy}
}
\hfill
\hfill
 
\end{example}

By the way, we have
\[
J_{\mathrm{A}}=\{(\imath, -\imath+1-2k)\mid k=0, 1,\dots, 2n-\imath-1\ \text{and}\ \imath\in I_{\mathrm{A}}\}. 
\]
By Proposition \ref{p:transform} (2), we have a bijection $\widehat{J}_{\mathrm{A}}\to J_{\mathrm{A}}$ given by
\[
(\imath, r)\mapsto (\imath, -\imath+1-2k)\ \text{with}\ k=\#\{r'\mid r'> r, (\imath, r')\in \widehat{J}_{\mathrm{A}}\}. 
\]
The explicit form of this bijection is given by
\begin{align*}
&(\imath, -\imath+1-2k)\mapsto (\imath, -\imath+1-2k)\ \text{for}\ k=0,\dots, n-1-\imath\ \text{and}\ \imath=1,\dots, n-1,\\
&(\imath, -2n+\imath+\frac{3}{2})\mapsto (\imath, -2n+\imath+1)\ \text{for}\ \imath=1,\dots, n,\\
&(\imath, -2n+\imath-2k)\mapsto (\imath, -2n+\imath-1-2k)\ \text{for}\ k=0,\dots, \imath -2\ \text{and}\ \imath=2,\dots, n,\\
&(\imath, -2n-\imath +\frac{5}{2})\mapsto (\imath, -2n-\imath+1)\ \text{for}\ \imath=1,\dots, n-1,\\
&(\imath, -2n-\imath+1-2k)\mapsto (\imath, -2n-\imath-1-2k)\ \text{for}\ k=0,\dots, n-2-\imath\ \text{and}\ \imath=1,\dots, n-2,\\
&(\imath, -\imath +2-2k)\mapsto (\imath, -\imath+1-2k)\ \text{for}\ k=0,1,\dots, 2n-\imath -1\ \text{and}\ \imath=n+1,\dots, 2n-1.
\end{align*}

By Proposition \ref{p:transform}, we already knew an explicit method for obtaining $\widehat{J}_{\mathrm{A}}$ from $J_{\mathrm{B}}$ by repeated application of $\beta(\imath, r)$, and we also already described the explicit bijection between $\widehat{J}_{\mathrm{A}}$ and $J_{\mathrm{A}}$ as above. Therefore, we can describe the correspondence between $m_{\mathrm{A}}(\bm{c}_1)$ and $m_{\mathrm{B}}(\bm{c}_0)$ in \eqref{eq:ABcorresp} by direct calculation (recall also the observation before Proposition \ref{p:transform}). The detailed calculation is left to the reader :
\begin{theorem}\label{t:explicitcorresp}
The following give the explicit images of the $(q, t)$-characters of fundamental modules under $\widetilde{\Phi}_{\mathrm{A}\to \mathrm{B}}$ and $\widetilde{\Phi}_{\mathrm{B}\to \mathrm{A}}$ :
\begin{align*}
&\widetilde{\Phi}_{\mathrm{A}\to \mathrm{B}}(L_t^{\mathrm{A}}(Y_{\imath, -\imath+1-2k}))=L_t(Y_{\imath, -2\imath +2-4k})\ \text{for}\ k=0,\dots, n-1-\imath\ \text{and}\ \imath=1,\dots,n-1,\\
&\widetilde{\Phi}_{\mathrm{A}\to \mathrm{B}}(L_t^{\mathrm{A}}(Y_{\imath, -2n+\imath+1}))=L_t(Y_{n, -6n+4\imath+3})\ \text{for}\ \imath=1,\dots, n,\\
&\widetilde{\Phi}_{\mathrm{A}\to \mathrm{B}}(L_t^{\mathrm{A}}(Y_{\imath, -2n+\imath-1-2k}))=L_t(Y_{n, -2n+1-4k}Y_{n, -6n+4\imath -1-4k})\ \text{for}\ k=0,\dots, \imath-2\ \text{and}\ \imath=2,\dots,n,\\
&\widetilde{\Phi}_{\mathrm{A}\to \mathrm{B}}(L_t^{\mathrm{A}}(Y_{\imath, -2n-\imath+1}))=L_t(Y_{n, -2n-4\imath +5})\ \text{for}\ \imath=1,\dots,n-1,\\
&\widetilde{\Phi}_{\mathrm{A}\to \mathrm{B}}(L_t^{\mathrm{A}}(Y_{\imath, -2n-\imath-1-2k}))=L_t(Y_{\imath, -4n-2\imath +2-4k})\ \text{for}\ k=0,\dots, n-2-\imath\ \text{and}\ \imath=1,\dots,n-2,\\
&\widetilde{\Phi}_{\mathrm{A}\to \mathrm{B}}(L_t^{\mathrm{A}}(Y_{\imath, -\imath+1-2k}))=L_t(Y_{2n-\imath, -2\imath +4-4k})\ \text{for}\ k=0,\dots,2n-1-\imath\ \text{and}\ \imath=n+1,\dots,2n-1.
\end{align*}
\begin{align*}
&\widetilde{\Phi}_{\mathrm{B}\to \mathrm{A}}(L_t(Y_{i, -2i +2-4k}))=L_t^{\mathrm{A}}(Y_{i, -i+1-2k})\ \text{for}\ k=0,\dots,n-1-i\ \text{and}\ i=1,\dots,n-1,\\
&\widetilde{\Phi}_{\mathrm{B}\to \mathrm{A}}(L_t(Y_{i, -4n+2i +2-4k}))=L_t^{\mathrm{A}}(Y_{i-k , -2n+i +1-k}Y_{1+k, -2n-k})\ \text{for}\ k=0,\dots,i-1\ \text{and}\ i=1,\dots,n-1,\\
&\widetilde{\Phi}_{\mathrm{B}\to \mathrm{A}}(L_t(Y_{i, -4n-2i +2-4k}))=L_t^{\mathrm{A}}(Y_{i, -2n-i-1-2k})\ \text{for}\ k=0,\dots,n-2-i\ \text{and}\ i=1,\dots,n-2,\\
&\widetilde{\Phi}_{\mathrm{B}\to \mathrm{A}}(L_t(Y_{n, -2n+3-4k}))=L_t^{\mathrm{A}}(Y_{n-k, -n+1-k})\ \text{for}\ k=0,\dots,n-1,\\
&\widetilde{\Phi}_{\mathrm{B}\to \mathrm{A}}(L_t(Y_{n, -2n+1-4k}))=L_t^{\mathrm{A}}(Y_{1+k, -2n-k})\ \text{for}\ k=0,\dots,n-2,\\
&\widetilde{\Phi}_{\mathrm{B}\to \mathrm{A}}(L_t(Y_{n-i, -2n-2i +4-4k}))=L_t^{\mathrm{A}}(Y_{n+i, -n-i+1-2k})\ \text{for}\ k=0,\dots, n-i -1\ \text{and}\ i=1,\dots,n-1.
\end{align*}
\end{theorem}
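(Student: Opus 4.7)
The strategy is to reduce the computation of $\widetilde{\Phi}_{\mathrm{A}\to\mathrm{B}}$ on fundamental modules to a purely combinatorial calculation on Lusztig parametrizations of a single normalized dual canonical basis element, using the sequence of $3$-moves supplied by Proposition \ref{p:transform}. Concretely, by Theorem \ref{t:isomtypeA} and Corollary \ref{c:mainisom}, the equality $\widetilde{\Phi}_{\mathrm{A}\to\mathrm{B}}(L_t^{\mathrm{A}}(m_{\mathrm{A}}(\bm{c}_1)))=L_t^{\mathrm{B}}(m_{\mathrm{B}}(\bm{c}_0))$ is equivalent to the identity $b_{-1}(\bm{c}_1,[\overleftarrow{\mathcal{Q}}'])=b_{-1}(\bm{c}_0,[\overleftarrow{\mathcal{Q}}^{<}])$ in $\mathscr{B}(\infty)$. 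Hence each entry of the theorem amounts to computing, for the unit vector $\bm{c}_1=\bm{e}_{(\imath,r)}$ associated with a fundamental module of type $\mathrm{A}$, the vector $\bm{c}_0$ parametrizing the same element with respect to $[\overleftarrow{\mathcal{Q}}^{<}]$, and conversely.

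First I would set up the bookkeeping. Proposition \ref{p:transform} provides an explicit chain of well-arranged reduced maps $\delta_{\mathrm{B}}\rightsquigarrow \bm{\beta}_n(\delta_{\mathrm{B}})\rightsquigarrow\cdots\rightsquigarrow \bm{\beta}_2\cdots\bm{\beta}_n(\delta_{\mathrm{B}})=\delta_{\mathrm{A}}$, and an explicit residue-preserving bijection $\widehat{J}_{\mathrm{A}}\to J_{\mathrm{A}}$ identifying $[\bm{i}(\delta_{\mathrm{A}})]$ with $[\overleftarrow{\mathcal{Q}}']$. I will transport the unit vector $\bm{e}_{(\imath,r)}\in\mathbb{Z}_{\geq 0}^{J_{\mathrm{A}}}$ back through this bijection to a unit vector supported on one site of $\widehat{J}_{\mathrm{A}}$, and then apply Proposition \ref{p:localchange} inductively, one elementary $3$-move at a time, to determine the corresponding vector in $\mathbb{Z}_{\geq 0}^{J_{\mathrm{B}}}$. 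The vector in $\mathbb{Z}_{\geq 0}^{J_{\mathrm{B}}}$ then yields the monomial $m_{\mathrm{B}}(\bm{c}_0)$ via Theorem \ref{t:mainisom}.

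The computation itself is governed by the three possible elementary transitions recorded just before Proposition \ref{p:transform}: at each $3$-move $\beta(\imath_0,r_0)$, the triple of coordinates on the affected sites transforms as $(1,0,0)\mapsto(0,0,1)$, $(0,1,0)\mapsto(1,0,1)$, $(0,0,1)\mapsto(1,0,0)$, and the remaining entries are unchanged. A support initially concentrated at a single site therefore either shifts to one of the two ``diagonal'' neighbours (the first and third cases) or, when a $3$-move hits the centre of the triple, \emph{splits} into two simultaneous $1$'s (the second case). Iterating through $\bm{\beta}_n,\bm{\beta}_{n-1},\dots,\bm{\beta}_2$ and reading off at which stages the support is hit at its centre yields all the correspondences stated in Theorem \ref{t:explicitcorresp}: the five families that remain fundamental come from the sites never hit at their centre, while the family $Y_{n,-2n+1-4k}Y_{n,-6n+4\imath-1-4k}$ (and its dual under $\widetilde{\Phi}_{\mathrm{B}\to\mathrm{A}}$) comes precisely from the one splitting event encountered by the site of residue $\imath\in\{2,\dots,n\}$ as it crosses rows of residue $n$.

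The hard part is not any single step but the bookkeeping: certifying that the induction on the $\bm{\beta}_k$ can be carried out without the support ever populating more than the two sites expected at that stage, and that at every application of $\beta(\imath_0,r_0)$ only one of the three local patterns above occurs. This will be handled by tracing, for each of the six source families in the statement, the trajectory of the support through the five rows of pictures in Example \ref{e:n=5pictures} (and its obvious generalisation to arbitrary $n$); the computation is mechanical and, after routine verification that the relabelling $\widehat{J}_{\mathrm{A}}\to J_{\mathrm{A}}$ matches the explicit parametrisation of $[\overleftarrow{\mathcal{Q}}']$, yields the two tables of correspondences.
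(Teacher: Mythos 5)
Your proposal follows exactly the paper's route: reduce via Theorem \ref{t:isomtypeA} and Corollary \ref{c:mainisom} to an equality of Lusztig parametrizations, then track a unit vector through the explicit chain of 3-moves from Proposition \ref{p:transform} using the local transition rules of Proposition \ref{p:localchange} and the relabelling $\widehat{J}_{\mathrm{A}}\to J_{\mathrm{A}}$. The paper itself declares the resulting bookkeeping a direct calculation left to the reader, so your outline (including the deferred verification that the support behaves as expected) matches the published argument.
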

\begin{remark}
The correspondence given by $\widetilde{\Phi}_{\mathrm{A}\to \mathrm{B}}$ in Theorem \ref{t:explicitcorresp}  coincides with Kashiwara-Kim-Oh's correspondence \cite[section 3]{KKO:AB} up to shift of spectral parameters when we restrict it to $\mathcal{C}_{\overleftarrow{\mathcal{Q}}'}$. We should note that their approach is based on the generalized quantum affine Schur-Weyl dualities.
\end{remark}
\begin{example}\label{ex:ABcorresp2}
The following is the explicit correspondence in the case $n=2$ (type $\mathrm{A}_3^{(1)}/\mathrm{B}_2^{(1)}$). 
\begin{align*}
&\widetilde{\Phi}_{\mathrm{A}\to \mathrm{B}}(L_t^{\mathrm{A}}(Y_{1, 0}))=L_t^{\mathrm{B}}(Y_{1, 0}),&&&
&\widetilde{\Phi}_{\mathrm{A}\to \mathrm{B}}(L_t^{\mathrm{A}}(Y_{1, -2}))=L_t^{\mathrm{B}}(Y_{2, -5}),\\
&\widetilde{\Phi}_{\mathrm{A}\to \mathrm{B}}(L_t^{\mathrm{A}}(Y_{1, -4}))=L_t^{\mathrm{B}}(Y_{2, -3}),&&&
&\widetilde{\Phi}_{\mathrm{A}\to \mathrm{B}}(L_t^{\mathrm{A}}(Y_{2, -1}))=L_t^{\mathrm{B}}(Y_{2, -1}),\\
&\widetilde{\Phi}_{\mathrm{A}\to \mathrm{B}}(L_t^{\mathrm{A}}(Y_{2, -3}))=L_t^{\mathrm{B}}(m_{2, -5}^{(2)}),&&&
&\widetilde{\Phi}_{\mathrm{A}\to \mathrm{B}}(L_t^{\mathrm{A}}(Y_{3, -2}))=L_t^{\mathrm{B}}(Y_{1, -2}).
\end{align*}
\begin{align*}
&\widetilde{\Phi}_{\mathrm{B}\to \mathrm{A}}(L_t^{\mathrm{B}}(Y_{1, 0}))=L_t^{\mathrm{A}}(Y_{1, 0}),&&&
&\widetilde{\Phi}_{\mathrm{B}\to \mathrm{A}}(L_t^{\mathrm{B}}(Y_{1, -4}))=L_t^{\mathrm{A}}(m_{2, -4}^{(1)}),\\
&\widetilde{\Phi}_{\mathrm{B}\to \mathrm{A}}(L_t^{\mathrm{B}}(Y_{2, -1}))=L_t^{\mathrm{A}}(Y_{2, -1}),&&&
&\widetilde{\Phi}_{\mathrm{B}\to \mathrm{A}}(L_t^{\mathrm{B}}(Y_{2, -3}))=L_t^{\mathrm{A}}(Y_{1, -4}),\\
&\widetilde{\Phi}_{\mathrm{B}\to \mathrm{A}}(L_t^{\mathrm{B}}(Y_{2, -5}))=L_t^{\mathrm{A}}(Y_{1, -2}),&&&
&\widetilde{\Phi}_{\mathrm{B}\to \mathrm{A}}(L_t^{\mathrm{B}}(Y_{1, -2}))=L_t^{\mathrm{A}}(Y_{3, -2}).
\end{align*}
\end{example}
\begin{example}\label{ex:ABcorresp3}
The following is the explicit correspondence in the case $n=3$ (type $\mathrm{A}_5^{(1)}/\mathrm{B}_3^{(1)}$). Here ``not KR'' means ``not the $(q, t)$-character of a Kirillov-Reshetikhin module''. 
\begin{align*}
&\widetilde{\Phi}_{\mathrm{A}\to \mathrm{B}}(L_t^{\mathrm{A}}(Y_{1, 0}))=L_t^{\mathrm{B}}(Y_{1, 0}),&&&
&\widetilde{\Phi}_{\mathrm{A}\to \mathrm{B}}(L_t^{\mathrm{A}}(Y_{1, -2}))=L_t^{\mathrm{B}}(Y_{1, -4}),\\
&\widetilde{\Phi}_{\mathrm{A}\to \mathrm{B}}(L_t^{\mathrm{A}}(Y_{1, -4}))=L_t^{\mathrm{B}}(Y_{3, -11}),&&&
&\widetilde{\Phi}_{\mathrm{A}\to \mathrm{B}}(L_t^{\mathrm{A}}(Y_{1, -6}))=L_t^{\mathrm{B}}(Y_{3, -5}),\\
&\widetilde{\Phi}_{\mathrm{A}\to \mathrm{B}}(L_t^{\mathrm{A}}(Y_{1, -8}))=L_t^{\mathrm{B}}(Y_{1, -12}),&&&
&\widetilde{\Phi}_{\mathrm{A}\to \mathrm{B}}(L_t^{\mathrm{A}}(Y_{2, -1}))=L_t^{\mathrm{B}}(Y_{2, -2}),\\
&\widetilde{\Phi}_{\mathrm{A}\to \mathrm{B}}(L_t^{\mathrm{A}}(Y_{2, -3}))=L_t^{\mathrm{B}}(Y_{3, -7}), &&&
&\widetilde{\Phi}_{\mathrm{A}\to \mathrm{B}}(L_t^{\mathrm{A}}(Y_{2, -5}))=L_t^{\mathrm{B}}(Y_{3, -5}Y_{3, -11}), (\text{not KR})\\
&\widetilde{\Phi}_{\mathrm{A}\to \mathrm{B}}(L_t^{\mathrm{A}}(Y_{2, -7}))=L_t^{\mathrm{B}}(Y_{3, -9}),&&&
&\widetilde{\Phi}_{\mathrm{A}\to \mathrm{B}}(L_t^{\mathrm{A}}(Y_{3, -2}))=L_t^{\mathrm{B}}(Y_{3, -3}),\\
&\widetilde{\Phi}_{\mathrm{A}\to \mathrm{B}}(L_t^{\mathrm{A}}(Y_{3, -4}))=L_t^{\mathrm{B}}(m_{2, -7}^{(3)}),&&&
&\widetilde{\Phi}_{\mathrm{A}\to \mathrm{B}}(L_t^{\mathrm{A}}(Y_{3, -6}))=L_t^{\mathrm{B}}(m_{2, -11}^{(3)}),\\
&\widetilde{\Phi}_{\mathrm{A}\to \mathrm{B}}(L_t^{\mathrm{A}}(Y_{4, -3}))=L_t^{\mathrm{B}}(Y_{2, -4}),&&&
&\widetilde{\Phi}_{\mathrm{A}\to \mathrm{B}}(L_t^{\mathrm{A}}(Y_{4, -5}))=L_t^{\mathrm{B}}(Y_{2, -8}),\\
&\widetilde{\Phi}_{\mathrm{A}\to \mathrm{B}}(L_t^{\mathrm{A}}(Y_{5, -4}))=L_t^{\mathrm{B}}(Y_{1, -6}).&&&&
\end{align*}
\begin{align*}
&\widetilde{\Phi}_{\mathrm{B}\to \mathrm{A}}(L_t^{\mathrm{B}}(Y_{1, 0}))=L_t^{\mathrm{A}}(Y_{1, 0}),&&&
&\widetilde{\Phi}_{\mathrm{B}\to \mathrm{A}}(L_t^{\mathrm{B}}(Y_{1, -4}))=L_t^{\mathrm{A}}(Y_{1, -2}),\\
&\widetilde{\Phi}_{\mathrm{B}\to \mathrm{A}}(L_t^{\mathrm{B}}(Y_{1, -8}))=L_t^{\mathrm{A}}(m_{2, -6}^{(1)}),&&&
&\widetilde{\Phi}_{\mathrm{B}\to \mathrm{A}}(L_t^{\mathrm{B}}(Y_{1, -12}))=L_t^{\mathrm{A}}(Y_{1, -8}),\\
&\widetilde{\Phi}_{\mathrm{B}\to \mathrm{A}}(L_t^{\mathrm{B}}(Y_{2, -2}))=L_t^{\mathrm{A}}(Y_{2, -1}),&&&
&\widetilde{\Phi}_{\mathrm{B}\to \mathrm{A}}(L_t^{\mathrm{B}}(Y_{2, -6}))=L_t^{\mathrm{A}}(Y_{2, -3}Y_{1, -6}), (\text{not KR})\\
&\widetilde{\Phi}_{\mathrm{B}\to \mathrm{A}}(L_t^{\mathrm{B}}(Y_{2, -10}))=L_t^{\mathrm{A}}(Y_{1, -4}Y_{2, -7}), (\text{not KR})&&&
&\widetilde{\Phi}_{\mathrm{B}\to \mathrm{A}}(L_t^{\mathrm{B}}(Y_{3, -3}))=L_t^{\mathrm{A}}(Y_{3, -2}),\\
&\widetilde{\Phi}_{\mathrm{B}\to \mathrm{A}}(L_t^{\mathrm{B}}(Y_{3, -5}))=L_t^{\mathrm{A}}(Y_{1, -6}),&&&
&\widetilde{\Phi}_{\mathrm{B}\to \mathrm{A}}(L_t^{\mathrm{B}}(Y_{3, -7}))=L_t^{\mathrm{A}}(Y_{2, -3}),\\
&\widetilde{\Phi}_{\mathrm{B}\to \mathrm{A}}(L_t^{\mathrm{B}}(Y_{3, -9}))=L_t^{\mathrm{A}}(Y_{2, -7}),&&&
&\widetilde{\Phi}_{\mathrm{B}\to \mathrm{A}}(L_t^{\mathrm{B}}(Y_{3, -11}))=L_t^{\mathrm{A}}(Y_{1, -4}),\\
&\widetilde{\Phi}_{\mathrm{B}\to \mathrm{A}}(L_t^{\mathrm{B}}(Y_{2, -4}))=L_t^{\mathrm{A}}(Y_{4, -3}),&&&
&\widetilde{\Phi}_{\mathrm{B}\to \mathrm{A}}(L_t^{\mathrm{B}}(Y_{2, -8}))=L_t^{\mathrm{A}}(Y_{4, -5}),\\
&\widetilde{\Phi}_{\mathrm{B}\to \mathrm{A}}(L_t^{\mathrm{B}}(Y_{1, -6}))=L_t^{\mathrm{A}}(Y_{5, -4}).&&&&
\end{align*}
\end{example}

\appendix
\section{The inverse of the quantum Cartan matrix of type $\mathrm{B}_n$}\label{a:inv}
Recall the notation in section \ref{s:qtori}. Assume that the Cartan matrix $C$ is of type $\mathrm{B}_n$. The following lemma provides the explicit formula of $\widetilde{C}(z)_{ji}$ for $i, j\in I$ as an element of $\mathbb{Z}\dbracket{z^{-1}}$ (see also Example \ref{e:invcarB_2}, \ref{e:invcarB-pic} and Remark \ref{r:explicitinvcarB}). The formula of $\widetilde{C}(z)_{ji}$ in different forms can be also found in \cite[Appendix C]{FR:defW-alg} and \cite[Appendix A]{GT}. 
\begin{lemma}\label{l:explicitinvcarB}
For $j\in I$ and $k\in \mathbb{Z}_{\geq 0}$, we have
\begin{align*}
\widetilde{c}_{ji}(-2k-1)=0\ \text{for}\ i\in I\setminus\{n\},&&& \widetilde{c}_{jn}(-2k-2)=0.
\end{align*}
For $j\in I$ and $k=0, 1,\dots, n-1$, we have
\begin{align}
\widetilde{c}_{ji}(-2k)&=
\begin{cases}
1&\text{if}\ (\ast),\\
0&\text{otherwise,}
\end{cases}\ \text{for}\ i\in I\setminus\{n\},&
\widetilde{c}_{jn}(-2k-1)&=
\begin{cases}
1&\text{if}\ (\ast\ast)\\
0&\text{otherwise}
\end{cases}\label{leq:explicitinvcarB}
\end{align}
here 
\begin{align*}
(\ast)&=
\begin{cases}
[i+1\leq k+j \leq 2n-i-1\ \text{and}\ 1-i\leq k-j\leq i-1\\
\text{with}\ (k+j)-(i+1), (k-j)-(1-i)\in 2\mathbb{Z}_{\geq 0}],\\
or\\
[2n-i\leq k+j].
\end{cases}\\
(\ast\ast)&=[k+j-n\in 2\mathbb{Z}_{\geq 0}].
\end{align*}

Moreover, for $j\in I$ and $k=0, 1,\dots, n-1$, we have 
\begin{align*}
\widetilde{c}_{ji}(-2k)&=\widetilde{c}_{ji}(-4n+2+2k)\ \text{for}\ i\in I\setminus\{n\},&
\widetilde{c}_{jn}(-2k-1)&=\widetilde{c}_{jn}(-4n+3+2k).
\end{align*}
\end{lemma}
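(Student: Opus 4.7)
The plan is to derive from the defining relation \eqref{eq:invcar} a system of linear recurrences for the coefficients $\widetilde{c}_{ji}(r)$ which, together with the initial data $\widetilde{c}_{ji}(-r_j)=\delta_{ji}$ and $\widetilde{c}_{ji}(r)=0$ for $r>-r_j$, determines them uniquely, and then to verify by induction that the closed-form expressions on the right-hand side of \eqref{leq:explicitinvcarB} satisfy the same data.

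Reading off the coefficient of $z^{r}$ for $r\le -1$ in \eqref{eq:invcar} using the explicit form of $C(z)$ for type $\mathrm{B}_n$ (cf.\ Example \ref{e:qCartan}), I obtain three families of recurrences:
\begin{align*}
\widetilde{c}_{j,i}(r-2)+\widetilde{c}_{j,i}(r+2) &= \widetilde{c}_{j-1,i}(r)+\widetilde{c}_{j+1,i}(r) \quad (1\le j\le n-2),\\
\widetilde{c}_{n-1,i}(r-2)+\widetilde{c}_{n-1,i}(r+2) &= \widetilde{c}_{n-2,i}(r)+\widetilde{c}_{n,i}(r),\\
\widetilde{c}_{n,i}(r-1)+\widetilde{c}_{n,i}(r+1) &= \widetilde{c}_{n-1,i}(r-1)+\widetilde{c}_{n-1,i}(r+1),
\end{align*}
with the convention $\widetilde{c}_{0,i}\equiv 0$. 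These express $\widetilde{c}_{j,i}(r)$ for $r$ sufficiently negative as a linear combination of values at strictly larger $r$, so downward induction on $r$ determines the solution uniquely, as already illustrated in Example \ref{e:invcarB_2}.

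The parity assertions $\widetilde{c}_{ji}(-2k-1)=0$ for $i\ne n$ and $\widetilde{c}_{jn}(-2k-2)=0$ follow by an immediate induction, since the only nonzero seed is $\widetilde{c}_{ii}(-r_i)=1$ and each of the three recurrences preserves the relevant parity class of $r$. For the main formulas, let $\widetilde{c}^{\flat}_{ji}(r)$ denote the candidate values defined by conditions $(\ast)$ and $(\ast\ast)$; the initial segment matches by direct inspection. The inductive step reduces the three recurrences to combinatorial identities for the indicator functions of $(\ast)$ and $(\ast\ast)$, of the form
\[
\mathbf{1}_{(\ast)}(j,k-1)+\mathbf{1}_{(\ast)}(j,k+1)=\mathbf{1}_{(\ast)}(j-1,k)+\mathbf{1}_{(\ast)}(j+1,k)
\]
(with $i$ fixed), and analogous identities coupling $(\ast)$ and $(\ast\ast)$ at the nodes $j=n-1$ and $j=n$. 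The periodicity claims $\widetilde{c}_{ji}(-2k)=\widetilde{c}_{ji}(-4n+2+2k)$ then follow from the substitution $k\mapsto 2n-1-k$, which swaps the two branches of $(\ast)$ while preserving the parity constraints (and similarly interchanges the residue classes in $(\ast\ast)$).

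The main obstacle will be the case analysis in the inductive step: near the vertices of the ``diamond'' region $\{i+1\le k+j\le 2n-i-1\}\cap\{1-i\le k-j\le i-1\}$ in the $(j,k)$-plane, and near the transition to the exterior branch $\{2n-i\le k+j\}$, the indicator $\mathbf{1}_{(\ast)}$ jumps in ways that interact with the parity constraint, and the two branches must be shown to ``stitch together'' compatibly with the recurrence. The bookkeeping—especially at the mixed recurrence for $j=n-1$, which couples $\widetilde{c}_{n-2,i}$ (an even-parity formula) with $\widetilde{c}_{n,i}$ (an odd-parity formula given by $(\ast\ast)$)—is the principal source of difficulty, and splitting the verification into a handful of boundary subcases (small $k$, bulk, large $k$ with $k+j\ge 2n-i$) seems unavoidable.
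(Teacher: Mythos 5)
Your treatment of the main formulas coincides with the paper's proof: extracting from \eqref{eq:invcar} the initial data $\widetilde{c}_{ji}(-r_j)=\delta_{ji}$, $\widetilde{c}_{ji}(r)=0$ for $r>-r_j$, and the recurrences (your three families are exactly \eqref{eq:rec1}--\eqref{eq:rec2}, with the $j=n-1$ case contained in \eqref{eq:rec1}), noting that downward induction in $r$ determines the solution uniquely, getting the parity vanishing by induction, and then verifying by induction on $k$, with the boundary case analysis you describe (diamond region, exterior branch, and the mixed steps at $j=n-1,n$), that the indicators $(\ast)$, $(\ast\ast)$ reproduce these values for $k=0,\dots,n-1$.

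The periodicity step, however, has a genuine gap. You claim $\widetilde{c}_{ji}(-2k)=\widetilde{c}_{ji}(-4n+2+2k)$ follows from the substitution $k\mapsto 2n-1-k$ applied to $(\ast)$, but the indicator of $(\ast)$ is not invariant under this reflection (the branch $2n-i\le k+j$ is unbounded in $k$), and, more to the point, the closed formula \eqref{leq:explicitinvcarB} is only established, and only true, for $k\le n-1$: for $n=2$, $i=j=1$, $k=3$ one has $\widetilde{c}_{11}(-6)=0$ by Example \ref{e:invcarB_2}, whereas $(\ast)$ would give $1$ since $k+j=4\ge 2n-i=3$; an analogous failure occurs for $(\ast\ast)$ at $(j,k)=(1,2n-2)$ when $n$ is odd. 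Since $-4n+2+2k=-2(2n-1-k)$ with $2n-1-k\in\{n,\dots,2n-1\}$, your substitution argument evaluates the formula precisely in the range where it is not valid, so it proves nothing about the left-hand values (indeed, by Remark \ref{r:explicitinvcarB} the coefficients change sign beyond $-4n+2$, so no naive symmetry of the indicators can hold). The paper closes this step differently: because the recurrences \eqref{eq:rec1}, \eqref{eq:rec2} determine each level in $k$ from the two preceding ones (and can be run in either direction), it suffices to check the periodicity identities at the two boundary levels $k=n-1$ and $k=n-2$ (and at $k=n-2$ for the odd family), which is a finite direct computation from \eqref{leq:explicitinvcarB}. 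You would need to replace your substitution argument by such a boundary verification plus propagation through the recurrences.
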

\begin{remark}
Note that, when we consider the tables as in Example \ref{e:invcarB-pic} (in particular, fix $i\in I\setminus\{n\}$), the first condition of $(\ast)$ corresponds to the rectangle whose corners are $(j, -2k)=(i, -2), (n-1, -2(n-i)), (1, -2i), (n-i, -2(n-1))$. In particular, the points in this rectangle satisfy the condition $1\leq k\leq n-1$ and $1\leq j\leq n-1$. 
\end{remark}
\begin{proof}[{Proof of Lemma \ref{l:explicitinvcarB}}]
By considering the coefficients of $z^k$ with $k\geq 0$ in \eqref{eq:invcar}, we obtain, for $i, j\in I$, 
\begin{align*}
\widetilde{c}_{ji}(-r_j)&=\delta_{ji}& 
\widetilde{c}_{ji}(r)&=0\ \text{for}\ r>-r_j.
\end{align*}%true for arbitrary finite types
Moreover, by considering the coefficients of $z^r$ with $r\leq -1$ in \eqref{eq:invcar}, we have
\begin{align}
&\widetilde{c}_{ji}(r-2)+\widetilde{c}_{ji}(r+2)-\sum_{|j-k|=1}\widetilde{c}_{ki}(r)=0\ \text{for}\ j\in I\setminus\{n\},\label{eq:rec1}\\
&\widetilde{c}_{ni}(r-1)+\widetilde{c}_{ni}(r+1)-\widetilde{c}_{n-1 i}(r-1)-\widetilde{c}_{n-1 i}(r+1)=0.\label{eq:rec2}
\end{align}%type B
Hence, by induction on $k$, we have $\widetilde{c}_{ji}(-2k-1)=\widetilde{c}_{jn}(-2k-2)=0$ for every $i\in I\setminus\{n\}$, $j\in I$ and $k\in \mathbb{Z}_{\geq 0}$. 

From now, we shall prove \eqref{leq:explicitinvcarB} for $i\in I\setminus\{n\}$. (The proof of \eqref{leq:explicitinvcarB} for $\widetilde{c}_{jn}(-2k-1)$ is also given by the similar arguments, and it is simpler. Hence we omit details of this case.) 
Fix $i\in I\setminus\{n\}$. We already have $\widetilde{c}_{ji}(0)=0$ for $j\in I$ and $\widetilde{c}_{ji}(-2)=\delta_{ji}$ for $j\in I\setminus\{n\}$. Moreover, by \eqref{eq:rec2}, 
\[
\widetilde{c}_{ni}(-2)=-\widetilde{c}_{ni}(0)+\widetilde{c}_{n-1 i}(-2)+\widetilde{c}_{n-1 i}(0)=\delta_{n-1 i}.
\]
On the other hand, if $k=0$, there is no $j$ which satisfies the condition $(\ast)$. Moreover, if $k=1$, the condition $(\ast)$ is satisfied if and only if  
\[
j=\begin{cases}
i&\text{if}\ i\neq n-1, \\
n-1, n&\text{if}\ i= n-1.
\end{cases}
\]
Hence, when $k=0, 1$, \eqref{leq:explicitinvcarB} holds for $i\in I\setminus\{n\}$. Assume that $2\leq k(\leq n-1)$. By \eqref{eq:rec1}, we have  
\begin{align}
\widetilde{c}_{ji}(-2k)=-\widetilde{c}_{ji}(-2(k-2))+\widetilde{c}_{j+1i}(-2(k-1))+\widetilde{c}_{j-1i}(-2(k-1)), \label{eq:rec1'}
\end{align}
for $j\in I\setminus\{n\}$, here $c_{-1i}(-2(k-1)):=0$, and, by \eqref{eq:rec2}, 
\begin{align}
\widetilde{c}_{ni}(-2k)=-\widetilde{c}_{ni}(-2(k-1))+\widetilde{c}_{n-1i}(-2k)+\widetilde{c}_{n-1i}(-2(k-1)). \label{eq:rec2'}
\end{align}

We consider the following cases : 
\begin{itemize}
\item[(a)] $k+j\leq 2n-i-1$ and $1-i>k-j$, 
\item[(b)] $k+j\leq 2n-i-1$ and $1-i\leq k-j$,
\item[(c)] $k+j\geq 2n-i$. 
\end{itemize}
\noindent Case (a) : By \eqref{eq:rec1'} and \eqref{eq:rec2'}, each $\widetilde{c}_{ji}(-2k)$ is determined by the terms which also satisfy (a).  
Hence, under the condition (a), we obtain $\widetilde{c}_{ji}(-2k)=0$ by  induction on $k$. (When $i=n-1$, there are no pairs $(j, k)$ which satisfy (a) and $k\geq 2$.) 

\noindent Case (b) : Note that in this case $j\leq n-1$. Hence $\widetilde{c}_{ji}(-2k)$'s in this region are determined by \eqref{eq:rec1'}, $\widetilde{c}_{ji}(-2k')=0$ for $k'\in\mathbb{Z}_{\leq 0}$ and $\widetilde{c}_{ji}(-2)=\delta_{ji}$. 

Write $\widetilde{c}_i(k+j, k-j):=\widetilde{c}_{ji}(-2k)$. Then the equality \eqref{eq:rec1'} is the equality of the following form :  
\begin{align}
\widetilde{c}_i(X, Y)=-\widetilde{c}_i(X-2, Y-2)+\widetilde{c}_i(X, Y-2)+\widetilde{c}_i(X-2, Y). \label{eq:rec1''}
\end{align}
Therefore, we can obtain the following by inductive calculation : $\widetilde{c}_{ji}(-2k)=1$ if 
\begin{align*}
&i+1\leq k+j (=X) \leq 2n-i-1\ \text{and}\ 1-i\leq k-j (=Y)\leq i-1\\
&\text{with}\ (k+j)-(i+1), (k-j)-(1-i)\in 2\mathbb{Z}_{\geq 0},
\end{align*} 
and otherwise $\widetilde{c}_{ji}(-2k)=0$ under the condition (b). A picture in Example \ref{e:invcarB-pic} might be helpful for consideration.

\noindent Case (c) : 
Consider the case $k+j=2n-i$. If $j=n$, then, by \eqref{eq:rec2} (or \eqref{eq:rec2'}) and the results in the cases (a) and (b), we have 
\begin{align*}
\widetilde{c}_{ni}(-2(n-i))&=-\widetilde{c}_{ni}(-2(n-i-1))+\widetilde{c}_{n-1i}(-2(n-i))+\widetilde{c}_{n-1i}(-2(n-i-1))\\
&=0+1+0=1. 
\end{align*}
The remaining argument is similar to the case (b) (here we also use the result in the case (b)). The details are left the reader. Consequently, we obtain that  $\widetilde{c}_{ji}(-2k)$ is always equal to $1$ in this case.

To prove $\widetilde{c}_{ji}(-2k)=\widetilde{c}_{ji}(-4n+2+2k)$ for $i\in I\setminus\{n\}$, $j\in I$, and $k=0, 1,\dots, n-1$, we only have to prove them for $k=n-2, n-1$ because of the form of the recurrence relations \eqref{eq:rec1} and \eqref{eq:rec2}, and they are shown by direct calculation via these recurrence relations and \eqref{leq:explicitinvcarB}. 

To prove $\widetilde{c}_{jn}(-2k-1)=\widetilde{c}_{ji}(-4n+3+2k)$ for $j\in I$ and $k=0, 1,\dots, n-1$, we only have to prove them for $k=n-2$ as above, and they are also shown by similar direct calculation. 
\end{proof}
\begin{remark}\label{r:explicitinvcarB}
If we continue the calculation by using \eqref{eq:rec1} and \eqref{eq:rec2}, we can also prove that 
\begin{itemize}
\item $\widetilde{c}_{ji}(-4n)=-\delta_{ji}$ for $i\in I\setminus\{n, n-1\}$ and $j\in I$,
\item $\widetilde{c}_{jn-1}(-4n)=-\delta_{jn-1}-\delta_{jn}$ for $j\in I$.
\end{itemize}
Hence the integers $\widetilde{c}_{ji}(-4n+2), \widetilde{c}_{ji}(-4n)$ coincide with $\widetilde{c}_{ji}(0), \widetilde{c}_{ji}(-2)$ for $i\in I\setminus\{n\}$ and $j\in I$ modulo overall sign. 

Similarly, we can show that  
\begin{itemize}
\item $\widetilde{c}_{jn}(-4n+1)=-\delta_{jn}$ and $\widetilde{c}_{jn}(-4n-1)=-\delta_{jn-1}$ for $j\in I$. 
\end{itemize}
Hence the integers $\widetilde{c}_{jn}(-4n+1), \widetilde{c}_{jn}(-4n-1)$ coincide with $\widetilde{c}_{jn}(-1), \widetilde{c}_{jn}(-3)$ for $j\in I$ modulo overall sign. Therefore, by \eqref{eq:rec1} and \eqref{eq:rec2}, we have 
\[
\widetilde{c}_{ji}(-4n+2+r)=-\widetilde{c}_{ji}(r)
\]
for all $i, j\in I$ and $r\in \mathbb{Z}_{\leq 0}$. This periodicity deduces the general formulas for $\widetilde{c}_{ij}(r)$ from Lemma \ref{l:explicitinvcarB}. We should remark that 
\[
-4n+2=-2h^{\vee}, 
\]
here $h^{\vee}$ is the dual Coxeter number for type $\mathrm{B}_n^{(1)}$ (see Remark \ref{r:convention}). See also \cite[Corollary 2.3]{HL:qGro}. 
\end{remark}

\section{Quantum cluster algebras}\label{a:qclus}
In this appendix, we review the definition of the quantum cluster algebra $\mathcal{A}(\Lambda, \widetilde{B})$, following \cite{BZ:qcluster}. Recall the notation in subsection \ref{ss:qclus}. 

Let $(\Lambda, \widetilde{B})$ a compatible pair. For $k\in J_e$, define $E^{(k)}=(e_{ij})_{i,j\in J}$ and $F^{(k)}=(f_{ij})_{i,j\in J_e}$ as follows : 
\begin{align*}
e_{ij}=\begin{cases}
\delta_{ij} & \text{if}\ j\neq k,\\
-1 & \text{if}\ i=j=k,\\
\max(0,-b_{ik}) & \text{if}\ i\neq j=k,
\end{cases} &  & f_{ij}=\begin{cases}
\delta_{ij} & \text{if}\ i\neq k,\\
-1 & \text{if}\ i=j=k,\\
\max(0,b_{kj}) & \text{if}\ i=k\neq j.
\end{cases}
\end{align*}
Then \emph{the mutation of $(\Lambda,\widetilde{B})$ in direction $k$} transforms $(\Lambda,\widetilde{B})$ into the pair $\mu_{k}(\Lambda,\widetilde{B}):=(\mu_{k}(\widetilde{B}),\mu_{k}(\Lambda))$ given by 
\begin{align*}
\mu_{k}(\Lambda)=(E^{(k)})^{T}\Lambda E^{(k)} &  & \mu_{k}(\widetilde{B})=E^{(k)}\widetilde{B}F^{(k)}.
\end{align*}
Then $\mu_{k}(\Lambda,\widetilde{B})$ is again compatible \cite[Proposition 3.4]{BZ:qcluster}. Moreover $\mu_{k}(\mu_{k}(\Lambda,\widetilde{B}))=(\Lambda,\widetilde{B})$ \cite[Proposition 3.6]{BZ:qcluster}.

\emph{A toric frame} in the skew-field $\mathcal{F}(\Lambda)$ of fraction is a map $M\colon \mathbb{Z}^{J}\to \mathcal{F}\setminus \{0\}$ of the form :  
\[
M(\bm{c})=\tau (X^{\eta(\bm{c})}),
\]
for $\bm{c}\in\mathbb{Z}^J$, here $\tau$ is a $\mathbb{Q}(v^{1/2})$-algebra automorphism of $\mathcal{F}$, and $\eta\colon\mathbb{Z}^J\to \mathbb{Z}^J$ is an isomorphism of $\mathbb{Z}$-modules. 

For a toric frame $M$, define a skew-symmetric $\mathbb{Z}$-bilinear form $\Lambda_M\colon \mathbb{Z}^J\times \mathbb{Z}^J\to \mathbb{Z}$ by $(\bm{c}, \bm{c}')\mapsto \Lambda(\eta(\bm{c}), \eta(\bm{c}'))$. Then, 
\begin{itemize}
\item $M(\bm{c})M(\bm{c}')=v^{\Lambda_M(\bm{c}, \bm{c}')}M(\bm{c}')M(\bm{c})$,
\item $M(0)=1$, $M(\bm{c})^{-1}=M(-\bm{c})$.
\end{itemize}
A pair $(M, \widetilde{B}_M)$ is called \emph{a quantum seed} if 
\begin{itemize}
\item $M$ is a toric frame of $\mathcal{F}$,
\item $\widetilde{B}_M=(b_{ij}^M)_{i\in J, j\in J_e}$ is a $J\times J_e$-integer matrix, and
\item $(\Lambda_M, \widetilde{B}_M)$ is compatible. 
\end{itemize}
The positive integers $\bm{d}\in (\mathbb{Z}_{>0})^{J_e}$ corresponding to the compatible pair $(\Lambda_M, \widetilde{B}_M)$ is denoted by $\bm{d}=(d_i^M)_{i\in J_e}$ (recall the beginning of subsection \ref{ss:qclus}). For $k\in J_e$, define $\mu_k(M)\colon \mathbb{Z}^{J}\to \mathcal{F}\setminus \{0\}$ as  
\begin{itemize}
\item $\displaystyle (\mu_k(M))(\bm{c})=\sum_{s=0}^{c_k}\left[\begin{array}{c}c_k\\s\end{array}\right]_{v^{d_k^M/2}}M(E^{(k)}\bm{c}+s\widetilde{\bm{b}}^{M, k})$, here $\widetilde{\bm{b}}^{M, k}:=(b_{ik}^M)_{i\in J}\in \mathbb{Z}^J$, if $c_k\geq 0$,
\item $\displaystyle (\mu_k(M))(\bm{c})=(\mu_k(M))(-\bm{c})^{-1}$ if $c_k\leq 0$.
\end{itemize}
Then $\mu_k(M)$ is also a toric frame of $\mathcal{F}$, and $(\Lambda_{\mu_k(M)}, \mu_k(\widetilde{B}_M))=\mu_k(\Lambda_M, \widetilde{B}_M)$. Therefore $\mu_k(M, \widetilde{B}_M):=(\mu_k(M), \mu_k(\widetilde{B}_M))$ is also a quantum seed \cite[Proposition 4.7]{BZ:qcluster}. Moreover, $\mu_k(\mu_k(M, \widetilde{B}_M))=(M, \widetilde{B}_M)$ \cite[Proposition 4.10]{BZ:qcluster}. We say that the quantum seed $\mu_k(M, \widetilde{B}_M)$ is obtained from $(M, \widetilde{B}_M)$ by \emph{mutation in direction $k$}. If we set  $X_i^M:=M(\bm{e}_i)$ and ${}'X_i^M:=(\mu_k(M))(\bm{e}_i)$ ($i\in J$), then   
\begin{itemize}
\item ${}'X_i^M=X_i^M$ if $i\neq k$, and
\item ${}'X_k^M=M(-\bm{e}_k-\sum_{j:-b_{jk}>0}b_{jk}\bm{e}_j)+M(-\bm{e}_k+\sum_{j:b_{jk}>0}b_{jk}\bm{e}_j)$.
\end{itemize}

Let $\mathbb{T}_e$ be the $\# J_e$-regular tree whose edges are labelled by $J_e$ so that the $\# J_e$-edges emanating from each vertex receive different labels. Write $t\overset{k}{\text{---}} t'$ if $t, t'\in \mathbb{T}_e$ are joined by an edge labelled by $k\in J_e$.

\emph{A quantum cluster pattern} is an assignment of a quantum seed $(M_t, \widetilde{B}_t)$ to every vertex $t\in \mathbb{T}_e$ such that $\mu_k(M_t, \widetilde{B}_t)=(M_{t'}, \widetilde{B}_{t'})$ whenever $t\overset{k}{\text{---}} t'$. Quantum seeds $(M, \widetilde{B}_M)$, $(M', \widetilde{B}_{M'})$ are said to be \emph{mutation equivalent} if there is a quantum cluster pattern such that $(M_t, \widetilde{B}_t)=(M, \widetilde{B}_M)$ and $(M_{t'}, \widetilde{B}_{t'})=(M', \widetilde{B}_{M'})$ for some $t, t'\in \mathbb{T}_e$. 

Let $\mathcal{S}:=\{(M_t, \widetilde{B}_t)\}_{t\in \mathbb{T}_e}$ be a quantum cluster pattern in $\mathcal{F}$. For $t\in \mathbb{T}_e$ and $j\in J$, write $X_{j; t}=M_t(\bm{e}_j)$, and set $\mathcal{X}:=\{X_{j; t}\mid t\in \mathbb{T}_e, j\in J_e\}$. The elements of $\mathcal{X}$ are called \emph{quantum cluster variables}. The set ${\bf X}_f:=\{M_t(\bm{e}_i)\mid i\in J_f\}$ does not depend on $t\in \mathbb{T}_e$. The elements of ${\bf X}_f$ are called \emph{frozen variables}. A set of elements $\{X_{j; t}\mid j\in J\}$ is called \emph{the cluster at $t$} (the frozen variables are included in the clusters). 

\emph{The quantum cluster algebra} $\mathcal{A}(\mathcal{S})$ is the $\mathbb{Z}[v^{\pm 1/2}]$-subalgebra of $\mathcal{F}$ generated by $\mathcal{X}$ and ${\bf X}_f$ (unlike \cite{BZ:qcluster}, we do not add the inverses of the elements of ${\bf X}_f$ in generators). If there exists $t_0\in \mathbb{T}_e$ such that 
$M_{t_0}(\bm{c})=X^{\bm{c}}$ and $\widetilde{B}_{t_0}=\widetilde{B}$, then $\Lambda_{M_{t_0}}=\Lambda$ and we write $\mathcal{A}(\mathcal{S})$ as $\mathcal{A}(\Lambda, \widetilde{B})$. In this case, we have 
\begin{align}
\overline{M_t(\bm{c})}=M_t(\bm{c})\label{eq:qmonominv}
\end{align}
for all $t\in \mathbb{T}_e$ and $\bm{e}\in \mathbb{Z}^J$ \cite[Proposition 6.2]{BZ:qcluster}. 

Remark that $\mathcal{A}(\mathcal{S})$ is isomorphic to $\mathcal{A}(\Lambda_{M_t}, \widetilde{B}_t)$ for any $t\in \mathbb{T}_e$ in general. 

\begin{theorem}[{The quantum Laurent phenomenon \cite[Corollary 5.2]{BZ:qcluster}}]\label{t:laurentpheno}
The quantum cluster algebra $\mathcal{A}(\mathcal{S})$ associated with a cluster pattern $\mathcal{S}=\{(M_t, \widetilde{B}_t)\}_{t\in \mathbb{T}_e}$ is contained in the quantum torus $\mathcal{T}_{M_t}:=\sum_{\bm{c}\in \mathbb{Z}^J}\mathbb{Z}[v^{\pm 1/2}]M_t(\bm{c})$ for any $t\in \mathbb{T}_e$.
\end{theorem}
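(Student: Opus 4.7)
The plan is to establish the inclusion $\mathcal{A}(\mathcal{S}) \subset \mathcal{T}_{M_t}$ for a fixed but arbitrary $t \in \mathbb{T}_e$ via the standard strategy of introducing a \emph{quantum upper bound} and showing that it is invariant under mutation. Write $(M, \widetilde{B}) = (M_t, \widetilde{B}_t)$. Since each frozen variable in $\mathbf{X}_f$ is literally of the form $M(\bm{e}_i)$ with $i \in J_f$, it already lies in $\mathcal{T}_M$, so the task reduces to showing that every quantum cluster variable $X_{j; t'} = M_{t'}(\bm{e}_j)$, for all $t' \in \mathbb{T}_e$ and $j \in J$, lies in $\mathcal{T}_M$.

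First I would record a one-step Laurent statement. For $k \in J_e$, the explicit exchange formula recalled before Theorem~\ref{t:laurentpheno} expresses $(\mu_k M)(\bm{e}_k)$ as a sum of two monomials $M(\bm{c})$, so $(\mu_k M)(\bm{e}_k) \in \mathcal{T}_M$; the remaining generators satisfy $(\mu_k M)(\bm{e}_j) = M(\bm{e}_j)$. Applying the same formula to $(\mu_k M, \mu_k \widetilde{B})$ and invoking $\mu_k \mu_k = \mathrm{id}$ gives $M(\bm{e}_k) \in \mathcal{T}_{\mu_k M}$. Motivated by this, define
\[
\mathcal{U}(M, \widetilde{B}) \;:=\; \mathcal{T}_M \,\cap\, \bigcap_{k \in J_e} \mathcal{T}_{\mu_k M} \;\subset\; \mathcal{F}(\Lambda).
\]
The central assertion, which is \cite[Theorem~5.1]{BZ:qcluster}, is the \emph{mutation invariance} of this upper bound: for every $k \in J_e$,
\[
\mathcal{U}(M, \widetilde{B}) \;=\; \mathcal{U}(\mu_k M, \mu_k \widetilde{B}).
\]
Granted this, an induction on the graph distance in $\mathbb{T}_e$ shows $\mathcal{U}(M_t, \widetilde{B}_t) = \mathcal{U}(M_{t'}, \widetilde{B}_{t'})$ for all $t'$. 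Then every cluster variable $X_{j;t'}$ lies in $\mathcal{T}_{M_{t'}} \subset \mathcal{U}(M_{t'}, \widetilde{B}_{t'}) = \mathcal{U}(M_t, \widetilde{B}_t) \subset \mathcal{T}_M$, and the corollary follows.

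The hard part is the mutation invariance of $\mathcal{U}$. The direction-$k$ case is automatic from $\mu_k \mu_k = \mathrm{id}$; the substance of the argument is to check, for each $l \in J_e \setminus \{k\}$, that $\mathcal{T}_M \cap \mathcal{T}_{\mu_l M} = \mathcal{T}_{\mu_k M} \cap \mathcal{T}_{\mu_k \mu_l M}$. This reduces to a rank-$2$ computation inside the subalgebra generated by $M(\bm{e}_k), M(\bm{e}_l)$ and their mutations, where one must verify a family of quantum Pl\"ucker-type identities that compare the two presentations of a common element in the four quantum tori attached to $\{M,\, \mu_k M,\, \mu_l M,\, \mu_k \mu_l M\}$. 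The compatibility condition $\widetilde{B}^{T} \Lambda = (D, 0)$ enters precisely to ensure that all the $v$-twists produced by moving $M(\bm{c})$'s past each other cancel in exactly the pattern needed for the classical Fomin--Zelevinsky Laurent-phenomenon argument to lift to the quantum torus. Concretely, I would reproduce the combinatorial bookkeeping of \cite[\S 5--6]{BZ:qcluster}: set up explicit formulas for the four toric frames in terms of a single lattice $\mathbb{Z}^J$, expand both sides of the conjectured equality into $v^{1/2}$-monomials using the exchange relation and \eqref{eq:qmonominv}, and check term-by-term cancellation. This verification is routine in structure but intricate in detail, and constitutes the main technical obstacle; the rest of the proof, as outlined above, is formal.
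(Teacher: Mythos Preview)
The paper does not give a proof of this theorem: it is stated in Appendix~B with a citation to \cite[Corollary~5.2]{BZ:qcluster} and used as a black box. Your outline faithfully reproduces the Berenstein--Zelevinsky strategy (upper bound $\mathcal{U}(M,\widetilde{B})$, mutation invariance via a rank-$2$ computation, then induction on distance in $\mathbb{T}_e$), so in substance you are reconstructing exactly the proof the citation points to.

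One small slip: in the chain ``$X_{j;t'} \in \mathcal{T}_{M_{t'}} \subset \mathcal{U}(M_{t'}, \widetilde{B}_{t'})$'' the inclusion is backwards, since by definition $\mathcal{U} = \mathcal{T}_M \cap \bigcap_k \mathcal{T}_{\mu_k M} \subset \mathcal{T}_M$. What you actually need (and have already justified via the one-step Laurent statement and $\mu_k\mu_k = \mathrm{id}$) is the direct membership $X_{j;t'} \in \mathcal{U}(M_{t'}, \widetilde{B}_{t'})$: for $j \ne k$ this is trivial, and for $j = k$ it is your observation that $M(\bm{e}_k) \in \mathcal{T}_{\mu_k M}$. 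Note also that the mutation-invariance theorem in \cite{BZ:qcluster} requires $\widetilde{B}$ to have full rank $\#J_e$; in the quantum setting this is automatic from compatibility (as recorded in \S\ref{ss:qclus} of the paper), so you are on safe ground.
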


\section{Unipotent quantum minors}\label{a:QEA}
In this appendix, we review our definitions of the quantized coordinate algebra $\mathcal{A}_v[N_-]$ and unipotent quantum minors. Recall the notation in subsection \ref{ss:QLA}. See also Remark \ref{r:minorconv}. 

\subsection{Quantized coordinate algebras $\mathcal{A}_v[N_-]$}
\begin{definition}
\label{d:QEA} \emph{The quantized enveloping algebra} $\Uv(=\Uv(\mathrm{X}_N))$ of type $\mathrm{X}_N$ over $\mathbb{Q}(v^{1/2})$ is the unital associative $\mathbb{Q}(v^{1/2})$-algebra defined by the generators 
\[
e_{i},f_{i}, t_i^{\pm 1}\;(i\in I),
\]
and the relations (i)--(iv) below : 
\begin{enumerate}
\item[(i)] $t_it_i^{-1}=1=t_i^{-1}t_i$ and  $t_it_j=t_jt_i$ for $i, j\in I$,
\item[(ii)] $t_ie_{j}=v_i^{c_{ij}}e_{j}t_i,\;t_if_{j}=v_i^{-c_{ij}}f_{j}t_i$
for $i, j\in I$,
\item[(iii)] ${\displaystyle \left[e_{i},f_{j}\right]=\delta_{ij}\frac{t_{i}-t_{i}^{-1}}{q_{i}-q_{i}^{-1}}}$ for $i,j\in I$,
\item[(iv)] ${\displaystyle \sum_{k=0}^{1-c_{ij}}(-1)^{k}\left[\begin{array}{c}
1-c_{ij}\\
k
\end{array}\right]_{v_i}x_{i}^{k}x_{j}x_{i}^{1-c_{ij}-k}=0}$ for $i,j\in I$ with $i\neq j$, and $x=e,f$. 
\end{enumerate}
\end{definition}
The $\mathbb{Q}(v^{1/2})$-subalgebra of $\Uv$ generated by $\{f_{i}\}_{i\in I}$ is denoted by $\Uv^-$. This is isomorphic to the algebra $\Uv^-$ in section \ref{s:dualcan} in an obvious way, hence we do not distinguish them. 

For $i\in I$, we define the $\mathbb{Q}(v^{1/2})$-linear maps $e'_{i}$
and $_{i}e'\colon\Uv^{-}\to\Uv^{-}$ by
\[
\begin{array}{lc}
e'_{i}\left(xy\right)=e'_{i}\left(x\right)y+v_{i}^{\langle h_{i}, \wt x\rangle}xe'_{i}\left(y\right), & e'_{i}(f_{j})=\delta_{ij},\\
_{i}e'\left(xy\right)=v_{i}^{\langle h_{i}, \wt y\rangle}{_{i}e'}\left(x\right)y+x\;{_{i}e'}\left(y\right), & _{i}e'(f_{j})=\delta_{ij}
\end{array}
\]
for homogeneous elements $x,y\in\Uv^{-}$. There exists a unique symmetric $\mathbb{Q}(v^{1/2})$-bilinear form $(-,-)_{L}\colon\Uv^{-}\times\Uv^{-}\to\mathbb{Q}(v^{1/2})$
such that
\begin{align*}
(1,1)_{L}=1 & , &  & (f_{i}x,y)_{L}=\frac{1}{1-v_{i}^{2}}(x,e'_{i}(y))_{L}, &  & (xf_{i},y)_{L}=\frac{1}{1-v_{i}^{2}}(x,{_{i}e'}(y))_{L}.
\end{align*}
Then $(-,-)_{L}$ is non-degenerate. See \cite[Chapter 1]{Lus:intro} for more
properties.

Let $\mathcal{U}_{v, \mathbb{Z}}^-$ be the $\mathbb{Z}[v^{\pm 1/2}]$-subalgebra of $\mathcal{U}_{v}^-$ generated by $\{f_i^{(m)}:=f_i^m/[m]_{v_i}[m-1]_{v_i}\cdots [1]_{v_i}\mid i\in I, m\in\mathbb{Z}_{\geq 0}\}$. Then the quantized coordinate algebra $\mathcal{A}_v[N_-]$ is defined as 
\[
\mathcal{A}_v[N_-]:=\{x\in \mathcal{U}_{v}^-\mid (x, \mathcal{U}_{v, \mathbb{Z}}^-)_L\subset \mathbb{Z}[v^{\pm 1/2}]\}. 
\]

\subsection{Unipotent quantum minors}\label{ss:unipminor}
Let $V$ be a $\Uv$-module. For $\mu \in P$, we set
\[
V_{\mu}:= \{u \in V \mid t_i.u=v_i^{\langle h_i, \mu \rangle}u\ \text{\ for\ all\ } i \in I\}.
\]
This is called \emph{the weight space of $V$ of weight $\mu$}. A $\Uv$-module $V=\bigoplus_{\mu\in P}V_{\mu}$ with weight space decomposition is said to be \emph{integrable} if $e_i$ and $f_i$ act locally nilpotently on $V$ for all $i\in I$. 

For $\lambda\in P_{+}$, denote by $V(\lambda)$ the (finite-dimensional) irreducible highest weight $\Uv$-module generated by a highest weight vector $u_{\lambda}$ of weight $\lambda$. The module $V(\lambda)$ is integrable. There exists a unique $\mathbb{Q}(v^{1/2})$-bilinear form $(-,-)_{\lambda}^{\varphi}\colon V(\lambda)\times V(\lambda)\to\mathbb{Q}(v^{1/2})$ such that
\begin{align*}
 \left(u_{\lambda},u_{\lambda}\right)_{\lambda}^{\varphi} & =1 & (x.u_{1},u_{2})_{\lambda}^{\varphi} & =(u_{1},\varphi(x).u_{2})_{\lambda}^{\varphi}
\end{align*}
for $u_{1},u_{2}\in V(\lambda)$ and $x\in\Uv$, where $\varphi\colon\Uv\to\Uv$ is the $\mathbb{Q}(v^{1/2})$-algebra anti-involution defined by 
\begin{align*}
\varphi\left(e_{i}\right) & =f_{i}, & \varphi(f_{i}) & =e_{i}, & \varphi(t_i) & =t_i
\end{align*}
for $i\in I$. The form $(-,-)_{\lambda}^{\varphi}$ is non-degenerate and symmetric. 

For $w\in W$, define the element $u_{w\lambda}\in V(\lambda)$ by
\begin{align*}
u_{w\lambda}=f_{i_{1}}^{(\langle h_{i_{1}}, s_{i_{2}}\cdots s_{i_{\ell}}\lambda\rangle)}\cdots f_{i_{\ell-1}}^{(\langle h_{i_{\ell-1}}, s_{i_{\ell}}\lambda\rangle)}f_{i_{\ell}}^{(\langle h_{i_{\ell}}, \lambda\rangle)}.u_{\lambda}
\end{align*}
for $(i_{1},\dots,i_{\ell})\in I(w)$. It is known that this element does not depend on the choice of $(i_{1},\dots,i_{\ell})\in I(w)$ and $w\in W$ (depends only on $w\lambda$). See, for example, \cite[Proposition 39.3.7]{Lus:intro}. Then $\left(u_{w\lambda},u_{w\lambda}\right)_{\lambda}^{\varphi}=1$.

For $\lambda\in P_{+}$ and $w,w'\in W$, define an element $D_{w\lambda, w'\lambda}\in\Uv^-$ by the
following property :
\[
(D_{w\lambda,w'\lambda},x)_{L}=(u_{w\lambda},x.u_{w'\lambda})_{\lambda}^{\varphi}
\]
for $x\in\Uv^{-}$. By the nondegeneracy of the bilinear form $(-,-)_L$, this element is uniquely determined. An element of this form is called \emph{a unipotent quantum minor}. Moreover, set
\[
\widetilde{D}_{w\lambda,w'\lambda}:=v^{-(w\lambda-w'\lambda, w\lambda-w'\lambda)/4+(w\lambda-w'\lambda, \rho)/2}D_{w\lambda,w'\lambda}. 
\]
This element is called \emph{a normalized unipotent quantum minor}.

\bibliographystyle{jplain} 
\bibliography{ref}

\begin{thebibliography}{99}

\bibitem{ASS}\label{b:ASS}
I.~Assem, D.~Simson and A.~Skowro\'nski, {\em Elements of the representation theory of associative algebras. {V}ol.~1}, Techniques of representation theory, London Mathematical Society Student Texts \textbf{65}, Cambridge University Press, Cambridge, 2006. x+458 pp.


\bibitem{B:braid}\label{b:B:braid}
J.~Beck, {\em Braid group action and quantum affine algebras}, Comm. Math. Phys. \textbf{165} (1994), no.~3, 555--568. 


\bibitem{Bed}\label{b:Bed}
R.~B\'edard, {\em On commutation classes of reduced words in {W}eyl groups}, European J. Combin. \textbf{20} (1999), no.~6, 483--505.

\bibitem{BZ:qcluster}\label{b:BZ:qcluster}
A.~Berenstein and A.~Zelevinsky, {\em Quantum cluster algebras}, Adv. Math. \textbf{195} (2005), no.~2, 405--455.


\bibitem{Bour:Lie}\label{b:Bour:Lie}  
N.~Bourbaki, {\em \'El\'ements de math\'ematique. Fasc. XXXIV. Groupes et alg\`ebres de Lie. Chapitre IV--VI}, Actualit\'es Scientifiques et Industrielles, no. 1337 Hermann, Paris, 1968. 288 pp.


\bibitem{cw}\label{b:cw}
S.~Cautis and H.~Williams, {\em Cluster theory of the coherent Satake category}, arXiv:1801.08111. 


\bibitem{CH}\label{b:CH}
V.~Chari and D.~Hernandez, {\em Beyond Kirillov-Reshetikhin modules}, Quantum affine algebras, extended affine Lie algebras, and their applications, 49--81, Contemp. Math. \textbf{506}, Amer. Math. Soc., Providence, RI, 2010. 


\bibitem{CP:guide}\label{b:CP:guide}
V.~Chari and A.~Pressley, {\em A guide to quantum groups}, Cambridge University Press, Cambridge, 1994, xvi+651 pp.


\bibitem{CP:QAAandRep}\label{b:CP:QAAandRep}
\bysame, {\em Quantum affine algebras and their representations}, Representations of groups (Banff, AB, 1994), 59--78, CMS Conf. Proc. \textbf{16}, Amer. Math. Soc., Providence, RI, 1995. 

\bibitem{CP:Dorey}\label{b:CP:Dorey}
\bysame, {\em Yangians, integrable quantum systems and {D}orey's rule}, Comm. Math. Phys. \textbf{181} (1996), no.~2, 265--302. 


  
\bibitem{cwy}\label{b:cwy}
K.~Costello, E.~Witten and M.~Yamazaki, {\em Gauge Theory and Integrability, I}, arXiv:1709.09993.


\bibitem{Dri:real}\label{b:Dri:real}
V.~G.~Drinfel'd, {\em A new realization of Yangians and of quantum affine algebras}, Soviet Math. Dokl. \textbf{36} (1988), no.~2, 212--216. 
 


\bibitem{FM}\label{b:FM}
E.~Frenkel and E.~Mukhin, {\em Combinatorics of $q$-characters of finite-dimensional representations of quantum affine algebras}, Comm. Math. Phys. \textbf{216} (2001), no.~1, 23--57.


\bibitem{FR:defW-alg}\label{b:FR:defW-alg}
E.~Frenkel and N.~Reshetikhin, {\em Deformations of $\mathscr{W}$-algebras associated to simple Lie algebras}, Comm. Math. Phys. \textbf{197} (1998), no.~1, 1--32. 


\bibitem{FR:q-chara}\label{b:FR:q-chara}
\bysame, {\em The $q$-characters of representations of quantum affine algebras and deformations of $\mathscr{W}$-algebras}, Recent developments in quantum affine algebras and related topics (Raleigh, NC, 1998), 163--205, 
Contemp. Math. \textbf{248}, Amer. Math. Soc., Providence, RI, 1999. 

\bibitem{Fuj}\label{b:Fuj}
R.~Fujita, {\em Affine highest weight categories and quantum affine Schur-Weyl duality of Dynkin quiver types}, arXiv:1710.11288.


\bibitem{GT}\label{b:GT}
S.~Gautam and V.~Toledano Laredo, {\em Meromorphic tensor equivalence for Yangians and quantum loop algebras}, Publ. Math. Inst. Hautes \'Etudes Sci. \textbf{125} (2017), 267--337.


\bibitem{GLS:Kac-Moody}\label{b:GLS:Kac-Moody}
C.~Gei\ss, B.~Leclerc and J.~Schr{\"o}er, {\em Kac-Moody groups and cluster algebras}, Adv. Math. \textbf{228} (2011), no.~1, 329--433.



\bibitem{GLS:qcluster}\label{b:GLS:qcluster}
\bysame, {\em Cluster structures on quantum coordinate rings}, Selecta Math. (N.S.) \textbf{19} (2013), no.~2, 337--397.

 

\bibitem{GY:BZconj}\label{b:GY:BZconj}
K.~Goodearl and M.~Yakimov, {\em The {B}erenstein-{Z}elevinsky quantum cluster algebra conjecture}, arXiv:1602.00498. To appear in J. Eur. Math. Soc. (JEMS). 


\bibitem{GY:Memo}\label{b:GY:Memo}
\bysame, {\em Quantum cluster algebra structures on quantum nilpotent algebras}, Mem.~Amer.~Math.~Soc. \textbf{247} (2017), no.~1169, vii+119 pp.


\bibitem{Hap}\label{b:Hap}
D.~Happel, {\em Triangulated categories in the representation theory of finite-dimensional algebras}, London Mathematical Society Lecture Note Series \textbf{119}, Cambridge University Press, Cambridge, 1988. x+208 pp. 


\bibitem{H:qt}\label{b:H:qt}
D.~Hernandez, {\em Algebraic approach to $q,t$-characters}, Adv. Math. {\bf 187} (2004), no.~1, 1--52.

\bibitem{H:monom}\label{b:H:monom}
\bysame, {\em Monomials of {$q$} and {$q, t$}-characters for non simply-laced quantum affinizations}, Math. Z. \textbf{250} (2005), no.~2, 443--473.

\bibitem{H:KR-T}\label{b:H:KR-T}
\bysame, {\em The {K}irillov-{R}eshetikhin conjecture and solutions of {$T$}-systems}, J. Reine Angew. Math. {\bf 596} (2006), 63--87. 


\bibitem{H:minim}\label{b:H:minim}
\bysame, {\em On minimal affinizations of representations of quantum groups}, Comm. Math. Phys. {\bf 276} (2007), no. 1, 221--259.


\bibitem{H:small}\label{b:H:small}
\bysame, {\em Smallness problem for quantum affine algebras and quiver varieties}, Ann. Sci. \'Ec. Norm. Sup\'er. (4) \textbf{41} (2008), no. 2, 271--306. 

\bibitem{H:bou}\label{b:H:bou}
\bysame, {\em Avanc\'ees concernant les R-matrices et leurs applications (d'apr\`es Maulik-Okounkov, Kang-Kashiwara-Kim-Oh...)}, S\'em. Bourbaki 69\`eme ann\'ee, 2016-2017, no. 1129. To appear in Ast\'erisque.


\bibitem{HL:cluaff}\label{b:HL:cluaff}
D.~Hernandez and B.~Leclerc, {\em Cluster algebras and quantum affine algebras},
Duke Math. J. {\bf 154} (2010), no. 2, 265--341.


\bibitem{HL:qGro}\label{b:HL:qGro}
\bysame, {\em Quantum Grothendieck rings and derived Hall algebras}, J. Reine Angew. Math. \textbf{701} (2015), 77--126. 


\bibitem{HL:JEMS2016}\label{b:HL:JEMS2016}
\bysame, {\em A cluster algebra approach to {$q$}-characters of Kirillov-Reshetikhin modules}, J. Eur. Math. Soc. (JEMS) \textbf{18} (2016), no. 5, 1113--1159.


\bibitem{Kac:Liealgbook}\label{b:Kac:Liealgbook}
V.~Kac, {\em Infinite-dimensional Lie algebras}, Third edition, Cambridge University Press, Cambridge, 1990. xxii+400 pp.


\bibitem{KKK:1}\label{b:KKK:1}
S.-J.~Kang, M.~Kashiwara and M.~Kim, {\em Symmetric quiver Hecke algebras and $R$-matrices of quantum affine algebras}, Invent. Math. \textbf{211} (2018), no.~2, 591--685. 


\bibitem{KKK:2}\label{b:KKK:2}
\bysame, {\em Symmetric quiver Hecke algebras and $R$-matrices of quantum affine algebras, II}, Duke Math. J. \textbf{164} (2015), no.~8, 1549--1602.

\bibitem{KKKO:Simp}\label{b:KKKO:Simp}
S.-J.~Kang, M.~Kashiwara, M.~Kim and S-j.~Oh, {\em Simplicity of heads and socles of tensor products}, Compos. Math. \textbf{151} (2015), no.~2, 377--396. 


\bibitem{KKKO:3}\label{b:KKKO:3}
\bysame, {\em Symmetric quiver Hecke algebras and $R$-matrices of quantum affine algebras III}, Proc. Lond. Math. Soc. (3) \textbf{111} (2015), no.~2, 420--444. 


\bibitem{KKKO:4}\label{b:KKKO:4}
\bysame, {\em Symmetric quiver Hecke algebras and $R$-matrices of quantum affine algebras IV}, Selecta Math. (N.S.) \textbf{22} (2016), no.~4, 1987--2015. 


\bibitem{KKKO:mon}\label{KKKO:mon}
\bysame, {\em Monoidal categorification of cluster algebras}, J. Amer. Math. Soc. \textbf{31} (2018), no.~2, 349--426. 


\bibitem{Kas:Qana}\label{b:Kas:Qana}
M.~Kashiwara, {\em On crystal bases of the {$Q$}-analogue of universal
  enveloping algebras}, Duke Math. J. \textbf{63} (1991), no.~2, 465--516.%  \MR{1115118}


\bibitem{Kas:Dem}\label{b:Kas:Dem}
\bysame, {\em The crystal base and {L}ittelmann's refined {D}emazure character formula}, Duke Math. J. \textbf{71} (1993), no.~3, 839--858. %\MR{1240605}



\bibitem{Kas:crys-survey}\label{b:Kas:crys-survey}
\bysame, {\em On crystal bases}, Representations of groups ({B}anff, {AB}, 1994), 155--197, CMS Conf. Proc. \textbf{16}, Amer. Math. Soc., Providence, RI, 1995.% \MR{1357199}


\bibitem{KKO:AB}
M.~Kashiwara, M.~Kim and S.-j.~Oh, {\em Monoidal categories of modules over quantum affine algebras of type A and B}, Proc. Lond. Math. Soc. (3) \textbf{118} (2019), no.~1, 43--77.


\bibitem{KO}\label{b:KO}
M.~Kashiwara and S.-j.~Oh, {\em Categorical relations between Langlands dual quantum affine algebras: doubly laced types}, arXiv:1705.07542. To appear in J. Algebraic Combin.


\bibitem{Kato:KLR}\label{b:Kato:KLR}
S.~Kato, {\em Poincar\'e-{B}irkhoff-{W}itt bases and {K}hovanov-{L}auda-{R}ouquier algebras}, Duke Math. J. \textbf{163} (2014), no.~3, 619--663.



\bibitem{Kimura:qunip}\label{b:Kimura:qunip}
Y.~Kimura, {\em Quantum unipotent subgroup and dual canonical basis},  Kyoto J. Math. \textbf{52} (2012), no.~2, 277--331.% \MR{2914878}


\bibitem{KiO:BZtwist}\label{b:KiO:BZtwist}
Y.~Kimura and H.~Oya, {\em Twist automorphisms on quantum unipotent cells and dual canonical bases}, arXiv:1701.02268. To appear in Int. Math. Res. Not. IMRN.


\bibitem{KQ}\label{b:KQ}
Y.~Kimura and F.~Qin, {\em Graded quiver varieties, quantum cluster algebras and dual canonical basis}, Adv. Math. \textbf{262} (2014), 261--312. 


\bibitem{Lus:can1}\label{b:Lus:can1}
G.~Lusztig, {\em Canonical bases arising from quantized enveloping algebras}, J. Amer. Math. Soc. \textbf{3} (1990), no.~2, 447--498.%  \MR{1035415}


\bibitem{Lus:quiper}\label{b:Lus:quiper}
\bysame, {\em Quivers, perverse sheaves, and quantized enveloping algebras}, J. Amer. Math. Soc. \textbf{4} (1991), no.~2, 365--421.


\bibitem{Lus:intro}\label{b:Lus:intro}
\bysame, {\em Introduction to quantum groups}, Reprint of the 1994 edition, Modern Birkh{\"a}user Classics, Birkh{\"a}user/Springer, New York, 2010. xiv+346 pp.

\bibitem{mo}\label{b:mo}
D.~Maulik and A.~Okounkov, {\em Quantum Groups and Quantum Cohomology}, arXiv:1211.1287. To appear in Ast\'erisque.



\bibitem{McN:finite}\label{b:McN:finite}
P.~J.~McNamara, {\em Finite dimensional representations of {K}hovanov-{L}auda-{R}ouquier algebras {I}: {F}inite type}, J. Reine Angew. Math. \textbf{707} (2015), 103--124.% \MR{3403455}


\bibitem{Nak:KR}\label{b:Nak:KR}
H.~Nakajima, {\em $t$-analogs of $q$-characters of Kirillov-Reshetikhin modules of quantum affine algebras}, Represent. Theory \textbf{7} (2003), 259--274. 


\bibitem{Nak:quiver}\label{b:Nak:quiver}
\bysame, {\em Quiver varieties and $t$-analogs of $q$-characters of quantum affine algebras}, Ann. of Math. (2) \textbf{160} (2004), no.~3, 1057--1097. 


\bibitem{Nak:extr}\label{b:Nak:extr}
\bysame, {\em Extremal weight modules of quantum affine algebras}, Representation theory of algebraic groups and quantum groups, 343--369, Adv. Stud. Pure Math. \textbf{40}, Math. Soc. Japan, Tokyo, 2004. 


\bibitem{OhS:cAR}\label{b:OhS:cAR}
S.-j.~Oh and U.~Suh, {\em Combinatorial Auslander-Reiten quivers and reduced expressions}, arXiv:1509.04820. To appear in J. Korean Math. Soc.


\bibitem{OhS:foldedAR-I}\label{b:OhS:foldedAR-I}
\bysame, {\em Twisted Coxeter elements and folded AR-quivers via Dynkin diagram automorphisms: I}, arXiv:1606.00076.


\bibitem{Oya:qcoord}\label{b:Oya:qcoord}
H.~Oya, {\em Representations of quantized coordinate algebras via PBW-type elements}, Osaka J. Math. \textbf{55} (2018), no.~1, 71--115.

\bibitem{Rin:PBW}\label{b:Rin:PBW}
C.~M.~Ringel, {\em PBW-bases of quantum groups}, J. Reine Angew. Math. \textbf{470} (1996), 51--88. 


\bibitem{VV:qGro}\label{b:VV:qGro}
M.~Varagnolo and E.~Vasserot, {\em Perverse sheaves and quantum Grothendieck rings}, Studies in memory of Issai Schur (Chevaleret/Rehovot, 2000), 345--365, Progr. Math. \textbf{210}, Birkh\"auser Boston, Boston, MA, 2003. 

\end{thebibliography}
\def\cprime{$'$} \def\cprime{$'$} \def\cprime{$'$} \def\cprime{$'$}

\end{document}